\documentclass[11pt,A4]{article}
\usepackage{amsmath}
\usepackage{amssymb}
\usepackage{amsfonts}
\usepackage{amsthm}
\usepackage{xcolor}
\usepackage{bm}
\usepackage{comment}
\usepackage{graphicx}
\usepackage{caption}
\usepackage{enumerate}
\usepackage{wrapfig}
\usepackage{subcaption} %

\graphicspath{{./}{Final paper plots/}}

\usepackage{tikz}
\usetikzlibrary{shapes.geometric, arrows}
\tikzstyle{bas} = [rectangle, minimum width=4cm, minimum height=1cm, text centered, text width=5cm ,draw=black, fill=blue!20]
\tikzstyle{arrow} = [thick,->,>=stealth]

\DeclareMathOperator\supp{supp}
\DeclareMathOperator\diam{diam}
\DeclareMathOperator\dist{dist}
\DeclareMathOperator\Hull{Hull}

\newcommand{\N}{\mathbb{N}}
\newcommand{\Z}{\mathbb{Z}}
\newcommand{\R}{\mathbb{R}}
\newcommand{\C}{\mathbb{C}}
\newcommand{\re}{\mathrm{e}}
\newcommand{\ri}{\mathrm{i}}
\newcommand{\rd}{\mathrm{d}}
\newcommand{\bc}{\mathbf{c}}
\newcommand{\bd}{\mathbf{d}}
\newcommand{\bg}{\mathbf{g}}
\newcommand{\bj}{\mathbf{j}}

\newcommand{\cA}{\mathcal{A}}
\newcommand{\cB}{\mathcal{B}}
\newcommand{\cT}{\mathcal{T}}

\newcommand{\cI}{\mathcal{I}}
\newcommand{\cV}{\mathcal{V}}
\newcommand{\cM}{\mathcal{M}}
\newcommand{\cZ}{\mathcal{Z}}

\newcommand{\eps}{\varepsilon}

\newcommand{\m}{\mathfrak{m}}
\newcommand{\jj}{\mathfrak{j}}
\newcommand{\dimH}{\mathrm{dim}_{\mathrm{H}}}
\DeclareRobustCommand
{\mathringbig}[1]{\accentset{\smash{\raisebox{-0.1ex}{$\scriptstyle\circ$}}}{#1}\rule{0pt}{2.3ex}}
\newcommand{\cirH}{\mathringbig{H}{}}

\usepackage{mathtools}

\usepackage{bigints}

\makeatletter
\def\Arg{\mathop{\operator@font Arg}\nolimits}
\def\cis{\mathop{\operator@font cis}\nolimits}
\makeatother 

\usepackage[mathscr]{euscript}

\newtheorem{theorem}{Theorem}[section]
\newtheorem{lemma}[theorem]{Lemma}
\newtheorem{proposition}[theorem]{Proposition}
\newtheorem{corollary}[theorem]{Corollary}
\newtheorem{definition}[theorem]{Definition}
\newtheorem{remark}[theorem]{Remark}

\newtheorem{assumption}[theorem]{Assumption}
\newtheorem{example}[theorem]{Example}

\usepackage{mathtools}

\makeatletter
\renewcommand*\env@matrix[1][\arraystretch]{%
  \edef\arraystretch{#1}%
  \hskip -\arraycolsep
  \let\@ifnextchar\new@ifnextchar
  \array{*\c@MaxMatrixCols c}}
\makeatother

\usepackage{accents}
\usepackage[a4paper, margin=1in]{geometry}  %

\usepackage{hyperref}

\title{Acoustic scattering by fractal inhomogeneities via geometry-conforming Galerkin methods for the Lippmann-Schwinger equation}
\author{J. Bannister$^{\text{a}}$, D. P. Hewett$^{\text{a}}$, A. Gibbs$^{\text{a}}$
\\
$^{\text{a}}${\footnotesize Department of Mathematics, University College London, London, United Kingdom}
}

\date{\today} %
\begin{document}
\maketitle
\abstract{ 
We propose and analyse a 
numerical method for 
time-harmonic acoustic scattering in $\R^n$, $n=2,3$, by a class of inhomogeneities (penetrable scatterers) 
with 
fractal boundary. Our 
method is 
based on a Galerkin discretisation of the Lippmann-Schwinger volume integral equation, using a discontinuous piecewise-polynomial approximation space 
on a geometry-conforming mesh comprising elements 
which themselves have 
fractal boundary. 
We first 
provide a semi-discrete well-posedness and error analysis 
for both the $h$- and $p$-versions of our method
for 
completely arbitrary inhomogeneities (without any regularity assumption on the boundary of the inhomogeneity or of the mesh elements). We prove convergence estimates for the integral equation solution and superconvergence estimates for linear functionals such as scattered field and far-field pattern evaluations, and elucidate how the regularity of the inhomogeneity boundary and the regularity of the refractive index affect the rates of convergence predicted. We then specialise to the case where the inhomogeneity is an ``$n$-attractor'', i.e.\ the fractal attractor of an iterated function system satisfying the open set condition with non-empty interior, showing how in this case the self-similarity of the inhomogeneity can be used to generate 
geometry-conforming meshes. 
For the 
$h$-version with piecewise constant approximation we also present singular quadrature rules, supported by a fully discrete error analysis, permitting practical implementation of our method. We present numerical results for two-dimensional examples, 
which validate our theoretical results and show 
that our method is significantly more accurate than a 
comparable method involving replacement of the fractal inhomogeneity by a smoother prefractal approximation.

} 

\bigskip
\noindent\textbf{Keywords:}
Wave scattering, 
Lippmann-Schwinger equation, 
Volume integral equation, 
Fractal, 
Iterated function system

\bigskip
\noindent\textbf{Mathematics Subject Classification (2020):}
28A80,   	
45A05,  	
65R20  	

\section{Introduction}\label{s:intro}

The simulation of 
scattering by an \textit{inhomogeneity}, a compact region of space in which the 
refractive index differs from that of a homogeneous background medium, is a standard problem in computational wave propagation. 
Most previous 
studies (e.g.\ \cite{ck2,martin2003acoustic, vainikko2006multidimensional, 
vainikko2000fast, chen2002fast,bruno2005higher,hohage2006fast, duan2009high, 
anand2016efficient,
gopal2022accelerated,
gujjula2022new,
anderson2024fast}) assume that the boundary of the inhomogeneity is smooth, or at least piecewise smooth.   
However, in applications 
one often encounters 
inhomogeneities whose boundaries are highly non-smooth, 
with complicated multi-scale geometrical features that might be modelled more 
accurately by fractals. 
A specific example in atmospheric physics 
motivating our work 
is the scattering of electromagnetic radiation by ice crystals in clouds, an understanding of which is crucial both for the determination of cloud composition via imaging methods, and for the estimation of the Earth's radiation balance in climate modelling \cite{baran2009review}. 
Here a major computational challenge 
is 
the simulation of scattering by large ice crystal aggregates, which have been observed to exhibit highly complex fractal-like shapes \cite{tyynela2011radar,chukin2012two,stein2015fractal}. 

As a first step towards such applications, in this paper we propose and analyse a 
numerical method for time-harmonic acoustic scattering in $\R^n$, $n=2,3$, by inhomogeneities with fractal boundaries. 
The problem we consider is to find a scattered field $u^{\rm s}$ satisfying the outgoing Sommerfeld radiation condition and the inhomogeneous Helmholtz equation 
\begin{align}
\label{e:FSP0}
\Delta u^s + k^2(1+\m)u^s = -f
\end{align}
in $\R^n$, 
where 
$k>0$ is the wavenumber, $f$ is a compactly supported source term (associated with the incident wave), 
and $\m$ is the refractive index perturbation, which we assume to be bounded and 
supported in a 
compact set%
\begin{align}
\label{e:KDef}
K:=\supp\m,
\end{align}
that we refer to as the \textit{inhomogeneity}. 
This is a standard model for acoustic scattering by penetrable obstacles \cite[Chap.~8]{ck2}, which in two dimensions also models an analogous electromagnetic scattering problem for waves with a certain polarisation  \cite[Rem.~2.5]{moiola2019acoustic}. 

Our method is based on the
standard 
reformulation 
of \eqref{e:FSP0} 
as a volume integral equation, 
commonly referred to as a Lippmann-Schwinger equation (LSE) - see, e.g.,
\cite{martin2003acoustic,costabel2015spectrum} and \cite[Section 8.2]{ck2}, 
and equations \eqref{e:LSE}-\eqref{e:LSE2} below. 
In recent years 
a number of 
efficient LSE solvers have been developed (e.g.\ \cite{vainikko2000fast, chen2002fast,bruno2005higher,hohage2006fast,
duan2009high, 
anand2016efficient,
gopal2022accelerated,
gujjula2022new,
anderson2024fast}), 
based variously on Galerkin, collocation and Nystr\"om discretisations. 
However, these methods all assume that $\m$ is either globally smooth, or is discontinuous across a smooth or piecewise-smooth interface.  %
In contrast, 
our approach 
can efficiently handle a class of inhomogeneities with fractal boundary.

As an illustration of the type of problem we consider, in Figure \ref{f:Scatt} we show total field plots, computed using our method, for the scattering of an incident plane wave in $\R^2$ by 
the Fudgeflake, the Gosper Island, and the Koch Snowflake, each of which has a fractal boundary with Hausdorff dimension strictly between $1$ and $2$ (for details see \S\ref{s:Examp} and \S\ref{s:Numer}). In these plots $\m$ takes a constant non-zero value on the inhomogeneity $K$, meaning that the total refractive index $1+\m$ jumps discontinuously across a fractal curve.

\begin{figure}[t!]
\centering
\includegraphics[height=.3\textwidth]
{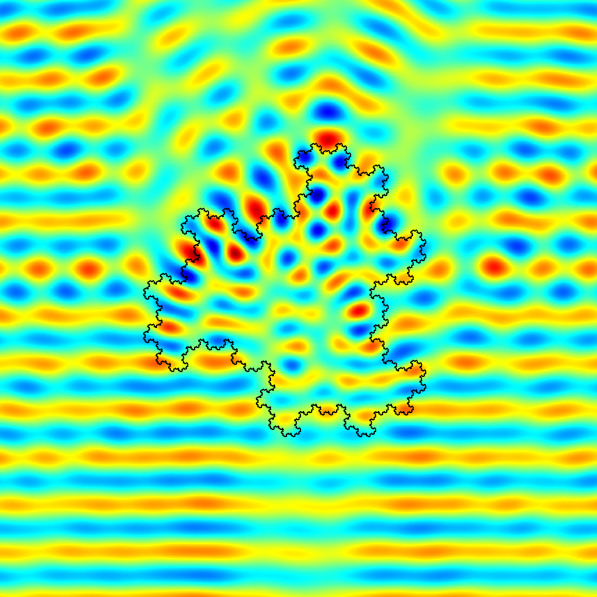}%
\hspace{2mm}
\includegraphics[height=.3\textwidth]
{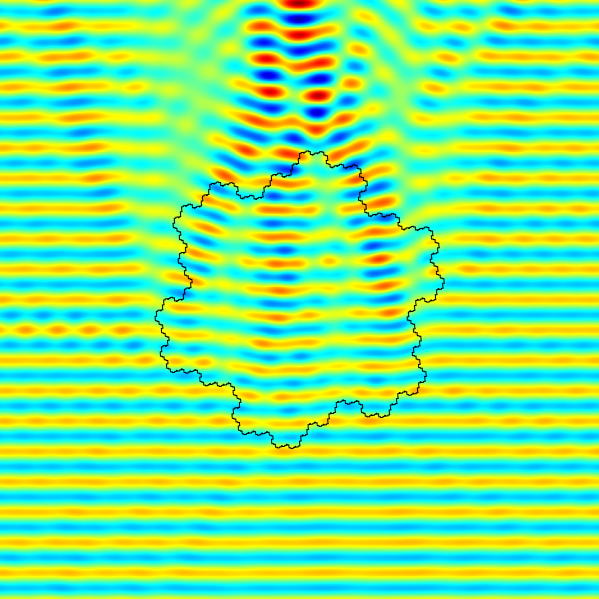}%
\hspace{2mm}
\includegraphics[height=.3\textwidth]
{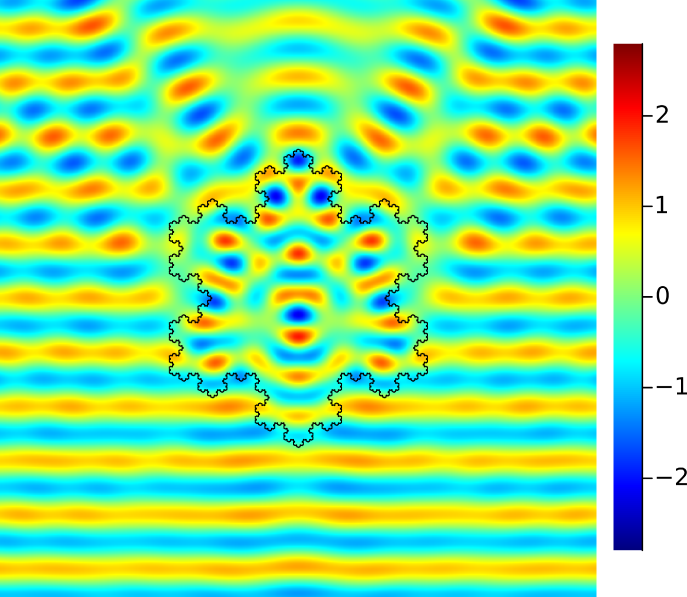}%
\caption{Scattering of a plane wave by the Fudgeflake, Gosper Island and Koch Snowflake, %
with a constant refractive index on each  inhomogeneity. For details see \S\ref{s:Numer}.}
\label{f:Scatt}
\end{figure}

Our method is based on a Galerkin discretisation of the LSE using a discontinuous piecewise-polynomial approximation space on a geometry-conforming mesh, 
such as those illustrated in Figure \ref{f:Meshes}. 
By the term ``geometry-conforming'' we mean that the fractal boundary of the inhomogeneity is captured exactly by the mesh, 
without any geometrical approximation. Necessarily this involves the use of mesh elements with fractal boundary. 

\begin{figure}[t!]
\subfloat[Uniform meshes of the Fudgeflake \label{f:FudgeMesh}]{
\centering
\includegraphics[width=.22\textwidth]{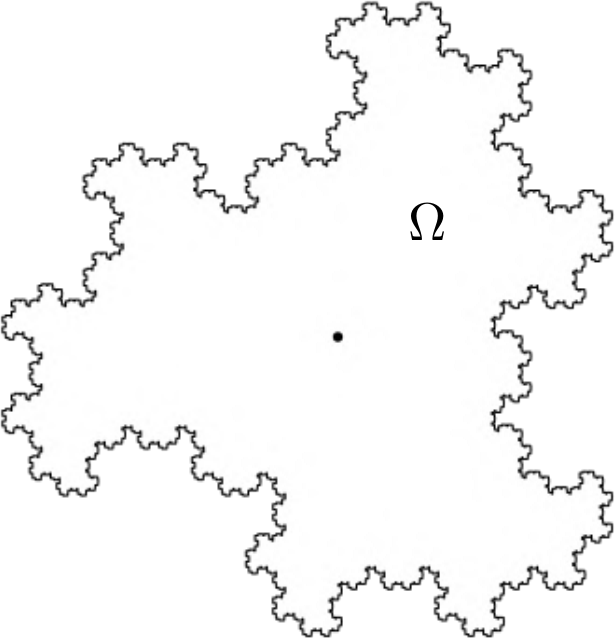}
\hspace{2mm}
\includegraphics[width=.22\textwidth]{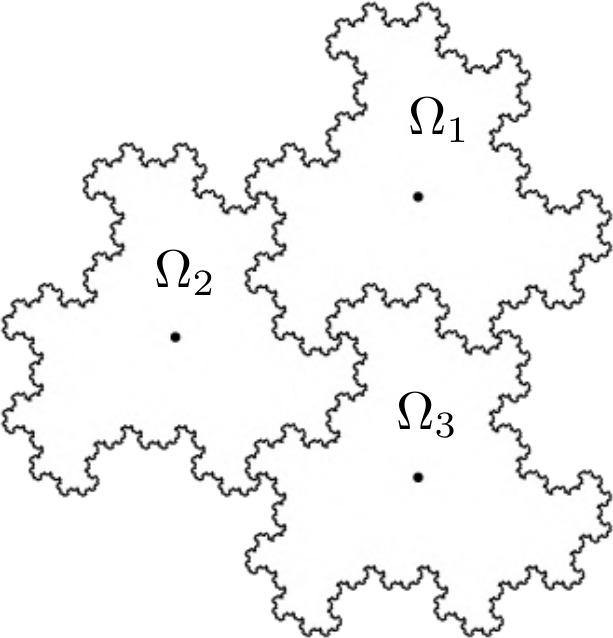}
\hspace{2mm}
\includegraphics[width=.22\textwidth]{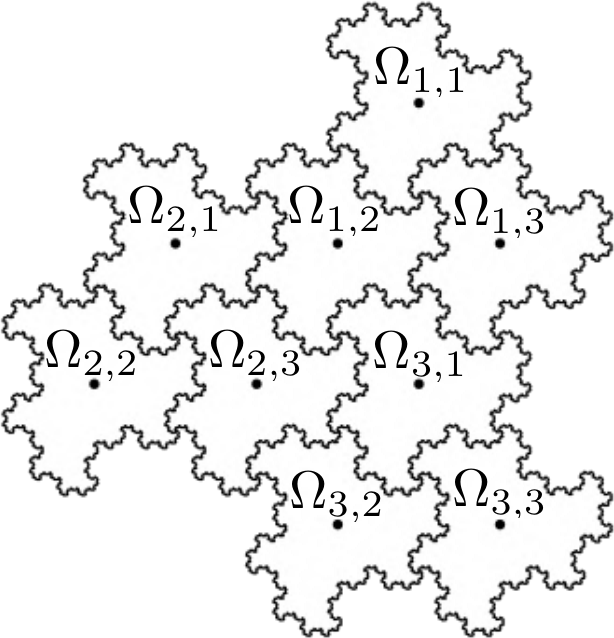}
\hspace{2mm}
\includegraphics[width=.22\textwidth]{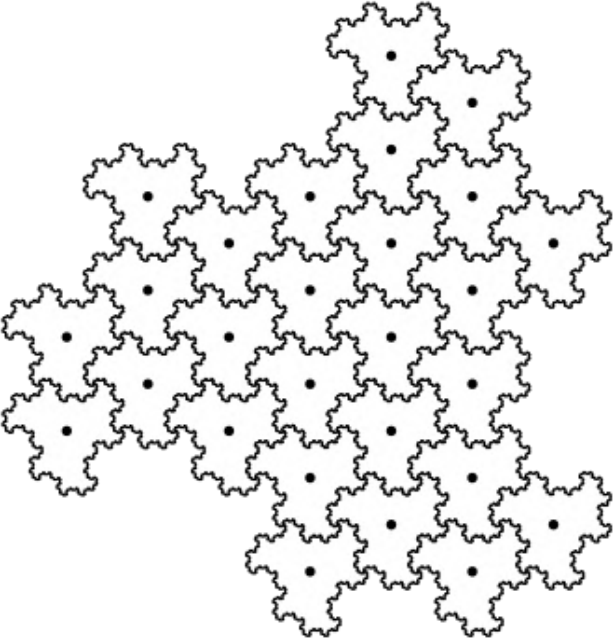}
}

\subfloat[Uniform meshes of the Gosper Island.\label{f:GosperMesh}]{
\centering
\includegraphics[width=.22\textwidth]{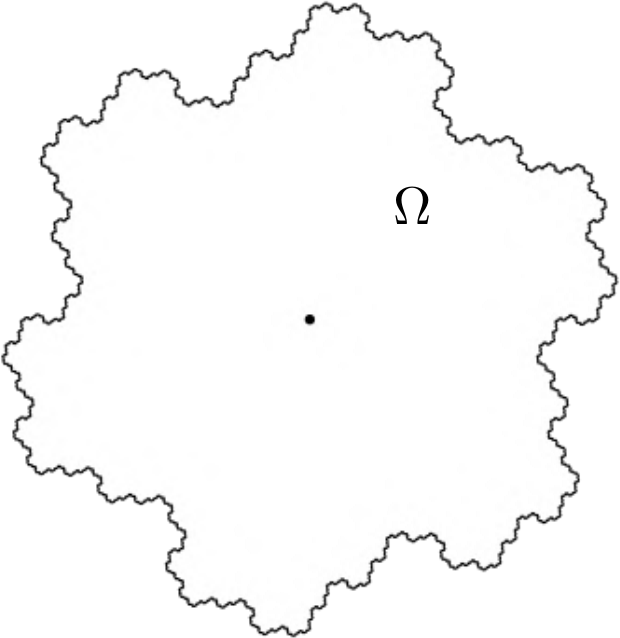}
\hspace{2mm}
\includegraphics[width=.22\textwidth]{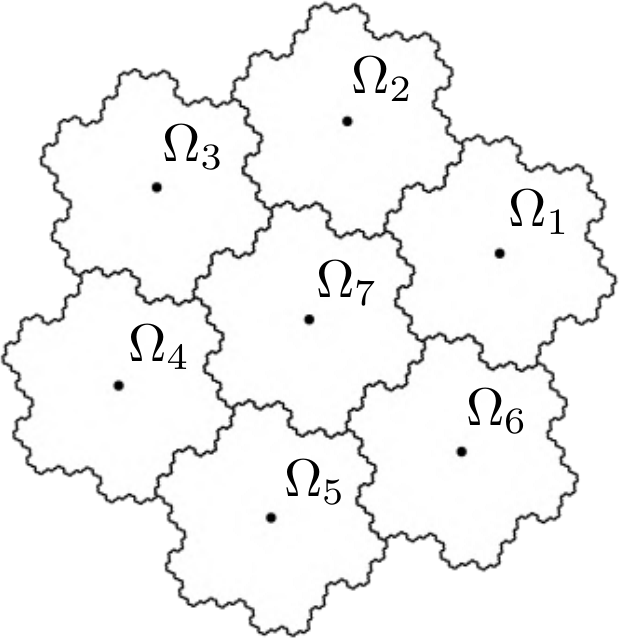}
\hspace{2mm}
\includegraphics[width=.22\textwidth]{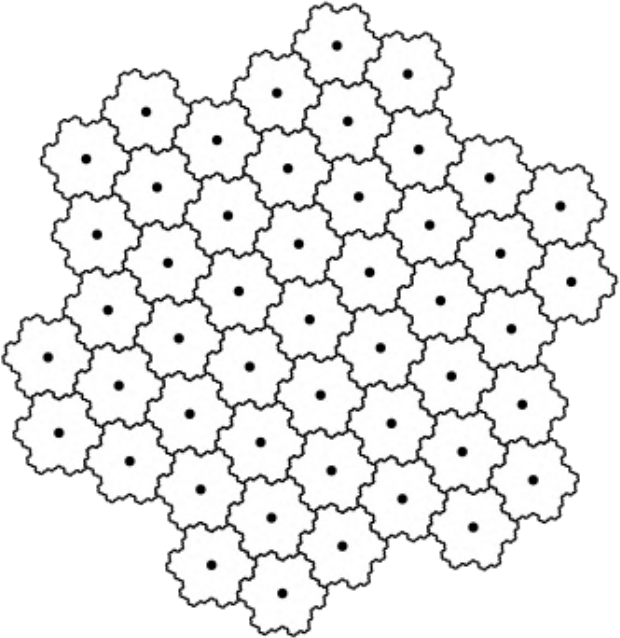}
\hspace{2mm}
\includegraphics[width=.22\textwidth]{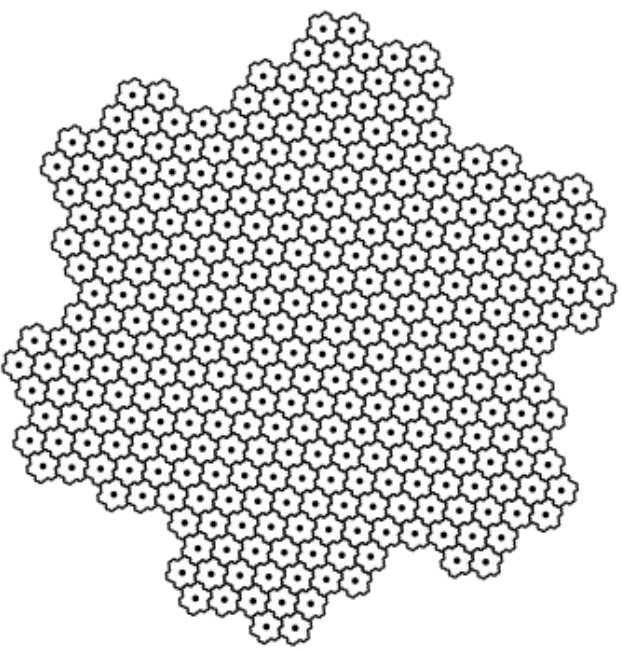}
}

\subfloat[Quasi-uniform meshes of the Koch Snowflake.\label{f:KochMesh}]{
\centering
\includegraphics[width=.22\textwidth]{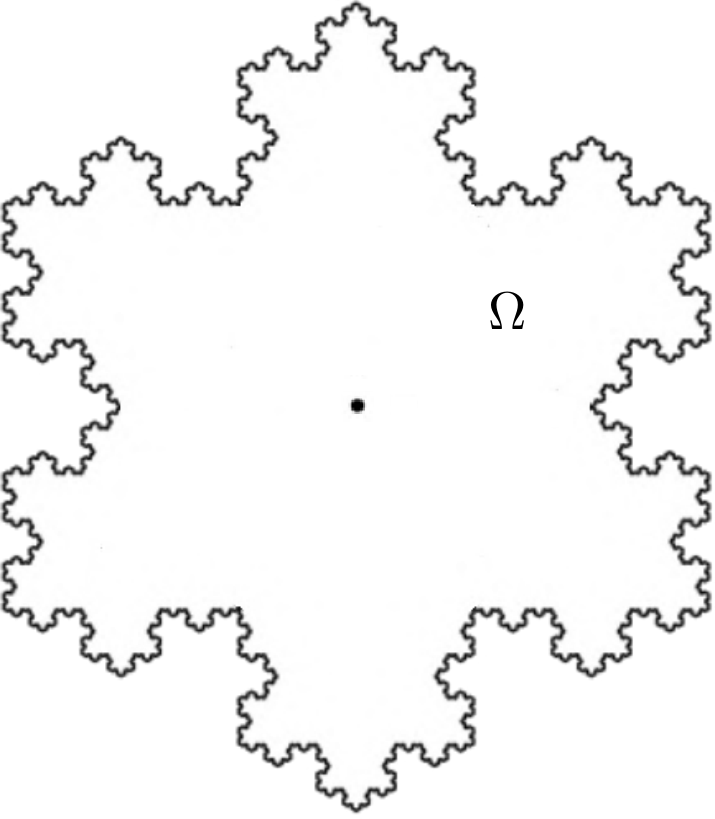}
\hspace{2mm}
\includegraphics[width=.22\textwidth]{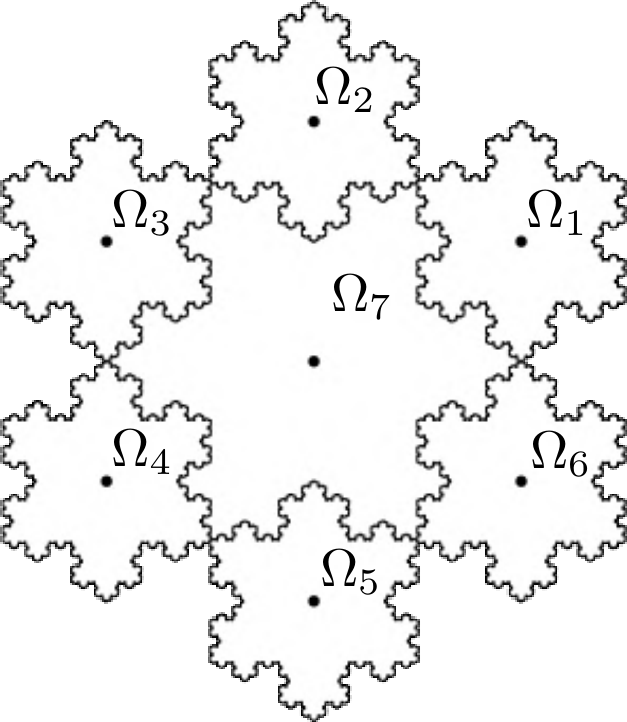}
\hspace{2mm}
\includegraphics[width=.22\textwidth]{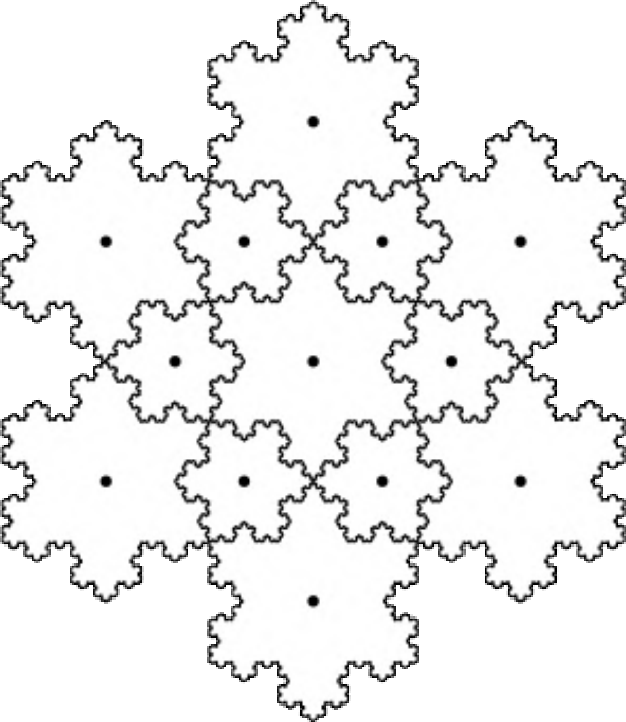}
\hspace{2mm}
\includegraphics[width=.22\textwidth]{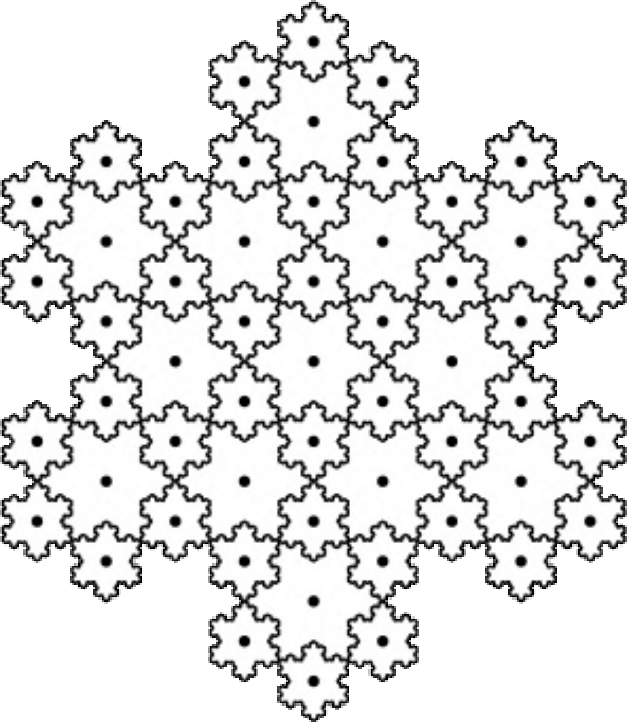}
}
\caption{Examples of geometry-conforming meshes of the Fudgeflake, Gosper Island and Koch Snowflake - for details see \S\ref{s:Examp}. In (a) and  (b) the meshes are uniform, because the associated IFS is homogeneous (all similarities have the same contraction factor), while in (c) the meshes are quasi-uniform.}
\label{f:Meshes}
\end{figure}

Our motivation for considering geometry-conforming meshes is to avoid the geometry-related approximation errors incurred by alternative approaches. Two such alternatives are illustrated in Figure \ref{f:Disc} for $K$ the Koch Snowflake. 
In Figure \ref{f:Disc}(a) we illustrate an ``unfitted'' approach, in which $K$ has been embedded in a bounding box, which has been meshed with a uniform Cartesian mesh. This approach 
is 
used in many volume integral equation solvers
(e.g., \cite{vainikko2000fast, gopal2022accelerated, ying2015sparsifying,
ambikasaran2016fast,
liu2018sparsify, 
tambova2018adiabatic,
giannakopoulos2019memory,
georgakis2020fast,
groth2020accel}) 
and the so-called discrete dipole approximation (DDA) method for the analogous electromagnetic problem \cite{goodman1991application, sarkar2003application, yurkin2007discrete}, 
because it offers the possibility of fast solvers that exploit FFT-based matrix-vector products.  
However, 
the fact that the mesh does not conform to $\partial K$ means that carrying out accurate quadrature on mesh elements intersected non-trivially by $\partial K$ may be challenging. This issue arises even when $\partial K$ is smooth, but is particularly problematic when $\partial K$ is fractal because the number of 
such mesh elements 
grows faster as $h\to 0$ than in the smooth case 
(cf., e.g., \cite[Eqn.~(3.7)]{bannisterthesis} for the Koch Snowflake case).
In Figure \ref{f:Disc}(b) we illustrate a ``semi-fitted'' approach in which $K$ has been approximated by a polygonal ``prefractal'' (the third in the sequence of standard prefractal approximations to $K$ shown in Figure \ref{f:KochPreFrac}), which has been meshed with triangular elements with a meshwidth $h$ comparable to the smallest side length in the prefractal. 
In certain cases (including that pictured in Figure \ref{f:Disc}(b)) it is still possible to develop FFT-based solvers on such meshes (see, e.g., \cite{bannister2022acoustic}). However, truncating the fractal geometry of $\partial K$ in this way will lead to solution errors that are worse than those arising for the approximation of a smooth domain by a polygon, again because of the fractality of $\partial K$, 
which implies in particular that the volume of the set difference between $K$ and its polygonal 
approximation typically decays 
more slowly as $h\to 0$ than in the smooth case 
(cf.~equation \eqref{e:GapArea} in Appendix \ref{a:AppB} below for the Koch Snowflake case). 
Finally, in Figure \ref{f:Disc}(c) for comparison we illustrate the ``fitted'', or ``geometry-conforming'', strategy that we adopt in this paper, in which $K$ is meshed exactly, with no approximation of 
$\partial K$, using a quasi-uniform mesh comprising elements with fractal boundary. This approach does not suffer from the geometrical approximation errors of the previous two approaches, and hence one might expect it to provide more accurate approximations. A key achievement of this paper is to show, both theoretically and numerically, that this is indeed the case. However, the reader might reasonably ask how such meshes can be generated in practice, and how one might go about doing accurate quadrature on them. 
A further key achievement of the paper is to show that, for a certain class of fractal inhomogeneities (including those considered in Figures \ref{f:Scatt} and \ref{f:Meshes}), such meshes can be constructed, and numerical quadrature on them implemented, in such a way that  
our method achieves essentially the same accuracy for a fractal inhomogeneity as one would obtain when discretising a polygonal/polyhedral inhomogeneity using a classical simplicial geometry-conforming mesh. 

\begin{figure}
\centering
\begin{subfigure}[t]{0.28\textwidth}
\centering
\includegraphics[height=40mm]{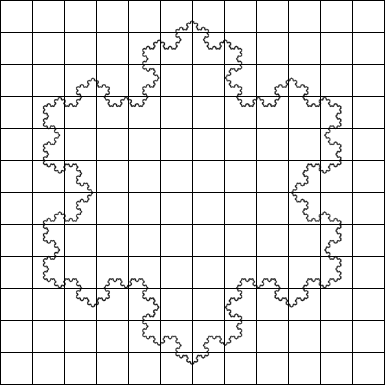}
\caption{Bounding box}
\end{subfigure}
\begin{subfigure}[t]{0.28\textwidth}
\centering
\includegraphics[height=40mm]{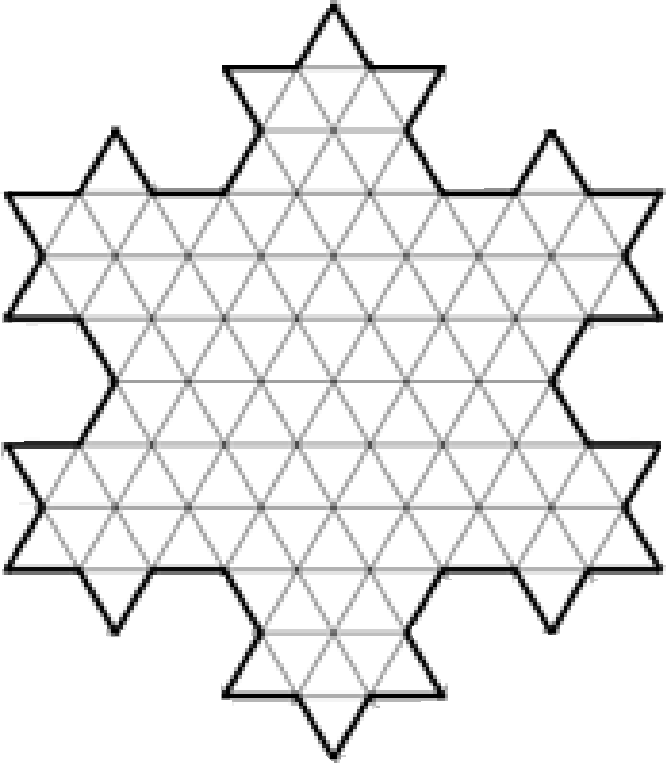}
\caption{Prefractal}
\end{subfigure}
\begin{subfigure}[t]{0.28\textwidth}
\centering
\includegraphics[height=40mm]{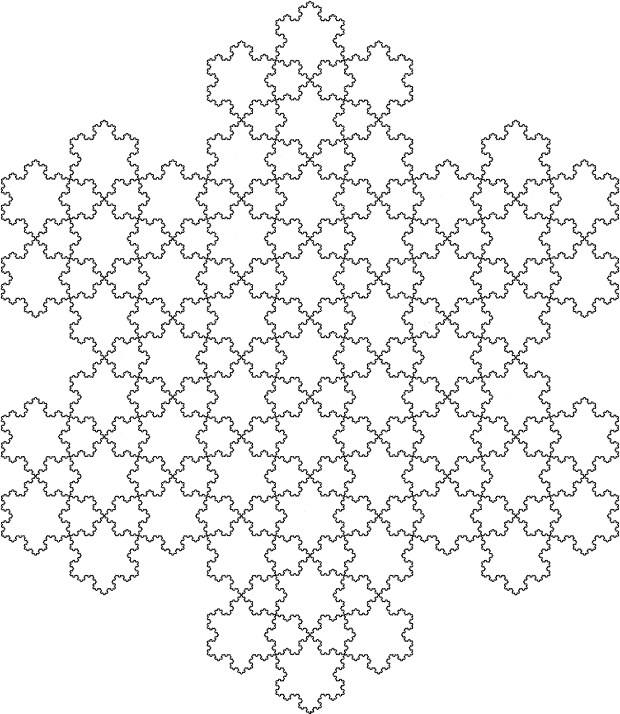}
\caption{Geometry-conforming}
\end{subfigure}
\caption{
Different discretisations of the Koch Snowflake. In this paper we study the 
``geometry-conforming'' approach of (c), using a mesh with fractal elements.}
\label{f:Disc}
\end{figure}

\begin{figure}[t!]
\centering
\includegraphics[width=.9\textwidth]{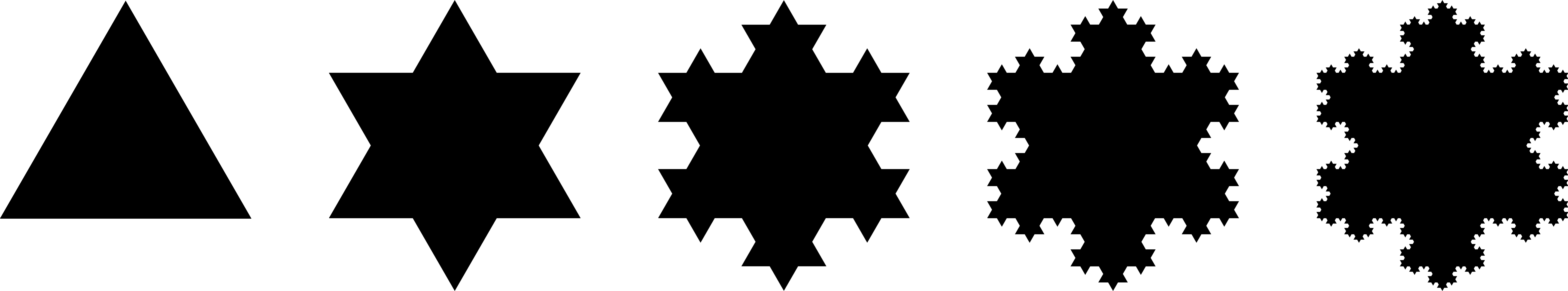}
\caption{The first five prefractal approximations of the Koch Snowflake.}
\label{f:KochPreFrac}
\end{figure}

An overview of the paper is as follows. 
In \S\ref{s:Formulation} we present the problem formulation, and address basic questions of well-posedness and solution regularity. 
In more detail, in \S\ref{s:FSP-LSE} we prove well-posedness of 
\eqref{e:FSP0} and 
review its equivalent reformulation as a well-posed Lippmann-Schwinger equation. This is classical for smooth $\m$ and/or smooth $K$, and our novel contribution here is to clarify that the classical results hold for essentially arbitrary $\m$ and $K$. In \S\ref{s:Restr} we prove the well-posedness of the restriction of the LSE to a bounded domain, and in \S\ref{s:SolReg} we clarify how the mapping properties of the associated integral operators depend on the regularity of $\m$ and $K$. In \S\ref{s:scatt} we discuss certain simplifications that arise when \eqref{e:FSP0} models a scattering problem. 

In \S\ref{s:VIM} we introduce our Galerkin method for the restricted LSE and analyse it at an abstract level. In \S\ref{s:Error} we provide a semi-discrete analysis of the method for completely arbitrary $K$, and completely arbitrary geometry-conforming meshes. 
We consider both $h$- and $p$-version convergence, 
using $h$- and $p$-explicit piecewise polynomial approximation results presented recently in \cite{polyapprox} 
to prove 
convergence estimates for the integral equation solution, and superconvergence estimates for evaluations of the scattered field and far-field pattern, with 
rates that depend explicitly on the regularity of $\m$ and $K$. 
In \S\ref{s:AbstFD} we provide a framework for a fully discrete analysis of our method, applicable whenever suitably accurate quadrature rules are available for its implementation. 

In \S\ref{s:IFS} we then specialise to the case where $K$
belongs to a certain class of compact sets 
that we call ``$n$-attractors'' (following \cite{dequalsnpaper}).  
{As detailed in Definition \ref{d:nattract} below, a compact set is an $n$-attractor if it has 
Hausdorff dimension $n$ 
and is the attractor of an iterated function system (IFS) satisfying the open set condition. The basic theory of IFS attractors was developed in \cite{HUTCHINSONJOHNE.1981FaSS} and numerous examples can be found at \cite{riddleIFS}, including the three examples shown in Figure \ref{f:Scatt}.} 
These sets typically have fractal boundaries, 
and the sets themselves possess a certain self-similarity, being equal to a finite union of smaller, similar copies of themselves. 
In \S\ref{s:IFSMesh} we show how this self-similarity permits the construction of quasi-uniform geometry-conforming meshes, {as illustrated in Figure \ref{f:Meshes}}, which completely capture the geometry of $K$ by using elements which themselves have a fractal boundary. 
In Remark \ref{r:App4} we observe that, when $\m$ is not globally smooth, our geometry-conforming approach offers higher rates of semi-discrete superconvergence for scattered field and far-field pattern evaluations than can be obtained using a ``bounding box'' approach like that illustrated in Figure \ref{f:Disc}(a). 
In \S\ref{s:IFSquad} we show how self-similarity  can also be further applied to derive accurate quadrature rules for singular integration on 
our geometry-conforming 
meshes, using the results presented recently in \cite{disjoint,nondisjoint}. For simplicity we restrict attention in this section to piecewise-constant approximations, and the case where $\m$ is constant on $K$. 
{In \S\ref{s:IFSFD}, 
we provide a fully-discrete error analysis for our method, with Theorem \ref{t:FD} (the main result of the paper) providing}  conditions on the quadrature parameters guaranteeing that the fully-discrete solution converges at the same rate as the semi-discrete solution. 

In \S\ref{s:Examp} we provide details of the three examples illustrated in Figure \ref{f:Scatt}, and in \S\ref{s:Numer} we report numerical results for these examples that validate our theoretical analysis and show that our method achieves a significantly smaller error than a comparable prefractal-based approach like that illustrated in Figure \ref{f:Disc}(b). In Appendix \ref{a:AppB} we provide a semi-discrete error analysis of the pre-fractal method, and in Appendix \ref{a:AppA} we collect some basic geometrical lemmas.

We end this introduction by 
noting that 
the analysis and numerical solution of PDEs on domains with fractal boundaries/interfaces is a highly active field, with many recent studies relating to prefractal-based approximations for the Poisson equation \cite{achdou2016transmission,vivaldi2007fractal,lancia2002transmission,capitanelli2015quasi,capitanelli2010robin,lancia2004some,lancia2003second}, Helmholtz equation \cite{bannister2022acoustic,
chandler2021boundary,
chandler2018well}, modified Helmholtz equation 
\cite{claret2024convergence}, 
heat equation \cite{lancia2010irregular,lancia2012numerical,cefalo2023fractal}, Stokes equation  \cite{cefalo2021approximation,lancia2020stokes} magnetostatic problems \cite{creo2020magnetostatic}, and for spectral problems for the Laplacian \cite{rosler2024computing,rosler2024res}.  
The geometry-conforming approach of the current paper builds on related work presented recently in \cite{caetano2024hausdorff,caetano2025integral,dequalsnpaper} 
for the solution of Dirichlet problems for the Helmholtz equation. 
We also mention the related recent study \cite{joly2024high} into high-order quadrature rules on fractals, which complements the results of \cite{disjoint,nondisjoint}, and which 
we expect to be relevant to the practical implementation of our method for higher polynomial degrees $(p\geq 1$). 
{Finally, we note that variational formulations of transmission problems in non-Lipschitz extension domains have been studied recently in \cite{claret2026helmholtz}, building on earlier related work in \cite{claret2024layer}}.

\section{Problem formulation, well-posedness and regularity}
\label{s:Formulation}

In this section we define the 
full-space problem (FSP) and 
demonstrate its well-posedness and equivalence to 
the Lippmann-Schwinger equation (LSE). 
These results are well known for a smooth or piecewise-smooth refractive index (see, e.g.,\ \cite[\S8]{ck2}), and our aim here is to clarify that they hold in fact for an arbitrary 
bounded refractive index supported in 
an arbitrary compact inhomogeneity. 
We then consider the restriction of the LSE to a bounded set, and study how the regularity of the refractive index and its support affect the mapping properties of the resulting integral operator and its adjoint. 

Throughout the paper, our standing assumptions are that %
\[n=2,3, \quad k>0, \quad f\in L^{2}_{\rm comp}(\R^n), \quad \text{and } \m \in L^{\infty}_{\rm comp}(\R^n).\]
We also recall from \eqref{e:KDef} the notation $K:=\supp\m$, where $\supp\m$ denotes the essential support of $\m$. Our well-posedness results will be stated under the additional {standard} assumption that $\Im[\m(x)] \geq 0$ 
for a.e.\ $x \in \R^n$, which for brevity we will write simply as $\Im[\m] \geq 0$. 
For a set $E\subset\R^n$ we denote by $E^\circ$ its interior, $\overline{E}$ its closure, $\partial E$ its boundary, and $|E|$ its Lebesgue measure (if $E$ is measurable). 

Our analysis is in the setting of fractional Sobolev spaces. 
For $s\in\R$ we denote by $H^s(\R^n)$ the usual Bessel potential Sobolev space, which coincides (with equivalent norms) with the classical Sobolev space $W^s_2(\R^n)$ defined in terms of square integrability of weak derivatives when $s\in \N_0$. In particular, $H^0(\R^n)=L^2(\R^n)$. We let $H^s_{\rm comp}(\R^n)$ denote the set of $u\in H^s(\R^n)$ with compact support, and $H^s_{\rm loc}(\R^n)$ the set of 
distributions $u$  
such that $\chi u\in H^s(\R^n)$ for every $\chi\in C^\infty_{\rm comp}(\R^n)$ (see e.g.\ \cite[\S2.6]{ss11} for topological properties of these spaces). 
For a non-empty open set $\Omega\subset\R^n$ and for $s\geq 0$ we define $H^s(\Omega):=\{u\in L^2(\Omega):u=U|_\Omega \text{ for some } U\in H^s(\R^n)\}$, which is a Hilbert space with the norm $\|u\|_{H^s(\Omega)}=
\min
\|U\|_{H^s(\R^n)}$, 
where the minimum is over all $U\in H^s(\R^n)$ such that $U|_\Omega=u$. 
When $\Omega$ is Lipschitz and $r\in \N_0$ the space $H^r(\Omega)$ coincides (with equivalent norms) with the space $W^r(\Omega)$ of functions in $L^2(\Omega)$ whose weak derivatives up to order $r$ are in $L^2(\Omega)$. However, for non-Lipschitz $\Omega$ the space $H^r(\Omega)$ may be a proper subset of $W^r(\Omega)$ \cite[\S1.1.1]{nonlip}. 

\subsection{The full-space problem and the Lippmann-Schwinger equation}\label{s:FSP-LSE}

\begin{definition}[Full-space problem (FSP)]\label{d:FSP}
Find $u^{\rm s} \in H^{1}_{\rm loc}(\R^n)$ satisfying the inhomogeneous Helmholtz equation
\begin{equation}\label{e:FSP}
\Delta u^{\rm s} + k^{2}(1+\m)u^{\rm s} = -f, \quad \mbox{in } \R^n,    
\end{equation}
and the Sommerfeld radiation condition (SRC)
\begin{equation}\label{e:SRC}
\frac{\partial u^{\rm s}}{\partial r}(x)-iku^{\rm s}(x)=o\bigg( \frac{1}{r^{(n-1)/2}} \bigg)	, \quad \mbox{uniformly as } r \coloneqq |x| \rightarrow \infty	.
\end{equation}
\end{definition}

\begin{remark}
\label{r:ER}
If $u^{\rm s}$ satisfies \eqref{e:FSP} then $u^{\rm s}\in H^2_{\rm loc}(\R^n)$ by standard elliptic regularity results (e.g., \cite[\S6.3.1, Thm 3]{evans}), and hence 
$u^{\rm s}\in C^0(\R^n)$ by the Sobolev embedding theorem (recall that $n=2,3$). Furthermore, if $R>0$ is such that the open ball $B_R$ centred at the origin contains both 
$K$ 
and $\supp{f}$, then $u^{\rm s}|_{\overline{B_R}^c}\in C^\infty(\overline{B_R}^c)$, so that the SRC \eqref{e:SRC} can be imposed {with $\partial u^{\rm s}/\partial r$ understood} in the classical sense.
\end{remark}

\begin{remark}[Transmission problem in the Lipschitz case]
Suppose that 
$K=
\overline{\Omega}$ 
for some bounded Lipschitz open set $\Omega\subset\R^n$. 
Then by \cite[Lem.~4.19]{mclean} 
the FSP \eqref{e:FSP}-\eqref{e:SRC} 
is equivalent to the following transmission problem: 
\begin{equation}\label{e:Trans}
\begin{aligned}
&\begin{array}{rlrl}
\Delta u^{\rm s}+k^2 (1+\m) u^{\rm s} \hspace{-2mm} &= \hspace{1mm}  -f|_{\Omega} \quad \text { in } \Omega , \\[1mm]
\Delta u^{\rm s}+k^2 u^{\rm s} \hspace{-2mm} &= \hspace{1mm} -f|_{\overline{\Omega}^c} \,\,\,\, \text { in } \overline{\Omega}^c, \\[1mm]
\gamma^{+} u^{\rm s} \hspace{-2mm} &= \hspace{1mm} \gamma^{-} u^{\rm s} \quad \text { on } \partial \Omega, \\[1mm]
\partial_n^{+} u^{\rm s} \hspace{-2mm} &= \hspace{1mm} \partial_n^{-} u^{\rm s} \quad \text { on } \partial \Omega,
\end{array}
\end{aligned}
\end{equation}
along with the radiation condition \eqref{e:SRC}, where $\gamma^\pm$ and $\partial_n^\pm$ denote the exterior($+$)/interior($-$) Dirichlet and Neumann traces on $\partial\Omega$. 
The FSP \eqref{e:FSP}-\eqref{e:SRC} therefore provides 
a generalisation of the transmission problem \eqref{e:SRC}-\eqref{e:Trans} to non-Lipschitz inhomogeneities. 
\end{remark}

\begin{theorem}[FSP well-posedness]\label{t:FSPWP}
Suppose that 
$\Im[\m] \geq 0$. 
Then there exists a unique $u^{\rm s}\in 
H^{1}_{\rm loc}(\R^n)$ satisfying
\eqref{e:FSP}-\eqref{e:SRC}.
\end{theorem}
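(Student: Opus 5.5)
The plan is the classical Fredholm route: uniqueness via Rellich's lemma plus unique continuation, then existence from the Fredholm alternative applied to the Lippmann-Schwinger reformulation, as in \cite[\S8]{ck2}. What has to be checked is that nothing requires smoothness of $\m$ or of $K$; only $\m\in L^\infty_{\rm comp}(\R^n)$ and $\Im[\m]\geq 0$ should be needed.

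\textbf{Reformulation.} Let $\Phi$ denote the outgoing fundamental solution of $\Delta+k^2$ and $\mathcal{N}g:=\int\Phi(\cdot-y)g(y)\,\rd y$ the Newtonian potential. Choose $R$ with $K\cup\supp f\subset B_R$.

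1. By standard mapping properties, $\mathcal{N}:L^2_{\rm comp}(\R^n)\to H^2_{\rm loc}(\R^n)$ is bounded. Moreover $\mathcal{N}g$ is radiating and satisfies $(\Delta+k^2)\mathcal{N}g=-g$.
2. If $u^{\rm s}$ solves the FSP, write the equation as $(\Delta+k^2)u^{\rm s}=-f-k^2\m u^{\rm s}$. The right-hand side lies in $L^2_{\rm comp}$, because $u^{\rm s}\in H^2_{\rm loc}\subset C^0$ by Remark~\ref{r:ER} and $\m\in L^\infty_{\rm comp}$.
3. The difference $u^{\rm s}-\mathcal{N}(f+k^2\m u^{\rm s})$ is then an entire radiating solution of the homogeneous Helmholtz equation, hence zero. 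So the FSP is equivalent to
\[
u^{\rm s}=\mathcal{N}f+k^2\mathcal{N}(\m u^{\rm s}).
\]
4. Restricting to $B_R$ gives a second-kind equation $u=\mathcal{N}f|_{B_R}+k^2\mathcal{N}(\m u)|_{B_R}$ on $L^2(B_R)$. The operator $u\mapsto\mathcal{N}(\m u)|_{B_R}$ is compact on $L^2(B_R)$: multiplication by $\m$ is bounded on $L^2$, $\mathcal{N}$ maps $L^2(B_R)$ boundedly into $H^2(B_R)$, and the embedding $H^2(B_R)\hookrightarrow L^2(B_R)$ is compact.
5. Any $L^2(B_R)$ solution extends to all of $\R^n$ through the right-hand side formula and gives an FSP solution. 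By the Fredholm alternative, existence therefore follows from uniqueness.

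\textbf{Uniqueness.} Suppose $f=0$.

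1. Apply Green's identity on $B_\rho$ with $\rho>R$ to get $\Im\int_{\partial B_\rho}\overline{u^{\rm s}}\,\partial_r u^{\rm s}=k^2\int\Im[\m]|u^{\rm s}|^2\geq 0$. This only needs $u^{\rm s}\in H^2_{\rm loc}$ and $\m\in L^\infty$.
2. Combined with the SRC, this gives $\int_{\partial B_\rho}|u^{\rm s}|^2\to 0$. Rellich's lemma then gives $u^{\rm s}=0$ outside $B_R$.
3. To conclude $u^{\rm s}\equiv 0$, use the unique continuation principle for $\Delta u+Vu=0$ with $V=k^2(1+\m)\in L^\infty$. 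In dimensions $2$ and $3$ this holds for bounded potentials by Carleman-estimate results (e.g.\ Jerison--Kenig, or H\"ormander's theorem for operators with Lipschitz principal part and bounded lower-order terms).
4. Since $u^{\rm s}$ vanishes on the open connected set $\overline{B_R}^c$ and $\R^n$ is connected, $u^{\rm s}\equiv 0$. No structure of $K$ is used.

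\textbf{Main obstacle.} The delicate step is unique continuation for a merely $L^\infty$ coefficient $\m$. The textbook argument in \cite[\S8]{ck2} uses a continuous or smooth refractive index. I would handle this by citing a unique continuation theorem valid for $W^{2,2}_{\rm loc}$ solutions with bounded zeroth-order potential, for instance \cite[Thm.~17.2.6]{} in H\"ormander's form. A second detail to check is the boundedness $\mathcal{N}:L^2_{\rm comp}\to H^2_{\rm loc}$, which follows from the Fourier multiplier bound away from the sphere $|\xi|=k$ together with local smoothness.
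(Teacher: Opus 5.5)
Your proof is correct. The uniqueness half matches the paper's: Green's identity on a ball, Rellich's lemma, then unique continuation for the bounded potential $k^2(1+\m)$. Your existence argument, however, takes a different route.

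The paper proves existence by a PDE argument:
\begin{itemize}
\item it poses the variational problem \eqref{e:VarProb} on $H^1(B_R)$ with the exterior Dirichlet-to-Neumann map $T_R$;
\item it uses $\Re[-\langle T_R\phi,\phi\rangle_{\partial B_R}]\geq 0$ to obtain the G{\aa}rding inequality \eqref{e:Garding};
\item it derives injectivity from uniqueness through the glued function \eqref{e:Ext};
\item it concludes with Fredholm theory and the compact embedding $H^1(B_R)\subset L^2(B_R)$.
\end{itemize}

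You instead solve the Lippmann--Schwinger equation restricted to $B_R$ by the Fredholm alternative on $L^2(B_R)$. This is essentially the argument the paper gives later for Corollary~\ref{c:IntEqn}, with compactness as in Lemma~\ref{l:Comp}. That argument uses only the uniqueness half of Theorem~\ref{t:FSPWP}. You have run it in the reverse logical order, deducing FSP existence from solvability of the restricted equation.

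What your route buys is that the DtN map disappears entirely: you need neither its continuity, nor its sign property, nor the check that \eqref{e:Ext} is a radiating FSP solution. Only \eqref{e:NewtReg}, \eqref{e:NewtEqs} and the radiation condition for $\Phi$ are required. Moreover, only the easy direction of the equivalence is used, namely that an $L^2(B_R)$ solution of the restricted equation generates an FSP solution. This is used both for injectivity and for existence, so your step 3 (FSP $\Rightarrow$ LSE) is not needed for this theorem.

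The paper's route keeps the well-posedness proof independent of the volume-potential machinery, so that Theorem~\ref{t:FSP-LSE} and Corollary~\ref{c:LSEWP} are genuine consequences of it. The variational argument also transfers more readily to settings where no second-kind reformulation with a compact volume operator on $L^2$ is available (e.g.\ contrasts in the principal part).

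One sign needs fixing in your uniqueness step. Green's first identity gives $\int_{\partial B_\rho}\overline{u^{\rm s}}\,\partial_r u^{\rm s}=\int_{B_\rho}\bigl(|\nabla u^{\rm s}|^2-k^2(1+\m)|u^{\rm s}|^2\bigr)$. Hence $\Im\int_{\partial B_\rho}\overline{u^{\rm s}}\,\partial_r u^{\rm s}=-k^2\int\Im[\m]|u^{\rm s}|^2\leq 0$, not $\geq 0$. It is this nonpositivity that, inserted into $\int_{\partial B_\rho}|\partial_r u^{\rm s}-\ri k u^{\rm s}|^2=\int_{\partial B_\rho}\bigl(|\partial_r u^{\rm s}|^2+k^2|u^{\rm s}|^2\bigr)-2k\Im\int_{\partial B_\rho}\overline{u^{\rm s}}\,\partial_r u^{\rm s}$, yields $\int_{\partial B_\rho}|u^{\rm s}|^2\to 0$. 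With the sign as you wrote it, the Rellich step would not follow.
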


\begin{proof}
For uniqueness, suppose that $u^{\rm s}$ satisfies \eqref{e:FSP}-\eqref{e:SRC} with $f=0$ and let $R>0$ be large enough that $\supp(\m)\subset B_R$ (the open ball of radius $R$ centred at the origin). An application of Green's first identity in $B_R$ gives that $\langle \partial_n^-u^{\rm s},\gamma^- u^{\rm s}\rangle_{\partial B_R} = \int_{B_R} |\nabla u^{\rm s}|^2-k^2(1+\m)|u^{\rm s}|^2$. Since $u^{\rm s}$ satisfies $\Delta u^{\rm s}+k^2 u^{\rm s}=0 $ in a neighbourhood of $\partial B_R$ we have $\partial_n^+u^{\rm s}=\partial_n^-u^{\rm s}$ and $\gamma^+u^{\rm s}=\gamma^-u^{\rm s}$ on $\partial B_R$, so that $\Im[\langle \partial_n^+u^{\rm s},\gamma^+ u^{\rm s}\rangle_{\partial B_R}] = -k^2 \int_{B_R} \Im[\m]|u^{\rm s}|^2\leq 0$, and then by \cite[Lem.~9.9]{mclean} $u^{\rm s}=0$ in $\overline{B_R}^c$. Finally, let $R'>R$ and note that $u^{\rm s}|_{B_{R'}}$ belongs to $H^1_0(B_{R'})$, satisfies \eqref{e:FSP} in $B_{R'}$ with $f=0$, and vanishes on $B_{R'}\setminus \overline{B_{R}}$, so that by the unique continuation principle (e.g., \cite[Thm~2.1]{ucp}) $u^{\rm s}=0$ in $B_{R'}$. Hence $u^{\rm s}=0$ on $\R^n$. 

For existence, let $R>0$ be such that $ B_R$ contains both $\supp(\m)$ and $\supp(f)$ and consider the problem: find $u\in H^1(B_R)$ such that 
\begin{align}
\label{e:VarProb}
a_R(u,v):=\int_{B_R} \nabla u \cdot \overline{\nabla u} - k^2 (1+\m) u \overline{v} - \langle T_R \gamma^-u,\gamma^-v\rangle_{\partial B_R} = \int_{B_R}f\overline{v}, \qquad \forall v\in H^1(B_R),
\end{align}
where $T_R:H^{1/2}(\partial B_R) \to H^{-1/2}(\partial B_R)$ denotes the Dirichlet-to-Neumann map for the exterior Dirichlet problem for $\Delta u+k^2 u=0$ in $\overline{B_R}^c$ with the SRC at infinity, and $H^{\pm 1/2}(\partial B_R)$ denote the standard boundary Sobolev spaces. 
Arguing as in \cite[Lemma 3.3]{gps}, we note that if $u$ satisfies \eqref{e:VarProb} then the function 
\begin{align}
\label{e:Ext}
u^{\rm s}:=\begin{cases}
u, & \text{on }B_R,\\
U, & \text{on }\overline{B_R}^c,
\end{cases} 
\end{align}
where $U$ denotes the solution of the exterior Dirichlet problem mentioned above, with Dirichlet data equal to $\gamma^-u$ on $\partial B_R$, satisfies \eqref{e:FSP}-\eqref{e:SRC}. 
The sesquilinear form $a_R(\cdot,\cdot)$ is continuous on $H^1(B_R)$, by the Cauchy-Schwarz inequality, the boundedness of $T_R:H^{1/2}(\partial B_R) \to H^{-1/2}(\partial B_R)$ and the fact that $\m\in L^\infty(\R^n)$. Furthermore, it satisfies a G{\aa}rding inequality, since for $u\in H^1(B_R)$ the fact that $\Re[-\langle T_R \gamma^-u,\gamma^-v\rangle_{\partial B_R}]\geq 0$ (see, e.g., \cite[Lemma 3.2]{gps}) implies that 
\begin{align}
\label{e:Garding}
\Re[a_R(u,u)]\geq \|\nabla u\|^2_{L^2(B_R)} - k^2\|\Re[1+\m]\|_{L^\infty(B_R)}\| u\|^2_{L^2(B_R)} \geq \|\nabla u\|^2_{H^1(B_R)} - \beta \| u\|^2_{L^2(B_R)},
\end{align}
where $\beta:=k^2\|\Re[1+\m]\|_{L^\infty(B_R)} + 1$. The form $a_R(\cdot,\cdot)$ is also injective because if $u\in H^1(B_R)$ and $a_R(u,v)=0$ for every $v\in H^1(B_R)$ then $u^{\rm s}$ defined as in \eqref{e:Ext} satisfies \eqref{e:FSP}-\eqref{e:SRC} with $f=0$ (cf.\ \cite[Lemma 2.3]{gps}). Hence by uniqueness $u^{\rm s}=0$ on $\R^n$, so that $u=u^{\rm s}|_{B_R}=0$. Finally, since $H^1(B_R)$ is compactly embedded in $L^2(B_R)$ we deduce the existence of a solution of \eqref{e:VarProb} (using e.g.\ \cite[Thm~5.18]{waef}), and hence by \eqref{e:Ext} a solution of \eqref{e:FSP}-\eqref{e:SRC}.
\end{proof}

Let 
$\Phi$ 
denote the outgoing fundamental solution of the Helmholtz equation in $\R^n$, viz.
$$
\Phi(x,y) := \frac{\ri}{4}H_{0}^{(1)}(k|x-y|), \quad n=2,
\quad \text{ and }\quad
\Phi(x,y) := \frac{\re^{\ri k|x-y|}}{4\pi|x-y|}, \quad n=3, 
$$
where $H_0^{(1)}$ denotes the Hankel function of the first kind of order $0$. 
We note that $\Phi(x,\cdot)\in L^2_{\rm loc}(\R^n)$ for both $n=2,3$.
Let $\mathcal{V}$ denote the acoustic Newtonian potential %
defined by
\begin{equation} \label{e:Newt}
\mathcal{V}\psi(x)=\int_{\R^n}\Phi(x,y)\psi(y)\,dy, \qquad \mbox{for } \quad x\in \R^n \quad \mbox{and} \quad \psi\in L^2_{\mathrm{comp}}(\R^n).
\end{equation}
For any $s\in\R$, $\mathcal{V}$ extends to a continuous operator (see, e.g., \cite[Cor.~6.5, Eqn (6.10)]{mclean}, combined with \cite[Thm~2.6.7]{ss11})
\begin{align}
 \label{e:NewtReg}
 \mathcal{V}:H^s_{\rm comp}(\R^n) \to H^{s+2}_{\rm loc}(\R^n),
\end{align}
and (see, e.g., \cite[p197]{mclean})
\begin{align}
 \label{e:NewtEqs}
 (\Delta + k^2)\mathcal{V}\psi =  \mathcal{V}(\Delta + k^2)\psi = -\psi, \qquad \psi \in H^s_{\mathrm{comp}}(\R^n), \quad s\in \R.
 \end{align}
 We define 
$\cM_{\m}:L^2_{\rm loc}(\R^n)\to L^2_K:=\{u\in L^2(\R^n):\supp(u)\subset K\}$ (a continuous operator) 
by
\begin{align}
\label{e:MDef}
\cM_{\m}\psi(x):=\m(x)\psi(x), \qquad \psi\in L^2_{\rm loc}(\R^n).
\end{align}
The following equivalence theorem is a generalisation of \cite[Theorem 8.3]{ck2}.

\begin{theorem}[FSP-LSE equivalence]
\label{t:FSP-LSE}
If $u^{\rm s}\in H^{1}_{\rm loc}(\R^n)$ satisfies the FSP \eqref{e:FSP}-\eqref{e:SRC} then $u^{\rm s}\in H^{2}_{\rm loc}(\R^n)\cap C^{0}(\R^n)$ and $u^{\rm s}$ satisfies the Lippmann-Schwinger equation (LSE)
\begin{equation}\label{e:LSE} 
	u^{\rm s} = \mathcal{V}f+k^{2}\mathcal{V}
	\cM_{\m} u^{\rm s},
\end{equation}
i.e.
\begin{equation}\label{e:LSE2} 
	u^{\rm s}(x) = \int_{\R^n}\left( f(y)+k^{2}\m(y)u^{\rm s}(y) \right) \Phi(x,y) \ \mathrm{d}y, \quad x\in \R^n.
\end{equation}
\noindent Conversely, if $u^{\rm s}\in L^{2}_{\rm loc}(\R^n)$ satisfies the LSE \eqref{e:LSE}, then $u^{\rm s}\in H^{2}_{\rm loc}(\R^n)\cap C^{0}(\R^n)$ and $u^{\rm s}$ satisfies the FSP \eqref{e:FSP}-\eqref{e:SRC}. 
\end{theorem}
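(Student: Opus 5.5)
For the forward direction I would start from Remark \ref{r:ER}: if $u^{\rm s}$ satisfies \eqref{e:FSP} then $u^{\rm s}\in H^2_{\rm loc}(\R^n)\cap C^0(\R^n)$. To derive \eqref{e:LSE}, I will set
\[w:=u^{\rm s}-\mathcal{V}f-k^2\mathcal{V}\cM_{\m}u^{\rm s}.\]
Since $f\in L^2_{\rm comp}(\R^n)$ and $\cM_\m u^{\rm s}\in L^2_K$ is compactly supported, \eqref{e:NewtReg} gives $w\in H^2_{\rm loc}(\R^n)$. By \eqref{e:NewtEqs},
\[(\Delta+k^2)w=(\Delta+k^2)u^{\rm s}+f+k^2\m u^{\rm s}=0\]
in $\R^n$, using \eqref{e:FSP} rewritten as $(\Delta+k^2)u^{\rm s}=-f-k^2\m u^{\rm s}$. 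Hence $w$ is a distributional, and so by Weyl's lemma a smooth classical, entire solution of the homogeneous Helmholtz equation. Next I check that $w$ satisfies the SRC. The function $u^{\rm s}$ does so by hypothesis. Each Newtonian potential $\mathcal{V}\psi$ with $\psi$ compactly supported in $B_R$ is, for $|x|>R$, a superposition $\int\Phi(x,y)\psi(y)\,\rd y$ of radiating fundamental solutions. Differentiating under the integral sign and using the uniform asymptotics of $\Phi(\cdot,y)$ for $y$ in a compact set, it satisfies \eqref{e:SRC}. Thus $w$ is an entire radiating solution, so $w=0$, e.g.\ by Rellich's lemma or \cite[Lem.~9.9]{mclean} as in the proof of Theorem \ref{t:FSPWP}, or by Green's representation in a large ball. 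The integral form \eqref{e:LSE2} then holds pointwise: both sides are continuous, and for $\psi\in L^2_{\rm comp}$ the integral defining $\mathcal{V}\psi(x)$ converges absolutely, because $\Phi(x,\cdot)\in L^2_{\rm loc}$.

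For the converse, suppose $u^{\rm s}\in L^2_{\rm loc}(\R^n)$ satisfies \eqref{e:LSE}. Then $f+k^2\m u^{\rm s}\in L^2_{\rm comp}(\R^n)$, so by \eqref{e:NewtReg} the right-hand side lies in $H^2_{\rm loc}(\R^n)$. Hence $u^{\rm s}\in H^2_{\rm loc}(\R^n)\subset C^0(\R^n)$ by Sobolev embedding, since $n\le3$. This is a single bootstrap step and needs no iteration. Applying $\Delta+k^2$ and using \eqref{e:NewtEqs} gives $(\Delta+k^2)u^{\rm s}=-f-k^2\m u^{\rm s}$, which is \eqref{e:FSP}. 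The SRC follows from the same radiating property of Newtonian potentials with compactly supported densities that was used above.

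The main obstacle is making the statement ``$\mathcal{V}\psi$ satisfies the SRC for $\psi\in L^2_{\rm comp}$'' rigorous with only $L^2$ densities. My plan is to note that for $|x|\ge 2R$ the kernel $\Phi(x,y)$ and its $x$-derivatives are smooth in $(x,y)$ for $y\in B_R$. The known estimate $\partial_r\Phi(x,y)-\ri k\Phi(x,y)=O(|x|^{-(n+1)/2})$ holds uniformly in $y\in \overline{B_R}$. I then integrate this estimate against $|\psi|\in L^1(B_R)$, using Cauchy--Schwarz on the bounded set $B_R$. The argument makes no use of any regularity of $K$ or $\m$, which is the point of the generalisation of \cite[Theorem 8.3]{ck2}.
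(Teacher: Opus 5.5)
Your proposal is correct. The converse direction matches the paper's: \eqref{e:NewtReg}, Sobolev embedding, \eqref{e:NewtEqs}, and the radiating property of the kernel. Your forward direction, however, takes a genuinely different route.

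The paper applies Green's representation theorem directly to $u^{\rm s}$ in $B_R$. This requires the representation formula for functions that are only in $H^2(B_R)$, a point the paper delegates to remarks in \cite{ck2} and to \cite{bannisterthesis}. It then uses the SRC for $u^{\rm s}$ to show that the boundary integral over $\partial B_R$ tends to zero as $R\to\infty$, so \eqref{e:LSE2} is derived constructively.

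You instead form the defect $w=u^{\rm s}-\cV f-k^2\cV\cM_{\m}u^{\rm s}$. Via \eqref{e:NewtEqs} you show it is an entire, hence smooth, radiating solution of the homogeneous Helmholtz equation, and you eliminate it by uniqueness. In fact you could simply quote the uniqueness half of Theorem \ref{t:FSPWP} with $\m=0$ and $f=0$.

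What your route buys:
\begin{itemize}
\item All the low regularity of $u^{\rm s}$, $f$ and $\m$ is absorbed into the mapping properties of $\cV$, so Green's identities or Rellich's lemma are only ever applied to a smooth function.
\item Both directions of the theorem rest on the same ingredient, namely that $\cV\psi$ is radiating for $\psi\in L^2_{\rm comp}(\R^n)$.
\item You justify that ingredient more carefully than the paper's one-line appeal to \cite{mclean}: a uniform far-field bound for $\partial_r\Phi-\ri k\Phi$, integrated against $|\psi|\in L^1(B_R)$.
\end{itemize}

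The price is that your argument is indirect. It shows that $u^{\rm s}$ and the right-hand side of \eqref{e:LSE} coincide, rather than deriving the representation. It also needs the radiating property of Newtonian potentials already in the forward direction, whereas the paper needs it only for the converse.
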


\begin{proof}
Suppose that $u^{\rm s}\in H^{1}_{\rm loc}(\R^n)$ satisfies  \eqref{e:FSP}-\eqref{e:SRC}. Then
$\Delta u^{\rm s}+k^s u^{\rm s} = -f - k^2\m u^{\rm s}$ as an equality in $L^2_{\rm comp}(\R^n)$, and we noted already in Remark \ref{r:ER} that $u^{\rm s}\in H^{2}_{\rm loc}(\R^n)\cap C^{0}(\R^n)$. Hence, letting $R>0$ be such that $B_R$ contains both $K$ and $\supp(f)$, an application of Green's representation theorem in $B_R$ (see, e.g., \cite[Theorem 2.1]{ck2} and the comments after \cite[Theorem 8.2]{ck2} arguing that \cite[Theorem 2.1]{ck2} holds also for elements of $H^2(B_R)$, which are expanded in \cite[Theorem 2.3.1]{bannisterthesis}), gives that
\begin{align*}
u^{\rm s}(x) &= \int_{B_R}\left(f(y)+k^{2}\m(y)u^{\rm s}(y) \right) \Phi(x,y) \ \mathrm{d}y,\notag\\
&  - \int_{\partial B_R} \partial_{n(y)}^-u^{\rm s}\, \gamma^-\Phi(x,y) - \gamma^-u^{\rm s}\, \partial_{n(y)}^-\Phi(x,y) \mathrm{d}s(y), 
\qquad x\in B_R.
\end{align*}
{Since $u^{\rm s}$ satisfies the SRC \eqref{e:SRC}, a standard argument (see, e.g., \cite[Thm~2.5]{ck2}) shows that the integral over $\partial B_R$ tends to zero as $R\to \infty$, from which we obtain \eqref{e:LSE2}.} 

Now suppose that $u^{\rm s}\in L^{2}_{\rm loc}(\R^n)$ satisfies \eqref{e:LSE}. Then $u^{\rm s}\in H^{2}_{\rm loc}(\R^n)\cap C^{0}(\R^n)$ by \eqref{e:NewtReg} and the Sobolev embedding theorem, $u^{\rm s}$ satisfies \eqref{e:FSP} by \eqref{e:NewtEqs}, and $u^{\rm s}$ satisfies \eqref{e:SRC} because $\Phi$ does (see, e.g., \cite[p.~282]{mclean}).
\end{proof}

\begin{corollary}[LSE well-posedness]
\label{c:LSEWP}
Let $\Im[\m] \geq 0$. %
Then the LSE \eqref{e:LSE} has a unique solution $u^{\rm s}$, which 
coincides with the unique solution of the FSP \eqref{e:FSP}-\eqref{e:SRC}.
\end{corollary}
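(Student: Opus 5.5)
The corollary follows by combining Theorem \ref{t:FSPWP} with the two directions of Theorem \ref{t:FSP-LSE}. The only care needed is the function space in which solutions of the LSE \eqref{e:LSE} are sought. We interpret a solution of \eqref{e:LSE} as some $u^{\rm s}\in L^2_{\rm loc}(\R^n)$ satisfying \eqref{e:LSE}. The right-hand side makes sense for such $u^{\rm s}$: $f\in L^2_{\rm comp}(\R^n)$ and $\cM_{\m}u^{\rm s}\in L^2_K\subset L^2_{\rm comp}(\R^n)$, so by \eqref{e:NewtReg} both terms lie in $H^2_{\rm loc}(\R^n)$. This interpretation matches the converse part of Theorem \ref{t:FSP-LSE}.

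\emph{Existence.} Since $\Im[\m]\geq 0$, Theorem \ref{t:FSPWP} provides a solution $u^{\rm s}\in H^1_{\rm loc}(\R^n)$ of the FSP \eqref{e:FSP}--\eqref{e:SRC}. By the first part of Theorem \ref{t:FSP-LSE}, this $u^{\rm s}$ satisfies \eqref{e:LSE}. Because $u^{\rm s}\in H^1_{\rm loc}(\R^n)\subset L^2_{\rm loc}(\R^n)$, it is a solution of the LSE in the sense above.

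\emph{Uniqueness.} Let $u_1,u_2\in L^2_{\rm loc}(\R^n)$ both satisfy \eqref{e:LSE}. By the converse part of Theorem \ref{t:FSP-LSE}, each $u_j$ lies in $H^2_{\rm loc}(\R^n)\subset H^1_{\rm loc}(\R^n)$ and satisfies the FSP \eqref{e:FSP}--\eqref{e:SRC} with the same $f$. The uniqueness part of Theorem \ref{t:FSPWP} then gives $u_1=u_2$. One could instead apply FSP uniqueness with $f=0$ to the difference $u_1-u_2$, which satisfies the homogeneous LSE $w=k^2\mathcal{V}\cM_{\m}w$, but this is not needed.

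\emph{Coincidence.} The LSE solution obtained in the existence step is, by construction, the FSP solution. Since both solutions are unique, the unique LSE solution coincides with the unique FSP solution.

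There is no real obstacle here. The only step requiring attention is the regularity upgrade from $L^2_{\rm loc}$ to $H^1_{\rm loc}$, which is needed to invoke FSP uniqueness for arbitrary $L^2_{\rm loc}$ solutions of the LSE. Theorem \ref{t:FSP-LSE} already provides this through \eqref{e:NewtReg}.
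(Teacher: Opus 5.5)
Your argument is correct and matches the paper, which states this corollary without a separate proof as an immediate consequence of Theorem~\ref{t:FSPWP} and both directions of Theorem~\ref{t:FSP-LSE}, combined exactly as you do. Taking $L^2_{\rm loc}(\R^n)$ as the solution class for the LSE matches the converse part of Theorem~\ref{t:FSP-LSE}, and that converse is what makes your uniqueness step work for arbitrary LSE solutions.
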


The LSE \eqref{e:LSE} can be viewed as a ``direct'' integral equation formulation since the unknown $u^s$ is the scattered field. One can also consider an equivalent ``indirect'' formulation.

\begin{proposition}[LSE-ILSE equivalence]
\label{p:Indirect}
If $u^{\rm s}\in L^2_{\rm loc}(\R^n)$ satisfies the LSE \eqref{e:LSE} then $\sigma^{\rm s}:=f+k^2\cM_\m u^{\rm s}\in L^2_{\rm comp}(\R^n)$ satisfies the indirect LSE (ILSE)
\begin{align}
\label{e:ILSE}
\sigma^{\rm s} = f + k^2\cM_\m \cV \sigma^{\rm s},
\end{align}
and 
\begin{align}
\label{e:ILSERep}
u^{\rm s}=\cV\sigma^{\rm s}.
\end{align}
Conversely, if $\sigma^{\rm s}\in L^2_{\rm comp}(\R^n)$ satisfies \eqref{e:ILSE} then $u^{\rm s}$ defined by \eqref{e:ILSERep} satisfies the LSE \eqref{e:LSE}, and 
$\sigma^{\rm s}=f+k^2\cM_\m u^{\rm s}$. 
\end{proposition}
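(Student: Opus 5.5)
This is a direct algebraic verification: substitute the LSE into the definition of $\sigma^{\rm s}$, and conversely. The only points needing care are that $\sigma^{\rm s}$ lies in $L^2_{\rm comp}(\R^n)$ and that every composition is well defined. Both follow from the mapping properties \eqref{e:NewtReg} and the definition \eqref{e:MDef} of $\cM_\m$.

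\emph{Forward direction.} Suppose $u^{\rm s}\in L^2_{\rm loc}(\R^n)$ satisfies \eqref{e:LSE}.
\begin{enumerate}[1.]
\item By \eqref{e:MDef}, $\cM_\m u^{\rm s}\in L^2_K$. Since $K$ is compact, this lies in $L^2_{\rm comp}(\R^n)$.
\item As $f\in L^2_{\rm comp}(\R^n)$, it follows that $\sigma^{\rm s}:=f+k^2\cM_\m u^{\rm s}\in L^2_{\rm comp}(\R^n)$. In particular $\cV\sigma^{\rm s}$ is defined, by \eqref{e:Newt}.
\item By linearity of $\cV$,
\[
\cV\sigma^{\rm s}=\cV f+k^2\cV\cM_\m u^{\rm s}=u^{\rm s},
\]
where the last equality is \eqref{e:LSE}. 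This proves \eqref{e:ILSERep}.
\item Substituting \eqref{e:ILSERep} into the definition of $\sigma^{\rm s}$ gives $\sigma^{\rm s}=f+k^2\cM_\m\cV\sigma^{\rm s}$, which is \eqref{e:ILSE}.
\end{enumerate}

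\emph{Converse.} Suppose $\sigma^{\rm s}\in L^2_{\rm comp}(\R^n)$ satisfies \eqref{e:ILSE}.
\begin{enumerate}[1.]
\item Define $u^{\rm s}:=\cV\sigma^{\rm s}$. By \eqref{e:NewtReg} with $s=0$, $u^{\rm s}\in H^2_{\rm loc}(\R^n)\subset L^2_{\rm loc}(\R^n)$, so $\cM_\m u^{\rm s}$ is well defined.
\item Rewriting \eqref{e:ILSE} in terms of $u^{\rm s}$ gives directly $\sigma^{\rm s}=f+k^2\cM_\m u^{\rm s}$, which is the last claim of the proposition.
\item Applying $\cV$ to this identity and using linearity,
\[
u^{\rm s}=\cV\sigma^{\rm s}=\cV f+k^2\cV\cM_\m u^{\rm s},
\]
which is \eqref{e:LSE}.
\end{enumerate}

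There is no genuine obstacle. The only step needing attention is the bookkeeping of function spaces: $\cV$ is applied only to compactly supported $L^2$ functions, and $\cM_\m$ is applied only to $L^2_{\rm loc}$ functions. Compactness of $K$ and \eqref{e:NewtReg} guarantee both.
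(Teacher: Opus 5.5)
Your proof is correct and is exactly the direct calculation the paper has in mind; the paper's proof says only ``follows by straightforward direct calculations''. You substitute the LSE into the definition of $\sigma^{\rm s}$ for the forward direction and apply $\cV$ to the ILSE for the converse, and you also make explicit the function-space bookkeeping (compactness of $K$, and \eqref{e:NewtReg} giving $\cV\sigma^{\rm s}\in L^2_{\rm loc}(\R^n)$) that the paper leaves implicit.
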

\begin{proof}
Follows by straightforward direct calculations. 
\end{proof}

\subsection{The restricted LSE}
\label{s:Restr}

We now consider the restriction of the LSE \eqref{e:LSE} and ILSE \eqref{e:ILSE} to a bounded set $\Omega\subset\R^n$, as a step towards numerical discretisations. The set $\Omega$ could be, for instance, the inhomogeneity $K$, its interior, or some larger set containing it (such as a bounding box, cf.\ Figure \ref{f:Disc}(a)). 

Let $\Omega\subset \R^n$ be bounded and measurable. %
We define the restriction operator 
$|_\Omega:L_{\rm loc}^2(\R^n)\to L^2(\Omega)$ (a continuous operator) by $(u|_\Omega)(x):=u(x)$ for $u\in L^2_{\rm loc}(\R^n)$ and $x\in \Omega$, and 
the extension by zero operator
$\mathcal{Z}_{\Omega}:L^{2}(\Omega)\rightarrow L^{2}(\R^n)$ 
(a bounded operator)
by 
\begin{equation*}
	(\mathcal{Z}_{\Omega}\phi)(x) := \begin{cases}   \phi(x), & \text { for   } x\in \Omega, \\  
	 0, & \text { for } x\in \R^n\setminus \Omega, \end{cases}
	 \qquad \phi\in L^2(\Omega),
\end{equation*}
and note that 
$(\cZ_\Omega \phi)|_\Omega=\phi$ for all $\phi\in L^2(\Omega)$. 
We then define
$\mathcal{V}_\Omega:L^2(\Omega)\to L^2(\Omega)$ and $\mathcal{V}_\Omega^*:L^2(\Omega)\to L^2(\Omega)$ (both bounded operators) by
\begin{align}
 \label{e:VVstar}
 \mathcal{V}_\Omega := |_\Omega \circ \mathcal{V} \circ \cM_{\m}\circ \mathcal{Z}_\Omega, 
\qquad 
\mathcal{V}_\Omega^* := |_\Omega \circ \cM_{\m}\circ \cV \circ \mathcal{Z}_\Omega, 
 \end{align} 
which have integral representations
\[ \mathcal{V}_\Omega \phi (x) = \int_\Omega \Phi(x,y) \m(y)\phi(y)\, \rd y, \qquad \mathcal{V}_\Omega^* \phi (x) = \m(x)\int_\Omega \Phi(x,y) \phi(y)\, \rd y, \qquad \phi\in L^2(\Omega).\]
We note that $\cV_\Omega^*$ is the adjoint of $\cV_\Omega$, modulo complex conjugation, in the sense that
\begin{align}
\label{e:Adj}
(\cV_\Omega \phi,\psi) = (\phi,\overline{\cV_\Omega^*\overline{\psi}}), \qquad \phi,\psi\in L^2(\Omega).
\end{align}

\begin{lemma}[Compactness of $\cV_\Omega$ and $\cV_\Omega^*$]
\label{l:Comp}
Let $\Omega\subset \R^n$ be bounded and measurable. 
Then $\mathcal{V}_\Omega:L^2(\Omega)\to L^2(\Omega)$ and $\mathcal{V}_\Omega^*:L^2(\Omega)\to L^2(\Omega)$ are compact. 
\end{lemma}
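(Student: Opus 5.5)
Both operators are compositions of bounded maps with the Newtonian potential, so the plan is to locate a compact factor in each composition. The natural choice is to exploit the smoothing property \eqref{e:NewtReg} of $\cV$ together with Rellich's compact embedding on a ball. Fix $R>0$ such that $\overline{\Omega}\subset B_R$; this is possible since $\Omega$ is bounded. Note that no regularity of $\Omega$ is needed, because we only ever apply compactness on the ball $B_R$ and then restrict to $\Omega$.

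For $\cV_\Omega$, write it as
\[\cV_\Omega = |_\Omega\circ |_{B_R}^{-1}\!\!\!\!\quad\text{(schematically)}\quad \circ\, \big(|_{B_R}\circ \cV\big)\circ \cM_\m\circ\cZ_\Omega.\]
More precisely, take $\phi\in L^2(\Omega)$. Then $\cM_\m\cZ_\Omega\phi\in L^2_{\rm comp}(\R^n)$ with support in $\overline{B_R}$, and $\|\cM_\m\cZ_\Omega\phi\|_{L^2}\le \|\m\|_{L^\infty}\|\phi\|_{L^2(\Omega)}$. By \eqref{e:NewtReg} with $s=0$, the map $\psi\mapsto (\cV\psi)|_{B_R}$ is bounded from $\{\psi\in L^2(\R^n):\supp\psi\subset\overline{B_R}\}$ into $H^2(B_R)$. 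Concretely, choose a cutoff $\chi\in C^\infty_{\rm comp}$ equal to $1$ on $B_R$; then $\chi\cV$ is continuous into $H^2(\R^n)$ on this closed subspace, by the continuity in \eqref{e:NewtReg} together with the closed graph theorem. Next, the embedding $H^2(B_R)\hookrightarrow L^2(B_R)$ is compact. One way to see this is via the definition of $H^s$ on an open set as restrictions: a bounded sequence in $H^2(\R^n)$ multiplied by a fixed cutoff is bounded in $H^1$ and supported in a fixed ball, so Rellich–Kondrachov on that ball gives an $L^2$-convergent subsequence. Finally, restriction $L^2(B_R)\to L^2(\Omega)$ is bounded. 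A composition of bounded operators with one compact factor is compact, and hence $\cV_\Omega$ is compact.

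For $\cV_\Omega^*$, the same argument applies with the order of operators changed: $\cV_\Omega^* = |_\Omega\circ\cM_\m\circ(\chi\cV)\circ\cZ_\Omega$. Here $\cZ_\Omega$ maps into $L^2$ functions supported in $\overline{B_R}$, the operator $\chi\cV$ maps these compactly into $L^2(\R^n)$ (by the argument above), and multiplication by $\m$ followed by restriction is bounded on $L^2$. Alternatively, compactness of $\cV_\Omega^*$ follows from \eqref{e:Adj} and Schauder's theorem, since $\phi\mapsto\overline{\cV_\Omega^*\overline{\phi}}$ is the Hilbert adjoint of $\cV_\Omega$ and complex conjugation is a bounded (antilinear) isometry.

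The only step requiring some care is the justification that \eqref{e:NewtReg} gives a \emph{bounded} map on functions supported in a fixed compact set, with the target norm measured on a fixed ball; the compactness then comes from Rellich. If one prefers to avoid Sobolev theory, a fallback is to argue directly that the kernel $\Phi(x,y)\m(y)$ belongs to $L^2(\Omega\times\Omega)$. This holds because $|\Phi(x,y)|\lesssim |\log|x-y||+1$ for $n=2$ and $\lesssim |x-y|^{-1}$ for $n=3$, both of which are square integrable over bounded sets in $\R^n$. Then $\cV_\Omega$ and $\cV_\Omega^*$ are Hilbert–Schmidt, hence compact. This fallback is probably the cleanest route, and I would present it as the main proof.
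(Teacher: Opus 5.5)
Your proof is correct. The Sobolev argument in your first two paragraphs is essentially the paper's own proof: bound $|_D\circ\cV\circ\cM_\m\circ\cZ_\Omega$ into $H^2(D)$ for a bounded open $D\supset\Omega$ via \eqref{e:NewtReg} and a cutoff, use the compact embedding $H^2(D)\subset L^2(D)$, restrict to $\Omega$, and obtain $\cV_\Omega^*$ from \eqref{e:Adj}. You should drop the meaningless ``schematic'' factor $|_{B_R}^{-1}$ from that part.

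The Hilbert--Schmidt argument you choose as your main proof is a genuinely different route. It needs only the following:
\begin{itemize}
\item the pointwise bounds $|\Phi(x,y)|\lesssim 1+|\log|x-y||$ for $n=2$ and $|\Phi(x,y)|\lesssim|x-y|^{-1}$ for $n=3$;
\item the estimate $\int_\Omega\int_\Omega F(x-y)\,\rd x\,\rd y\le|\Omega|\int_{|z|\le\diam(\Omega)}F(z)\,\rd z$.
\end{itemize}
It treats $\cV_\Omega$ and $\cV_\Omega^*$ symmetrically. Their kernels $\Phi(x,y)\m(y)$ and $\m(x)\Phi(x,y)$ both lie in $L^2(\Omega\times\Omega)$ because $\m\in L^\infty$, so no appeal to adjoints or Schauder's theorem is needed. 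It also yields the stronger Hilbert--Schmidt property.

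What it gives up is dimension independence. The Newtonian kernel behaves like $|x-y|^{2-n}$ near the diagonal, which is square integrable only for $n<4$, so your argument relies on the standing assumption $n=2,3$. The paper's route works in any dimension. More to the point for the paper, it reuses the mapping property \eqref{e:NewtReg} that drives the later regularity results (Lemmas \ref{l:MapProp} and \ref{l:MapProp2}), so compactness comes at no extra cost there.
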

\begin{proof}
Let $D\subset\R^n$ be any bounded open set such that $D\supset \Omega$. Then $\mathcal{V}_\Omega$ can be written as $ \mathcal{V}_\Omega = |_{D\to \Omega} \circ |_D \circ \mathcal{V} \circ \cM_{\m} \circ \mathcal{Z}_\Omega$, where $|_{D\to \Omega}$ denotes the restriction operator from $L^2(D)$ to $L^2(\Omega)$. 
The composition $\mathcal{V} \circ \cM_{\m} \circ \mathcal{Z}_\Omega$ is continuous from $L^2(\Omega)\to H^2_{\rm loc}(\R^n)$, so that by \cite[Thm 2.6.7(c)]{ss11} the map $\phi\mapsto \chi (\mathcal{V} ( \cM_{\m} (\mathcal{Z}_\Omega(\phi)))$ is bounded from $L^2(\Omega)\to H^2(\R^n)$ for every $\chi\in C^\infty_{\rm comp}(\R^n)$. By choosing $\chi$ to satisfy $\chi\equiv 1$ on $D$ we can obtain boundedness of $|_D \circ \mathcal{V} \circ \cM_{\m} \circ \mathcal{Z}_\Omega$ from $L^2(\Omega)\to H^2(D)$, and the claimed compactness then follows from the 
compactness of the embedding $H^2(D)\subset L^2(D)$ (see, e.g., \cite[Thm~3.27]{mclean}) and the boundedness of $|_{D\to \Omega}:L^2(D)\to L^2(\Omega)$. The compactness of $\cV_\Omega^*$ follows from the adjoint property \eqref{e:Adj}. 
\end{proof}

The following propositions provide conditions under which the LSE and ILSE are equivalent to their restricted versions. Both propositions assume that $|K\setminus\Omega|=0$, which ensures that $\Omega$ is ``large enough'' that $\m$ is determined by its values on $\Omega$, so that in particular 
\begin{align}
\label{e:KOmega}
\text{if } |K\setminus\Omega|=0 \text{ then } \cZ_\Omega\circ |_\Omega \circ \cM_\m =\cM_\m \circ \cZ_\Omega\circ |_\Omega = \cM_\m  \text{ on }L^2_{\rm loc}(\R^n).
\end{align}
Also, if $|K\setminus\Omega|=0$ then we can write $\m=m\chi_\Omega$ for some $m\in L^\infty_{\rm comp}(\R^n)$, where $\chi_\Omega$ is the characteristic function of $\Omega$, 
a fact we shall use in \S\ref{s:SolReg} (see in particular \eqref{e:smOmega}). 

\begin{proposition}
[LSE-RLSE equivalence]
\label{p:IntEqn}
Let 
$\Omega\subset \R^n$ be bounded and measurable with $|K\setminus \Omega|=0$. Suppose that $u^{\rm s}$ satisfies the LSE \eqref{e:LSE}. Then $\tilde{u}:=u^{\rm s}|_\Omega$ satisfies the 
restricted LSE (RLSE):
\begin{align}
\label{e:IntEqn}
(\mathcal{I} - k^2\mathcal{V}_\Omega)\tilde{u} = 
\mathcal{V}f|_\Omega.
\end{align}
Conversely, suppose that $\tilde{u}\in L^2(\Omega)$ satisfies the RLSE \eqref{e:IntEqn}.
Then 
\begin{align}
\label{e:usDef}
u^{\rm s}:=\mathcal{V}f + k^2\mathcal{V}(\cM_{\m}(\mathcal{Z}_\Omega(\tilde{u})))
\end{align}
satisfies the LSE \eqref{e:LSE} and $u^{\rm s}|_\Omega=\tilde{u}$. 
\end{proposition}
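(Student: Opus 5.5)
The argument is a direct manipulation of the operator identities, with \eqref{e:KOmega} doing the real work. For the forward direction, suppose $u^{\rm s}$ satisfies the LSE \eqref{e:LSE}, and set $\tilde u:=u^{\rm s}|_\Omega$. By \eqref{e:KOmega} we have $\cM_\m u^{\rm s}=\cM_\m(\cZ_\Omega(u^{\rm s}|_\Omega))=\cM_\m\cZ_\Omega\tilde u$. Substituting this into \eqref{e:LSE} gives
\[
u^{\rm s}=\cV f+k^2\cV\cM_\m\cZ_\Omega\tilde u .
\]
Restricting to $\Omega$ and recalling the definition \eqref{e:VVstar} of $\cV_\Omega$, this becomes $\tilde u=\cV f|_\Omega+k^2\cV_\Omega\tilde u$, which is \eqref{e:IntEqn}. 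The only point needing care is that restriction commutes with these identities, which holds as equalities in $L^2_{\rm loc}(\R^n)$.

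For the converse, suppose $\tilde u\in L^2(\Omega)$ solves the RLSE, and define $u^{\rm s}$ by \eqref{e:usDef}. This lies in $L^2_{\rm loc}(\R^n)$, indeed in $H^2_{\rm loc}$ by \eqref{e:NewtReg}.

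The first step is to show $u^{\rm s}|_\Omega=\tilde u$. Restricting \eqref{e:usDef} gives $u^{\rm s}|_\Omega=\cV f|_\Omega+k^2\cV_\Omega\tilde u$, and by \eqref{e:IntEqn} the right-hand side equals $\tilde u$.

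The second step uses this to identify the multiplier term. Applying \eqref{e:KOmega} again,
\[
\cM_\m u^{\rm s}=\cM_\m\cZ_\Omega(u^{\rm s}|_\Omega)=\cM_\m\cZ_\Omega\tilde u .
\]
Therefore $\cV f+k^2\cV\cM_\m u^{\rm s}=\cV f+k^2\cV\cM_\m\cZ_\Omega\tilde u=u^{\rm s}$, which is the LSE \eqref{e:LSE}.

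None of this is a serious obstacle. The one subtle point is to establish $u^{\rm s}|_\Omega=\tilde u$ before using \eqref{e:KOmega} to replace $\cM_\m u^{\rm s}$ by $\cM_\m\cZ_\Omega\tilde u$. That replacement is exactly where the hypothesis $|K\setminus\Omega|=0$ enters: it guarantees that $\m$ vanishes a.e.\ outside $\Omega$, so values of $u^{\rm s}$ off $\Omega$ do not matter.
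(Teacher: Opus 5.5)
Your proposal is correct and follows essentially the same route as the paper. For the forward direction you restrict the LSE to $\Omega$ and use \eqref{e:KOmega}; for the converse you first restrict \eqref{e:usDef} and use the RLSE to get $u^{\rm s}|_\Omega=\tilde u$, and only then invoke \eqref{e:KOmega} to recover the LSE.
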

\begin{proof}
The first statement follows by direct calculation, applying $|_\Omega$ to \eqref{e:LSE} and using \eqref{e:KOmega}. 
For the second, taking the restriction of \eqref{e:usDef} to $\Omega$ and combining with \eqref{e:IntEqn} gives $u^{\rm s}|_\Omega=\tilde{u}$.
Combining this with \eqref{e:usDef} and \eqref{e:KOmega} proves that $u^{\rm s}$ satisfies  \eqref{e:LSE}. 
\end{proof}

\begin{proposition}
[ILSE-RILSE equivalence]
\label{p:IntEqn2}
Let 
$\Omega\subset \R^n$ be bounded and measurable with $|K\setminus \Omega|=0$ and $|\supp f \setminus \Omega|=0$. Suppose that $\sigma^{\rm s}$ satisfies the ILSE \eqref{e:ILSE}. Then $\sigma:=\sigma^{\rm s}|_\Omega$ satisfies the 
restricted ILSE (RILSE):
\begin{align}
\label{e:RILSE}
(\mathcal{I} - k^2\mathcal{V}_\Omega^*)\sigma = 
f|_\Omega.
\end{align}
Conversely, suppose that $\sigma\in L^2(\Omega)$ satisfies the RILSE \eqref{e:RILSE}. %
Then 
\begin{align}
\label{e:sigsDef}
\sigma^{\rm s}:=\mathcal{Z}_\Omega(\sigma)
\end{align}
satisfies the ILSE \eqref{e:ILSE} and $\sigma^{\rm s}|_\Omega=\sigma$. 
\end{proposition}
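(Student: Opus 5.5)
The argument mirrors the proof of Proposition \ref{p:IntEqn}: we apply $|_\Omega$ to the ILSE and use the identity \eqref{e:KOmega}, together with its analogue for $f$. Since $|\supp f\setminus\Omega|=0$, we have $\cZ_\Omega(f|_\Omega)=f$ as elements of $L^2(\R^n)$. This is the only new ingredient compared with Proposition \ref{p:IntEqn}.

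\emph{Forward direction.} Suppose $\sigma^{\rm s}\in L^2_{\rm comp}(\R^n)$ satisfies \eqref{e:ILSE}. First we show that $\sigma^{\rm s}=\cZ_\Omega(\sigma^{\rm s}|_\Omega)$, i.e.\ that $\sigma^{\rm s}$ vanishes a.e.\ outside $\Omega$. Indeed, $\cM_\m\cV\sigma^{\rm s}$ is supported in $K$, and $|K\setminus\Omega|=0$. Likewise $f$ vanishes a.e.\ outside $\Omega$. So the right-hand side of \eqref{e:ILSE} vanishes a.e.\ on $\R^n\setminus\Omega$, and hence so does $\sigma^{\rm s}$.

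Now we restrict \eqref{e:ILSE} to $\Omega$. Writing $\cV\sigma^{\rm s}=\cV\cZ_\Omega\sigma$ with $\sigma:=\sigma^{\rm s}|_\Omega$, we obtain
\[
\sigma=f|_\Omega+k^2\,(\cM_\m\cV\cZ_\Omega\sigma)|_\Omega=f|_\Omega+k^2\cV_\Omega^*\sigma,
\]
where the last equality is the definition \eqref{e:VVstar}. This is exactly \eqref{e:RILSE}.

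\emph{Converse.} Let $\sigma\in L^2(\Omega)$ solve \eqref{e:RILSE}, and set $\sigma^{\rm s}:=\cZ_\Omega\sigma$. This lies in $L^2_{\rm comp}(\R^n)$ because $\Omega$ is bounded, and $\sigma^{\rm s}|_\Omega=\sigma$ holds trivially. Apply $\cZ_\Omega$ to \eqref{e:RILSE}. Using $\cZ_\Omega(f|_\Omega)=f$ and
\[
\cZ_\Omega\circ|_\Omega\circ\cM_\m=\cM_\m
\]
(from \eqref{e:KOmega}), we get
\[
\sigma^{\rm s}=f+k^2\,\cZ_\Omega|_\Omega\cM_\m\cV\cZ_\Omega\sigma=f+k^2\cM_\m\cV\sigma^{\rm s},
\]
which is \eqref{e:ILSE}.

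The only step needing any care is the support argument in the forward direction, namely that a solution of the ILSE is automatically supported in $\Omega$ up to null sets. This is exactly where the hypothesis $|\supp f\setminus\Omega|=0$ is used, in addition to $|K\setminus\Omega|=0$. Everything else is direct manipulation of the operator definitions.
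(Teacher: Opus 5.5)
Your proof is correct and follows essentially the same route as the paper. In the forward direction you use $\cZ_\Omega(\sigma^{\rm s}|_\Omega)=\sigma^{\rm s}$, and in the converse you apply $\cZ_\Omega$ to \eqref{e:RILSE} together with \eqref{e:KOmega} and $\cZ_\Omega(f|_\Omega)=f$. The only difference is that you spell out the support argument behind $\cZ_\Omega(\sigma^{\rm s}|_\Omega)=\sigma^{\rm s}$, which the paper merely asserts.
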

\begin{proof}
The first statement follows by direct calculation, noting that 
the assumptions on $\Omega$ imply that
$\cZ_\Omega(\sigma^{\rm s}|_\Omega)=\sigma^{\rm s}$. 
For the second, we first note that $\sigma^{\rm s}|_\Omega=(\cZ_\Omega\sigma)|_\Omega=\sigma$. Then, applying $\cZ_\Omega$ to \eqref{e:RILSE} and using \eqref{e:KOmega}, and the fact that $\cZ_\Omega(f|_\Omega)=f$ (which holds because $|\supp f\setminus \Omega|=0$), gives \eqref{e:ILSE}.
\end{proof}

\begin{corollary}[RLSE and RILSE well-posedness]
\label{c:IntEqn}
Let 
$\Im[\m] \geq 0$. %
Let $\Omega\subset \R^n$ be bounded and measurable with $|K\setminus \Omega|=0$. Then for any $g,\tau\in L^2(\Omega)$ the equations 
\begin{align}
\label{e:IntEqn2}
(\mathcal{I} - k^2\mathcal{V}_\Omega)u = 
g
\quad \text{and} \quad
(\mathcal{I} - k^2\mathcal{V}_\Omega^*)\sigma = 
\tau
\end{align}
both have unique solutions $u,\sigma\in L^2(\Omega)$. 
\end{corollary}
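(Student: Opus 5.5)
By Lemma \ref{l:Comp}, $k^2\cV_\Omega$ and $k^2\cV_\Omega^*$ are compact on $L^2(\Omega)$. Hence $\cI-k^2\cV_\Omega$ and $\cI-k^2\cV_\Omega^*$ are Fredholm of index zero, and the Fredholm alternative reduces existence for arbitrary right-hand sides to injectivity. We could treat the two operators separately. However, the adjoint relation \eqref{e:Adj} gives
\[
\cI-k^2\cV_\Omega^*=C\,(\cI-k^2\cV_\Omega)'\,C,
\]
where $'$ denotes the Hilbert adjoint and $C$ is complex conjugation. Since $C$ is a bijection, injectivity of $\cI-k^2\cV_\Omega$ implies that it is invertible, so its adjoint is invertible, and then so is $\cI-k^2\cV_\Omega^*$. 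It therefore suffices to show that $\cI-k^2\cV_\Omega$ is injective.

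For injectivity, let $u\in L^2(\Omega)$ satisfy $(\cI-k^2\cV_\Omega)u=0$. This is the RLSE \eqref{e:IntEqn} with $f=0$, since $\cV 0=0$. By the converse part of Proposition \ref{p:IntEqn}, the function
\[
u^{\rm s}:=k^2\cV\cM_\m\cZ_\Omega u
\]
satisfies the LSE \eqref{e:LSE} with $f=0$, and $u^{\rm s}|_\Omega=u$. Corollary \ref{c:LSEWP} applies because $\Im[\m]\ge0$, and uniqueness then gives $u^{\rm s}=0$ (the zero function obviously solves the homogeneous LSE). Hence $u=u^{\rm s}|_\Omega=0$.

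The step needing most care is checking that Proposition \ref{p:IntEqn} applies with $f=0$: the hypothesis $|K\setminus\Omega|=0$ is assumed, and $f=0\in L^2_{\rm comp}$ is admissible, so this works. The other point to verify is the adjoint bookkeeping: \eqref{e:Adj} must give exactly $(\cI-k^2\cV_\Omega^*)\sigma=\overline{(\cI-k^2\cV_\Omega)'\overline{\sigma}}$, so that bijectivity transfers through the conjugations. Alternatively, injectivity of $\cI-k^2\cV^*_\Omega$ can be shown directly with Proposition \ref{p:IntEqn2}, Proposition \ref{p:Indirect} and $f=0$: $\sigma^{\rm s}=\cZ_\Omega\sigma$ solves the homogeneous ILSE, so $u^{\rm s}=\cV\sigma^{\rm s}$ solves the homogeneous LSE and hence vanishes, and then $\sigma^{\rm s}=k^2\cM_\m u^{\rm s}=0$. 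Note that with $f=0$ the extra hypothesis $|\supp f\setminus\Omega|=0$ in Proposition \ref{p:IntEqn2} holds trivially.
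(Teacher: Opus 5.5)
Your proposal is correct and follows essentially the paper's proof: you get Fredholm index zero from Lemma \ref{l:Comp}, injectivity of $\cI-k^2\cV_\Omega$ from the converse part of Proposition \ref{p:IntEqn} with $f=0$ together with Corollary \ref{c:LSEWP}, and invertibility of $\cI-k^2\cV_\Omega^*$ from the adjoint relation \eqref{e:Adj}, where your conjugation bookkeeping is valid because $k^2$ is real. Your alternative direct injectivity argument for $\cI-k^2\cV_\Omega^*$, via Propositions \ref{p:IntEqn2} and \ref{p:Indirect} with $f=0$, is also valid.
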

\begin{proof}
By Lemma \ref{l:Comp} the operator $(\mathcal{I} - k^2\mathcal{V}_\Omega)$ is Fredholm of index zero, so to prove it is invertible we just need to prove it is injective. For this, suppose that $u\in L^2(\Omega)$ satisfies \eqref{e:IntEqn2} with $g=0$. Then by Proposition \ref{p:IntEqn} the function $u^{\rm s}:= k^2\mathcal{V}(\cM_{\m}(\mathcal{Z}_\Omega(u)))$ satisfies the LSE \eqref{e:LSE} with $f=0$, and furthermore $u^{\rm s}|_\Omega=u$. By Corollary \ref{c:LSEWP} we deduce that $u^{\rm s}=0$, so that also $u=u^{\rm s}|_\Omega=0$. Invertibility of $(\mathcal{I} - k^2\mathcal{V}_\Omega^*)$ follows using the adjoint relation \eqref{e:Adj}.
\end{proof}

\subsection{Solution regularity}
\label{s:SolReg}
The results of \S\ref{s:FSP-LSE}-\S\ref{s:scatt} show that the FSP \eqref{e:FSP}-\eqref{e:SRC} is well posed for general $\m\in L^\infty_{\rm comp}$ (with $K=\supp\m$ a general compact set), provided that $\Im[\m]\geq 0$. They also show that the FSP can be equivalently formulated by the RLSE \eqref{e:IntEqn}-\eqref{e:usDef} 
on any bounded measurable set $\Omega$ such that $|K\setminus \Omega|=0$, 
or by the RILSE \eqref{e:RILSE}-\eqref{e:sigsDef} under the additional assumption that $|\supp f\setminus \Omega|=0$.
We now consider how the smoothness of the FSP, RLSE and RILSE solutions is affected by additional regularity assumptions on $\m$ and $\Omega$.

Our regularity assumptions will be stated in the language of Sobolev multipliers. For a function $\tau\in L^\infty_{\rm comp}(\R^n)$ and 
$s\in\R$ 
we say $\tau$ is a Sobolev multiplier on $H^s(\R^n)$, and write $\tau\in \mathfrak{M}(H^s(\R^n))$, if the pointwise multiplication $\phi\mapsto \tau\phi$ defines a bounded operator from $H^s(\R^n)$ to $H^s(\R^n)$ 
(cf.\ e.g.\ 
\cite{Sickel99a,RunstSickel}). 
It is a standard result (see, e.g., \cite[Lemma 7]{sickel1999pointwise}) that 
\begin{align}
\label{e:Mult}
\text{if }\;0<s<t 
\;\text{ then }\;
\mathfrak{M}(H^t(\R^n)) \subset \mathfrak{M}(H^s(\R^n)).
\end{align}
Recalling that $\m\in L^\infty_{\rm comp}(\R^n)$, 
let
\begin{align}
\label{e:sm}
s_\m:=\sup
\{s\geq 0: \m\in\mathfrak{M}(H^s(\R^n))\}\in [0,\infty]. 
\end{align}
Given a bounded measurable set $\Omega\subset \R^n$, let
\begin{align}
\label{e:sOmega}
s_\Omega:=\sup
\{s\geq 0: \chi_\Omega\in\mathfrak{M}(H^s(\R^n))\}\in [0,\infty],
\end{align}
and if $|K\setminus \Omega|=0$, so that $\m=m\chi_\Omega$ for some $m\in L^\infty_{\rm comp}(\R^n)$, let
\begin{align}
\label{e:smOmega}
s_{\m,\Omega}:=\sup
\{s\geq 0: \m=m\chi_\Omega \text{ for some } m\in \mathfrak{M}(H^s(\R^n))\}\in [0,\infty].
\end{align}
By \eqref{e:Mult}, 
if $0\leq s<s_\m$ then 
$\m\in\mathfrak{M}(H^s(\R^n))$. 
When $s_\m<\infty$ it is not guaranteed that $\m\in \mathfrak{M}(H^{s_\m}(\R^n))$;  
however, if $s_\m=0$ then $\m\in \mathfrak{M}(H^{s_\m}(\R^n))=\mathfrak{M}(L^2(\R^n))$ because $\m\in L^\infty_{\rm comp}(\R^n)$. 
Similarly, if $0\leq s<s_\Omega$ then 
$\chi_\Omega\in\mathfrak{M}(H^s(\R^n))$, and it may or may not be true that $\chi_\Omega \in \mathfrak{M}(H^{s_\Omega}(\R^n))$, but definitely if $s_\Omega=0$ then $\chi_\Omega\in \mathfrak{M}(H^{s_\Omega}(\R^n))=\mathfrak{M}(L^2(\R^n))$.
Similarly, if $0\leq s<s_{\m,\Omega}$ then there exists $m\in \mathfrak{M}(H^s(\R^n))$ such that $\m=m\chi_\Omega$, and if $s_{\m,\Omega}<\infty$ it may or may not be true that there exists $m\in \mathfrak{M}(H^{s_{\m,\Omega}}(\R^n))$ such that $\m=m\chi_\Omega$, but definitely if $s_{\m,\Omega}=0$ then there exists $m\in \mathfrak{M}(H^{s_{\m,\Omega}}(\R^n))=\mathfrak{M}(L^2(\R^n)$ such that $\m=m\chi_\Omega$ (just take $m=\m$). 

\begin{remark}
We comment on the values of $s_\m$, $s_\Omega$ and $s_{\m,\Omega}$ in certain cases:
\label{r:MCont}
\begin{enumerate}[(i)]
\item
\label{i:Csigma}
If ${s'}\in\N$ and $\m\in C^{{s'}}_{\rm comp}(\R^n)$ then $\m\in\mathfrak{M}(H^s(\R^n))$ for $|s|\leq {s'}$ (e.g.,\ \cite[Thm 3.20]{mclean}), so that $s_\m\geq {s'}$, and $s_{\m,\Omega}\geq {s'}$ for any non-empty bounded measurable set $\Omega\subset\R^n$. 
\item
\label{i:CInfty}
If $\m\in C^{\infty}_{\rm comp}(\R^n)$ then 
$s_\m=\infty$, and $s_{\m,\Omega}=\infty$ for any non-empty bounded measurable set $\Omega\subset\R^n$.
\item 
\label{i:Charac}
If $\Omega$ is a non-empty bounded open set then $s_\Omega\leq 1/2$, and $\chi_\Omega\not\in \mathfrak{M}(H^{1/2}(\R^n))$. This holds by \cite[Lem.~3.2]{sickel1999pointwise} (and see also the references given before it). 
\item 
\label{i:Lip}
If 
$\Omega$ is a non-empty bounded Lipschitz open set 
then $s_\Omega=1/2$; indeed, 
$\chi_\Omega\in \mathfrak{M}(H^s(\R^n))$ if and only if $|s|<1/2$ (see, e.g., \cite[Prop.~3.2]{dequalsnpaper}, which is based on \cite[Props~5.1 \& 5.3]{Tri:02}). 
This is a situation where $\chi_\Omega\not\in \mathfrak{M}(H^{s_\Omega}(\R^n))$.
\item 
\label{i:smIneq}
For any $\m\in L^\infty_{\rm comp}(\R^n)$ and any non-empty bounded measurable set $\Omega\subset\R^n$ we have that $s_{\m,\Omega}\geq s_\m$, 
since $m=\m$ is a valid choice for $m$ in the set defining $s_{\m,\Omega}$. 
\item
\label{i:chiOmega} 
If $\m = m\chi_\Omega$ for some $m\in \C\setminus\{0\}$ and some non-empty bounded open set $\Omega\subset\R^n$ 
then $s_\m = s_\Omega$; indeed, $\m\in \mathfrak{M}(H^s(\R^n))$ if and only if $\chi_\Omega\in \mathfrak{M}(H^s(\R^n))$. Furthermore, $s_{\m,\Omega}=\infty$. 
In light of point \eqref{i:Charac}, this is a situation where $s_\m<s_{\m,\Omega}$. 
Furthermore, by point \eqref{i:Lip}, if $\Omega$ is Lipschitz then this is a situation where $\m\not\in \mathfrak{M}(H^{s_\m}(\R^n))$.
\item 
\label{i:Bounding}
Let $\m$ and $\Omega$ be as in point \eqref{i:chiOmega}, and let $\Omega'$ be a bounded open set 
such that $\overline{\Omega}\subset\Omega'$. Then 
$s_{\m,\Omega'}=s_\m =s_\Omega \in [0,1/2]$, while, as noted in point \eqref{i:chiOmega}, $s_{\m,\Omega} =\infty$. 
\end{enumerate} 
\end{remark}

Below we shall prove results under the assumption of certain statements $P$ involving the symbol $s_\m^-$, for instance ``$s\leq s_\m^-$'' or ``$u\in H^{s_\m^-}(\R^n)$''. This indicates that the result is valid if $P$ holds with $s_\m^-$ replaced by any $s\in[0,s_\m)$, and that if $s_\m<\infty$ and $\m\in \mathfrak{M}(H^{s_\m}(\R^n))$ then the result is also valid if $P$ holds with $s_\m^-$ replaced by $s_\m$. 
Statements involving the symbols $s_\Omega^-$ and $s_{\m,\Omega}^-$ should be interpreted similarly.

The following lemma is an immediate consequence of \eqref{e:NewtReg} and \eqref{e:MDef}.%
\begin{lemma}[Mapping properties of $\cV\circ\cM_{\m}$]
\label{l:MapProp}
Let $s\geq 0$. Then $\mathcal{V}\circ\cM_{\m}:H^s_{\rm loc}(\R^n)\to H^{\min(s,s_\m^-)+2}_{\rm loc}(\R^n)$ is continuous.
\end{lemma}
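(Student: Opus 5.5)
The plan is to write $\mathcal{V}\circ\cM_{\m}$ as a composition of two maps and check continuity of each. Fix $s\geq 0$ and set $t:=\min(s,s_\m^-)$, meaning any $t\in[0,\min(s,s_\m))$, or $t=\min(s,s_\m)$ in the cases where the $s_\m^-$ convention allows the endpoint. The aim is to show that $\cM_\m: H^s_{\rm loc}(\R^n)\to H^t_{\rm comp}(\R^n)$ is continuous, and then to compose with \eqref{e:NewtReg}, which gives continuity of $\mathcal{V}:H^t_{\rm comp}(\R^n)\to H^{t+2}_{\rm loc}(\R^n)$.

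For the first map I will use a cutoff. Fix $\chi\in C^\infty_{\rm comp}(\R^n)$ with $\chi\equiv 1$ on a neighbourhood of $K$. Then $\m\psi=\m(\chi\psi)$ for every $\psi\in H^s_{\rm loc}(\R^n)$. The map $\psi\mapsto\chi\psi$ is continuous from $H^s_{\rm loc}(\R^n)$ to $H^s(\R^n)$, by the definition of the topology of $H^s_{\rm loc}$ (cf.\ \cite[\S2.6]{ss11}). This space embeds continuously in $H^t(\R^n)$ because $t\leq s$.

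Next I need $\m\in\mathfrak{M}(H^t(\R^n))$.
\begin{itemize}
\item If $t<s_\m$, this follows from \eqref{e:Mult} and the definition \eqref{e:sm}.
\item If $t=s_\m$ with $s_\m\in(0,\infty)$, it holds by the convention that the endpoint is only allowed when $\m\in\mathfrak{M}(H^{s_\m}(\R^n))$.
\item If $t=0$, it holds because $\m\in L^\infty$.
\end{itemize}
Hence $\psi\mapsto\m\chi\psi$ is continuous from $H^s_{\rm loc}(\R^n)$ to $H^t(\R^n)$. Its image is supported in the fixed compact set $K$, so it lands continuously in $H^t_{\rm comp}(\R^n)$ with supports in a fixed compact set. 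That is exactly the setting in which \eqref{e:NewtReg} applies.

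Composing the two maps gives the claim. The only delicate point is the bookkeeping of the $s_\m^-$ convention at the endpoint, together with the topology of the comp/loc spaces. The latter is handled by restricting to functions supported in a fixed compact set, where \eqref{e:NewtReg} gives a bounded map into $H^{t+2}_{\rm loc}(\R^n)$.
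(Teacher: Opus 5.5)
Your proposal is correct and follows the paper's route. The paper declares the lemma an immediate consequence of \eqref{e:NewtReg} and \eqref{e:MDef} together with the multiplier property encoded in $s_\m$; your factorisation $\psi\mapsto\m(\chi\psi)\in H^t(\R^n)$, with support in $K$, followed by $\cV:H^t_{\rm comp}(\R^n)\to H^{t+2}_{\rm loc}(\R^n)$, simply spells this out.

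One small imprecision: when $s<s_\m$, the value $t=s$ is always admissible (take any $s'\in(s,s_\m)$ in the $s_\m^-$ convention), not only when the endpoint convention applies. Your first case, $t<s_\m$, already covers this, so the proof is unaffected.
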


As a consequence, we have the following elementary regularity result. 
\begin{lemma}[FSP/LSE solution regularity]
\label{l:Reg}
Let $0\leq s\leq s_\m^-$, $f\in H^{s}_{\rm comp}(\R^n)$, %
$\Im[\m] \geq 0$. 
Then the solution of the FSP \eqref{e:FSP}-\eqref{e:SRC} (equivalently, of the LSE \eqref{e:LSE}) satifies $u^{\rm s}\in H^{s+2}_{\rm loc}(\R^n)$.
\end{lemma}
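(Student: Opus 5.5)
We would prove Lemma \ref{l:Reg} by a bootstrapping argument based on the LSE \eqref{e:LSE} and Lemma \ref{l:MapProp}. By Theorem \ref{t:FSPWP} and Corollary \ref{c:LSEWP} the FSP has a unique solution $u^{\rm s}$, which also solves the LSE
\[
u^{\rm s} = \mathcal{V}f + k^2\mathcal{V}\cM_\m u^{\rm s},
\]
and by Theorem \ref{t:FSP-LSE} we already know that $u^{\rm s}\in H^2_{\rm loc}(\R^n)$. We then improve this regularity in finitely many steps.

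For the source term, \eqref{e:NewtReg} with $f\in H^s_{\rm comp}(\R^n)$ gives $\mathcal{V}f\in H^{s+2}_{\rm loc}(\R^n)$ directly. For the second term, we proceed by induction. Suppose $u^{\rm s}\in H^{t}_{\rm loc}(\R^n)$ for some $t\geq 0$ (initially $t=2$). Lemma \ref{l:MapProp} gives
\[
k^2\mathcal{V}\cM_\m u^{\rm s}\in H^{\min(t,s_\m^-)+2}_{\rm loc}(\R^n).
\]
Here we use that $s\leq s_\m^-$. Concretely, we fix $s$ in the admissible range, so that either $s<s_\m$, or $s=s_\m$ with $\m\in\mathfrak{M}(H^{s_\m}(\R^n))$. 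In either case we apply Lemma \ref{l:MapProp} with the index $\min(t,s)$: $u^{\rm s}\in H^{\min(t,s)}_{\rm loc}$ and $\m$ is a multiplier on $H^{\min(t,s)}$. This is justified by \eqref{e:Mult}, or trivially when $\min(t,s)=0$ since $\m\in L^\infty$. It yields
\[
u^{\rm s}\in H^{\min(t,s)+2}_{\rm loc}(\R^n).
\]
Starting from $t=2$, after one step we get $u^{\rm s}\in H^{\min(2,s)+2}_{\rm loc}$. Each further step increases the exponent by $2$ until it reaches $s+2$. Hence after at most $\lceil s/2\rceil+1$ steps we obtain $u^{\rm s}\in H^{s+2}_{\rm loc}(\R^n)$.

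The only delicate point is the bookkeeping of the ``$s_\m^-$'' convention: Lemma \ref{l:MapProp} must be applied with a fixed multiplier index $\sigma=\min(t,s)$ for which $\m\in\mathfrak{M}(H^\sigma(\R^n))$ genuinely holds. We must also remember that multiplication by the compactly supported $\m$ maps $H^\sigma_{\rm loc}$ into $H^\sigma_{\rm comp}$, which is what \eqref{e:NewtReg} requires. Everything else is routine.
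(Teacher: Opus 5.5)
Your proof is correct and follows the same route as the paper: start from $u^{\rm s}\in H^2_{\rm loc}(\R^n)$ (Theorem~\ref{t:FSP-LSE}) and bootstrap through the LSE \eqref{e:LSE}, using \eqref{e:NewtReg} for $\mathcal{V}f$ and Lemma~\ref{l:MapProp} for $\mathcal{V}\cM_\m u^{\rm s}$. Applying the multiplier property at the fixed index $\min(t,s)$, so that the $s_\m^-$ convention is respected, spells out what the paper's phrase ``repeated applications'' leaves implicit.
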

\begin{proof}
The case $s=0$ is already covered by Theorem \ref{t:FSPWP}, Theorem \ref{t:FSP-LSE} and Corollary \ref{c:LSEWP}. 
The case $s>0$ follows 
by repeated applications of \eqref{e:LSE} and Lemma \ref{l:MapProp}.
\end{proof}

We now prove mapping properties of the operators $\cV_\Omega$ and $\cV_\Omega^*$ in the case %
where $\Omega$ is a bounded open set.
The properties of $\cV_\Omega^*$ are different to those of $\cV_\Omega$ because the volume potential and multiplication operator are applied in the opposite order. 
Since $\cV_\Omega$ and $\cV_\Omega^*$ are defined on $L^2(\Omega)$, it is natural to consider their mapping properties in the spaces $H^s(\Omega)\subset L^2(\Omega)$ for $s\geq0$. %
We shall also consider mapping properties of $\cV_\Omega^*$ in the spaces 
\[ H^s_{\rm ze}(\Omega):=\{\phi\in H^s(\Omega):\cZ_\Omega(\phi)\in H^s(\R^n)\}, \quad s\geq 0,\]
equipped with the norm
\[ \|\phi \|_{H^s_{\rm ze}(\Omega)}:=\|\cZ_\Omega(\phi)\|_{H^s(\R^n)}, \quad \phi\in H^s_{\rm ze}(\Omega).\]
We note that $H^s_{\rm ze}(\Omega)\subset H^s(\Omega)$ is a Hilbert space, unitarily isomorphic to the closed subspace $\cirH^s(\Omega):=\{u\in H^s(\R^n):u=0 \text{ a.e.\ on } \Omega^c\}$ of $H^s(\R^n)$ (see, e.g., \cite[Rem.~3.1]{nonlip}), and that $H^s_{\rm ze}(\Omega)$ is continuously embedded in $H^s(\Omega)$ with 
\begin{align}
\label{e:HzeEmb}
\|\phi\|_{H^s(\Omega)}\leq \|\phi \|_{H^s_{\rm ze}(\Omega)}, \qquad \phi\in H^s_{\rm ze}(\Omega). 
\end{align}
We note that $H^s_{\rm ze}(\Omega)=H^s(\Omega)$ (with equivalent norms) if and only if $\cZ_\Omega:H^s(\Omega)\to H^s(\R^n)$ is bounded, which holds if and only if $\chi_\Omega\in\mathfrak{M}(H^s(\R^n))$. 
The latter equivalence holds because $\chi_\Omega u = \cZ_\Omega(u|_\Omega)$ for $u\in L^2(\R^n)$, and because $\cZ_\Omega(\phi)=\chi_\Omega \mathcal{E}^s_\Omega(\phi)$ for $\phi\in H^s(\Omega)$, where $\mathcal{E}^s_\Omega:H^s(\Omega)\to H^s(\R^n)$ is the ``minimal norm extension operator'' on $H^s(\Omega)$, mapping $\phi\in H^s(\Omega)$ to the unique $\tilde{\phi}\in H^s(\R^n)$ such that $\tilde{\phi}|_\Omega=u$ and $\|\phi\|_{H^s(\Omega)}=\|\tilde{\phi}\|_{H^s(\R^n)}$ (see, e.g., \cite[\S 3.1.4]{nonlip}).  
We note also that $H^0(\Omega)=H_{\rm ze}^0(\Omega)=L^2(\Omega)$.

\begin{lemma}[Mapping properties of $\cV_\Omega$ and $\cV_\Omega^*$]
\label{l:MapProp2}
Let $\Omega\subset\R^n$ be bounded and open with $|K\setminus \Omega|=0$. 
For $s\geq 0$ the following operators are bounded:
\begin{align}
\cV_\Omega&:H^s(\Omega)\to H^{\min(s,s_\m^-)+2}(\Omega),\label{e:MP1}\\
\cV_\Omega^*&:H^s(\Omega)\to H^{\min(\min(s,s_\Omega^-)+2,s_{\m,\Omega}^-)}(\Omega),\label{e:MP3}\\
\cV_\Omega^*&:H_{\rm ze}^s(\Omega)\to H_{\rm ze}^{\min(s+2,s_\m^-)}(\Omega)\label{e:MP4}.
\end{align}
If $\Im[\m] \geq 0$ then the following operators are boundedly invertible:
\begin{align}
\label{}
&\mathcal{I}-k^2\mathcal{V}_\Omega:H^{s}(\Omega)\to H^{s}(\Omega), 
&&  %
0\leq s\leq s_\m^-+2,\label{e:MP5}\\ 
&\mathcal{I}-k^2\mathcal{V}_\Omega^*:H^{s}(\Omega)\to H^{s}(\Omega),
&&  %
0\leq s\leq \min(s_\Omega^- + 2,s_{\m,\Omega}^-),\label{e:MP7}\\
&\mathcal{I}-k^2\cV_\Omega^*:H_{\rm ze}^s(\Omega)\to H_{\rm ze}^s(\Omega),
&&  %
0\leq s\leq s_\m^-\label{e:MP8}.
\end{align}

\end{lemma}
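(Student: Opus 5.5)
The plan is to prove the three boundedness statements \eqref{e:MP1}--\eqref{e:MP4} by factorising $\cV_\Omega$ and $\cV_\Omega^*$ as in \eqref{e:VVstar} and tracking the regularity through each factor. Then I would get the invertibility statements \eqref{e:MP5}--\eqref{e:MP8} from Corollary \ref{c:IntEqn} combined with a bootstrap.

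For \eqref{e:MP1}, given $\phi\in H^s(\Omega)$ I use that $|K\setminus\Omega|=0$ gives $\cM_\m\cZ_\Omega\phi=\m\,\mathcal{E}^s_\Omega\phi$, with $\mathcal{E}^s_\Omega$ the minimal norm extension; this is \eqref{e:KOmega} applied to the extension. So $\cM_\m\cZ_\Omega\phi\in H^{\min(s,s_\m^-)}_{\rm comp}(\R^n)$ with norm controlled by $\|\phi\|_{H^s(\Omega)}$. If $s\le s_\m^-$ the multiplier property applies directly; otherwise I use \eqref{e:Mult} to drop to the lower index. Then \eqref{e:NewtReg} and restriction to $\Omega$, as in Lemma \ref{l:Comp} with a cutoff $\chi\equiv1$ near $\overline\Omega$, give \eqref{e:MP1}.

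For \eqref{e:MP3}, write $\m=m\chi_\Omega$ with $m\in\mathfrak{M}(H^{t}(\R^n))$ for $t$ below (or equal to) $s_{\m,\Omega}$. The first factor is $\cZ_\Omega\phi=\chi_\Omega\mathcal{E}^s_\Omega\phi$, which lies in $H^{\min(s,s_\Omega^-)}(\R^n)$ by \eqref{e:Mult}. Applying $\cV$ lifts this to $H^{\min(s,s_\Omega^-)+2}_{\rm loc}$. Multiplying by $m$ keeps the index up to the cap $s_{\m,\Omega}^-$. The factor $\chi_\Omega$ disappears on restriction to $\Omega$, because $(m\chi_\Omega w)|_\Omega=(mw)|_\Omega$. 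For \eqref{e:MP4}, $\phi\in H^s_{\rm ze}(\Omega)$ means $\cZ_\Omega\phi\in H^s(\R^n)$, so $\cV\cZ_\Omega\phi\in H^{s+2}_{\rm loc}$. Then $\cM_\m$ maps this into $H^{\min(s+2,s_\m^-)}$ with support in $K$. Since $|K\setminus\Omega|=0$, this function equals $\cZ_\Omega$ of its restriction, which is exactly membership of $H^{\min(s+2,s_\m^-)}_{\rm ze}(\Omega)$ with the right norm. A cutoff is again used to pass from the loc spaces to global norms.

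For invertibility, injectivity on each space follows from injectivity on $L^2(\Omega)$, using Corollary \ref{c:IntEqn} and the embeddings into $L^2(\Omega)$ (see \eqref{e:HzeEmb}). For surjectivity, let $g\in H^s(\Omega)$ and let $u\in L^2(\Omega)$ be the $L^2$ solution. Then $u=g+k^2\cV_\Omega u$, and I bootstrap with \eqref{e:MP1}: if $u\in H^t(\Omega)$ then $\cV_\Omega u\in H^{\min(t,s_\m^-)+2}(\Omega)$. Each step raises the regularity by $2$, or reaches $\min(s,s_\m^-+2)=s$, after finitely many iterations. The same argument with \eqref{e:MP3} and \eqref{e:MP4} handles \eqref{e:MP7} and \eqref{e:MP8}; the stated ranges of $s$ are exactly those where the caps do not bind. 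Boundedness of the inverses then follows from the bounded inverse theorem, since the operators are bounded bijections between Hilbert spaces.

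I expect the main obstacle to be the bookkeeping for the ``$s^-$'' conventions. In particular, at the endpoint cases such as $s=s_\m^-+2$ one must use $\m\in\mathfrak{M}(H^{s_\m})$ when that is assumed, and an arbitrary $s'<s_\m$ otherwise, and check that the bootstrap still closes. A further delicate point is verifying the identity $\cM_\m\cZ_\Omega\phi=\m\mathcal{E}^s_\Omega\phi$ and its analogue for $m\chi_\Omega$ when $\Omega$ is an arbitrary open set. This is where the hypothesis $|K\setminus\Omega|=0$ enters.
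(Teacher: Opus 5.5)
Your proposal is correct and follows essentially the same route as the paper. You factorise $\cV_\Omega$ and $\cV_\Omega^*$ as in \eqref{e:VVstar}, using $\cZ_\Omega\phi=\chi_\Omega\mathcal{E}^s_\Omega\phi$, $\m=m\chi_\Omega$ and \eqref{e:KOmega} to push regularity through \eqref{e:NewtReg}, and then bootstrap the $L^2(\Omega)$ solution from Corollary \ref{c:IntEqn} using \eqref{e:MP1}--\eqref{e:MP4}; you are slightly more complete than the paper in spelling out injectivity on the smaller spaces and the bounded inverse theorem step.
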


\begin{proof}
The operator $\mathcal{V}_\Omega$ was defined as $\mathcal{V}_\Omega := |_\Omega \circ \mathcal{V} \circ \cM_{\m} \circ \mathcal{Z}_\Omega$. However, since 
$|K\setminus \Omega|=0$, 
for $s\geq 0$ its action on $H^s(\Omega)$ can be equivalently represented as $\mathcal{V}_\Omega = |_\Omega \circ \mathcal{V} \circ \cM_{\m} \circ \mathcal{E}^s_\Omega$, where $\mathcal{E}^s_\Omega$
is the minimal norm extension operator discussed above. 
Then \eqref{e:MP1} follows from 
Lemma \ref{l:MapProp}. 
For \eqref{e:MP5}, 
suppose that $g\in H^s(\Omega)$ for some $0\leq s\leq s_\m^-+2$. By Corollary \ref{c:IntEqn} there exists a unique $u\in L^2(\Omega)$ such that $u = g+k^2\cV_\Omega u$. That $u\in H^s(\Omega)$ follows from repeated applications of the latter equation and \eqref{e:MP1}.  

The operator $\mathcal{V}_\Omega^*$ was defined as $\mathcal{V}_\Omega^* := |_\Omega \circ \cM_\m \circ \cV \circ \mathcal{Z}_\Omega$.
By 
the discussion after \eqref{e:HzeEmb}, 
$\cZ_\Omega:H^s(\Omega)\to H^s(\R^n)$ is bounded for $0\leq s\leq s_\Omega^-$. 
Furthermore, if $0\leq t\leq s_{\m,\Omega}^-$ then there exists a %
function 
$m$ 
such that 
$\m=m\chi_\Omega$, 
with multiplication by $m$ a continuous operator from 
$H^{t}_{\rm loc}(\R^n)$ to $H^{t}(\R^n)$.
Then $|_\Omega \circ \cM_\m =|_\Omega \circ \cM_m$, where $\cM_m$ denotes the operator of multiplication by $m$, so that 
$|_\Omega\circ \cM_\m:H^t_{\rm loc}(\R^n) \to H^t(\Omega)$ for $0\leq t\leq s_{\m,\Omega}^-$. 
On the other hand, 
$\cZ_\Omega:H^s_{\rm ze}(\Omega)\to H^s(\R^n)$ is bounded for all $s\geq 0$, and by \eqref{e:KOmega} the assumption $|K\setminus\Omega|=0$ implies that $\cZ_\Omega\circ|_\Omega\circ\cM_\m=\cM_\m:H^s_{\rm loc}(\R^n)\to H^s(\R^n)$ is continuous for $0\leq s\leq s_\m^-$. 
Then \eqref{e:MP3} and \eqref{e:MP4} follow by combining these observations with \eqref{e:NewtReg}. 
For \eqref{e:MP7} and \eqref{e:MP8}, by Corollary \ref{c:IntEqn} we know that 
for every $\tau \in L^2(\Omega)$ 
there exists a unique $\sigma\in L^2(\Omega)$ such that $\sigma = \tau +k^2\cV_\Omega^* \tau$. Then \eqref{e:MP7}/\eqref{e:MP8} follow from repeated applications of the latter equation and 
\eqref{e:MP3}/\eqref{e:MP4}.  
\end{proof}

The following example presents 
the classical case when $\m\in C^\infty_{\rm comp}(\R^n)$ (as in, e.g., \cite{anderson2024fast}). Examples with non-smooth $\m$ will be considered in \S\ref{s:IFS}. 

\begin{example}
\label{r:Cinfty}
Suppose that $\m\in C^\infty_{\rm comp}(\R^n)$ with $\Im[\m] \geq 0$, and that $\Omega$ is a bounded open set with $|K\setminus \Omega|=0$. Then the following operators are boundedly invertible:
\begin{align}
\label{}
&\mathcal{I}-k^2\mathcal{V}_\Omega:H^{s}(\Omega)\to H^{s}(\Omega), 
&&  
0\leq s<\infty,\label{e:MP5a}\\ 
&\mathcal{I}-k^2\mathcal{V}_\Omega^*:H^{s}(\Omega)\to H^{s}(\Omega),
&&  
0\leq s\leq s_{\Omega}^-+2\leq 5/2,\label{e:MP7a}\\
&\mathcal{I}-k^2\cV_\Omega^*:H_{\rm ze}^s(\Omega)\to H_{\rm ze}^s(\Omega),
&&  
0\leq s<\infty\label{e:MP8a}.
\end{align}

\end{example}

\subsection{Scattering problems and total field formulation}
\label{s:scatt}
Let $u^{\rm i}$ be an \textit{incident field}, by which we mean a 
function that is $C^\infty$ and satisfies $\Delta u^{\rm i}+k^{2}u^{\rm i}=0$ in some open set $D^{\rm i}\subset \R^n$ such that $K\subset D^{\rm i}$.
Then we refer to the FSP \eqref{e:FSP}-\eqref{e:SRC} with 
\begin{align}
\label{e:fscatt}
f:=k^{2}\m u^{\rm i}
\end{align} 
as the \emph{scattering problem} for $u^{\rm i}$. 
Assuming that $\Im[\m] \geq 0$,  
we refer to the resulting unique solution $u^{\rm s}$ of \eqref{e:FSP}-\eqref{e:SRC} as the \emph{scattered field}, and note that  
the \emph{total field} $u^{\rm t}:=u^{\rm i}+u^{\rm s}$ satisfies the equation $\Delta u^{\rm t} +k^{2}(1+\m)u^{\rm t}=0$ in $D^{\rm i}$. 
Examples of incident fields include a plane wave $u^{\rm i}(x) := e^{ik\vartheta\cdot x}$, where $\vartheta\in \R^n$ is a unit direction vector, for which $D^{\rm i}=\R^n$, and a point source $u^{\rm i}(x):=\Phi(x,x_0)$ for some $x_0\in \R^n\setminus K$, for which $D^{\rm i}=\R^n\setminus\{x_0\}$.

In this context, 
given a bounded measurable set 
$\Omega$ such that 
$\overline{\Omega}\subset D^{\rm i}$,   
by substituting 
\eqref{e:fscatt} 
into the RLSE \eqref{e:IntEqn} and adding $u^{\rm i}|_\Omega$ to both sides we find that  
$u:=u^{\rm t}|_\Omega = \tilde{u}+u^{\rm i}|_\Omega$, the restriction of the total field to $\Omega$, is an element of $L^2(\Omega)$ and satisfies the 
RLSE with a modified right-hand side:%
\begin{align}
\label{e:IntEqn3}
(\mathcal{I} - k^2\mathcal{V}_\Omega) u  = 
u^{\rm i}|_\Omega.
\end{align}
If $|K\setminus \Omega|=0$, then, by Corollary \ref{c:IntEqn}, \eqref{e:IntEqn3} has a unique solution $u$, and, by Proposition \ref{p:IntEqn}, the resulting fields can be written in terms of $u$ using \eqref{e:usDef}, which gives
\begin{align}
\label{e:usRep}
u^{\rm s}=k^2\mathcal{V}(\cM_{\m}(\mathcal{Z}_\Omega(u)))\qquad \text{in }\R^n,\\
\label{e:utRep}
u^{\rm t}:=u^{\rm i}+k^2\mathcal{V}(\cM_{\m}(\mathcal{Z}_\Omega(u)))\qquad \text{in }D^{\rm i}.
\end{align}
An attraction of the total field formulation \eqref{e:IntEqn3}-\eqref{e:usRep} for the RLSE 
over the scattered field formulation 
\eqref{e:IntEqn}-\eqref{e:usDef}  
is that we avoid the need to evaluate $\cV f$. 
For completeness we note that under the assumption of \eqref{e:fscatt} the RILSE \eqref{e:RILSE} becomes 
\begin{align}
\label{e:RILSE2}
(\mathcal{I} - k^2\mathcal{V}_\Omega^*)\sigma = 
k^2\m|_\Omega u^{\rm i}|_\Omega.
\end{align}

\section{The volume integral method (VIM)}
\label{s:VIM}
In \S\ref{s:Restr} we showed that the FSP/LSE can be formulated equivalently in terms of either the RLSE or the RILSE. Either formulation can be used as the starting point for a numerical discretisation, but for brevity we shall focus only on discretisation of the RLSE 
\begin{align}
\label{e:IntEqn2a}
(\mathcal{I} - k^2\mathcal{V}_\Omega)u = 
g.
\end{align}
We assume henceforth that
\begin{align}
\label{e:nAssumption}
\Im[\m](x)\geq 0 \text{ a.e.\ on }\R^n,
\end{align}
and that %
\begin{align}
\label{e:OmegaAssumption}
\Omega\subset\R^n \text{ is bounded and open, with } 
|K\setminus \Omega|=0.
\end{align}
The following result is an immediate consequence of Corollary \ref{c:IntEqn}.

\begin{corollary}
[Variational problem for RLSE]
Under the
assumptions \eqref{e:nAssumption} and \eqref{e:OmegaAssumption}, 
the integral equation \eqref{e:IntEqn2a} has a unique solution $u \in L^2(\Omega)$, which can be characterized as the unique solution of the 
following 
variational problem: %
Find 
$u \in L^2(\Omega)$ such that
\begin{equation}\label{e:VarProb2}
b(u, v)=F(v), \quad \forall v \in L^{2}(\Omega),	
\end{equation}
where, for $u,v\in L^2(\Omega)$,
\begin{equation*}
\begin{aligned}
b(u, v) := 	\left(\left(\mathcal{I}-k^2\mathcal{V}_{\Omega}\right) u, v \right)_{L^2(\Omega)}  
=\int_{\Omega}\left(u(x) -  k^2\int_{\Omega} \Phi(x,y)\m(y)u(y)\,\mathrm{d} y\right) \overline{v(x)}\, \mathrm{d} x
\end{aligned}
\end{equation*}
and 
\begin{equation*}
F(v):= \left(g, v\right)_{L^2(\Omega)}=\int_{\Omega} g(x) \overline{v(x)} \,\mathrm{d} x.
\end{equation*}
\end{corollary}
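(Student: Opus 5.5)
The plan is to read the statement as a direct restatement of Corollary \ref{c:IntEqn}, combined with the elementary fact that an element of a Hilbert space is determined by its inner products with all elements of the space.

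First, the assumptions \eqref{e:nAssumption} and \eqref{e:OmegaAssumption} imply that $\Omega$ is bounded and measurable, since it is open, with $|K\setminus\Omega|=0$, and that $\Im[\m]\geq 0$. These are exactly the hypotheses of Corollary \ref{c:IntEqn}. Applying that corollary with $g\in L^2(\Omega)$ gives a unique $u\in L^2(\Omega)$ solving \eqref{e:IntEqn2a}.

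Second, we show that \eqref{e:IntEqn2a} and \eqref{e:VarProb2} have the same solutions in $L^2(\Omega)$.
\begin{itemize}
\item If $u$ solves \eqref{e:IntEqn2a}, take the $L^2(\Omega)$ inner product with an arbitrary $v\in L^2(\Omega)$. This gives $b(u,v)=F(v)$.
\item Conversely, suppose $b(u,v)=F(v)$ for all $v\in L^2(\Omega)$. Then $((\mathcal{I}-k^2\mathcal{V}_\Omega)u-g,v)_{L^2(\Omega)}=0$ for all $v$. The choice $v=(\mathcal{I}-k^2\mathcal{V}_\Omega)u-g$, which lies in $L^2(\Omega)$ because $\mathcal{V}_\Omega$ is bounded on $L^2(\Omega)$, shows that this residual vanishes. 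Hence $u$ solves \eqref{e:IntEqn2a}.
\end{itemize}
Existence and uniqueness for \eqref{e:VarProb2} therefore follow from those for \eqref{e:IntEqn2a}.

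Third, the integral form of $b$ follows from the representation $\mathcal{V}_\Omega\phi(x)=\int_\Omega\Phi(x,y)\m(y)\phi(y)\,\rd y$, which is recorded after \eqref{e:VVstar}.

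There is no real obstacle in this argument. The only points to check are that $\mathcal{V}_\Omega:L^2(\Omega)\to L^2(\Omega)$ is bounded, so that $b$ and $F$ are well defined (this follows from Lemma \ref{l:Comp} or from the definition \eqref{e:VVstar}), and that the test space is all of $L^2(\Omega)$, so that the residual itself is an admissible test function.
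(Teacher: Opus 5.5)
Your proposal is correct and follows the paper's route. The paper simply declares the result an immediate consequence of Corollary \ref{c:IntEqn}, and your argument — invoke that corollary, then test with the residual to show that the operator equation and the variational problem are equivalent — just writes out the routine details the paper omits.
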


To solve \eqref{e:VarProb2} we use a Galerkin volume integral method (VIM) based on piecewise-polynomial approximation on a 
mesh of $\Omega$.

\begin{definition}[Mesh]
\label{d:Mesh}
Let $\Omega\subset\R^n$ be non-empty, bounded and open.
We say that a finite collection $\cT=\{T\}$ of non-empty bounded open subsets of $\Omega$ is a \emph{mesh} of $\Omega$ if 
\[ 
T\cap T'= \emptyset \text{ for }T\neq T' \quad \text{and} \quad
\bigg| \Omega \setminus \bigcup_{T\in\cT}  T \bigg|=0. \]
We define the meshwidth of $\cT$ by $h:=\max_{T\in\cT}\diam(T)$. 
\end{definition}
Given 
$\Omega$ and $h>0$ we can always find a mesh $\cT$ of $\Omega$ of meshwidth $\leq h$, e.g.\ 
by considering the intersection with $\Omega$ of a set of dyadic cubes with diameter $\leq h$. 

\begin{definition}[Piecewise polynomial space $V_N$]
\label{d:Poly}
Given a mesh $\cT$ of $\Omega$ and $p\in \N_0$ we denote the space of piecewise polynomial functions of total degree at most $p$ on $\cT$ by
\begin{align}
\label{e:VhDef}
V_N:= \{v\in L^2(\Omega):v|_{ T} \text{ is a polynomial of total degree $\leq p$ for each } T\in\cT\}\subset L^2(\Omega),
\end{align}
and we denote by $P_N:L^2(\Omega)\to V_N$ the $L^2$-orthogonal projection onto $V_N$. 
\end{definition}

\begin{definition}[Galerkin VIM]
Our Galerkin VIM is as follows: find 
$u_N\in V_N $ such that
\begin{equation}\label{e:VIM}
b(u_N, v_N)=F(v_N), \quad \forall v_N \in V_N ,	
\end{equation}
or, equivalently, 
\begin{align}
\label{e:VIMProj}
(\cI - k^2 P_N \cV_\Omega)u_N = P_N g.
\end{align}
\end{definition}

Let $\{\psi_j\}_{j=1}^N$ be an $L^2$-orthonormal basis of $V_N $.  
Then, writing $u_N=\sum_{j=1}^N c_j\psi_j$, the VIM \eqref{e:VIM} is equivalent to 
solving for $\bc=(c_1,\ldots,c_N)^T\in \C^N$
the linear system 
\begin{align}
\label{e:LinSys}
A\bc = \bg,
\end{align}
where $A=(A_{ij})_{i,j=1}^N \in \C^{N\times N}$ and $\bg=(g_1,\ldots,g_N)^T\in \C^N$ are given by
\begin{align}
\label{e:LinSys2}
A_{ij} = \delta_{ij} - k^2\int_{ \Omega}\int_{ \Omega}\Phi(x,y)\m(y)\psi_j(y)\overline{\psi_i(x)}\,\rd y\rd x, 
\qquad 
g_{i} = \int_{\Omega}g(x)\overline{\psi_i(x)}\,\rd x.
\end{align}

\begin{remark}[Scattered and total field approximations]
\label{r:Scatt}
In the scattering problem 
discussed in \S\ref{s:scatt}, we have $g=u^{\rm i}|_\Omega$ and $u=u^{\rm t}|_\Omega$. 
Then if $u_N$ solves \eqref{e:VIM} 
we can insert $u_N$ into \eqref{e:usRep} to obtain approximations $u^{\rm s}_N$ and $u^{\rm t}_N$ of the scattered field $u^{\rm s}$ and the total field $u^{\rm t}$, viz.
\begin{align}
\label{e:ushDef}
u^{\rm s}_N:=k^2\mathcal{V}(\cM(\mathcal{Z}_\Omega(u_N))),\qquad \text{in }\R^n,\\
\label{e:uthDef}
u^{\rm t}_N:=u^{\rm i}+k^2\mathcal{V}(\cM(\mathcal{Z}_\Omega(u_N))),\qquad \text{in }D^{\rm i}.
\end{align}
\end{remark}

\subsection{Abstract semi-discrete error analysis}
\label{s:Error}

We now carry out a semi-discrete error analysis of our Galerkin VIM for arbitrary meshes of arbitrary $\Omega$ satisfying \eqref{e:OmegaAssumption}. 
For efficiency of presentation we shall present 
results simultaneously for both 
the $h$- and $p$-versions of our Galerkin VIM, which are defined as follows:
\begin{align}
\text{\underline{$h$-version}:}& \text{ fix $p\in \N_0$ and consider a sequence of meshes of $\Omega$}
\label{e:hversion}\\
&\text{ with associated meshwidths $h\to 0$;}
\notag\\
\text{\underline{$p$-version}:}& \text{ fix a mesh of $\Omega$ with some meshwidth $h>0$ and let $p\to\infty$.}\label{e:pversion}
\end{align}

Our analysis follows the classical approach to the analysis of projection methods for integral equations, a historical overview of which can be found in e.g.\ \cite[\S3]{atkinson1997numerical}. However, to apply this approach in the current context we need certain results that are not available in the classical literature. 

We first note the following $h$- and $p$-explicit best approximation results, taken from 
\cite{polyapprox}. (The $p=0$, $r=1$ version of Proposition \ref{p:BA} appeared already in \cite[Cor.~3.25]{dequalsnpaper}.) We emphasize that Proposition \ref{p:BA} holds for arbitrary meshes of arbitrary domains. 

\begin{proposition}[Best approximation results, 
{\cite[Cor.~2.7]{polyapprox}}]
\label{p:BA}
Let $n,r\in \N$ and let $h_0>0$. There exists a constant $C>0$, depending only on $n$, $r$ and $h_0$, such that if $\Omega\subset\R^n$ is any non-empty bounded open set, $\cT=\{ T\}$ is any mesh of $\Omega$ with meshwidth $h\in (0,h_0]$, 
$p\geq r-1$ 
and $0\leq s\leq r$, 
then
\begin{align}
\label{e:BAHs}
\min_{v_N\in V_N }
\|v-v_N\|_{L^2(\Omega)} 
\leq 
\left(\frac{Ch}{p+1}\right)^s
\|v\|_{H^s(\Omega)},
\qquad v\in H^s(\Omega).
\end{align} 
\end{proposition}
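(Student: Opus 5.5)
The plan is to construct an explicit piecewise-polynomial approximant of $v$ element by element, and to bound its error by a local polynomial approximation estimate on each $T\in\cT$. The difficulty is that the elements $T$ are arbitrary open sets, with no regularity and possibly fractal boundary. So standard Bramble--Hilbert arguments on $T$ itself, which need star-shapedness or Lipschitz regularity, are not available. To get around this, I will work instead with the extension $V:=\mathcal{E}^s_\Omega v\in H^s(\R^n)$, which satisfies $\|V\|_{H^s(\R^n)}=\|v\|_{H^s(\Omega)}$, and approximate $V$ on a ball (or cube) $B_T\supset T$ of diameter comparable to $\diam T\le h$. Every bounded set of diameter $d$ lies in a ball of radius $d$, so such $B_T$ always exist. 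I then set $v_N|_T:=q_T|_T$, where $q_T$ is a polynomial of degree $\le p$ approximating $V$ on $B_T$. This gives
\[
\|v-v_N\|^2_{L^2(\Omega)}=\sum_T\|V-q_T\|^2_{L^2(T)}\le\sum_T\|V-q_T\|^2_{L^2(B_T)} .
\]

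For the local estimate on a ball of radius $\rho\le h$, I would use an $hp$-explicit polynomial approximation result. Integer-order estimates of the form $\min_q\|V-q\|_{L^2(B)}\le (C\rho/(p+1))^r|V|_{H^r(B)}$ for $p\ge r-1$ follow from expansion in orthogonal polynomials on the reference ball (or cube) and scaling. The $s=0$ case is trivial: take $q=0$. Fractional $s\in(0,r)$ then follows by interpolation, either applied to the global operator $V\mapsto v_N$ or handled via K-functional arguments.

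The main obstacle is the overlap of the balls $B_T$. They are not disjoint, so $\sum_T|V|^2_{H^r(B_T)}$ need not be bounded by $|V|^2_{H^r(\R^n)}$. Worse, the number of elements $T$ meeting a given ball is not controlled by any shape regularity. Elements may be tiny, and many small elements can cluster inside a single ball of radius $h$. My fix is to replace the per-element balls with a fixed overlay. Take a uniform Cartesian grid of cubes $Q$ of side $\sim h$; each element $T$ meets only a bounded number (depending on $n$) of cubes, and each lies inside the patch $\widetilde Q$ of neighbouring cubes. Assign each $T$ to a cube $Q$ it meets, and use the single polynomial $q_{\widetilde Q}$ approximating $V$ on $\widetilde Q$ for every $T$ assigned to $Q$. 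Since the elements are disjoint,
\[
\sum_{T\mapsto Q}\|V-q_{\widetilde Q}\|^2_{L^2(T)}\le\|V-q_{\widetilde Q}\|^2_{L^2(\widetilde Q)},
\]
and the patches $\widetilde Q$ have bounded overlap $3^n$. Summing over $Q$ then gives $\|v-v_N\|_{L^2(\Omega)}\le C(h/(p+1))^r\|V\|_{H^r(\R^n)}$.

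For fractional $s$, I would interpolate between $s=0$ and $s=r$ for the fixed linear map $V\mapsto V-\sum_T \chi_T q_{\widetilde Q(T)}$ from $H^s(\R^n)$ to $L^2(\Omega)$. Two points need care here. The first is making the patch polynomial operator linear, for instance by using $L^2(\widetilde Q)$-projection, which is bounded on $L^2$ trivially. The second is checking that the exact interpolation space $[L^2,H^r]_{s/r}=H^s(\R^n)$. The constant depends on $h_0$ only through the mild constraint that the grid side is $\sim h\le h_0$, which also covers the case $p+1$ large relative to $h$.
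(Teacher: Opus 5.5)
The paper gives no proof of this proposition; it imports it directly from \cite[Cor.~2.7]{polyapprox}. Your proposal is therefore a self-contained alternative, and its core mechanism is sound. You extend by $\mathcal{E}^s_\Omega$, overlay a grid of half-open cubes of side at least $h$, assign each element to one cube it meets, and use a single patch polynomial for all elements assigned to that cube. This handles both the arbitrary element shapes and the absence of any bound on how many elements cluster near a point. Indeed, disjointness of the elements and the $3^n$-fold overlap of the patches give, for every $V\in H^r(\R^n)$,
\[
\min_{v_N\in V_N}\|V|_\Omega-v_N\|_{L^2(\Omega)}\le\left(\frac{C_1h}{p+1}\right)^r|V|_{H^r(\R^n)}\le\left(\frac{C_1h}{p+1}\right)^r\|V\|_{H^r(\R^n)},
\]
with $C_1=C_1(n,r)$. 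Two small corrections. Side $\sim h$ is not quite enough: you need side $\ge h$ to get $T\subset\widetilde Q$. And since $V_N$ uses total degree, on the cube you should use $\mathbb{Q}_{\lfloor p/n\rfloor}\subset\mathbb{P}_p$ when $\lfloor p/n\rfloor\ge r-1$, and Bramble--Hilbert with $\mathbb{P}_{r-1}$ for the finitely many remaining $p$. Because only the $H^r$ seminorm enters, your constant does not depend on $h_0$ at all, which is slightly more than the statement asks.

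The step that does not deliver the statement as written is the interpolation. Consider your operator $E:V\mapsto V|_\Omega-\sum_T\chi_T q_{\widetilde Q(T)}$, built from $L^2(\widetilde Q)$-projections. The overlap argument only gives $\|E\|_{L^2(\R^n)\to L^2(\Omega)}\le 3^{n/2}$, so interpolation yields the bound $3^{\frac{n}{2}(1-s/r)}(C_1h/(p+1))^s$. As $s\to0^+$ the prefactor tends to $3^{n/2}$, while $(Ch/(p+1))^s\to1$. So this cannot be written as $(Ch/(p+1))^s$ with $C$ independent of $s$: for fixed $s$ you only get $C=3^{n/(2s)}C_1$. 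The trivial bound $\|v\|_{L^2(\Omega)}\le\|v\|_{H^s(\Omega)}$ does not rescue small $s$ once $Ch/(p+1)<1$. For the paper's later uses, where $s$ is fixed, this would suffice, but the proposition asserts a constant uniform over $0\le s\le r$.

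The fix is to interpolate $F:V\mapsto(\cI-P_N)(V|_\Omega)$ instead.
\begin{itemize}
\item Since $\cI-P_N$ is an orthogonal projection, $\|F\|_{L^2(\R^n)\to L^2(\Omega)}\le 1$.
\item Since $\|FV\|_{L^2(\Omega)}=\min_{v_N\in V_N}\|V|_\Omega-v_N\|_{L^2(\Omega)}$, your patch estimate above gives $\|F\|_{H^r(\R^n)\to L^2(\Omega)}\le(C_1h/(p+1))^r$.
\item Complex interpolation, with $[L^2(\R^n),H^r(\R^n)]_{s/r}=H^s(\R^n)$ isometrically for the Bessel potential norms, then gives $\|F\|_{H^s(\R^n)\to L^2(\Omega)}\le(C_1h/(p+1))^s$ for all $0\le s\le r$.
\item Applying this to $V=\mathcal{E}^s_\Omega v$, for which $\|V\|_{H^s(\R^n)}=\|v\|_{H^s(\Omega)}$, gives \eqref{e:BAHs}.
\end{itemize}
This also removes the need to make the patch operator linear.
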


\begin{theorem}[Semi-discrete convergence]
\label{t:GalWP}
Let $\m$ and $\Omega$ satisfy \eqref{e:nAssumption} and \eqref{e:OmegaAssumption}, and suppose that $g\in H^s(\Omega)$ for some $0\leq s\leq s_\m^-+2$. Then the unique solution of \eqref{e:IntEqn2a} satisfies $u\in H^s(\Omega)$. 
For sufficiently small $h$ (for the $h$-version) or sufficiently large $p$ (for the $p$-version) the discrete problem \eqref{e:VIM} has a unique solution $u_N\in V_N $ and %
\begin{align}
\label{e:GalErr}
\|u-u_N\|_{L^2(\Omega)}%
=O\left(\frac{h^{\min(p+1,s)}}{(p+1)^{s}}\right), \quad \text{as }\, h\to 0 \; \text{ or } \; p\to \infty.
\end{align}
\end{theorem}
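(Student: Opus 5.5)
The proof uses the classical projection-method argument for second-kind equations with a compact operator. We combine the regularity of Lemma~\ref{l:MapProp2} with the best approximation bound of Proposition~\ref{p:BA}. The error rate then comes from C\'ea-type quasi-optimality.

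\emph{Regularity.} That $u\in H^s(\Omega)$ follows directly from the bounded invertibility \eqref{e:MP5} of $\mathcal{I}-k^2\mathcal{V}_\Omega$ on $H^s(\Omega)$ for $0\leq s\leq s_\m^-+2$.

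\emph{Discrete stability.} First I show that $P_N\to\mathcal{I}$ strongly on $L^2(\Omega)$ in both limits. By Proposition~\ref{p:BA} with $r=1$, $s=1$, we have $\|v-P_Nv\|_{L^2(\Omega)}\leq Ch(p+1)^{-1}\|v\|_{H^1(\Omega)}$ for $v\in H^1(\Omega)$. This tends to zero as $h\to0$ with $p$ fixed, or as $p\to\infty$ with $h$ fixed. Moreover $\|P_N\|=1$ and $H^1(\Omega)$ is dense in $L^2(\Omega)$, since it contains $C^\infty_{\rm comp}(\Omega)$. A density argument then gives strong convergence.

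By Lemma~\ref{l:Comp}, $\mathcal{V}_\Omega$ is compact. Strong convergence of a uniformly bounded family is uniform on compact sets, so $\|(\mathcal{I}-P_N)\mathcal{V}_\Omega\|_{L^2\to L^2}\to0$. Set $A:=\mathcal{I}-k^2\mathcal{V}_\Omega$, which is invertible on $L^2(\Omega)$ by Corollary~\ref{c:IntEqn}. Then
\[
\mathcal{I}-k^2P_N\mathcal{V}_\Omega=A+k^2(\mathcal{I}-P_N)\mathcal{V}_\Omega .
\]
A Neumann series argument shows this is invertible on $L^2(\Omega)$ with uniformly bounded inverse once $\|k^2(\mathcal{I}-P_N)\mathcal{V}_\Omega\|<\|A^{-1}\|^{-1}$. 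This holds for $h$ small or $p$ large. Restricted to $V_N$, it gives unique solvability of \eqref{e:VIMProj}, because any $L^2$ solution of \eqref{e:VIMProj} lies in $V_N$. The main point to get right is the uniformity in the $p$-version; the argument above is uniform precisely because Proposition~\ref{p:BA} holds for arbitrary meshes and gives a vanishing factor in both regimes.

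\emph{Quasi-optimality and rate.} Applying $P_N$ to \eqref{e:IntEqn2a} gives
\[
(\mathcal{I}-k^2P_N\mathcal{V}_\Omega)(u-u_N)=u-P_Nu .
\]
Hence $\|u-u_N\|_{L^2(\Omega)}\leq C_{\rm stab}\|u-P_Nu\|_{L^2(\Omega)}=C_{\rm stab}\min_{v_N\in V_N}\|u-v_N\|_{L^2(\Omega)}$.

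Now apply Proposition~\ref{p:BA}. If $s\leq p+1$, take $r=\lceil s\rceil$ (or $r=1$ if $s=0$), so that $p\geq r-1$. This gives the bound $(Ch/(p+1))^s\|u\|_{H^s(\Omega)}$.

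If $s>p+1$, which occurs only in the $h$-version with fixed $p$, use $u\in H^{p+1}(\Omega)$ with $r=p+1$ instead. The bound is $O(h^{p+1})$. Since $p$ is fixed, this is the same as $O(h^{p+1}/(p+1)^s)$.

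In the $p$-version $h$ is fixed, and eventually $p+1\geq s$. The bound is then $O((p+1)^{-s})=O(h^{\min(p+1,s)}(p+1)^{-s})$.

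Combining the cases gives \eqref{e:GalErr}.
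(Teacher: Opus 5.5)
Your proof is correct and follows the same route as the paper: regularity from \eqref{e:MP5}, density of the $V_N$ via Proposition~\ref{p:BA}, then uniform invertibility of $\mathcal{I}-k^2P_N\mathcal{V}_\Omega$ with quasi-optimality $\|u-u_N\|_{L^2(\Omega)}\lesssim\|u-P_Nu\|_{L^2(\Omega)}$, and finally Proposition~\ref{p:BA} again for the rate. The only difference is that you prove the compactness/Neumann-series step and the quasi-optimality identity explicitly, where the paper cites \cite[Thm~3.1.1 \& Lem.~3.1.2]{atkinson1997numerical}, and you spell out the choice of $r$ in the final application of Proposition~\ref{p:BA}.
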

\begin{proof}
As already noted in Lemma \ref{l:Comp} and Corollary \ref{c:IntEqn}, the 
operator 
$\cI-k^2\cV_\Omega:L^2(\Omega)\to L^2(\Omega)$ 
is injective and a compact perturbation of the identity, from which it follows that \eqref{e:IntEqn2a} 
has a unique solution $u\in L^2(\Omega)$. 
The fact that $u\in H^s(\Omega)$ under the stated assumptions on $g$ and $s$ follows by Lemma \ref{l:MapProp2} (specifically, by \eqref{e:MP5}). 
By assumption, either we have a fixed maximum polynomial degree $p$ and a sequence of meshes of $\Omega$ with associated meshwidths $h$ tending to zero, or we have a fixed mesh of some meshwidth $h$ and we consider the limit as the polynomial degree $p\to \infty$. In either case, the associated sequence of approximation spaces $V_N$ 
forms a dense sequence in $L^2(\Omega)$, as follows from the following standard argument. Let $v\in L^2(\Omega)$ and $\eps>0$. Then by the density of $C^\infty_{\rm comp}(\Omega)$ in $L^{2}(\Omega)$ \cite[Corollary 3.5]{mclean} there exists 
$\phi \in C^\infty_{\rm comp}(\Omega)$ such that 
$\|u-\phi\|_{L^2(\Omega)}<\eps/2$, and by 
Proposition \ref{p:BA} we can find $h>0$ sufficiently small and a mesh $\cT$ of meshwidth $\leq h$ (for the $h$-version), or $p$ sufficiently large (for the $p$-version), such that $\|\phi-P_N\phi\|_{L^2(\Omega)}<\eps/2$, so that by the triangle inequality $\|u-P_N\phi\|_{L^2(\Omega)}<\eps$.
Hence by 
\cite[Thm~3.1.1 \& Lem.~3.1.2]{atkinson1997numerical} 
there exist $C_{qo}>0$ and $h_0>0$ (or $p_0\in \N_0$) such that for each $0<h\leq h_0$ (or for each $p\geq p_0$) the operator $\cI-k^2P_N\cV_\Omega$ is invertible 
on $L^2(\Omega)$ 
with 
\begin{align}
\label{e:UnifBd}
\|(\cI-k^2P_N\cV_\Omega)^{-1}\|\leq C_{qo},
\end{align}
the discrete problem \eqref{e:VIM} has a unique solution $u_N\in V_N $, and
\begin{align}
\label{e:QO}
\|u-u_N\|_{L^2(\Omega)}\leq 
C_{qo}
\|u-P_N u\|_{L^2(\Omega)},
\end{align}
from which \eqref{e:GalErr} follows by 
Proposition \ref{p:BA}.
\end{proof}

In applications one is often interested in the evaluation of bounded linear functionals of $u$, rather than $u$ itself. 
We recall that if $J$ is a bounded linear functional on $L^2(\Omega)$ then by the Riesz representation theorem there exists $\jj \in L^2(\Omega)$ such that 
\begin{align}
\label{e:JDef}
J(v) = (v,\overline{\jj }) = \int_\Omega \jj (y)v(y)\,\mathrm{d} y, \qquad v\in L^2(\Omega).
\end{align}
Whenever $J$ is of this form,%
the Cauchy-Schwarz inequality gives 
\begin{align}
\label{e:JErr}
|J(u)-J(u_N)|\leq 
\|u-u_N\|_{L^2(\Omega)}\|\jj \|_{L^2(\Omega)},
\end{align}
so that $J(u_N)$ converges to $J(u)$ at least as fast as $u_N$ converges to $u$.
Under additional assumptions, we can prove %
superconvergence. 

\begin{theorem}[Semi-discrete superconvergence of functionals]
\label{t:SupCon}
Under the assumptions of Theorem \ref{t:GalWP}, 
suppose that either $\jj \in H^t(\Omega)$ for some $0\leq t\leq \min(s_\Omega^-+2,s_{\m,\Omega}^-)$, or $\jj \in H_{\rm ze}^t(\Omega)$ for some $0\leq t\leq s_\m^-$.  
Then
\begin{align}
\label{e:SupCon}
|J(u)-J(u_N)|
=O\left(\frac{h^{\min(p+1,s)+\min(p+1,t)}}{(p+1)^{s+t}}\right), 
\quad \text{as }\, h\to 0 \; \text{ or } \; p\to \infty.
\end{align}
\end{theorem}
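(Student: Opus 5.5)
This is the classical Aubin--Nitsche-type duality argument for projection methods, carried out in the setting of the adjoint operator $\cV_\Omega^*$.

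\textbf{Step 1: the dual problem.} We introduce the solution $z\in L^2(\Omega)$ of
\[
(\cI-k^2\cV_\Omega^*)z=\jj .
\]
By Corollary \ref{c:IntEqn} this equation is uniquely solvable. Define $w:=\overline{z}$. Using the adjoint relation \eqref{e:Adj}, for every $v\in L^2(\Omega)$ we get
\[
J(v)=(v,\overline{\jj})=(v,\overline{(\cI-k^2\cV_\Omega^*)z})=((\cI-k^2\cV_\Omega)v,w)=b(v,w).
\]

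\textbf{Step 2: regularity of the dual solution.} If $\jj\in H^t(\Omega)$ with $0\le t\le \min(s_\Omega^-+2,s_{\m,\Omega}^-)$, then \eqref{e:MP7} gives $z\in H^t(\Omega)$ with $\|z\|_{H^t(\Omega)}\lesssim\|\jj\|_{H^t(\Omega)}$. In the alternative case $\jj\in H^t_{\rm ze}(\Omega)$ with $0\le t\le s_\m^-$, \eqref{e:MP8} gives $z\in H^t_{\rm ze}(\Omega)\subset H^t(\Omega)$, using the embedding \eqref{e:HzeEmb}. Complex conjugation preserves $H^t(\Omega)$ norms, so in either case $w\in H^t(\Omega)$.

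\textbf{Step 3: Galerkin orthogonality.} Since $b(u-u_N,v_N)=0$ for all $v_N\in V_N$, and $P_N w\in V_N$, we can subtract $b(u-u_N,P_Nw)$ and bound
\[
|J(u)-J(u_N)|=|b(u-u_N,w-P_Nw)|\le \|\cI-k^2\cV_\Omega\|\,\|u-u_N\|_{L^2(\Omega)}\|w-P_Nw\|_{L^2(\Omega)}.
\]

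\textbf{Step 4: combining the rates.} The first factor is estimated by Theorem \ref{t:GalWP}, which gives the rate $h^{\min(p+1,s)}/(p+1)^s$. For the second factor, $P_N$ is the $L^2$-orthogonal projection, so $\|w-P_Nw\|$ is a best approximation error. Proposition \ref{p:BA} with $r=\lceil t\rceil$ then gives the bound $(Ch/(p+1))^{t}\|w\|_{H^t}$ whenever $p\ge r-1$.

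For the $h$-version with $p+1<t$, we cannot use exponent $t$ directly. Instead, since $w\in H^{p+1}(\Omega)$ as well, we apply Proposition \ref{p:BA} with $s=r=p+1$, which yields the rate $h^{p+1}$. This matches the $h^{\min(p+1,t)}$ in \eqref{e:SupCon}. The $(p+1)^{-t}$ factor is only relevant in the $p$-version, where $p\to\infty$ and eventually $p\ge \lceil t\rceil -1$, so the full rate $(h/(p+1))^t$ applies. The same bookkeeping is used in the proof of Theorem \ref{t:GalWP}.

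Multiplying the two rates gives \eqref{e:SupCon}.

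\textbf{Main obstacle.} The only delicate point is Step 2: correctly matching the two admissible ranges of $t$ to \eqref{e:MP7} and \eqref{e:MP8}. The $H_{\rm ze}$ case needs the embedding \eqref{e:HzeEmb} to bring the regularity back into $H^t(\Omega)$, which is the space Proposition \ref{p:BA} requires. The rest is routine.
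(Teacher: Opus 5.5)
Your proposal is correct and follows essentially the same route as the paper: a dual problem $b(v,\zeta)=J(v)$ (your $w=\overline{z}$ with $(\cI-k^2\cV_\Omega^*)z=\jj$), dual regularity from \eqref{e:MP7}/\eqref{e:MP8} together with \eqref{e:HzeEmb}, Galerkin orthogonality, and then Theorem~\ref{t:GalWP} and Proposition~\ref{p:BA}. One small point is that for $t=0$ your choice $r=\lceil t\rceil=0$ is not admissible in Proposition~\ref{p:BA}, but that case is trivial (the paper handles it directly via \eqref{e:JErr}, and the bound $\|w-P_Nw\|_{L^2(\Omega)}\leq\|w\|_{L^2(\Omega)}$ also suffices).
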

\begin{proof}
The case $t=0$ is covered by Theorem \ref{t:GalWP} and \eqref{e:JErr}. 
For the case $t>0$ we introduce the dual problem: given a 
functional $J$ of the form \eqref{e:JDef}, find $\zeta\in L^2(\Omega)$ such that 
\begin{align}
 \label{e:DualProb}
 b(v,\zeta)=J(v), \qquad \forall v\in L^2(\Omega).
 \end{align}
By \eqref{e:Adj}, \eqref{e:DualProb} is equivalent to the equation
\[ (\cI - k^2\cV_\Omega^*) \overline{\zeta} = \jj . \]
Hence, by Lemma \ref{l:MapProp2}, if either 
$\jj \in H^t(\Omega)$ for some $0\leq t\leq \min(s_\Omega^-+2,s_{\m,\Omega}^-)$ or $\jj \in H_{\rm ze}^t(\Omega)$ for some $0\leq t\leq s_\m^-$ then the solution $\zeta$ of \eqref{e:DualProb} satisfies $\zeta\in H^t(\Omega)$ (recall that $H^t_{\rm ze}(\Omega)$ is continuously embedded in $H^t(\Omega)$ for $t\geq 0$). By \eqref{e:VarProb2} and \eqref{e:VIM} we have Galerkin orthogonality, i.e.\ that 
\begin{align}
\label{e:GalOrth}
b(u-u_N,v_N)=0, \qquad \text{for all }v_N\in V_N.
\end{align}
This gives 
\begin{align*}
\label{}
|J(u)-J(u_N)| = |b(u-u_N,\zeta)| &= |b(u-u_N,\zeta-P_N\zeta)| \\
&\leq \|\cI-k^2\cV_\Omega\|_{L^2(\Omega)\to L^2(\Omega)} \|u-u_N\|_{L^2(\Omega)}\|\zeta-P_N\zeta\|_{L^2(\Omega)},
\end{align*}
from which \eqref{e:SupCon} follows by Theorem \ref{t:GalWP} and Proposition \ref{p:BA}.
\end{proof}

By adapting the arguments from \cite{sloan1984iterated} (see also \cite[\S3.4.1]{atkinson1997numerical}) we can also prove superconvergence of the ``iterated'' Galerkin solution. 

\begin{theorem}[Semi-discrete superconvergence of iterated Galerkin solution]
\label{t:SupConI}
Under the assumptions of Theorem \ref{t:GalWP}, %
define the iterated Galerkin solution by 
\begin{align}
\label{e:IterGal}
u_N^{(1)}:= g + k^2\cV_\Omega u_N.
\end{align}
Then 
\begin{align}
\label{e:SupCon2}
\|u-u_N^{(1)}\|_{L^2(\Omega)}
=O\left(\frac{h^{\min(p+1,s)+\min(p+1,t)}}{(p+1)^{s+t}}\right), 
\quad \text{as }\, h\to 0 \; \text{ or } \; p\to \infty,
\end{align}
where 
\[ t=\max(\min(s_\Omega^-+2,s_{\m,\Omega}^-),s_\m^-).\]
\end{theorem}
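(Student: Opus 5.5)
Following Sloan's argument, we first derive an error identity for the iterated solution. Applying $P_N$ to \eqref{e:IterGal} and using \eqref{e:VIMProj} gives $P_N u_N^{(1)} = P_N g + k^2 P_N\cV_\Omega u_N = u_N$. Substituting back, $u_N^{(1)}$ satisfies $(\cI - k^2\cV_\Omega P_N)u_N^{(1)} = g$. Since also $(\cI-k^2\cV_\Omega)u=g$, subtracting yields
\[ (\cI - k^2\cV_\Omega P_N)(u-u_N^{(1)}) = k^2\cV_\Omega(\cI-P_N)u. \]
The operator $\cI-k^2\cV_\Omega P_N$ is the ``transpose'' counterpart of $\cI-k^2P_N\cV_\Omega$. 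For $h\le h_0$ (or $p\ge p_0$) it is invertible with a uniformly bounded inverse. This follows from \eqref{e:UnifBd} via the identity $(\cI-k^2\cV_\Omega P_N)^{-1} = \cI + k^2\cV_\Omega(\cI-k^2P_N\cV_\Omega)^{-1}P_N$, which is checked by direct multiplication. Hence
\[ \|u-u_N^{(1)}\|_{L^2(\Omega)} \le C\,\|\cV_\Omega(\cI-P_N)u\|_{L^2(\Omega)}. \]

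Next we estimate $\|\cV_\Omega(\cI-P_N)u\|_{L^2}$ by duality, using $\cI-P_N = (\cI-P_N)^2$ and self-adjointness of $P_N$. For $\psi\in L^2(\Omega)$, \eqref{e:Adj} gives
\[ (\cV_\Omega(\cI-P_N)u,\psi) = ((\cI-P_N)u,\overline{\cV_\Omega^*\overline\psi}) = ((\cI-P_N)u,(\cI-P_N)\overline{\cV_\Omega^*\overline\psi}). \]
This is bounded by $\|(\cI-P_N)u\|_{L^2}\,\|(\cI-P_N)\overline{\cV_\Omega^*\overline\psi}\|_{L^2}$. The first factor is $O(h^{\min(p+1,s)}/(p+1)^s)$ by Theorem \ref{t:GalWP}'s regularity conclusion $u\in H^s(\Omega)$ together with Proposition \ref{p:BA}. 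For the second factor we need a bound $\|\cV_\Omega^*\phi\|_{H^t(\Omega)}\le C\|\phi\|_{L^2(\Omega)}$ with $t$ as in the statement. Proposition \ref{p:BA} applied with $r\ge t$ then gives $O(h^{\min(p+1,t)}/(p+1)^t)\|\psi\|_{L^2}$. Taking the supremum over $\|\psi\|_{L^2}\le1$ yields \eqref{e:SupCon2}.

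The main point is the mapping property of $\cV_\Omega^*$ from $L^2(\Omega)$, and this is where the max in the definition of $t$ comes from. Using \eqref{e:MP3} with $s$ large is not available, since the input $\phi$ is only in $L^2$. Instead we rely on the smoothing directly.

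First route: $\cV_\Omega^*:L^2(\Omega)=H^0(\Omega)\to H^{\min(2,s_{\m,\Omega}^-)}(\Omega)$ by \eqref{e:MP3}. Iterating is not needed, since the bound $\min(s_\Omega^-+2,\cdot)$ arises only when the input is smoother.

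So we would instead argue via the factorisation $\cV_\Omega^* = |_\Omega\circ\cM_\m\circ\cV\circ\cZ_\Omega$. Here $\cV\circ\cZ_\Omega:L^2(\Omega)\to H^2_{\rm loc}$, and multiplication by $m$ (resp. $\m$) preserves $H^{\tau}$ for $\tau\le s_{\m,\Omega}^-$ (resp. $s_\m^-$, with values in $H^{\tau}_{\rm ze}\subset H^\tau$ by \eqref{e:HzeEmb}). This gives $t$ up to $\min(2,\max(s_{\m,\Omega}^-,s_\m^-))$ directly.

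To reach the stated $t$, which can exceed $2$, we would use the regularity of the actual functions rather than plain operator smoothing. Apply the duality with $\zeta_\psi := (\cI - k^2\cV_\Omega^*)^{-1}$-type regularity. Concretely, replace $\overline{\cV_\Omega^*\overline\psi}$ by noting that only its approximability matters, and bootstrap through \eqref{e:MP7}/\eqref{e:MP8} as in the proof of Theorem \ref{t:SupCon}.

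We expect this bootstrap step to be the main obstacle. It requires checking carefully which of \eqref{e:MP3}, \eqref{e:MP4} applies with $L^2$ input, and that the resulting exponent matches $\max(\min(s_\Omega^-+2,s_{\m,\Omega}^-),s_\m^-)$, possibly capped by $2$ as our route indicates.
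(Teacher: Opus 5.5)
Your argument is correct up to, but not including, the final ``bootstrap'', and it takes a different and leaner route than the paper's. Your identity $(\cI-k^2\cV_\Omega P_N)(u-u_N^{(1)})=k^2\cV_\Omega(\cI-P_N)u$ is correct. So is the uniform bound on $(\cI-k^2\cV_\Omega P_N)^{-1}$, obtained from your explicit formula and \eqref{e:UnifBd}. Together with duality using the self-adjoint idempotent $\cI-P_N$, these reduce everything to $\|(\cI-P_N)u\|_{L^2(\Omega)}$ times the best-approximation error of $\cV_\Omega^*\phi$ over $\|\phi\|_{L^2(\Omega)}\le 1$. The only extra ingredient is then a smoothing bound $\cV_\Omega^*:L^2(\Omega)\to H^t(\Omega)$.

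The paper proceeds differently:
\begin{itemize}
\item it writes $\|u-u_N^{(1)}\|_{L^2(\Omega)}=k^2\|\cV_\Omega(u-u_N)\|_{L^2(\Omega)}$;
\item it dualises to the negative norm $\sup_{\psi\in H^t(\Omega)}|(u-u_N,\psi)|/\|\psi\|_{H^t(\Omega)}$;
\item it bounds this by solving the adjoint problem $(\cI-k^2\overline{\cV}_\Omega^*)\varphi=\psi$ via \eqref{e:MP7}/\eqref{e:MP8} and using Galerkin orthogonality.
\end{itemize}
It therefore needs the same smoothing bound \emph{and} invertibility of $\cI-k^2\cV_\Omega^*$ on $H^t(\Omega)$. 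Your route avoids the latter.

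The gap is the final bootstrap, and it cannot be closed. In Theorem \ref{t:SupCon}, bootstrapping through \eqref{e:MP7}/\eqref{e:MP8} works because the dual datum $\jj$ is a single fixed smooth function. Here the datum ranges over all of $L^2(\Omega)$. With $s=0$, \eqref{e:MP3} and \eqref{e:MP4} give only $\cV_\Omega^*:L^2(\Omega)\to H^{\min(2,s_{\m,\Omega}^-)}(\Omega)$ (recall $s_{\m,\Omega}\ge s_\m$). This is sharp, since $(\Delta+k^2)\cV\cZ_\Omega\phi=-\phi$ is merely $L^2$ inside $\Omega$.

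The paper's proof asserts exactly the property you are missing: boundedness $L^2(\Omega)\to H^t(\Omega)$ for all $t\le\min(s_\Omega^-+2,s_{\m,\Omega}^-)$, resp.\ into $H^t_{\rm ze}(\Omega)$ for $t\le s_\m^-$. It cites Lemma \ref{l:MapProp2}, but that lemma does not give this for $t>2$.

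In fact \eqref{e:SupCon2} cannot hold in general once $\min(p+1,t)>2$. Your identity gives $\|u-u_N^{(1)}\|_{L^2(\Omega)}\gtrsim\|\cV_\Omega(\cI-P_N)u\|_{L^2(\Omega)}$. Set $w=(\cI-P_N)u$ and test $(\Delta+k^2)\cV(\m\cZ_\Omega w)=-\m\cZ_\Omega w$ against $\chi_h\in C^\infty_{\rm comp}(\Omega)$ oscillating on the mesh scale. Then $\|(\Delta+k^2)\chi_h\|_{L^2}\sim h^{-2}$ while $|(\m w,\chi_h)|\sim h^{p+1}$. Take, for example, $\m\in C^\infty_{\rm comp}(\R^n)$ (so $t=\infty$), a cube-shaped $\Omega$ with uniform meshes, $p\ge 2$ and generic smooth $u$. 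The error is then bounded below by $c\,h^{p+3}$, rather than being $O(h^{2(p+1)})$.

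So what you have actually proved is the correct result: \eqref{e:SupCon2} with $t$ replaced by $\min(t,2)=\min(2,s_{\m,\Omega}^-)$. It agrees with the stated one for the $h$-version with $p\le 1$, which covers the piecewise-constant method of \S\ref{s:IFS}.
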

\begin{proof}
We follow the idea of the proof of \cite[Thm~1]{sloan1984iterated}. 
For ease of presentation we introduce some further notation, defining $\overline{\cV}_\Omega^*:L^2(\Omega)\to L^2(\Omega)$ by $\overline{\cV}_\Omega^*(\phi):=\overline{\cV_\Omega^*(\overline{\phi})}$ for $\phi\in L^2(\Omega)$, which by \eqref{e:Adj} is the adjoint of $\cV_\Omega:L^2(\Omega)\to L^2(\Omega)$. 
From \eqref{e:IntEqn2a} and \eqref{e:IterGal} we have that
\begin{align}
\|u-u_N^{(1)}\|_{L^2(\Omega)} = k^2 \|\cV_\Omega(u-u_N)\|_{L^2(\Omega)} 
&= k^2\sup_{0\neq\phi\in L^2(\Omega)}\frac{|(\cV_\Omega(u-u_N),\phi)|}{\|\phi\|_{L^2(\Omega)}}\notag\\
&= k^2\sup_{0\neq\phi\in L^2(\Omega)}\frac{|(u-u_N,\overline{\cV}_\Omega^*\phi)|}{\|\phi\|_{L^2(\Omega)}}.
\label{e:IterGal3}
\end{align}

For $0\leq t\leq \min(s_\Omega^-+2,s_{\m,\Omega}^-)$, 
by Lemma \ref{l:MapProp2} we know that $\cV_\Omega^*:L^2(\Omega)\to H^{t}(\Omega)$ is bounded, which implies that $\overline{\cV}_\Omega^*:L^2(\Omega)\to H^{t}(\Omega)$ is also bounded, with the same norm. Noting that if $\overline{\cV}_\Omega^*\phi=0$ then $(u-u_N,\overline{\cV}_\Omega^*\phi)=0$, combining the above with \eqref{e:IterGal3} gives 
\begin{align}
\|u-u_N^{(1)}\|_{L^2(\Omega)} 
&\leq
k^2 \sup_{\substack{\phi\in L^2(\Omega)\notag\\\overline{\cV}_\Omega^*\phi \neq 0}}\frac{|(u-u_N,\overline{\cV}_\Omega^*\phi)|}{\|\phi\|_{L^2(\Omega)}}\\
 &\leq
k^2 \|\overline{\cV}_\Omega^*\|^{-1}_{L^2(\Omega)\to H^t(\Omega)}
 \sup_{\substack{\phi\in L^2(\Omega)\notag\\\overline{\cV}_\Omega^*\phi \neq 0}}\frac{|(u-u_N,\overline{\cV}_\Omega^*\phi)|}{\|\overline{\cV}_\Omega^*\phi\|_{H^{t}(\Omega)}}\\
&\leq
k^2 \|\overline{\cV}_\Omega^*\|^{-1}_{L^2(\Omega)\to H^t(\Omega)}
 \sup_{0\neq\psi\in H^{t}(\Omega)}\frac{|(u-u_N,\psi)|}{\|\psi\|_{H^{t}(\Omega)}}.
\label{e:IterGal4}
\end{align}
Under the same assumptions on $t$, Lemma \ref{l:MapProp2} also implies that $\cI - k^2\cV_\Omega^*:H^{t}(\Omega)\to H^{t}(\Omega)$ is invertible, so that also $\cI - k^2\overline{\cV}_\Omega^*:H^{t}(\Omega)\to H^{t}(\Omega)$ is invertible. Hence, for each $\psi\in H^{t}(\Omega)$ there exists $\varphi\in H^{t}(\Omega)$ such that $(\cI - k^2\overline{\cV}_\Omega^*)\varphi = \psi$, and, by the invertibility of $(\cI - k^2\cV_\Omega):L^2(\Omega)\to L^2(\Omega)$ and the Galerkin orthogonality \eqref{e:GalOrth}, %
\begin{align}
(u-u_N,\psi) &= ((\cI - k^2\cV_\Omega)^{-1}(\cI - k^2\cV_\Omega)(u-u_N),\psi) \notag\\
&= ((\cI - k^2\cV_\Omega)(u-u_N),(\cI - k^2\overline{\cV}_\Omega^*)^{-1}\psi) \notag\\
&= ((\cI - k^2\cV_\Omega)(u-u_N),\varphi)\notag\\
&= b(u-u_N,\varphi)\notag\\
&= b(u-u_N,\varphi-P_N\varphi)\notag\\
&\leq \|\cI-k^2\cV_\Omega\|_{L^2(\Omega)\to L^2(\Omega)}\|u-u_N\|_{L^2(\Omega)} \|\varphi-P_N\varphi\|_{L^2(\Omega)}.
\label{e:L2Bound}
\end{align}
Combining this with \eqref{e:IterGal4}, Theorem \ref{t:GalWP} and Proposition \ref{p:BA} gives 
\eqref{e:SupCon2}. 

For $0\leq t\leq s_\m^-$ we argue similarly, albeit in the context of the $H^t_{\rm ze}(\Omega)$ spaces. By Lemma \ref{l:MapProp2} we deduce that $\overline{\cV}_\Omega^*:L^2(\Omega)\to H_{\rm ze}^{t}(\Omega)$ is bounded, and then using \eqref{e:HzeEmb} we obtain the following modified version of \eqref{e:IterGal4}:
\begin{align}
\|u-u_N^{(1)}\|_{L^2(\Omega)} 
&\leq
k^2 \|\overline{\cV}_\Omega^*\|^{-1}_{L^2(\Omega)\to H_{\rm ze}^t(\Omega)}
 \sup_{0\neq\psi\in H_{\rm ze}^{t}(\Omega)}\frac{|(u-u_N,\psi)|}{\|\psi\|_{H_{\rm ze}^{t}(\Omega)}}\notag\\
&\leq k^2 \|\overline{\cV}_\Omega^*\|^{-1}_{L^2(\Omega)\to H_{\rm ze}^t(\Omega)}
 \sup_{0\neq\psi\in H_{\rm ze}^{t}(\Omega)}\frac{|(u-u_N,\psi)|}{\|\psi\|_{H^{t}(\Omega)}}.
\label{e:IterGal4a}
\end{align}
By Lemma \ref{l:MapProp2} we also deduce that 
$\cI - k^2\overline{\cV}_\Omega^*:H_{\rm ze}^{t}(\Omega)\to H_{\rm ze}^{t}(\Omega)$ is invertible. Hence, for each $\psi\in H_{\rm ze}^{t}(\Omega)$ there exists $\varphi\in H_{\rm ze}^{t}(\Omega)\subset H^{t}(\Omega)$ such that $(\cI - k^2\overline{\cV}_\Omega^*)\varphi = \psi$, and then we finish the proof as in the previous case, combining 
\eqref{e:IterGal4a} with \eqref{e:L2Bound}, Theorem \ref{t:GalWP} and Proposition \ref{p:BA} to obtain \eqref{e:SupCon2}. 
\end{proof}

\subsection{Application to the scattering problem}
\label{s:Scatt2}

We now consider the application of the results of \S\ref{s:Error} to the scattering problem 
discussed in \S\ref{s:scatt}, in which we solve \eqref{e:IntEqn2a} with $g:=u^{\rm i}|_\Omega$ for some incident field $u^{\rm i}$, and $u$ represents the restriction to $\Omega$ of the total field $u^{\rm t}$. 
In this context,
two important functionals 
are:
\begin{itemize}
\item 
the \emph{scattered field 
functional} 
$J^{\rm SF}_{x_0}$, 
given for an observation point $x_0\in \R^n$ 
by $J^{\rm SF}_{x_0}(u)=u^{\rm s}(x_0)$, 
which 
(recalling \eqref{e:usRep}) 
takes 
the form 
\eqref{e:JDef} with $\jj (y) = k^2\m(y)\Phi(x_0,y)$;
\item
the \emph{far-field pattern %
functional} 
$J^{\rm FFP}_{\hat{x}}$, 
given for an observation direction $\hat{x}\in \mathbb{S}^{n-1}$ by \eqref{e:JDef} with 
$\jj (y) = ck^{(1+n)/2}\m(y)\re^{-\ri k \hat{x}\cdot y}$ for a constant $c$ depending only on $n$ (see, e.g., \cite[\S2.5, \S3.5]{ck2}); 
\end{itemize}
We also mention that, in this context, the iterated Galerkin solution $u_N^{(1)}$ defined in \eqref{e:IterGal} satisfies $u_N^{(1)}=u^{\rm t}_N|_\Omega = u^{\rm s}_N|_\Omega + u^{\rm i}|_\Omega$, where $u^{\rm t}_N|_\Omega$ and $u^{\rm s}_N|_\Omega$ are defined by \eqref{e:ushDef} and \eqref{e:uthDef}, so 
\begin{align}
\label{e:IterRep}
\|u-u_N^{(1)}\|_{L^2(\Omega)}=\|u^{\rm s}-u^{\rm s}_N\|_{L^2(\Omega)}=
\|J^{\rm SF}_{\cdot}(u)-J^{\rm SF}_{\cdot}(u_N)\|_{L^2(\Omega)}.
\end{align}

\begin{corollary}[Semi-discrete analysis for the scattering problem]
\label{c:Scatt}
Let $\m$ and $\Omega$ satisfy \eqref{e:nAssumption} and \eqref{e:OmegaAssumption}, let $g=u^{\rm i}|_\Omega$ for some incident field $u^{\rm i}$ that is 
$C^\infty$
in an open set $D^{\rm i}$ containing $\overline\Omega$, and let 
\begin{align}
 \label{e:stDef}
 s:= s_\m^-+2 \text{ and }t:=\max(\min(s_\Omega^-+2,s_{\m,\Omega}^-),s_\m^-). 
 \end{align} 
Then:
\begin{itemize}
\item[(i)] The solution of \eqref{e:IntEqn2a} satisfies 
$u\in H^{s}(\Omega)$, 
and for sufficiently small $h$ (for the $h$-version) or sufficiently large $p$ (for the $p$-version) the Galerkin VIM solution $u_N$ exists and satisfies 
\eqref{e:GalErr}; 
\item[(ii)]
The far-field pattern functional $J^{\rm FFP}_{\hat{x}}$ satisfies \eqref{e:SupCon}, 
uniformly for $\hat{x}\in \mathbb{S}^{n-1}$.  
\item[(iii)]
The scattered field functional $J^{\rm SF}_{x_0}$ also satisfies \eqref{e:SupCon}, 
uniformly for $x_0\in \R^n\setminus K_\eps$,  where $K_\eps:=\{x\in \R^n:\dist(x,K)< \eps\}$, for every $\eps>0$.
\item[(iv)]   
$ \|u-u_N^{(1)}\|_{L^2(\Omega)}=\|u^{\rm s}-u^{\rm s}_N\|_{L^2(\Omega)}=
\|J^{\rm SF}_{\cdot}(u)-J^{\rm SF}_{\cdot}(u_N)\|_{L^2(\Omega)}$ satisfies \eqref{e:SupCon2}.
\end{itemize}
\end{corollary}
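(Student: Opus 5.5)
The plan is to reduce each part to the abstract results of \S\ref{s:Error} by verifying the regularity hypotheses on $g$ and on the Riesz representers $\jj$.

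\textbf{Part (i).} Since $u^{\rm i}$ is $C^\infty$ on the open set $D^{\rm i}\supset\overline\Omega$, choose $\chi\in C^\infty_{\rm comp}(D^{\rm i})$ with $\chi\equiv1$ on a neighbourhood of $\overline\Omega$. Then $\chi u^{\rm i}\in C^\infty_{\rm comp}(\R^n)\subset H^r(\R^n)$ for every $r$, and $(\chi u^{\rm i})|_\Omega=g$. Hence $g\in H^r(\Omega)$ for all $r\ge0$, in particular for $r=s_\m^-+2$ (read under the paper's $s^-$ convention). Theorem \ref{t:GalWP} then gives (i) directly.

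\textbf{Parts (ii) and (iii).} I apply Theorem \ref{t:SupCon}. It suffices to show that $\jj$ lies in $H^{t_1}(\Omega)$ with $t_1=\min(s_\Omega^-+2,s_{\m,\Omega}^-)$, or in $H^{t_2}_{\rm ze}(\Omega)$ with $t_2=s_\m^-$, with norms bounded uniformly in the parameter; one then takes whichever option gives the larger $t$. In both cases $\jj=\m\,w|_\Omega$ up to constants, where $w(y)=\re^{-\ri k\hat x\cdot y}$ or $w(y)=\Phi(x_0,y)$.

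For (iii), $\Phi(x_0,\cdot)$ is smooth away from $x_0$, but it is singular at $x_0$, which may lie in $\Omega\setminus K$. So I cut off: pick $\chi_\eps\in C^\infty_{\rm comp}(K_{\eps/2})$ with $\chi_\eps\equiv1$ on $K_{\eps/4}$, and set $w=\chi_\eps\Phi(x_0,\cdot)$. This does not change $\m w$, since $\m$ vanishes off $K$. For $x_0\notin K_\eps$, $w\in C^\infty_{\rm comp}(\R^n)$, and all derivatives are bounded uniformly in $x_0$ because $\Phi$ and its derivatives are bounded on $\{|x-y|\ge\eps/2\}$. In (ii), $w$ (times a fixed cutoff) is smooth with derivatives bounded uniformly in $\hat x$.

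\textbf{Key lemma.} If $w\in C^\infty_{\rm comp}$ then:
\begin{itemize}
\item $\cZ_\Omega(\m w|_\Omega)=\m w$ (using $|K\setminus\Omega|=0$), and $\m w\in H^{r}(\R^n)$ for $r\le s_\m^-$. This follows because $\m w=\cM_\m(w)$ and $w\in H^r$. So $\jj\in H^{t_2}_{\rm ze}(\Omega)$.
\item Writing $\m=m\chi_\Omega$ with $m$ a multiplier on $H^r$, we get $\jj=(mw)|_\Omega\in H^r(\Omega)$ for $r\le s_{\m,\Omega}^-$. Hence $\jj\in H^{t_1}(\Omega)$, since $t_1\le s_{\m,\Omega}^-$.
\end{itemize}
In both cases the norms are bounded by a multiplier norm times $\|w\|_{H^r}$, which is uniform in the parameter.

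\textbf{Uniformity (main obstacle).} Theorem \ref{t:SupCon} is stated for a single functional, so its constant must be tracked. From its proof, the error is bounded by
\[
\|\cI-k^2\cV_\Omega\|\,\|u-u_N\|\,\|\zeta-P_N\zeta\|.
\]
Proposition \ref{p:BA} bounds the last factor by $(Ch/(p+1))^{t}\|\zeta\|_{H^t(\Omega)}$. By the bounded invertibility in \eqref{e:MP7}/\eqref{e:MP8} together with \eqref{e:HzeEmb}, $\|\zeta\|_{H^t}\lesssim\|\jj\|$ in the relevant space, and that norm is uniform by the key lemma. Since $p$ is capped in the $h$-version, $(Ch/(p+1))^t$ matches the rate $h^{\min(p+1,t)}/(p+1)^t$ up to the caps.

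\textbf{Part (iv).} The identity \eqref{e:IterRep} gives the equalities of the three norms. Theorem \ref{t:SupConI}, applied with the $s$ from (i) and $t$ as in \eqref{e:stDef}, gives \eqref{e:SupCon2}.
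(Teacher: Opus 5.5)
Your proposal is correct and follows the same route as the paper's proof. You show that $g\in H^r(\Omega)$ for every $r$ and that the representer $\jj$ satisfies \eqref{e:HtReg} with norms bounded uniformly in $\hat{x}$ or $x_0$, then invoke Theorems~\ref{t:GalWP}, \ref{t:SupCon} and \ref{t:SupConI} together with \eqref{e:IterRep}. Two of your details make rigorous points that the paper only covers with ``by the smoothness of the function $\Phi(x_0,\cdot)$'' and ``uniformly'': the cutoff $\chi_\eps\Phi(x_0,\cdot)$, which is needed when $\Omega\supsetneq K$ and $x_0$ may lie in $\Omega$, and your explicit tracking of the constant in the proof of Theorem~\ref{t:SupCon}.
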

\begin{proof}
Since the incident field $u^{\rm i}$ is $C^\infty$ in an open set $D^{\rm i}$ containing $\overline\Omega$, we have $g=u^{\rm i}|_\Omega\in H^s(\Omega)$ for every $s\geq 0$, so that part (i) follows by Theorem \ref{t:GalWP}. 

For part (ii) we note first that the function $y\mapsto\Phi^\infty(\hat{x},y):=\re^{-\ri k \hat{x}\cdot y}$ is an element of $H^t(\Omega)$ for all $t\geq 0$, and that for any $t\geq 0$ the norm $\|\Phi^\infty(\hat{x},\cdot)\|_{H^t(\Omega)}$ can be bounded above, uniformly for $\hat{x}\in \mathbb{S}^{n-1}$. 
Hence $\jj (y) = ck^{(1+n)/2}\m(y)\re^{-\ri k \hat{x}\cdot y}$ satisfies 
\begin{align}
\label{e:HtReg}
\jj \in H^t(\Omega) \text{ for all } 0\leq t\leq s_{\m,\Omega}^-, \text{ and } \jj \in H_{\rm ze}^t(\Omega) \text{ for all } 0\leq t\leq s_\m^-,
\end{align}
and, in all cases considered in \eqref{e:HtReg}, $\|\jj \|_{H^t(\Omega)}$ can be bounded above, uniformly in $\hat{x}\in \mathbb{S}^{n-1}$, so that part (ii) follows by Theorem \ref{t:SupCon}. 

For part (iii) we note similarly that, by the smoothness of the function $\Phi(x_0,\cdot)$, 
the function $\jj (y) = k^2\m(y)\Phi(x_0,y)$ satisfies \eqref{e:HtReg}, and that, in all cases considered in \eqref{e:HtReg}, for every $\eps>0$ the norm $\|\jj \|_{H^t(\Omega)}$ can be bounded above, uniformly for $x_0\in \R^n\setminus K_\eps$, so that part (iii) also follows by Theorem \ref{t:SupCon}.

Finally, part (iv) follows by Theorem \ref{t:SupConI}, in light of \eqref{e:IterRep}. 
\end{proof}

\begin{remark}[Pointwise superconvergence of $J^{\rm SF}_{x_0}$ on $K$]
\label{r:SupConF}
Part (iii) of Corollary \ref{c:Scatt} provides pointwise superconvergence estimates for $J^{\rm SF}_{x_0}$ for $x_0\in\R^n\setminus K$.
One can also derive such estimates for $x_0\in K$, but the rate of superconvergence may be reduced compared to the case $x_0\in\R^n\setminus K$ because of the singularity in $\Phi(x_0,\cdot)$.

By standard calculations one can show that if $t<2-n/2$ then $\Phi(x_0,\cdot)\in H^t_{\rm loc}(\R^n)$ for every $x_0\in \R^n$. From this it follows that, for every $x_0\in \R^n$, $\jj (y) = k^2\m(y)\Phi(x_0,y)$ satisfies
\begin{align}
&\jj \in H^t(\Omega) \text{ for all } 0\leq t\leq s_{\m,\Omega}^-,\,\,t<2-n/2,\notag\\ \text{ and } &\jj \in H_{\rm ze}^t(\Omega) \text{ for all } 0\leq t\leq s_\m^-,\,t<2-n/2.
\label{e:HtRegK}
\end{align}
Furthermore, in all cases considered in \eqref{e:HtRegK}, $\|\jj \|_{H^t(\Omega)}$ can be bounded above, uniformly for $x_0\in \R^n$.  
It then follows by Theorem \ref{t:SupCon} that
$J^{\rm SF}_{x_0}$ satisfies \eqref{e:SupCon}, uniformly for $x_0\in \R^n$, with $t$ replaced by any element of the set
\begin{align}
 \label{e:SupConvR}
  \{t:t\leq \max(\min(s_\Omega^-+2,s_{\m,\Omega}^-),s_\m^-),\, t<2-n/2\}.  
 \end{align}

Part (iv) of Corollary \ref{c:Scatt} shows that the restriction $t<2-n/2$ can be removed if one considers $L^2$ convergence of $J^{\rm SF}_{x_0}$ rather than pointwise convergence. 
\end{remark}

In the following example, we discuss the implications of these results for the classical case with smooth $\m$ considered in Example \ref{r:Cinfty}. 
Here, when we mention convergence rates, we have in mind the norms considered in Corollary \ref{c:Scatt}. So for $u$ we are referring to $L^2$ error on $\Omega$, 
for $J^{\rm FFP}_{\hat{x}}$ we are referring to pointwise error, uniformly for $\hat{x}\in \mathbb{S}^{n-1}$, and for $J^{\rm SF}_{x_0}$ we are referring to both pointwise error, uniformly for $x_0\in \R^n\setminus K_\eps$ for any $\eps>0$, and $L^2$ error on $\Omega$.
We note that examples for non-smooth $\m$ will be considered in \S\ref{s:IFS}.
\begin{example}
\label{r:Cinfty2}
Suppose that $\m\in C^\infty_{\rm comp}(\R^n)$ with $\Im[\m] \geq 0$, and that $\Omega$ is a bounded open set with $|K\setminus \Omega|=0$. Then $s$ and $t$ defined by \eqref{e:stDef} satisfy $s=t=\infty$, and hence: 
\begin{itemize}
\item For the $h$-version VIM, we have algebraic convergence, with $O(h^{p+1})$ error for $u$ and $O(h^{2(p+1)})$ error for $J^{\rm FFP}_{\hat{x}}$ and $J^{\rm SF}_{x_0}$;%
\item For the $p$-version VIM, we have super-algebraic convergence, with $O((p+1)^{-\infty})$ error for $u$, $J^{\rm FFP}_{\hat{x}}$ and $J^{\rm SF}_{x_0}$. %
\end{itemize}
\end{example}

\subsection{Abstract fully discrete error analysis}
\label{s:AbstFD}

Suppose that the matrix and right-hand side in \eqref{e:LinSys} are approximated using quadrature rules as $A\approx \tilde{A} \in \C^{N\times N}$ and $\bg\approx \tilde{\bg}\in \C^N$. 
Then we seek $\tilde{\bc}=(\tilde{c}_1,\ldots,\tilde{c}_N)^T\in\C^N$ such that
\begin{align}
\label{e:LinSysFD}
\tilde{A}\tilde{\bc} = \tilde{\bg},
\end{align}
and define the fully discrete approximation 
\begin{align}
\label{e:FDDef}
\tilde{u}_N:=\sum_{j=1}^N \tilde{c}_j \psi_j\in V_N .
\end{align}
Equivalently, we seek $\tilde{u}_N\in V_N $ as the solution of 
\begin{align}
\label{e:uFDVar}
\tilde{b}(\tilde{u}_N,v_N) = \tilde{F}(v_N), \qquad v_N\in V_N ,
\end{align}
where $\tilde{b}(\tilde{u}_N,v_N) := \overline{\bd}^T \tilde{A} \tilde{\bc}$ and $\tilde{F}(v_N):=\overline{\bd}^T \tilde{\bg}$ for $v_N=\sum_{j=1}^N d_j \psi_j\in V_N $ and $\bd=(d_1,\ldots,d_N)^T$.

Having computed $\tilde{u}_N$, one can also consider quadrature approximations to linear functionals $J$ of the form \eqref{e:JDef} applied to $\tilde{u}_N$. 
We note that the action of a such a functional $J$ 
on 
$v_N=\sum_{j=1}^N d_j \psi_j\in V_N $ can be written as $J(v_N)=\bj^T \bd$ where $\bj\in \C^N$ is defined by 
\begin{align}
\label{e:JSD}
j_i = J(\psi_i)=
\int_{ T_i} \jj (y) \psi_i(y)\,\rd y , \qquad i=1,\ldots,N.
\end{align} 
If $\bj$ is approximated using quadrature as $\bj\approx \tilde{\bj}\in \C^N$ then we can obtain a fully discrete functional approximation $\tilde{J}(v_N):=\tilde{\bj}^T \bd$. 

The error in the fully discrete solution and fully discrete functional evaluations 
can be analysed using Strang's Lemma (e.g.\ \cite[Thm~4.2.11]{ss11}). 
Furthermore, as is standard for second kind integral equations (cf.\ e.g.\ \cite[\S3.6]{atkinson1997numerical}), the fully discrete system is well conditioned. 

\begin{theorem}[Fully discrete convergence, conditioning and superconvergence]
\label{t:FDErr}
Under the assumptions of Theorem \ref{t:GalWP}: 
\begin{itemize}
\item[(i)]
If $\|A-\tilde{A}\|_2\to 0$ and $\|\bg-\tilde{\bg}\|_2\to 0$ as $h\to 0$ (for the $h$-version) or $p\to \infty$ (for the $p$-version), then for sufficiently small $h$, or sufficiently large $p$, 
the system \eqref{e:LinSysFD} is invertible, with
\begin{align}
\label{e:ConvFD}
\|u-\tilde{u}_N\|_{L^2(\Omega)}\to 0, \quad \text{as }\, h\to 0 \; \text{ or } \; p\to \infty,
\end{align} 
and
\begin{align}
\label{e:Cond}
\kappa_2(\tilde{A}) = \|\tilde{A}\|_2\|\tilde{A}^{-1}\|_2 =O(1), \quad \text{as }\, h\to 0 \; \text{ or } \; p\to \infty;
\end{align}
\item[(ii)]
If 
\begin{align}
\label{e:AfOh}
\|A-\tilde{A}\|_2\;\text{ and } \; \|\bg-\tilde{\bg}\|_2 \; \text{ are }\; O\left(\frac{h^{\min(p+1,s)}}{(p+1)^{s}}\right), \quad \text{as }\, h\to 0 \; \text{ or } \; p\to \infty,
\end{align}
then 
\begin{align}
\label{e:FDErr}
\|u-\tilde{u}_N\|_{L^2(\Omega)}=O\left(\frac{h^{\min(p+1,s)}}{(p+1)^{s}}\right), \quad \text{as }\, h\to 0 \; \text{ or } \; p\to \infty; 
\end{align}
\item[(iii)]
Under the assumptions of Theorem \ref{t:SupCon}, if 
\begin{align}
\label{e:AfOh2}
\|A-\tilde{A}\|_2, \;\;
\|\bg-\tilde{\bg}\|_2\;\text{ and } \; 
\|\bj-\tilde{\bj}\|_2
\; \text{ are }\; 
O\left(\frac{h^{\min(p+1,s)+\min(p+1,t)}}{(p+1)^{s+t}}\right), 
\quad \text{as }\, h\to 0 \; \text{ or } \; p\to \infty,
\end{align}
then 
\begin{align}
\label{e:FDErr2}
|J(u)-\tilde{J}(\tilde{u}_N)|=O\left(\frac{h^{\min(p+1,s)+\min(p+1,t)}}{(p+1)^{s+t}}\right), 
\quad \text{as }\, h\to 0 \; \text{ or } \; p\to \infty. 
\end{align}
\end{itemize}
\end{theorem}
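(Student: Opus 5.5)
The plan is to work entirely in coefficient space, using the fact that $\{\psi_j\}$ is $L^2$-orthonormal. Then $v_N\mapsto\bd$ is an isometry $V_N\to\C^N$ (Euclidean norm). Under this isometry $A$ represents $(\cI-k^2P_N\cV_\Omega)|_{V_N}$, since $\overline{\bd}^TA\bc=b(u_N,v_N)$.

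\textbf{Stability of $A$ and $\tilde A$.} From \eqref{e:UnifBd} in the proof of Theorem \ref{t:GalWP}, $\|A^{-1}\|_2\le C_{qo}$ for $h$ small or $p$ large. Also $\|A\|_2\le 1+k^2\|\cV_\Omega\|_{L^2\to L^2}$, since $P_N$ is an orthogonal projection.

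Write $\tilde A=A(I+A^{-1}(\tilde A-A))$. If $\|A-\tilde A\|_2\le 1/(2C_{qo})$, which holds eventually, a Neumann series shows $\tilde A$ is invertible with $\|\tilde A^{-1}\|_2\le 2C_{qo}$. In addition $\|\tilde A\|_2\le\|A\|_2+1$. Together these give \eqref{e:Cond}.

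\textbf{Strang-type estimate.} Subtracting $\tilde A\tilde\bc=\tilde\bg$ from $A\bc=\bg$ gives
\[\tilde A(\bc-\tilde\bc)=(\tilde A-A)\bc+(\bg-\tilde\bg).\]
Hence
\[\|u_N-\tilde u_N\|_{L^2(\Omega)}=\|\bc-\tilde\bc\|_2\le 2C_{qo}\bigl(\|A-\tilde A\|_2\|u_N\|_{L^2(\Omega)}+\|\bg-\tilde\bg\|_2\bigr).\]
Here $\|u_N\|_{L^2(\Omega)}\le C_{qo}\|g\|_{L^2(\Omega)}$ is uniformly bounded. Combining this with the triangle inequality $\|u-\tilde u_N\|\le\|u-u_N\|+\|u_N-\tilde u_N\|$ and Theorem \ref{t:GalWP} gives (i) and (ii) immediately.

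\textbf{Functionals, part (iii).} Split
\[J(u)-\tilde J(\tilde u_N)=[J(u)-J(u_N)]+[J(u_N)-J(\tilde u_N)]+[J(\tilde u_N)-\tilde J(\tilde u_N)].\]
The first term is handled by Theorem \ref{t:SupCon}. The third equals $(\bj-\tilde\bj)^T\tilde\bc$ and is bounded by $\|\bj-\tilde\bj\|_2\|\tilde u_N\|_{L^2}$, with $\|\tilde u_N\|_{L^2}$ uniformly bounded by the previous step.

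The second term is the main obstacle. The naive bound $\|\jj\|_{L^2}\|u_N-\tilde u_N\|$ yields only the rate in \eqref{e:AfOh2}, but that is precisely what is assumed, so the naive bound suffices. Indeed, the hypothesis \eqref{e:AfOh2} already makes $\|A-\tilde A\|_2$ and $\|\bg-\tilde\bg\|_2$ of the superconvergent order. The Strang estimate above then gives $\|u_N-\tilde u_N\|_{L^2}$ of that same order, and Cauchy--Schwarz with $\jj\in L^2(\Omega)$ finishes the argument.

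If one wanted a sharper version, one would instead use a discrete dual problem $A^{H}\bm{z}=\overline{\bj}$ to exploit cancellation. That refinement is not needed under the stated hypotheses.
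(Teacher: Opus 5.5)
Your proposal is correct and follows essentially the paper's argument. You use a Strang-type perturbation estimate, which you derive directly in coefficient space from the identity $\tilde A(\bc-\tilde\bc)=(\tilde A-A)\bc+(\bg-\tilde\bg)$ and a Neumann series instead of citing Sauter--Schwab's Strang lemma and Atkinson; the uniform bound \eqref{e:UnifBd} then gives stability and conditioning, and part (iii) uses the same three-term split (with the middle term ordered as $J(u_N)-J(\tilde u_N)$, so you can use $\|\jj\|_{L^2(\Omega)}$ directly rather than bounding $\|\tilde\bj\|_2$).
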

\begin{proof}
For (i) and (ii), we first note that, for $v_N,w_N\in V_N $,
\begin{align}
\label{e:Strang1}
|b(v_N,w_N)-\tilde{b}(v_N,w_N)| \leq \|A-\tilde{A}\|_2 \|v_N\|_{L^2(\Omega)}\|w_N\|_{L^2(\Omega)}, 
\end{align}
and 
\begin{align}
\label{e:Strang2}
|F(v_N)-\tilde{F}(v_N)| \leq \|\bg-\tilde{\bg}\|_2 \|v_N\|_{L^2(\Omega)}. 
\end{align}
Then, under the assumptions of (i), by \cite[Thm~4.2.11]{ss11} 
the system \eqref{e:LinSysFD} is invertible 
for sufficiently small $h$ (or sufficiently large $p$) 
and
\begin{align}
\|u-\tilde{u}_N\|_{L^2(\Omega)}
&\lesssim \min_{v_N\in V_N } \left( \|u-v_N\|_{L^2(\Omega)} + \|A-\tilde{A}\|_2\|v_N\|_{L^2(\Omega)} \right) + \|\bg-\tilde{\bg}\|_2 \notag\\
&\leq \|u-P_Nu\|_{L^2(\Omega)} + \|A-\tilde{A}\|_2\|P_Nu\|_{L^2(\Omega)}  +\|\bg-\tilde{\bg}\|_2\notag\\
&\leq \|u-P_Nu\|_{L^2(\Omega)} + \|A-\tilde{A}\|_2\|u\|_{L^2(\Omega)}  +\|\bg-\tilde{\bg}\|_2,
\label{e:Strang3}
\end{align}
by the fact that $\|P_Nu\|_{L^2(\Omega)}\leq \|u\|_{L^2(\Omega)}$. 
From \eqref{e:Strang3} and Proposition \ref{p:BA} it follows that if $\|A-\tilde{A}\|_2\to 0$ and $\|\bg-\tilde{\bg}\|_2\to 0$ then $\|u-\tilde{u}_N\|_{L^2(\Omega)}\to 0$, and also that \eqref{e:AfOh} implies \eqref{e:FDErr}. 

To prove the conditioning statement \eqref{e:Cond}, by standard arguments (e.g.\ \cite[Thm 3.1.1, Lem.~3.1.2, \S3.6]{atkinson1997numerical}) one can show that, for sufficiently small $h$ (or sufficiently large $p$), both $\|A\|_2$ and $\|A^{-1}\|_2$ are bounded above, independently of $h$ (respectively, of $p$). That the same is true for $\|\tilde{A}\|_2$ and $\|\tilde{A}^{-1}\|_2$ when $\|A-\tilde{A}\|_2\to 0$ and $\|\bg-\tilde{\bg}\|_2\to 0$ then follows by a standard perturbation argument, using e.g.\ \cite[Thm~A.1]{atkinson1997numerical}. 

For (iii), to prove \eqref{e:FDErr2} we first use the triangle inequality to bound
\begin{align}
\label{e:Strang4}
|J(u)-\tilde{J}(\tilde{u}_N)| \leq & \underbrace{|J(u)-J(u_N)|}_{(I)} 
+\underbrace{|J(u_N)-\tilde{J}(u_N)|}_{(II)}
+ \underbrace{|\tilde{J}(u_N)-\tilde{J}(\tilde{u}_N)|}_{(III)}.
\end{align}
Let $\mu:=\frac{h^{\min(p+1,s)+\min(p+1,t)}}{(p+1)^{s+t}}$. 
We know (I) is $O(\mu)$ by Theorem \ref{t:SupCon}. For (II) we have, using \eqref{e:VIMProj}, \eqref{e:UnifBd}, the fact that $\|P_Ng\|_{L^2(\Omega)}\leq \|g\|_{L^2(\Omega)}$, and the third assumption in \eqref{e:AfOh2}, that
\begin{align}
\label{e:Strang5}
|J(u_N)-\tilde{J}(u_N)|\leq \|\bj-\tilde{\bj}\|_2 \|u_N\|_{L^2(\Omega)}  
\lesssim
\|\bj-\tilde{\bj}\|_2\|g\|_{L^2(\Omega)} = O(\mu).
\end{align}
Finally, for (III) we can bound
\begin{align}
\label{e:Strang6}
|\tilde{J}(u_N)-\tilde{J}(\tilde{u}_N)| \leq \|\tilde{\bj}\|_2 \|u_N-\tilde{u}_N\|_{L^2(\Omega)}
\lesssim \|\jj \|_{L^2(\Omega)}\|u_N-\tilde{u}_N\|_{L^2(\Omega)},
\end{align}
where we used the third assumption in \eqref{e:AfOh2} to bound $\|\tilde{\bj}\|_2\leq\|\bj\|_2+\|\bj-\tilde{\bj}\|_2\lesssim \|\bj\|_2=\|\jj \|_{L^2(\Omega)}$. 
Then, by the argument at the top of p235 of \cite{ss11} (which uses \cite[Eqn~(4.156)]{ss11}), 
for sufficiently small $h$ (or sufficiently large $p$) we can use \eqref{e:Strang1} and \eqref{e:Strang2} to bound
\begin{align}
\|u_N-\tilde{u}_N\|_{L^2(\Omega)} 
&\lesssim \sup_{0\neq v_N\in V_N }\frac{|b(u_N,v_N) - \tilde{b}(u_N,v_N)| + |F(v_N)-\tilde{F}(v_N)|}{\|v_N\|_{L^2(\Omega)}}\notag \\
&\leq\left(\|A-\tilde{A}\|_2 \|u_N\|_{L^2(\Omega)} + \|\bg-\tilde{\bg}\|_2 \right),
\label{e:Strang7}
\end{align}
which is $O(\mu)$ by the first two assumptions in \eqref{e:AfOh2} and by \eqref{e:UnifBd}, which gives uniform boundedness of $\|u_N\|_{L^2(\Omega)}$ as in \eqref{e:Strang5}.
\end{proof}

\section{Iterated Function Systems and $n$-attractors}
\label{s:IFS}

We now specialise to a class of inhomogeneities that we will refer to as ``$n$-attractors'' (following \cite{dequalsnpaper}). We first recall the notion of an iterated function system (IFS) and some associated concepts. 
Our sources include the seminal paper by Hutchinson \cite{HUTCHINSONJOHNE.1981FaSS}, and Falconer's textbook
\cite{falconer2014fractal}. 
We note that the standard definition of an IFS involves general contraction mappings; here we restrict our attention to contracting similarities. 
In the following, $\dimH(E)$ denotes the Hausdorff dimension of a set $E\subset\R^n$ (see \cite[\S3]{falconer2014fractal}). 

\begin{definition}[Iterated function system (IFS)]
\label{d:IFS}
Let $n,M\in \mathbb{N}$ with $M\geq 2$. An \emph{iterated function system} (IFS) 
is a collection $\{s_{1},s_{2},\ldots, s_{M}\}$ of maps such that, for each $i=1,\ldots,M,$ $s_{i}:\mathbb{R}^{n}\rightarrow \mathbb{R}^{n}$ is a contracting similarity of the form
\begin{equation}
\label{e:IFS}
	s_{i}(x) = \rho_{i} A_{i}x +\delta_{i}, \quad x\in \mathbb{R}^{n},
\end{equation}
for some contraction factor $\rho_{i} \in (0,1)$, orthogonal matrix $A_{i}\in \mathbb{R}^{n\times n}$, and translation vector $\delta_{i}\in \mathbb{R}^{n}$. 
\end{definition}

\begin{definition}[Open set condition (OSC)]
\label{d:osc}
We say that the IFS $\{s_1,\ldots,s_M\}$ satisfies the \emph{open set condition} (OSC) if there exists a non-empty open set $O\subset \mathbb{R}^{n}$ such that $s_{i}(O)\cap s_{j}(O)=\emptyset$ for all $i,j=1,\ldots M$ such that $i\neq j$, and 
$\cup_{i=1}^{M}s_{i}(O)\subset O $. 
\end{definition}

\begin{theorem}[IFS attractor and fractal dimension]
\label{t:IFS}
Let $\{s_{i}\}_{i=1}^{M}$ be an IFS. 
Then there exists a unique non-empty compact set $K\subset\R^n$, called the \emph{attractor} of the IFS, such that 
\begin{align}
\label{e:Attractor}
K = \bigcup_{i=1}^M s_i(K).
\end{align}
Furthermore, if the OSC holds then the Hausdorff dimension $d=\dimH(K)$ of $K$ equals the \emph{similarity dimension} of the IFS, defined to be the unique positive solution of the equation
\begin{equation}\label{e:SimDim}
	\sum_{i=1}^{M}\rho_{i}^{d} =1.
\end{equation}
\end{theorem}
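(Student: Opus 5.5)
This is Hutchinson's theorem together with Moran's formula, and I would prove it in two independent parts.

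\textbf{Existence and uniqueness.} I would work in the space $\mathcal{K}$ of non-empty compact subsets of $\R^n$ with the Hausdorff metric $d_H$, which is complete. Define the Hutchinson operator $S(E):=\bigcup_{i=1}^M s_i(E)$. This maps $\mathcal{K}$ to itself, since each $s_i$ is continuous and a finite union of compact sets is compact. Next I would check that $d_H(\bigcup_i A_i,\bigcup_i B_i)\le \max_i d_H(A_i,B_i)$ and $d_H(s_i(A),s_i(B))=\rho_i d_H(A,B)$. Together these give $d_H(S(A),S(B))\le \rho_{\max} d_H(A,B)$ with $\rho_{\max}:=\max_i\rho_i<1$. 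The Banach fixed point theorem then yields a unique fixed point $K\in\mathcal{K}$, which is exactly \eqref{e:Attractor}. This step is routine.

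\textbf{Dimension.} First, $\rho\mapsto\sum_i\rho_i^d$ is continuous and strictly decreasing in $d$, equals $M\ge2$ at $d=0$, and tends to $0$ as $d\to\infty$. So \eqref{e:SimDim} has a unique positive root.

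For the \emph{upper bound}, iterate \eqref{e:Attractor} to write $K=\bigcup_{|w|=m}s_w(K)$ over words $w$ of length $m$, where $\diam s_w(K)=\rho_w\diam K$. This gives $\mathcal{H}^d_\delta(K)\le\sum_{|w|=m}(\rho_w\diam K)^d=(\diam K)^d$. Letting $m\to\infty$ gives $\mathcal{H}^d(K)<\infty$, hence $\dimH K\le d$. The OSC is not needed here.

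For the \emph{lower bound}, which is the main obstacle and where the OSC enters, I would follow Hutchinson/Falconer. Put the natural self-similar probability measure $\mu$ on $K$, defined by $\mu(s_w(K))=\rho_w^d$. This can be built as the pushforward of the Bernoulli measure on $\{1,\dots,M\}^{\N}$ with weights $\rho_i^d$ under the coding map. The plan is then:
\begin{enumerate}
\item Fix a ball $B$ of radius $r<1$. Consider the stopping set $Q$ of finite words $w$ with $\rho_w\le r<\rho_{w^-}$, where $w^-$ is $w$ with its last letter deleted. These words form a cut set, so the sets $s_w(K)$ with $w\in Q$ cover $K$.
\item Using the open set $O$ from the OSC, and assuming without loss of generality that $K\subset\overline{O}$ (which holds since $S(\overline O)\subset\overline O$), the sets $s_w(O)$ with $w\in Q$ are pairwise disjoint. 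Each contains a ball of radius $c_1\rho_{\min} r$ and is contained in a ball of radius $c_2 r$, where $c_1,c_2$ depend only on $O$.
\item A volume comparison in $\R^n$ then shows that at most a constant number $q$ of these $s_w(\overline O)$ meet $B$, with $q$ independent of $r$.
\item Hence $\mu(B)\le q r^d$.
\end{enumerate}
The mass distribution principle then gives $\mathcal{H}^d(K)\ge\mu(K)/q>0$, so $\dimH K\ge d$.

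The delicate points are the disjointness of the $s_w(O)$ for incomparable words, which follows by induction from the OSC, and the volume-counting argument in step 3. Since the statement is classical (see \cite{HUTCHINSONJOHNE.1981FaSS} and \cite[Thm~9.3]{falconer2014fractal}), in the paper one could simply cite these references.
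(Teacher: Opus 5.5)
Your proposal is correct, and it is the same argument as the paper's proof, which simply cites Hutchinson's Theorems 3.1 and 5.3: the Hutchinson operator is a contraction in the Hausdorff metric, and under the OSC the natural self-similar measure is combined with the Moran--Hutchinson volume-counting lemma. Two small points need tidying:
\begin{itemize}
\item The paper's OSC does not require $O$ to be bounded, yet your constant $c_2$ needs this. You should first replace $O$ by $O\cap B(y,R)$, with $y\in O$ and $R$ large enough that $s_i(B(y,R))\subset B(y,R)$ for all $i$; this set is still a valid, now bounded, OSC set.
\item The bound $\mu(B)\le qr^d$ should come from the identity $\mu=\sum_{w\in Q}\rho_w^d\,\mu\circ s_w^{-1}$ (equivalently, from cylinder sets). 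It should not come from $\mu(s_w(K))=\rho_w^d$, since for the pushforward measure one a priori only has $\mu(s_w(K))\ge\rho_w^d$.
\end{itemize}
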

\begin{proof}
See \cite[Thm~3.1, Thm~5.3]{HUTCHINSONJOHNE.1981FaSS}. 
\end{proof}

\begin{definition}[$n$-attractor]
\label{d:nattract}
We say a non-empty compact $K\subset\R^n$ is a \emph{$n$-attractor} if it is the attractor of an IFS satisfying the OSC for which the similarity dimension (i.e.\ the solution of \eqref{e:SimDim}), and hence also the Hausdorff dimension of $K$, equals $n$.
\end{definition}

The concept of an $n$-attractor is closely related to the concept of a ``fractal tiling'' --- see, e.g., \cite{bandt1991self,grochenig1994self,
strichartz1999geometry,Keesling:99,BaVi:14}. 
In particular, the set of ``homogeneous'' $n$-attractors (i.e.,\ those for which all contraction factors are equal) coincides with the set of compact ``$M$-rep tiles'', as defined in \cite{bandt1991self} (see \cite[Rem.~2.3]{dequalsnpaper}). 
Some examples of $n$-attractors will be studied in \S\ref{s:Examp}. 
Some basic properties of $n$-attractors can be found in \cite[Prop.~2.8]{dequalsnpaper}; here we highlight some properties that will be particularly relevant for us.
\begin{proposition}
\label{p:IFSProp}
Let $K\subset\R^n$ be an $n$-attractor and let $\Omega:=K^\circ$. Then:
$|K|>0$;
$\Omega\neq \emptyset$; 
$\overline{\Omega}=K$; 
$\partial\Omega=\partial K$; 
$\dimH(\partial K)=\dimH(\partial \Omega)\in[n-1,n)$, so in particular $|\partial K|=|\partial\Omega|=0$; 
the OSC is satisfied with $O=\Omega$;
and $|s_i(K)\cap s_j(K)|=0$ for $i\neq j\in\{1,\ldots,M\}$. 
\end{proposition}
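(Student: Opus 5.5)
We prove the listed properties in order, since each feeds the next. The starting point is the OSC with Hutchinson's self-similar measure theory: when the similarity dimension equals $d$ and the OSC holds, Hutchinson/Schief give $0<\mathcal{H}^d(K)<\infty$ \cite{HUTCHINSONJOHNE.1981FaSS}. With $d=n$ the measure $\mathcal{H}^n$ is a multiple of Lebesgue measure, so $|K|>0$. Next we show $|s_i(K)\cap s_j(K)|=0$ for $i\neq j$. Since $|s_i(K)|=\rho_i^n|K|$ and $\sum_i\rho_i^n=1$ by \eqref{e:SimDim}, the decomposition \eqref{e:Attractor} gives
$$|K|\le \sum_i |s_i(K)| = |K|.$$
This equality in the union bound forces the pairwise intersections to be null.

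For the interior, we use the OSC set $O$. The standard argument is that $K\subset\overline{O}$: iterate $\cup_i s_i(\overline O)\subset\overline O$ and use uniqueness of the attractor, or attract a point of $\overline O$. We then show $O\subset K$ up to a null set. Since $\cup_i s_i(O)\subset O$ are disjoint with total measure $\sum\rho_i^n|O|=|O|$, the set $O\setminus\cup_i s_i(O)$ is null. Iterating, $O\setminus\cup_{|w|=m}s_w(O)$ is null for every $m$, and $\cup_{|w|=m}s_w(\overline O)$ converges to $K$ in Hausdorff distance. Hence almost every point of $O$ lies within distance $\to0$ of $K$, so $|O\setminus K|=0$ because $K$ is closed. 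We need $O$ bounded here, which may be assumed by intersecting with a large ball? That does not preserve the OSC in general. Instead we use the Schief strong OSC, or simply use $|O\setminus K|=0$ to conclude $O\subset K$. Indeed, an open set minus a closed set is open, and an open null set is empty. So $O\subset K$, giving $\Omega=K^\circ\supset O\ne\emptyset$.

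To get $\overline\Omega=K$, note $s_w(\Omega)\subset K^\circ$ for every word $w$, because $s_w(K)\subset K$ and similarities are open maps. Every point of $K$ is a limit of points $s_w(x_0)$ with $x_0\in\Omega$ and $|w|\to\infty$. Hence $K\subset\overline\Omega\subset K$, and $\partial\Omega=\overline\Omega\setminus\Omega=K\setminus K^\circ=\partial K$.

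For the OSC with $O=\Omega$, first $s_i(\Omega)\subset\Omega$ by the above. For disjointness, $s_i(\Omega)\cap s_j(\Omega)$ is open and contained in $s_i(K)\cap s_j(K)$, which is null. So this open set is empty.

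The main obstacle is the dimension of the boundary. For the upper bound $\dimH(\partial K)<n$, the plan is to show $\partial K$ is covered by the images $s_i(\partial K)$ together with parts of overlaps, i.e.\ $\partial K\subset\cup_i s_i(\partial K)$. This gives $\partial K$ as a subattractor-like set. The key point is that some cylinder $s_w(K)$ lies inside $\Omega$ away from $\partial K$, because $\Omega\neq\emptyset$ and cylinders shrink. Consequently, at generation $m$ the boundary is covered by at most a fixed proportion $(1-\eta)$ of the cylinders' total measure. A covering count then gives $\dimH\partial K\le$ the solution of a strictly smaller Moran equation, hence $<n$. I would cite \cite[Prop.~2.8]{dequalsnpaper} for the details if allowed. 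The lower bound $n-1$ is a topological fact: the boundary of a bounded open set with nonempty exterior separates $\R^n$ and has topological dimension $\ge n-1$, hence Hausdorff dimension $\ge n-1$. Then $|\partial K|=0$ because its dimension is below $n$.
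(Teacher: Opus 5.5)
Your proposal does not go through as written at one step, which you flagged yourself and then did not close. The argument that $O\subset K$ uses $|O|<\infty$, to deduce $|O\setminus\bigcup_i s_i(O)|=0$ from $\sum_i\rho_i^n|O|=|O|$. It also uses compactness of $\overline O$, so that $\bigcup_{|w|=m}s_w(\overline O)\to K$. This step matters because it is the only source of $\Omega\neq\emptyset$, on which the closure, boundary, OSC-with-$\Omega$ and dimension parts all depend. Definition~\ref{d:osc} does not require $O$ to be bounded. Your fallback, ``simply use $|O\setminus K|=0$'', is circular, since that is exactly the statement that needed boundedness. Moreover, for an unbounded $O$ the conclusion $O\subset K$ would be false, so something genuinely has to be proved.

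The repair is short. Take a ball $B\subset O$ and set $O':=\bigcup_{w\in\cI}s_w(B)$. Then $O'$ is open, contains $B$, and satisfies $s_i(O')\subset O'$ and $s_i(O')\cap s_j(O')\subset s_i(O)\cap s_j(O)=\emptyset$. It is bounded, because $\dist(s_w(z),K)\le\dist(s_w(z),s_w(K))=\rho_w\dist(z,K)\le\dist(z,K)$. Running your argument on $O'$ gives $B\subset O'\subset K$. Hence $\Omega\neq\emptyset$, and in fact every OSC set lies in $K$. This also gives $|K|\ge|B|>0$ directly, so the appeal to Hutchinson's $\mathcal{H}^n(K)>0$ becomes unnecessary.

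The upper bound $\dimH(\partial K)<n$ is only sketched, and it should be stated precisely. Choose $v\in\cI$ with $s_v(K)\subset\Omega$; this is possible because the cylinders containing a point of $\Omega$ shrink to it. Let $p:=|v|$. If $x\in\partial K$, then $x\in s_u(K)$ for some $|u|=p$. Also $x\notin s_u(\Omega)$, because $s_u(\Omega)\subset\Omega$ and $\Omega\cap\partial K=\emptyset$. So $x\in s_u(\partial K)$, and necessarily $u\neq v$. Thus $\partial K\subset\bigcup_{|u|=p,\,u\neq v}s_u(\partial K)$, and no ``overlap'' terms are needed. Iterating this cover gives $\mathcal{H}^t(\partial K)=0$ whenever $\sum_{|u|=p,\,u\neq v}\rho_u^t<1$. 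Such $t<n$ exist, since at $t=n$ the sum equals $1-\rho_v^n<1$.

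Your lower bound via topological dimension is correct but uses heavy machinery. An elementary alternative is to project orthogonally onto a hyperplane. Every normal line through a point of the bounded set $\Omega$ meets $\partial\Omega$, so the projection of $\partial\Omega$ contains that of $\Omega$. The projection is $1$-Lipschitz, so $\mathcal{H}^{n-1}(\partial\Omega)>0$.

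The remaining steps are correct: the null overlaps, $\overline\Omega=K$ via openness of similarities, $\partial\Omega=\partial K$, and the OSC with $O=\Omega$.

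The paper gives no argument of its own for this proposition; it simply refers to \cite[Prop.~2.8]{dequalsnpaper}. With the repairs above, your proof is a self-contained, elementary alternative to that citation. It uses only measure counting under the OSC, shrinking cylinders, and a Moran-type covering, where the paper defers to the literature on self-similar sets.
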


Also of relevance will be the following result, proved in \cite{dequalsnpaper}, about multiplier properties of the characteristic function of the interior of an $n$-attractor.
We note that, for the case considered in Proposition \ref{p:sOmega}, determining whether 
$\chi_\Omega\in \mathfrak{M}(H^{s_\Omega}(\R^n))$ 
appears to be 
an open problem.

\begin{proposition}[{\cite[Cor.~3.4]{dequalsnpaper}}]
\label{p:sOmega}
Let $\Omega\subset\R^n$ denote the interior of an $n$-attractor. Then 
$s_\Omega = (n-d)/2$ and 
$\chi_\Omega\in \mathfrak{M}(H^s(\R^n))$ 
for $|s|< (n-d)/2$,
where $d:=\dim_{\rm H}(\partial\Omega)\in[n-1,n)$ is the Hausdorff dimension of $\partial\Omega$. 
\end{proposition}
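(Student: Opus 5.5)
The plan is to reduce the multiplier question to a question about the size of $\eps$-neighbourhoods of $\partial\Omega$, and then use self-similarity to control those neighbourhoods. Throughout, let $\Omega:=K^\circ$ and $d:=\dimH(\partial\Omega)$. By Proposition \ref{p:IFSProp} we have $d\in[n-1,n)$ and $|\partial\Omega|=0$.

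\textbf{Step 1: boundary geometry.} We show that the upper box-counting (Minkowski) dimension of $\partial\Omega$ equals $d$. Equivalently, for every $\delta>0$,
\[ |\{x:\dist(x,\partial\Omega)<\eps\}|\lesssim \eps^{n-d-\delta}\quad\text{as }\eps\to0. \]
For this we use self-similarity. By \eqref{e:Attractor} and the last statement of Proposition \ref{p:IFSProp}, $\partial K\subset\bigcup_i s_i(\partial K)\cup\bigcup_{i\neq j}(s_i(K)\cap s_j(K))$. The intersections $s_i(K)\cap s_j(K)$ themselves lie in $s_i(\partial K)$, so $\partial K$ is contained in a finite union of scaled copies of itself. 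We then cover $\partial K$ at scale $\eps$ by level-$\ell$ cylinders $s_{i_1}\circ\cdots\circ s_{i_\ell}(K)$ of diameter comparable to $\eps$. Counting only those cylinders that meet $\partial K$ gives a sub-multiplicative counting function $N(\eps)$. The OSC, applied to the open set $O=\Omega$, gives a bounded-overlap property. Together these yield $\overline{\dim}_B\partial K=\dimH\partial K$. This parallels Falconer's implicit theorem for sets that are finite unions of their own similar images. The same count, run as a lower bound, also gives $|\{x:\dist(x,\partial\Omega)<\eps\}|\gtrsim \eps^{n-d+\delta}$ along a sequence $\eps\to0$.

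\textbf{Step 2: multiplier property for $|s|<(n-d)/2$.} Take $0<s<(n-d)/2$. We use Sickel's criterion (\cite{sickel1999pointwise}): if the neighbourhood bound of Step 1 holds with exponent $n-d-\delta>2s$, then $\chi_\Omega\in\mathfrak{M}(H^s(\R^n))$. Concretely, one first shows $\chi_\Omega\in B^{(n-d-\delta)/2}_{2,\infty}(\R^n)$. This follows from
\[ \|\chi_\Omega(\cdot+h)-\chi_\Omega\|_{L^2}^2\le|\{\dist(\cdot,\partial\Omega)<|h|\}|. \]
One then combines this with the $L^\infty$ bound on $\chi_\Omega$ through the Besov/Triebel multiplier results of Sickel and Triebel (\cite{Tri:02}). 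Negative $s$ follows by duality, and $s=0$ is trivial. This gives the multiplier statement and also $s_\Omega\ge(n-d)/2$.

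\textbf{Step 3: upper bound $s_\Omega\le(n-d)/2$.} Suppose $\chi_\Omega\in\mathfrak{M}(H^s)$ with $s>(n-d)/2$. Applying the multiplier to a cutoff function equal to $1$ near $K$ gives $\chi_\Omega\in H^s$, hence $\chi_\Omega\in B^{s}_{2,\infty}$. This forces
\[ \|\chi_\Omega(\cdot+h)-\chi_\Omega\|^2_{L^2}\lesssim|h|^{2s}. \]
We must contradict this using the lower bound from Step 1. Near each boundary cylinder, the set $K$ and its complement both occupy a fixed proportion of a ball of comparable radius. This uses $|K|>0$, $\Omega$ nonempty, and the fact that $\partial K$ contains points of $\overline{K^c}$, rescaled by the similarity. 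Hence, averaging over directions $h$ with $|h|=\eps$, the difference-quotient integral is $\gtrsim$ the neighbourhood volume, which is $\gtrsim \eps^{n-d+\delta}$. With $2s>n-d+\delta$ this is a contradiction.

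\textbf{Main obstacle.} I expect the hardest part to be the lower bound in Step 3, specifically the uniform two-sided density of $K$ and $K^c$ at all boundary cylinders. I would prove it first at level $0$ by compactness: choose a ball in $\Omega$ and a ball in $K^c$, both near some point of $\partial K$. I would then transport it to every level with the maps $s_{i_1}\circ\cdots\circ s_{i_\ell}$. The delicate point is that the image of the exterior ball may be covered by other cylinders. To rule this out, I would choose the exterior ball far enough out relative to $\diam K$, using the bounded number of neighbouring cylinders guaranteed by the OSC.
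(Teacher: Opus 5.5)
\paragraph{Main gap: Step 2.}
Your argument fails at Step 2. You use the principle that the global bound $|\{x:\dist(x,\partial\Omega)<\eps\}|\lesssim\eps^{\alpha}$ with $\alpha=n-d-\delta$, equivalently $\chi_\Omega\in B^{\alpha/2}_{2,\infty}(\R^n)\cap L^\infty(\R^n)$, implies $\chi_\Omega\in\mathfrak{M}(H^s(\R^n))$ for $2s<\alpha$. That principle is false, and it is not what the Sickel/Triebel multiplier results say. Their hypotheses are localised and scale-invariant; in Triebel's case the hypothesis is that $\partial\Omega$ be a $d$-set.

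Testing the multiplier inequality on a bump of width $r$ at $x$ shows that you essentially need $\int_{B(x,r)}\dist(y,\partial\Omega)^{-2s}\,\rd y\lesssim r^{n-2s}$, uniformly in $x$ and $r$. A global volume bound only gives $O(r^{n(1-2s/\alpha)})$, which is much larger because $\alpha<1\le n$.

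Here is a concrete counterexample. In disjoint balls $B(x_k,r_k)$ with $r_k=2^{-k}$, place a periodic array of open cubes of side $\eps_k=r_k^{n/\alpha}$ and spacing $2\eps_k$. The union $E$ satisfies $|\{\dist(\cdot,\partial E)<\eps\}|\lesssim\eps^\alpha$. Yet for bumps $u_k$ at scale $r_k$, $\|\chi_Eu_k\|^2_{H^s}/\|u_k\|^2_{H^s}\gtrsim(r_k/\eps_k)^{2s}\to\infty$ for every $s>0$. So Step 1, as you set it up, does not supply what Step 2 needs.

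\paragraph{How to repair Step 2.}
The missing idea is to use self-similarity and the OSC to transfer the global estimate to every ball. This is where the OSC is needed, not in Step 1: there $\overline{\dim}_B\partial K=d$ already follows from $\partial K\subset\bigcup_i s_i(\partial K)$ by Falconer's implicit theorem.

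Take $x\in\R^n$ and $\eps<r\le\diam(\Omega)$. Then $\partial\Omega\cap B(x,2r)$ is covered by the sets $s_{\bm m}(\partial K)$ with $\bm m\in L_r(\Omega)$ and $K_{\bm m}\cap B(x,2r)\neq\emptyset$. Lemma \ref{l:Falc} bounds their number independently of $x$ and $r$, and scaling gives $|B(x,r)\cap\{\dist(\cdot,\partial\Omega)<\eps\}|\lesssim r^{d+\delta}\eps^{n-d-\delta}$.

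From this you get $\int_{B(x,r)}\dist(\cdot,\partial\Omega)^{-2sp}\lesssim r^{n-2sp}$ for some $p>1$. That yields the Hardy inequality $\int|u|^2\dist(\cdot,\partial\Omega)^{-2s}\lesssim\|u\|^2_{H^s(\R^n)}$ via the Fefferman--Phong condition. The multiplier bound then follows by splitting the Gagliardo seminorm of $\chi_\Omega u$. Alternatively, finish proving that $\partial\Omega$ is a $d$-set and quote Triebel.

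\paragraph{Exterior density in Step 3.}
Step 3 is sound once two-sided density is known. Your proposed proof of exterior density does not work, however: the image under $s_{\bm m}$ of an exterior ball may be covered by neighbouring cylinders, however far out you place it.

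Instead, choose $\bm v$ with $K_{\bm v}\subset\Omega$. Then $s_{\bm v}^{-1}(K)$ contains a neighbourhood of $K$. It is tiled by the images under $s_{\bm v}^{-1}$ of the $L_{\rho_{\bm v}\diam(\Omega)}$ mesh; one of these tiles is $K$, and all are similar copies of $K$ with diameter greater than $\rho_{\rm min}\diam(\Omega)$. Every $x\in\partial K$ then lies in some other tile $T$ with $|T\cap K|=0$. The easy interior density of $T$ at $x$ gives $|B(x,r)\setminus K|\gtrsim r^n$.
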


By combining Proposition \ref{p:sOmega} with results from \S\ref{s:VIM} we can derive semi-discrete convergence results for our Galerkin VIM when the inhomogeneity $K$ is an $n$-attractor. As an example we consider the case where $\m$ is constant on the inhomogeneity, as in Figure \ref{f:Scatt}. 
The following result follows by combining Corollary \ref{c:Scatt},  Proposition \ref{p:sOmega} and Remark \ref{r:MCont}\eqref{i:chiOmega}.

\begin{corollary}[Semi-discrete convergence on an $n$-attractor]
\label{c:App3}
Let $\Omega$ be interior of an $n$-attractor, let $\m = m\chi_\Omega$ for some $m\in \C\setminus\{0\}$ with $\Im[m] \geq 0$, and let $d:=\dim_{\rm H}(\partial\Omega)$.  Then 
\begin{align}
\label{e:App4}
s_\m = s_\Omega = \tfrac{n-d}{2} \quad \text{and} \quad s_{\m,\Omega}=\infty,
\end{align}
the following operators are boundedly invertible:
\begin{align}
\label{}
&\mathcal{I}-k^2\mathcal{V}_\Omega:H^{s}(\Omega)\to H^{s}(\Omega), 
&&  
0\leq s\leq(\tfrac{n-d}{2})^-+2,\label{e:MP5b}\\ 
&\mathcal{I}-k^2\mathcal{V}_\Omega^*:H^{s}(\Omega)\to H^{s}(\Omega),
&&  
0\leq s\leq(\tfrac{n-d}{2})^-+2,\label{e:MP7b}\\
&\mathcal{I}-k^2\cV_\Omega^*:H_{\rm ze}^s(\Omega)\to H_{\rm ze}^s(\Omega),
&&  
0\leq s\leq(\tfrac{n-d}{2})^-,\label{e:MP8b}
\end{align}
and the statements of Corollary \ref{c:Scatt} hold with
\begin{align}
 \label{e:App5}
 s=t=(\tfrac{n-d}{2})^-+ 2. 
 \end{align} 
In particular, for the $h$-version VIM on $\Omega$ 
we have $O(h^{\min(p+1,(\tfrac{n-d}{2})^-+2)})$ error for $u$ and $O(h^{2\min(p+1,(\tfrac{n-d}{2})^-+2)})$ error for $J^{\rm FFP}_{\hat{x}}$ and $J^{\rm SF}_{x_0}$ (in the sense explained before Example \ref{r:Cinfty2}). 
\end{corollary}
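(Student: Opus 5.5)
The plan is to compute the three regularity indices first and then substitute them into the abstract results of \S\ref{s:VIM}. We work with $\m=m\chi_\Omega$, $m\in\C\setminus\{0\}$, where $\Omega$ is the interior of an $n$-attractor $K'$.

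\emph{Step 1: checking \eqref{e:nAssumption} and \eqref{e:OmegaAssumption}.} By Proposition \ref{p:IFSProp} we have $|\partial\Omega|=0$ and $\overline{\Omega}=K'$. Hence the essential support $K=\supp\m$ equals $\overline\Omega$ (every ball around a point of $\overline\Omega$ meets $\Omega$ in a set of positive measure, since $\Omega$ is open and nonempty there). It follows that $|K\setminus\Omega|=|\partial\Omega|=0$. Since $\Omega$ is bounded and open, \eqref{e:OmegaAssumption} holds. Condition \eqref{e:nAssumption} holds because $\Im[m]\ge0$.

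\emph{Step 2: the identities \eqref{e:App4}.} By Remark \ref{r:MCont}\eqref{i:chiOmega}, multiplication by the constant $m\neq0$ preserves each multiplier class, so $s_\m=s_\Omega$. Choosing the constant (hence $C^\infty_{\rm comp}$ after cutoff near $\overline\Omega$) function $m$ in \eqref{e:smOmega} gives $s_{\m,\Omega}=\infty$. Proposition \ref{p:sOmega} then gives $s_\Omega=(n-d)/2$, with $\chi_\Omega\in\mathfrak M(H^s)$ for all $s<(n-d)/2$. The symbol $((n-d)/2)^-$ is therefore to be read, per the convention, as any $s<(n-d)/2$. At the endpoint the multiplier property is an open question, and the convention handles this automatically.

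\emph{Step 3: invertibility.} Substitute these values into Lemma \ref{l:MapProp2}.
\begin{itemize}
\item \eqref{e:MP5} gives \eqref{e:MP5b} directly.
\item In \eqref{e:MP7}, the bound $\min(s_\Omega^-+2,s_{\m,\Omega}^-)$ becomes $s_\Omega^-+2=((n-d)/2)^-+2$, which gives \eqref{e:MP7b}.
\item \eqref{e:MP8} gives \eqref{e:MP8b}.
\end{itemize}

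\emph{Step 4: the values of $s$ and $t$ in Corollary \ref{c:Scatt}.} From \eqref{e:stDef}, $s=s_\m^-+2=((n-d)/2)^-+2$. Also
\[
t=\max\bigl(\min(s_\Omega^-+2,\infty),\,s_\m^-\bigr)=\max\bigl(((n-d)/2)^-+2,\,((n-d)/2)^-\bigr)=((n-d)/2)^-+2,
\]
which is \eqref{e:App5}. For the final $h$-version statement, fix $p$ and insert these into \eqref{e:GalErr} and \eqref{e:SupCon}. Then $s=t$ gives exponents $\min(p+1,s)$ and $2\min(p+1,s)$ respectively.

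The only step needing care is the bookkeeping of the ``$^-$'' convention inside nested min/max. One has to check that the rate is asserted for every $s<(n-d)/2+2$ (and at the endpoint only if the multiplier property holds there), which is consistent with the convention stated before Lemma \ref{l:MapProp}. Everything else is direct substitution.
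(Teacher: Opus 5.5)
Your proposal is correct and follows the paper's route: the paper's proof simply combines Proposition \ref{p:sOmega} and Remark \ref{r:MCont}\eqref{i:chiOmega} to get the indices, then substitutes them into Lemma \ref{l:MapProp2} and Corollary \ref{c:Scatt}, exactly as you do. Your explicit check that $\supp\m=\overline{\Omega}$, so that $|K\setminus\Omega|=|\partial\Omega|=0$, verifies a standing hypothesis that the paper leaves implicit.
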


\begin{remark}[Semi-discrete convergence on a bounding box]
\label{r:App4}
Let $\m$, $\Omega$ and $d$ be as in Corollary \ref{c:App3}.
Let $\Omega'$ be a bounded 
open set (e.g.\ a bounding box) such that $\overline{\Omega}\subset\Omega'$. 
Then by Proposition \ref{p:sOmega} and Remark \ref{r:MCont}\eqref{i:Bounding}
\begin{align}
\label{e:App4}
s_{\m,\Omega'}=s_\Omega=\tfrac{n-d}{2},
\end{align}
the following operators are boundedly invertible:
\begin{align}
\label{}
&\mathcal{I}-k^2\mathcal{V}_{\Omega'}:H^{s}(\Omega')\to H^{s}(\Omega'), 
&&  
0\leq s\leq(\tfrac{n-d}{2})^-+2,\label{e:MP5c}\\ 
&\mathcal{I}-k^2\mathcal{V}_{\Omega'}^*:H^{s}(\Omega')\to H^{s}(\Omega'),
&&  
0\leq s\leq(\tfrac{n-d}{2})^-,\label{e:MP7c}\\
&\mathcal{I}-k^2\cV_{\Omega'}^*:H_{\rm ze}^s(\Omega')\to H_{\rm ze}^s(\Omega'),
&&  
0\leq s\leq(\tfrac{n-d}{2})^-\label{e:MP8c}.
\end{align}
and, with $\Omega$ replaced by $\Omega'$, 
the statements of Corollary \ref{c:Scatt} hold with 
\begin{align}
 \label{e:App6}
 s=(\tfrac{n-d}{2})^-+ 2 \quad \text{and} \quad t=(\tfrac{n-d}{2})^-. 
 \end{align}
In particular, for the $h$-version VIM on $\Omega'$  
we have $O(h^{\min(p+1,(\tfrac{n-d}{2})^-+2)})$ error for $u$ and $O(h^{\min(p+1,(\tfrac{n-d}{2})^-+2)+(\tfrac{n-d}{2})^-})$ error for $J^{\rm FFP}_{\hat{x}}$ and $J^{\rm SF}_{x_0}$ (in the sense explained before Example \ref{r:Cinfty2}).  

Comparing this with the results in Corollary \ref{c:App3}, we see that, for this choice of $\m$, applying our Galerkin VIM on $\Omega$ (via a 
``geometry-conforming'' method, as illustrated in Figure \ref{f:Disc}(c)) leads to an improved rate of convergence for linear functionals, compared to that obtained by applying the same approach on $\Omega'$ (via an 
``bounding box'' method, as illustrated in Figure \ref{f:Disc}(a)).
\end{remark}

We now consider the practical implementation of our Galerkin VIM on an $n$-attractor. In \S\ref{s:IFSMesh} we discuss the generation of geometry-conforming meshes, and in \S\ref{s:IFSquad} we discuss numerical quadrature rules. 
We combine the results of these two sections  in \S\ref{s:IFSFD} (specifically, in Theorem \ref{t:FD}), where we provide a fully discrete error analysis of our method in the case of piecewise constant approximations. 

\subsection{Meshing an $n$-attractor}
\label{s:IFSMesh}

For an $n$-attractor $K$, the fact that $|K|>0$ and $|s_i(K)\cap s_j(K)|=0$ for $i\neq j\in\{1,\ldots,M\}$ is often referred to as the ``self-similarity'' of $K$ (see, e.g., \cite[\S5]{HUTCHINSONJOHNE.1981FaSS}). Combined with \eqref{e:Attractor}, 
it means that $K$ can be decomposed into a union of $M$ smaller copies of $K$, with zero measure overlaps. 
This induces a corresponding decomposition of the interior $\Omega:=K^\circ$ into $M$ disjoint smaller copies of $\Omega$, with
\begin{align}
\label{e:OmegaDecomp}
\Bigg| \Omega\setminus \bigcup_{i=1}^M s_i(\Omega)\Bigg|=0, 
\qquad s_i(\Omega)\cap s_j(\Omega)=\emptyset, \,\,i\neq j\in\{1,\ldots,M\}.
\end{align}
Hence $\{s_i(\Omega)\}_{i=1}^M$ 
constitutes 
a mesh of $\Omega$ in the sense of Definition \ref{d:Mesh}. 
The 
decomposition 
\eqref{e:OmegaDecomp} 
can be applied recursively to generate 
meshes of $\Omega$ comprising elements similar to $\Omega$ and 
of arbitrarily small diameter. To describe such meshes we adopt the vector index notation of \cite{HUTCHINSONJOHNE.1981FaSS}.

\begin{definition}[Vector indices]%
\label{d:VectInd}
Let $\{s_{i}\}_{i=1}^{M}$ be an IFS on $\mathbb{R}^{n}$. 
Given $l\in \N$ and $\bm{m} = (m_{1},\ldots,m_{l})\in \left\{1, \ldots, M \right\}^{l}$ 
we define 
\begin{equation}\label{e:sm}
s_{\bm{m}}(x) \coloneqq s_{m_1}\circ s_{m_2} \circ \ldots \circ s_{m_l}(x),\quad x\in \mathbb{R}^{n},
\end{equation}
which is a contracting similarity with contraction factor
\begin{equation*}
	\rho_{\bm{m}} := \prod_{i=1}^{l}\rho_{m_{i}},
\end{equation*} 
where, for each $i=1,\ldots,l$, $\rho_{m_i}$ denotes the contraction factor of the map $s_{m_i}$.
Given $E\subset \mathbb{R}^{n}$ we define $E_{\bm{m}} := s_{\bm{m}}(E)$. When 
$l=1$, so that 
$\bm{m}=(i)$ for some $i\in\{1,\ldots N\}$, we denote $s_{\bm{m}}(E) =s_{i}(A)$ simply by $E_{i}$. For convenience we also define $s_\emptyset$ to be the identity map on $\R^n$, $\rho_\emptyset:=1$ and $E_\emptyset:=E$. 
Finally, we define 
\begin{equation}
\cI := \{\emptyset\}\cup \left(\bigcup_{l\in \mathbb{N}} \left\{1, \ldots, M \right\}^{l}\right).
\end{equation}
\end{definition}

The following trivial scaling results will be used repeatedly in our analysis.

\begin{lemma}[Scaling properties]
\label{l:Scaling}
Let $K$ be an $n$-attractor with IFS $\{s_{i}\}_{i=1}^{M}$, and let $\Omega:=K^\circ$. 
Then, for any 
$\bm{m} \in \mathcal{I}$, 
\begin{equation}
\label{e:Kmdiam}
\operatorname{diam}\left(\Omega_{\bm{m}}\right)=\rho_{\bm{m}} \operatorname{diam}(\Omega)
\end{equation}
and 
\begin{equation}\label{e:Scaling}
	\left|\Omega_{\bm{m}}\right|=\rho_{\bm{m}}^{n}|\Omega|
	=\left(\frac{\diam(\Omega_{\bm{m}})}{\diam(\Omega)}\right)^n|\Omega|.
\end{equation}
\end{lemma}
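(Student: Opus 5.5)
The plan is to verify the two identities directly from the structure of $s_{\bm{m}}$ as a composition of similarities. First I record that, for any $\bm{m}=(m_1,\ldots,m_l)\in\cI$, the map $s_{\bm{m}}$ is itself a similarity of the form $s_{\bm{m}}(x)=\rho_{\bm{m}}A_{\bm{m}}x+\delta_{\bm{m}}$ with $A_{\bm{m}}$ orthogonal. This follows by induction on $l$ from \eqref{e:IFS}: composing $x\mapsto\rho A x+\delta$ with $x\mapsto \rho' A' x+\delta'$ gives $x\mapsto \rho\rho' AA'x+(\rho A\delta'+\delta)$, and a product of orthogonal matrices is orthogonal. The case $\bm{m}=\emptyset$ is trivial, since $s_\emptyset$ is the identity and $\rho_\emptyset=1$.

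For \eqref{e:Kmdiam}, I use $|s_{\bm{m}}(x)-s_{\bm{m}}(y)|=\rho_{\bm{m}}|A_{\bm{m}}(x-y)|=\rho_{\bm{m}}|x-y|$ for all $x,y\in\R^n$. Since $s_{\bm{m}}$ is a bijection from $\Omega$ onto $\Omega_{\bm{m}}$, taking the supremum over $x,y\in\Omega$ gives $\diam(\Omega_{\bm{m}})=\rho_{\bm{m}}\diam(\Omega)$.

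For the first equality in \eqref{e:Scaling}, I apply the change of variables formula for affine maps: $|s_{\bm{m}}(E)|=|\det(\rho_{\bm{m}}A_{\bm{m}})|\,|E|=\rho_{\bm{m}}^n|E|$ for measurable $E$, because $|\det A_{\bm{m}}|=1$. Here $\Omega$ is open, so it is measurable, and its image under a homeomorphism is again open. The second equality follows by substituting $\rho_{\bm{m}}=\diam(\Omega_{\bm{m}})/\diam(\Omega)$ from \eqref{e:Kmdiam}. This division is legitimate because $\Omega$ is non-empty and open by Proposition \ref{p:IFSProp}, so $0<\diam(\Omega)<\infty$, using that $K$ is compact.

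No step is a real obstacle. The only point needing care is to note that $\diam(\Omega)\neq 0$, which Proposition \ref{p:IFSProp} supplies, and that the composed map is still a similarity with orthogonal linear part.
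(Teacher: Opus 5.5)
Your proposal is correct. The paper gives no proof at all and simply calls these ``trivial scaling results''; your argument is the standard one it implicitly relies on. Composing the maps gives a similarity with ratio $\rho_{\bm{m}}$ and orthogonal linear part, so distances scale by $\rho_{\bm{m}}$ and Lebesgue measure by $\rho_{\bm{m}}^n$, and $0<\diam(\Omega)<\infty$ justifies the final substitution.
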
 

Our focus will be on the quasi-uniform meshes described in the following definition.  

\begin{definition}[$L_{h}$ index set and $L_{h}$ mesh]
\label{d:Lh}
Let $K$ be an $n$-attractor and let $\Omega:=K^\circ$. Given 
$h>0$ 
we define the $L_{h}$ \emph{index set} 
by $L_h(\Omega)=\{\emptyset\}$ when $h\geq \diam(\Omega)$ and by 
\begin{equation}\label{lhindexset}
L_{h}(\Omega) \coloneqq \{ \bm{m}=(m_1,\dots,m_{l}) \in \mathcal{I} :\operatorname{diam}(\Omega_{\bm{m}})\leq h \mbox{ and } \operatorname{diam}\left(\Omega_{(m_{1},\dots,m_{l-1})}\right)>h\} 
\end{equation}
when $h<\diam(\Omega)$, 
replacing $\Omega_{(m_{1},\dots,m_{l-1})}$ by $\Omega_\emptyset=\Omega$ when $l=1$.
We refer to $\{\Omega_{\bm{m}}\}_{\bm{m}\in L_h(\Omega)}$ as the $L_h$ \emph{mesh} of $\Omega$. 
\end{definition}

\begin{proposition}[$L_h$ mesh properties]
\label{p:LhMesh}
Let $K$ be an $n$-attractor, $\Omega:=K^\circ$ and %
\begin{align}
\label{e:rhominDef}
\rho_{\rm min}:=\min_{i=1,\ldots,M}\rho_i.
\end{align}
Suppose that $h\in(0,\diam(\Omega)]$. 
Then the $L_h$ mesh $\{\Omega_{\bm{m}}\}_{\bm{m}\in L_h(\Omega)}$ is a 
mesh of $\Omega$ (in the sense of Definition \ref{d:Mesh}). 
Furthermore, the elements of the mesh satisfy 
\begin{align}
\label{e:LhProp1}
\rho_{\rm min}h < \diam(\Omega_{\bm{m}})\leq h, \qquad \bm{m}\in L_h(\Omega),
\end{align}
and
\begin{align}
\label{e:tauDef}
\left(\frac{\rho_{\rm min}h}{\diam(\Omega)}\right)^{n} |\Omega| 
\leq |\Omega_{\bm{m}}|\leq 
\left(\frac{h}{\diam(\Omega)}\right)^{n} |\Omega|,
\qquad \bm{m}\in L_h(\Omega),
\end{align}
and the number of elements $N=\#L_h(\Omega)$ in the mesh satisfies
\begin{equation}
\label{e:LhProp2}
\left(\frac{\diam(\Omega)}{h}\right)^{n}\leq N < \left(\frac{\diam(\Omega)}{\rho_{\rm min}h}\right)^{n}.
\end{equation}
\end{proposition}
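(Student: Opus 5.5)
The plan is to view the index set $L_h(\Omega)$ as the set of leaves of a finite subtree of the $M$-ary tree $\cI$, obtained by descending from the root $\emptyset$ until the diameter first drops to at most $h$. Since $\diam(\Omega_{\bm{m}})=\rho_{\bm{m}}\diam(\Omega)$ by Lemma \ref{l:Scaling}, and $\rho_{\bm{m}}\le(\max_i\rho_i)^l\to0$ as $l\to\infty$, every infinite path $(m_1,m_2,\dots)$ has exactly one prefix in $L_h(\Omega)$. For the case $h=\diam(\Omega)$ the set $L_h(\Omega)=\{\emptyset\}$ and all claims are immediate, so we assume $h<\diam(\Omega)$. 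The length of the prefixes involved is bounded uniformly, so $L_h(\Omega)$ is finite.

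For the mesh property I would first prove by induction on the level that for any finite ``cut'' of the tree (a finite set of indices such that every infinite path has exactly one prefix in it) the sets $\Omega_{\bm{m}}$ are pairwise disjoint and cover $\Omega$ up to a null set. The base case is the trivial cut $\{\emptyset\}$. For the inductive step, we replace a node $\bm{m}$ by its children $(\bm{m},i)$, $i=1,\dots,M$. Applying the similarity $s_{\bm{m}}$ to \eqref{e:OmegaDecomp} shows that the $\Omega_{(\bm m,i)}$ are disjoint open subsets of $\Omega_{\bm m}$ that exhaust it up to a null set, because similarities map null sets to null sets. Disjointness for two indices that are not prefixes of one another follows by taking their first differing coordinate $j$. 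Both sets lie inside $s_{\bm{p}}(s_i(\Omega))$ and $s_{\bm{p}}(s_{i'}(\Omega))$ respectively, where $\bm p$ is the common prefix and $i\ne i'$. These are disjoint by the OSC with $O=\Omega$ (Proposition \ref{p:IFSProp}) and injectivity of $s_{\bm p}$. Here we use $s_{\bm q}(\Omega)\subset\Omega$, which follows from $s_i(\Omega)\subset\Omega$. Alternatively, one can obtain $L_h(\Omega)$ from $\{\emptyset\}$ by finitely many child-replacements and argue inductively. Each $\Omega_{\bm m}$ is nonempty, bounded and open, so this gives Definition \ref{d:Mesh}.

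For \eqref{e:LhProp1}, the upper bound is part of the definition. For the lower bound, write $\bm m=(\bm m',m_l)$. Then $\diam(\Omega_{\bm m})=\rho_{m_l}\diam(\Omega_{\bm m'})>\rho_{\min}h$, using Lemma \ref{l:Scaling} and $\rho_{\bm m}=\rho_{\bm m'}\rho_{m_l}$. Then \eqref{e:tauDef} follows directly from \eqref{e:Scaling} and \eqref{e:LhProp1}; the non-strict lower inequality is harmless. For \eqref{e:LhProp2}, I would sum the volumes: by the mesh property $|\Omega|=\sum_{\bm m\in L_h}|\Omega_{\bm m}|$. Inserting the bounds from \eqref{e:tauDef} gives
\[
N\left(\frac{\rho_{\min}h}{\diam\Omega}\right)^n|\Omega|<|\Omega|\le N\left(\frac{h}{\diam\Omega}\right)^n|\Omega|,
\]
with the strict inequality coming from the strict bound in \eqref{e:LhProp1}. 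Since $|\Omega|>0$ by Proposition \ref{p:IFSProp}, this yields \eqref{e:LhProp2}.

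The main obstacle I expect is the careful verification of the mesh property, namely disjointness of incomparable indices and the full-measure covering. Everything else is bookkeeping from Lemma \ref{l:Scaling}.
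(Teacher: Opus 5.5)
Your proposal is correct and follows essentially the same route as the paper: the mesh property comes from repeated application of \eqref{e:OmegaDecomp}, \eqref{e:LhProp1} comes directly from the definition of $L_h(\Omega)$, and \eqref{e:tauDef}--\eqref{e:LhProp2} come from \eqref{e:Scaling} and summing $|\Omega|=\sum_{\bm{m}\in L_h(\Omega)}|\Omega_{\bm{m}}|$; the paper simply cites a reference for the mesh property, whereas you spell out the cut/child-replacement induction and derive disjointness of incomparable indices from the OSC. Your observation that the strict upper bound on $N$ needs the strict inequality in \eqref{e:LhProp1} together with $|\Omega|>0$ is a small improvement, since the paper's displayed chain uses only non-strict inequalities.
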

\begin{proof}
That $\{\Omega_{\bm{m}}\}_{\bm{m}\in L_h(\Omega)}$ defines a mesh of $\Omega$ follows by repeated applications of  \eqref{e:OmegaDecomp} - see the results in \cite[Section 5.1.1]{joly2024high}. 
The bounds in \eqref{e:LhProp1} 
follow immediately from the definition of $L_h(\Omega)$, and \eqref{e:tauDef} and \eqref{e:LhProp2} follow from \eqref{e:LhProp1} and \eqref{e:Scaling}, since
\[ |\Omega| = \sum_{\bm{m}\in L_h(\Omega)} |\Omega_{\bm{m}}| = |\Omega|\sum_{\bm{m}\in L_h(\Omega)}\left(\frac{\diam(\Omega_{\bm{m}})}{\diam(\Omega)}\right)^n, \qquad \text{so that }\quad 1= \sum_{\bm{m}\in L_h(\Omega)}\left(\frac{\diam(\Omega_{\bm{m}})}{\diam(\Omega)}\right)^n,\]
which by \eqref{e:LhProp1} gives that
\[
N\left(\frac{\rho_{\rm min}h}{\diam(\Omega)}\right)^n 
\leq 1=\sum_{\bm{m}\in L_h(\Omega)}\left(\frac{\diam(\Omega_{\bm{m}})}{\diam(\Omega)}\right)^n
\leq N\left(\frac{h}{\diam(\Omega)}\right)^n,
\]
from which \eqref{e:LhProp2} follows immediately.
\end{proof}

In practice, the $L_{h}$ mesh can be constructed via repeated subdivision of $\Omega$ using the IFS structure - see the algorithm presented in \cite[Section 5.1.1]{joly2024high}, which is guaranteed to terminate in a finite number of steps which grows logarithmically with decreasing $h$ \cite[Remark 5.1]{joly2024high}. 
Furthermore, because the diameters of elements in the $L_h$ meshes can only take values from a discrete set (namely the diameter of $\Omega$ multiplied by all possible products of powers of the contraction factors, cf.\ \eqref{e:Kmdiam}), there are only countably many distinct $L_h$ meshes. Indeed, for any $n$-attractor there exists a decreasing sequence $(h_l)_{l\in\N_0}$, tending to zero as $l\to\infty$, and with $h_0=\diam(\Omega)$, such that, for each $l\in\N$, the $L_{h_l}$ mesh is a strict refinement of the $L_{h_{l-1}}$ mesh and 
\begin{align}
\label{e:hkProp}
L_{h}(\Omega)=L_{h_l}(\Omega) \quad \text{for }h\in [h_{l}, h_{l-1}). 
\end{align}
We emphasize that the $L_h$ meshes comprise elements whose boundaries are in general fractal. 
Some examples were presented in Figure \ref{f:Meshes}.

\subsection{Numerical quadrature 
- piecewise constant case}
\label{s:IFSquad}

To implement our VIM 
we need to evaluate the integrals \eqref{e:LinSys2} arising in the linear system \eqref{e:LinSys}. 
For simplicity we focus on the case where the refractive index is constant on the inhomogeneity, with 
\begin{align}
\label{e:nConst}
\m=m\chi_\Omega,\qquad \text{for some }m\in\C \text{ with }\Im[m]\geq 0.
\end{align}
We also restrict attention to the case of the $h$-version VIM with piecewise constant basis functions ($p=0$). 

Under these assumptions, if the elements of the $L_h$ mesh $\{\Omega_{\bm{m}}\}_{\bm{m}\in L_h(\Omega)}$ are indexed as $\{\Omega_{\bm{m}(1)},\ldots,\Omega_{\bm{m}(N)}\}$, where $N=\#L_h(\Omega)$ (recall 
\eqref{e:LhProp2}), 
we work with the basis of $V_N$ 
\begin{align}
\label{}
\psi_j  = | \Omega_{\bm{m}(j)}|^{-1/2}\chi^{}_{\Omega_{\bm{m}(j)}}, \qquad j=1,\ldots,N.
\end{align}
The integrals \eqref{e:LinSys2} to be evaluated are then, for $i,j=1,\ldots,N$, 
\begin{align}
\label{e:LinSys2a}
A_{ij} = \delta_{ij} - \frac{mk^2}
{|\Omega_{\bm{m}(i)}|^{1/2}|\Omega_{\bm{m}(j)}|^{1/2}}
\int_{\Omega_{\bm{m}(i)}}\int_{\Omega_{\bm{m}(j)}}\Phi(x,y)\,\rd y\rd x, 
\qquad 
g_{i} = \frac{1}
{|\Omega_{\bm{m}(i)}|^{1/2}}
\int_{\Omega_{\bm{m}(i)}}u^{\rm i}(x)\,\rd x.
\end{align}

Our approach to the evaluation of \eqref{e:LinSys2a} will be to combine singularity subtraction with the self-similarity techniques of \cite{disjoint,nondisjoint} to write the (weakly) singular integrals arising in the matrix $A$ in terms of ``regular'' integrals (i.e.\ integrals with a smooth integrand). These regular integrals can be evaluated using the composite barycentre rule. 
The following definitions and results arise from special cases of the definitions and results found in \cite{disjoint,nondisjoint}, where more general measures on fractals were considered. We note that the higher order quadrature rules 
recently proposed and analysed in \cite{joly2024high} 
could be used in place of the composite barycentre rule in the following, %
but, at least for the piecewise constant case, the composite barycentre rule is sufficient for our purposes. %

\subsubsection{The composite barycentre rule}
\label{s:Bary}

We begin by reviewing the definition and convergence theory for the composite barycentre rule considered in \cite{disjoint}.
Suppose that $K$ is an $n$-attractor and let $\Omega:=K^\circ$. Given a measurable function $F:\Hull(\Omega)\to \mathbb{C}$ and some 
$h_q>0$ 
we define the composite barycentre rule
approximation of the integral
\begin{equation}\label{e:Intf}
I_{\Omega}[F]\coloneqq \int_{\Omega}F(x)\,\mathrm{d}x 
\end{equation}
by 
\begin{equation}
\label{e:Quadf}
Q^{h_{q}}_{\Omega}[F] \coloneqq \sum_{\bm{m}\in L_{h_q}(\Omega)}|\Omega_{\bm{m}}|\,F(x_{\bm{m}}),
\end{equation}
where, for $\bm{m}\in \cI$, $x_{\bm{m}}$ is the barycentre of $\Omega_{\bm{m}}$, defined by
\begin{align}
\label{}
x_{\bm{m}} \coloneqq \frac{1}{|\Omega_{\bm{m}}|}\int_{\Omega_{\bm{m}}} x \,\mathrm{d} x.
\end{align}
We recall that formulas for the weight $|\Omega_{\bm{m}}|$ appearing in \eqref{e:Quadf} were given in \eqref{e:Scaling}. 
One can prove that (see \cite[Proposition 3.3]{disjoint})
\[x_{\bm{m}}=s_{\bm{m}}(x^{}_{\Omega}),\] 
where $x^{}_{\Omega}$ is the barycentre of $\Omega$, which is given by the formula
\begin{equation}\label{e:Bary}
x^{}_{\Omega} = \left[ I - \sum_{j=1}^{M}\rho_{j}^{n+1}A_{j} \right]^{-1}\left( \sum_{j=1}^{M}\rho_{j}^{n}\delta_{j} \right),
\end{equation}
where $I$ denotes the $n\times n$ identity matrix.

Now suppose that $K'$ is another $n$-attractor, $\Omega':=(K')^\circ$, and $G:\Hull(\Omega)\times\Hull(\Omega')\to \C$ is measurable. 
Then for the double integral 
\begin{equation}\label{e:Intg}
I_{\Omega,\Omega'}[G]\coloneqq \int_{\Omega}\int_{\Omega'}G(x,x')\,\mathrm{d}x \mathrm{d}x' 
\end{equation}
we define the iterated composite barycentre approximation 
\begin{equation}
\label{e:Quadg}
Q^{h_{q}}_{\Omega,\Omega'}[G] \coloneqq \sum_{\bm{m}\in L_{h_q}(\Omega)}\sum_{\bm{m}'\in L_{h_q}(\Omega')}|\Omega_{\bm{m}}||\Omega_{\bm{m}'}'| \, G(x_{\bm{m}},x'_{\bm{m}'}).
\end{equation}

\noindent The following theorem collects results from \cite[Section 3]{disjoint}, providing error estimates for the above approximations under appropriate smoothness assumptions on the integrands. 
Here, and henceforth, we define, for $h_q>0$, 
\[ \Hull(\Omega,h_q):=\bigcup_{\bm{m}\in L_{h_q}(\Omega)}\Hull(\Omega_{\bm{m}})\subset \Hull(\Omega).\]

\begin{proposition}
\label{p:BaryErr}
Let $h_q>0$ and 
suppose that 
$f:U \rightarrow \mathbb{C}$ is twice differentiable, where $U$ is some open set containing 
$\Hull(\Omega,h_q)$.
Then 
\begin{equation}\label{e:ErrSing}
	\left|I_{\Omega}[F]-Q^{h_q}_{\Omega}[F]\right| \leq \frac{nh_q^{2}}{2} |\Omega|
\sup_{\substack{x\in \Hull(\Omega,h_q)}}
\max _{\substack{\alpha \in \mathbb{N}_0^n \\|\alpha|=2}}\left|D^\alpha F(x)\right|.
\end{equation}
Suppose that 
$G:U\times U'\to \C$ is twice differentiable, where 
$U$ is as above and  
$U'$ is some open set containing 
$\Hull(\Omega',h_q)$. 
Then 
\begin{equation}\label{e:ErrDoub}
\left|I_{\Omega,\Omega'}[G]-Q^{h_q}_{\Omega,\Omega'}[G]\right| \leq 2n h_q^{2}|\Omega||\Omega'| 
\sup_{\substack{x\in \Hull(\Omega,h_q)\\{y\in \Hull(\Omega',h_q) }}}
\max _{\substack{\alpha \in \mathbb{N}_0^{2n} \\|\alpha|=2}}\left|D^\alpha G(x,y)\right|.
\end{equation}
Under the weaker assumption that 
$G:\Hull(\Omega,h_q)\times \Hull(\Omega',h_q)\rightarrow \mathbb{C}$ 
is Lipschitz continuous with Lipschitz constant 
$\mathcal{L}_{\Hull(\Omega,h_q)\times \Hull(\Omega',h_q)}[G]$, 
we have 
\begin{equation}
\label{e:ErrDoub2}
\left|I_{\Omega,\Omega'}[G]-Q^{h_q}_{\Omega,\Omega'}[G]\right| \leq \sqrt{2} h_q|\Omega||\Omega'| 
\mathcal{L}_{\Hull(\Omega,h_q)\times \Hull(\Omega',h_q)}[G].
\end{equation}
\end{proposition}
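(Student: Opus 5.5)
The plan is to prove all three estimates by working element-by-element on the $L_{h_q}$ mesh and Taylor expanding about the barycentre. By Proposition \ref{p:LhMesh}, $\{\Omega_{\bm{m}}\}_{\bm{m}\in L_{h_q}(\Omega)}$ is a mesh of $\Omega$, so the elements are disjoint and cover $\Omega$ up to a null set. (If $h_q\geq\diam(\Omega)$ the mesh is the trivial one $\{\Omega\}$, and everything below still applies.) Hence
\[
I_\Omega[F]-Q^{h_q}_\Omega[F]=\sum_{\bm{m}\in L_{h_q}(\Omega)}\int_{\Omega_{\bm{m}}}\big(F(x)-F(x_{\bm{m}})\big)\,\rd x,
\]
and $\sum_{\bm{m}}|\Omega_{\bm{m}}|=|\Omega|$. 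It therefore suffices to bound each element integral by $|\Omega_{\bm{m}}|$ times the stated constant.

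For \eqref{e:ErrSing}, fix $\bm{m}$ and $x\in\Omega_{\bm{m}}$. The barycentre $x_{\bm{m}}$ is an average of points of $\Omega_{\bm{m}}$, so it lies in $\Hull(\Omega_{\bm{m}})$. Since this hull is convex, the whole segment $[x_{\bm{m}},x]$ lies in $\Hull(\Omega_{\bm{m}})\subset\Hull(\Omega,h_q)\subset U$, which makes Taylor's theorem with Lagrange remainder legitimate:
\[
F(x)=F(x_{\bm{m}})+\nabla F(x_{\bm{m}})\cdot(x-x_{\bm{m}})+R(x),\qquad |R(x)|\leq \tfrac12\Big(\textstyle\sum_{i}|x_i-(x_{\bm{m}})_i|\Big)^2\max_{|\alpha|=2}\sup|D^\alpha F|.
\]
Here the supremum is over $\Hull(\Omega,h_q)$. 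By the defining property of the barycentre, $\int_{\Omega_{\bm{m}}}(x-x_{\bm{m}})\,\rd x=0$, so the linear term integrates to zero; this is the key cancellation. Applying Cauchy--Schwarz, $(\sum_i|v_i|)^2\leq n|v|^2$, and $|x-x_{\bm{m}}|\leq\diam(\Omega_{\bm{m}})\leq h_q$ from \eqref{e:LhProp1}, we get $|R(x)|\leq \tfrac{n}{2}h_q^2\max_{|\alpha|=2}\sup|D^\alpha F|$. Integrating over $\Omega_{\bm{m}}$ and summing over $\bm{m}$ then gives \eqref{e:ErrSing}.

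For \eqref{e:ErrDoub}, I would repeat the same argument in $\R^{2n}$ on the product cells $\Omega_{\bm{m}}\times\Omega'_{\bm{m}'}$. These cells form a partition of $\Omega\times\Omega'$ up to null sets, with measures $|\Omega_{\bm{m}}||\Omega'_{\bm{m}'}|$ summing to $|\Omega||\Omega'|$. The barycentre of a product cell is $(x_{\bm{m}},x'_{\bm{m}'})$, and its convex hull is $\Hull(\Omega_{\bm{m}})\times\Hull(\Omega'_{\bm{m}'})\subset U\times U'$, so the Taylor step goes through. The only changes are that the dimension is now $2n$ and the distance to the barycentre is at most $\sqrt{2}\,h_q$. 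The constant becomes $\tfrac{2n}{2}(\sqrt2 h_q)^2=2nh_q^2$, as claimed.

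For \eqref{e:ErrDoub2}, no Taylor expansion is needed. Both $(x,y)$ and $(x_{\bm{m}},x'_{\bm{m}'})$ lie in $\Hull(\Omega_{\bm{m}})\times\Hull(\Omega'_{\bm{m}'})\subset\Hull(\Omega,h_q)\times\Hull(\Omega',h_q)$, so the Lipschitz hypothesis gives directly
\[
|G(x,y)-G(x_{\bm{m}},x'_{\bm{m}'})|\leq\mathcal{L}\big(|x-x_{\bm{m}}|^2+|y-x'_{\bm{m}'}|^2\big)^{1/2}\leq\sqrt2\,h_q\,\mathcal{L}.
\]
Integrating and summing gives \eqref{e:ErrDoub2}.

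The proof is essentially routine. The only points needing care are confirming that the barycentre lies in the convex hull, so that the relevant segments stay inside the domain where the derivative and Lipschitz bounds hold, and tracking the constants in the multivariate Taylor remainder, specifically $\sum_{|\alpha|=2}\tfrac{2}{\alpha!}|v^\alpha|=(\sum_i|v_i|)^2$.
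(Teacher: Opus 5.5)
Your proof is correct and is essentially the argument behind the results the paper simply cites for this proposition (\cite[Thms~3.6 \& 3.7]{disjoint}). It uses elementwise Taylor expansion about the barycentre, with the linear term integrating to zero, and the same expansion on the product cells $\Omega_{\bm{m}}\times\Omega'_{\bm{m}'}\subset\R^{2n}$, where the distance to the barycentre is at most $\sqrt{2}\,h_q$; the last estimate is a direct Lipschitz bound. One point needs tidying: $F$ and $G$ are complex-valued, and the Lagrange form of the remainder does not hold directly for complex-valued functions. The fix is to choose $|\theta|=1$ with $\theta R(x)=|R(x)|$ and apply the Lagrange form to the real-valued function $\Re(\theta F)$; this gives exactly your bound on $|R(x)|$ with no loss in the constant.
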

\begin{proof}
See \cite[Thms 3.6 \& 3.7]{disjoint}\footnote{We note that the final bound of \cite[Theorem 3.7(iii)]{disjoint} is missing a factor of $\mu(K)\mu(K')$.}.
\end{proof}

In the implementation of our VIM we will apply these quadrature rules to integrals over mesh elements. For every $\bm{m}\in \cI$ the mesh element $\Omega_{\bm{m}}$ is the interior of $K_{\bm{m}}$, which is the attractor of the IFS $\{s_{\bm{m}}\circ s_{i}\circ s_{\bm{m}}^{-1}\}_{i=1}^{M}$, which satisfies the OSC because $\{s_i\}_{i=1}^M$ does. Furthermore, for any $\bm{p}\in\cI$ we have that $(\Omega_{\bm{m}})_{\bm{p}}=\Omega_{(\bm{m},\bm{p})}$, where $(\bm{m},\bm{p})$ denotes the concatenation of $\bm{m}$ and $\bm{p}$ (with $(\emptyset,\bm{p}):=\bm{p}$ and $(\bm{m},\emptyset):=\bm{m}$). Also, $L_{h_q}(\Omega_{\bm{m}})=L_{h_q/\rho_{\bm{m}}}(\Omega)$. Hence, 
for $F:\Hull(\Omega_{\bm{m}})\to \C$ 
and 
$h_q>0$ 
the composite barycentre rule approximation to the integral $I_{\Omega_{\bm{m}}}[F]$ is given by
\[ Q^{h_q}_{\Omega_{\bm{m}}}[F]=\sum_{\bm{p}\in L_{h_q/\rho_{\bm{m}}}(\Omega)}|\Omega_{(\bm{m},\bm{p})}|\,F(x_{(\bm{m},\bm{p})}).\]
Similarly, if $\bm{n}\in \cI$ 
then for $G:\Hull(\Omega_{\bm{m}})\times \Hull(\Omega_{\bm{n}})\to \C$ our approximation to the integral $I_{\Omega_{\bm{m}},\Omega_{\bm{n}}}[G]$ is given by
\[ Q^{h_q}_{\Omega_{\bm{m}},\Omega_{\bm{n}}}[G]=\sum_{\bm{p}\in L_{h_q/\rho_{\bm{m}}}(\Omega)}\sum_{\bm{q}\in L_{h_q/\rho_{\bm{n}}}(\Omega)}|\Omega_{(\bm{m},\bm{p})}|\,|\Omega_{(\bm{n},\bm{q})}|\,G(x_{(\bm{m},\bm{p})},x_{(\bm{n},\bm{q})}).\]

For the evaluation of the matrix entry $A_{ij}$ in \eqref{e:LinSys2a} we adopt different approaches, depending on whether 
$K_{\bm{m}(i)}=\overline{\Omega_{\bm{m}(i)}}$ and 
$K_{\bm{m}(j)}= \overline{\Omega_{\bm{m}(j)}}$ are disjoint or not. 
To distinguish these cases we introduce the following index sets:
\[ 
\cI_{\rm reg} := \{(i,j)\in \{1,\ldots,N\}^2: K_{\bm{m}(i)}\cap K_{\bm{m}(j)}= \emptyset\},
\qquad 
\cI_{\rm sing} := 
\{1,\ldots,N\}^2 \setminus \cI_{\rm reg}.
\] 

\subsubsection{Regular interactions}
\label{s:Reg}
If $(i,j)\in \cI_{\rm reg}$ the integral in \eqref{e:LinSys2a} is regular, and amenable to direct approximation by the composite barycentre rule.  
With $h_r>0$ we define our approximation of $A_{ij}$ in this case by
\begin{align}
\label{e:DisQuad}
\tilde{A}_{ij}:=\delta_{ij} - \frac{mk^2}
{|\Omega_{\bm{m}(i)}|^{1/2}|\Omega_{\bm{m}(j)}|^{1/2}}
Q^{h_r}_{\Omega_{\bm{m}(i)},\Omega_{\bm{m}(j)}}[\Phi].
\end{align}
To derive error estimates we need to assume a certain separation property for the $L_h$ meshes. 
We recall that, for a given $\bm{m}\in \cI$, $K_{\bm{m}}$ equals the closure of the open set $\Omega_{\bm{m}}$.

\begin{assumption}[Separation assumption]
\label{a:Sep}
We assume that: 
\begin{itemize}
\item[(i)] If $\bm{m},\bm{n}\in \cI$ and $K_{\bm{m}}\cap K_{\bm{n}}=\emptyset$ then 
$\Hull(K_{\bm{m}})\cap \Hull(K_{\bm{n}})=\emptyset$;
\item[(ii)] There exists a constant $C_{\rm sep}>0$ such that if $h\in(0,\diam(K)]$, $\bm{m},\bm{n}\in L_h(\Omega)$ and $K_{\bm{m}}\cap K_{\bm{n}}=\emptyset$ then $\dist(\Hull(K_{\bm{m}}),\Hull(K_{\bm{n}}))\geq C_{\rm sep}h$.
\end{itemize}
\end{assumption}

We also need a regularity result about the fundamental solution $\Phi$. In \cite[Lemma 5.1]{disjoint} it was shown that for $n=2,3$ there is a constant $C_n>0$, depending only on $n$, such that 
\begin{equation}\label{e:PhiBnd}
	\left| D^\alpha \Phi(x,y)\right| \leq C_n \frac{\left(1+(k|x-y|)^{(n+1)/2}\right)}{|x-y|^{n}}, \qquad \forall x\neq y, \quad \forall \alpha \in \mathbb{N}_0^{2n}, \, |\alpha|=2.
\end{equation}

\begin{lemma}[Disjoint quadrature error]
\label{l:DisQuad}
Suppose that 
Assumption \ref{a:Sep}(i) holds.
Then there exists a constant $C_{\rm dis}>0$ such that if $h\in (0,\diam(K)]$ and $\bm{m}(i),\bm{m}(j)\in L_{h}(K)$ are such that $K_{\bm{m}{(i)}}\cap K_{\bm{m}{(j)} } = \emptyset$, then $A_{ij}$ and $\tilde{A}_{ij}$ defined by \eqref{e:LinSys2a} and \eqref{e:DisQuad} satisfy 
\begin{equation}\label{e:DisErr}
|A_{i j}-\tilde{A}_{ij}| \leq 
\frac{C_{\rm dis}h_r^2h^n}
{\left(d_{ij}\right)^n}, 
\qquad 
\forall h_r>0,
\end{equation}
where
\[ d_{ij}:=\dist(\Hull(K_{\bm{m}(i)}),\Hull(K_{\bm{m}(j)})).\]
\end{lemma}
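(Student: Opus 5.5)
The plan is to apply the double-integral estimate \eqref{e:ErrDoub} of Proposition \ref{p:BaryErr} to $G=\Phi$ on the pair of elements $\Omega_{\bm{m}(i)},\Omega_{\bm{m}(j)}$. These are interiors of the attractors $K_{\bm{m}(i)},K_{\bm{m}(j)}$, so they are admissible, and their composite barycentre rules are the ones displayed before the lemma.

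First, since $i\neq j$ (the elements are disjoint), the $\delta_{ij}$ terms cancel, and
\[
|A_{ij}-\tilde{A}_{ij}| = \frac{|m|k^2}{|\Omega_{\bm{m}(i)}|^{1/2}|\Omega_{\bm{m}(j)}|^{1/2}}\bigl|I_{\Omega_{\bm{m}(i)},\Omega_{\bm{m}(j)}}[\Phi]-Q^{h_r}_{\Omega_{\bm{m}(i)},\Omega_{\bm{m}(j)}}[\Phi]\bigr|.
\]

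Second, I will check smoothness. By Assumption \ref{a:Sep}(i), $\Hull(K_{\bm{m}(i)})\cap\Hull(K_{\bm{m}(j)})=\emptyset$. These are disjoint compact convex sets, so $d_{ij}>0$. We can then choose open neighbourhoods $U\supset \Hull(K_{\bm{m}(i)})$ and $U'\supset\Hull(K_{\bm{m}(j)})$ (for instance, open $\delta$-neighbourhoods with $\delta\to0$) on which $\Phi$ is smooth. Since $\Hull(\Omega_{\bm{m}(i)},h_r)\subset\Hull(\Omega_{\bm{m}(i)})\subset\Hull(K_{\bm{m}(i)})$, and similarly for $j$, every pair of points $(x,y)$ occurring in the supremum in \eqref{e:ErrDoub} satisfies $|x-y|\geq d_{ij}$.

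Third, I combine \eqref{e:ErrDoub} with \eqref{e:PhiBnd}:
\[
|A_{ij}-\tilde{A}_{ij}|\leq 2n\,|m|k^2 h_r^2\,|\Omega_{\bm{m}(i)}|^{1/2}|\Omega_{\bm{m}(j)}|^{1/2}\,C_n\sup_{|x-y|\geq d_{ij},\,x,y\in\Hull(K)}\frac{1+(k|x-y|)^{(n+1)/2}}{|x-y|^n}.
\]
In the supremum, $|x-y|\leq \diam(K)$ because the hulls of the elements lie in $\Hull(K)$. Hence the numerator is bounded by $1+(k\diam(K))^{(n+1)/2}$, and the whole supremum is at most a constant times $d_{ij}^{-n}$. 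By \eqref{e:tauDef}, $|\Omega_{\bm{m}}|\leq (h/\diam(\Omega))^n|\Omega|$ for $\bm{m}\in L_h(\Omega)$, so $|\Omega_{\bm{m}(i)}|^{1/2}|\Omega_{\bm{m}(j)}|^{1/2}\leq h^n|\Omega|/\diam(\Omega)^n$. This gives \eqref{e:DisErr} with
\[
C_{\rm dis}=2nC_n|m|k^2\bigl(1+(k\diam(K))^{(n+1)/2}\bigr)\,\frac{|\Omega|}{\diam(\Omega)^n},
\]
which is independent of $h$, $h_r$, $i$ and $j$.

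The main point needing care is the second step: justifying that $\Phi$ is twice differentiable on open sets containing the relevant hulls, and that the supremum in \eqref{e:ErrDoub} is really controlled by $d_{ij}$. This relies only on the nesting $\Hull(\Omega_{\bm{m}},h_r)\subset\Hull(K_{\bm{m}})$ and on Assumption \ref{a:Sep}(i). There is a technical point in passing from the open neighbourhoods to the closed hulls, but it is harmless: shrinking the neighbourhoods only affects the supremum through continuity, so the bound with $d_{ij}$ survives in the limit.
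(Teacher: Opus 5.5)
Your proposal is correct and follows essentially the same route as the paper. Like the paper, you cancel the $\delta_{ij}$ terms, use Assumption \ref{a:Sep}(i) to obtain disjoint open neighbourhoods of the hulls on which $\Phi$ is smooth, and combine \eqref{e:ErrDoub} with \eqref{e:PhiBnd} and \eqref{e:tauDef}, arriving at the same $C_{\rm dis}$. The only difference is how you bound the derivative factor: the paper uses monotonicity of $r\mapsto(1+(kr)^{(n+1)/2})/r^n$ together with $d_{ij}\le\diam(\Omega)$, whereas you bound the numerator and denominator separately, which is equivalent. Your closing concern about passing from the open neighbourhoods to the hulls is unnecessary, because the supremum in \eqref{e:ErrDoub} is already taken over $\Hull(\Omega_{\bm{m}(i)},h_r)\times\Hull(\Omega_{\bm{m}(j)},h_r)$.
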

\begin{proof}
We note first that
\begin{align}
\label{e:DQ1}
|A_{ij}-\tilde{A}_{ij}| =  \frac{|m|k^2}
{|\Omega_{\bm{m}(i)}|^{1/2}|\Omega_{\bm{m}(j)}|^{1/2}}
\left|I_{\Omega_{\bm{m}(i)},\Omega_{\bm{m}(j)}}[\Phi]-Q^{h_j}_{\Omega_{\bm{m}(i)},\Omega_{\bm{m}(j)}}[\Phi]\right|.
\end{align}
Assumption \ref{a:Sep}(i)
implies the existence of two disjoint open sets $U,U' \subset \R^{n}$ such that 
$\Hull(\Omega_{\bm{m}{(i)}}) \subset U$ and $\Hull(\Omega_{\bm{m}{(j)}}) \subset U'$.   
By the smoothness of $\Phi(x,y)$ away from $x=y$ we have that $\Phi:U\times U'\to\C$ is infinitely differentiable. Furthermore, by 
\eqref{e:PhiBnd} and the fact that the function $r\mapsto (1+r^{(n+1)/2})/r^n$ is decreasing on $(0,\infty)$, we have that
\[ 
\sup_{\substack{x\in \Hull(\Omega_{\bm{m}(i)})\\{y\in \Hull(\Omega_{\bm{m}(j)})}}}
\max _{\substack{\alpha \in \mathbb{N}_0^{2n} \\|\alpha|=2}}\left|D^\alpha \Phi(x,y)\right| \leq 
C_n\frac{\left(1+(kd_{ij})^{(n+1)/2}\right)}{(d_{ij})^{n}}.
\] 
Hence by \eqref{e:ErrDoub} we have that, for $h_r>0$,
\begin{align}
\label{e:DQ2}
|A_{ij}-\tilde{A}_{ij}| \leq  \frac{2n|m|k^2h_r^2|\Omega_{\bm{m}(i)}|^{1/2}|\Omega_{\bm{m}(j)}|^{1/2}
C_n\left(1+(kd_{ij})^{(n+1)/2}\right)}{(d_{ij})^{n}},
\end{align}
from which \eqref{e:DisErr} follows, by \eqref{e:Scaling},  \eqref{e:LhProp1} and the fact that $d_{ij}\leq \diam(\Omega)$, with
\begin{align*}
 \label{}
C_{\rm dis} = \frac{2n|m|k^2|\Omega|C_n(1+(k\diam(\Omega))^{(n+1)/2})}{(\diam{\Omega})^n}.
 \end{align*} 
\end{proof}

\subsubsection{Singular interactions}
\label{s:Sing}

If $(i,j)\in \cI_{\rm sing}$ then integral in \eqref{e:LinSys2a} is singular, and to approximate $A_{ij}$ we adopt a singularity subtraction approach. By decomposing the fundamental solution as
\begin{align}
\label{e:PhiDec}
\Phi(x,y)= c_n\Phi_0(x,y) + \Phi_*(x,y),
\end{align}
where
\begin{align}
c_n:=
\begin{cases}
-\dfrac{1}{2\pi}, & n=2,\\[2mm]
\dfrac{1}{4\pi}, & n=3,
\end{cases}
\qquad\qquad
\label{e:Phi0Def}
\Phi_0(x,y):=
\begin{cases}
\log(|x-y|), & n=2,\\[2mm]
\dfrac{1}{|x-y|}, & n=3,
\end{cases}
\end{align} 
and 
\begin{align}
 \label{e:PhiStDef}
\Phi_*(x,y):= \Phi(x,y)-c_n\Phi_0(x,y),
\end{align}
we can decompose $A_{ij}$ as
\begin{align}
\label{e:SingDec}
A_{ij} = \delta_{ij} - 
mk^2(c_nG^0_{ij} + G^*_{ij}),
\end{align}
where 
\begin{align}
\label{e:GijDef}
G^0_{ij}:= 
\frac{I_{\Omega_{\bm{m}(i)},\Omega_{\bm{m}(j)}}[\Phi_0]}{|\Omega_{\bm{m}(i)}|^{1/2}|\Omega_{\bm{m}(j)}|^{1/2}}
\qquad
\text{and} 
\qquad
G^*_{ij}:= 
\frac{I_{\Omega_{\bm{m}(i)},\Omega_{\bm{m}(j)}}[\Phi_*]}{|\Omega_{\bm{m}(i)}|^{1/2}|\Omega_{\bm{m}(j)}|^{1/2}}.
\end{align}

For the evaluation of $G^*_{ij}$ we apply the composite barycentre rule with a quadrature mesh width $h_*>0$ to obtain an approximation
\begin{align}
\label{e:GstijQ}
\tilde{G}^*_{ij}:= 
\frac{Q^{h_*}_{\Omega_{\bm{m}(i)},\Omega_{\bm{m}(j)}}[\Phi_*]}{|\Omega_{\bm{m}(i)}|^{1/2}|\Omega_{\bm{m}(j)}|^{1/2}}.
\end{align}
By \cite[Eqn~(62)]{disjoint} there exists $C_*>0$, independent of $k$, such that
\begin{align}
\label{e:GstBnd}
|D^\alpha\Phi_*(x,y)|\leq C_*k, \qquad \forall x,y\in\R^2, \quad \forall \alpha \in \mathbb{N}_0^{4}, \, |\alpha|=1,
\end{align}
which allows us to prove the following error estimate.

\begin{lemma}
\label{l:Gst}
There exists a constant $C_{\rm sing}^*>0$ such that 
if $h\in (0,\diam(K)]$ and $\bm{m}(i),\bm{m}(j)\in L_h(K)$ then 
$G^*_{ij}$ and $\tilde{G}^*_{ij}$ defined by \eqref{e:GijDef} and \eqref{e:GstijQ} satisfy
\begin{align}
\label{e:GstErr}
|G^*_{ij}-\tilde{G}^*_{ij}| \leq 
C^*_{\rm sing} h^2 h_*, \qquad \forall h_*>0.
\end{align}

\end{lemma}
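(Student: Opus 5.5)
The plan is to mirror the proof of Lemma \ref{l:DisQuad}, but to use the Lipschitz-type estimate \eqref{e:ErrDoub2} from Proposition \ref{p:BaryErr} instead of the second-derivative bound \eqref{e:ErrDoub}. The reason is that $\Phi_*$ is not twice differentiable uniformly near the diagonal $x=y$, but its first derivatives are bounded uniformly by \eqref{e:GstBnd}. We start from the definitions \eqref{e:GijDef} and \eqref{e:GstijQ}, which give
\[
|G^*_{ij}-\tilde{G}^*_{ij}| = \frac{\left|I_{\Omega_{\bm{m}(i)},\Omega_{\bm{m}(j)}}[\Phi_*]-Q^{h_*}_{\Omega_{\bm{m}(i)},\Omega_{\bm{m}(j)}}[\Phi_*]\right|}{|\Omega_{\bm{m}(i)}|^{1/2}|\Omega_{\bm{m}(j)}|^{1/2}}.
\]
Applying \eqref{e:ErrDoub2} with $\Omega$, $\Omega'$ replaced by $\Omega_{\bm{m}(i)}$, $\Omega_{\bm{m}(j)}$ (both interiors of $n$-attractors, as noted before \eqref{e:DisQuad}) bounds the numerator by $\sqrt2 h_*|\Omega_{\bm{m}(i)}||\Omega_{\bm{m}(j)}|\,\mathcal{L}[\Phi_*]$. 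Here $\mathcal{L}[\Phi_*]$ is the Lipschitz constant of $\Phi_*$ on $\Hull(\Omega_{\bm{m}(i)},h_*)\times\Hull(\Omega_{\bm{m}(j)},h_*)$.

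The key step is to bound this Lipschitz constant uniformly, independently of $i,j,h,h_*$. Since $\Phi_*$ extends to a function on $\R^n\times\R^n$, including the diagonal, with $|D^\alpha\Phi_*|\le C_*k$ for all $|\alpha|=1$ by \eqref{e:GstBnd}, the mean value theorem along straight segments in $\R^{2n}$ (which is convex) gives a global Lipschitz bound $|\Phi_*(z)-\Phi_*(z')|\le \sqrt{2n}\,C_*k|z-z'|$. Restricting to the product of hulls then gives $\mathcal{L}[\Phi_*]\le \sqrt{2n}\,C_*k$.

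This step is the main subtlety. We must check that $\Phi_*$ is genuinely continuously differentiable, or at least Lipschitz, across $x=y$, so that \eqref{e:GstBnd} holds there rather than just almost everywhere. In $2$D, $\Phi_*=\frac{\ri}{4}H_0^{(1)}(kr)+\frac{1}{2\pi}\log r$. Its expansion is a constant plus $r^2\log r$ terms and smooth terms, so the gradient is $O(r\log r)\to0$, which is fine. I take \eqref{e:GstBnd} from \cite{disjoint} as stated. Note that it is stated for $\R^2$; the $n=3$ case, with $\Phi_*=(\re^{\ri kr}-1)/(4\pi r)$, is analytic in $r$ with bounded gradient and is handled the same way.

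Finally, we combine the pieces and use \eqref{e:tauDef} to write $|\Omega_{\bm{m}(i)}|^{1/2}|\Omega_{\bm{m}(j)}|^{1/2}\le (h/\diam(\Omega))^n|\Omega|$. This gives
\[
|G^*_{ij}-\tilde{G}^*_{ij}|\le \sqrt{2}\sqrt{2n}\,C_*k\,\frac{|\Omega|}{\diam(\Omega)^n}\,h^n h_*.
\]
For $n=2$ this is exactly $C^*_{\rm sing}h^2h_*$. In general it yields $h^n h_*$, which for $n=3$ is even better than required, since $h\le\diam(K)$ means $h^3\le \diam(K)\,h^2$. The constant $C^*_{\rm sing}$ can therefore be taken as $2\sqrt{n}\,C_*k|\Omega|\diam(\Omega)^{2-n}$.
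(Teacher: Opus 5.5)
Your proposal is correct and is essentially the paper's proof. Both apply the Lipschitz bound \eqref{e:ErrDoub2} with the global Lipschitz constant of $\Phi_*$ coming from \eqref{e:GstBnd}, and then use \eqref{e:Scaling} and \eqref{e:LhProp1} to bound $|\Omega_{\bm{m}(i)}|^{1/2}|\Omega_{\bm{m}(j)}|^{1/2}\le (h/\diam(\Omega))^n|\Omega|$. Your factor $\sqrt{2n}$ in the Lipschitz constant is more careful than the paper's. Your remark on $n=3$ covers a case the paper leaves implicit, since it cites \eqref{e:GstBnd} only for $\R^2$; note, though, that for $n=3$ the gradient of $\Phi_*$ is $O(k^2)$ rather than $O(k)$. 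The only slip is in the final constant, which should read $2\sqrt{n}\,C_*k|\Omega|\diam(\Omega)^{-2}$ rather than $2\sqrt{n}\,C_*k|\Omega|\diam(\Omega)^{2-n}$.
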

\begin{proof}
The bound \eqref{e:GstBnd} implies that $\Phi_*$ is Lipschitz on $\R^2\times \R^2$ with Lipschitz constant $C^*k$, and hence by \eqref{e:ErrDoub2} we have that
\begin{align}
\label{e:GstErr1}
|G^*_{ij}-\tilde{G}^*_{ij}| \leq 
\sqrt{2} \,|\Omega_{\bm{m}(i)}|^{1/2}|\Omega_{\bm{m}(j)}|^{1/2}C_* kh_*,
\end{align}
from which \eqref{e:GstErr} follows, by \eqref{e:Scaling} and \eqref{e:LhProp1}, with
\[ 
C^*_{\rm sing} = \sqrt{2} C_*k \frac{|\Omega|}{(\diam(\Omega))^2}.
\]
\end{proof}

For the evaluation of $G^0_{ij}$ we adopt the approach of \cite{nondisjoint}.
For this approach to be applicable we need to assume that the IFS defining the $n$-attractor $K$ is such that the singular integral arising in $G^0_{ij}$ can be written explicitly, via a suitable coordinate transformation, in terms of one of a finite number of ``canonical singular integrals'', which should satisfy an invertible linear system of equations, the right-hand side of which should involve a finite number of ``canonical regular integrals'' that are amenable to approximation using the composite barycentre rule. An algorithm for deriving such a linear system was presented in \cite[\S4]{nondisjoint}.  
We make the above statements more precise in the following assumption, referring the reader to \cite{nondisjoint} for full details. 

\begin{assumption}
[Canonical integrals]
\label{a:nondisj}
We assume that, when applied to the IFS $\{s_{i}\}_{i=1}^{M}$ defining the $n$-attractor $K$, the algorithm detailed in \cite[\S4]{nondisjoint} terminates, producing a linear system of the form 	
\begin{equation}\label{E:CanLS}
	\cA\bm{s}=\cB\bm{r}+\bm{t},
\end{equation}
where, for some $n_s,n_r \in \N$,
\begin{itemize}
\item $\cA\in \mathbb{C}^{n_s \times n_s}$ is known and invertible;
\item $\cB\in \mathbb{C}^{n_s\times n_r}$ is known;
\item $\bm{t}\in \mathbb{C}^{n_{s}}$ is known (and present only when $n=2$); 
\item $\bm{s}=\left( I_{\Omega_{\bm{a}_1},\Omega_{\bm{b}_1}}[\Phi_0],\ldots,I_{\Omega_{\bm{a}_{n_s}},\Omega_{\bm{b}_{n_s}}}[\Phi_0] \right)\in \mathbb{C}^{n_{s}}$ 
is a vector of ``canonical singular integrals'', 
with $\bm{a}_\nu,\bm{b}_\nu\in  \mathcal{I}$ and $K_{\bm{a}_\nu}\cap K_{\bm{b}_\nu}\neq \emptyset$ for each $\nu=1,\ldots,n_s$;
\item $\bm{r}=\left( I_{\Omega_{\bm{c}_{1}},\Omega_{\bm{d}_{1}}}[\Phi_0],\ldots,I_{\Omega_{\bm{c}_{n_r}},\Omega_{\bm{d}_{n_r}}}[\Phi_0] \right)\in \mathbb{C}^{n_r}$ 
is a vector of ``canonical regular integrals'', 
with $\bm{c}_\eta,\bm{d}_\eta\in  \mathcal{I}$ and $K_{\bm{c}_\eta}\cap K_{\bm{d}_\eta}=\emptyset$ for each $\eta=1,\ldots,n_r$.
\end{itemize}
We also assume that, if $h\in(0,\diam(K)]$ and $\bm{m}{(i)},\bm{m}{(j)}\in L_{h}(K)$ are such that $K_{\bm{m}{(i)}}\cap K_{\bm{m}{(j)}}\neq \emptyset$, then there exists 
$\nu\in \{ 1,\ldots,n_s \}$, a permutation $\Pi:\{i,j\}\to\{i,j\}$, and a similarity $T:\R^n\to\R^n$ such that $T(K_{\bm{a}_\nu}) = K_{\bm{m}(\Pi(i))}$, $T(K_{\bm{b}_\nu}) = K_{\bm{m}(\Pi(j))}$. 
\end{assumption}
To explain the significance of Assumption \ref{a:nondisj}, we note that if $\bm{m}{(i)},\bm{m}{(j)}\in L_{h}(K)$ are such that $K_{\bm{m}{(i)}}\cap K_{\bm{m}{(j)}}\neq \emptyset$, and $\nu$, $\Pi$ and $T$ are as in the final part of Assumption \ref{a:nondisj}, then 
$G^0_{ij}$ can be represented in terms of the canonical singular integral $I_{\Omega_{\bm{a}_\nu},\Omega_{\bm{b}_\nu}}[\Phi_0]$ via a change of variables. Explicitly, suppose that the similarity $T$ from Assumption \ref{a:nondisj} takes the form $T(x)=\rho_T A_T x + \delta_T$ for some $\rho_T>0$, orthogonal $A_T\in \R^{n\times n}$ and $\delta_T\in \R^n$. Then elementary arguments {(cf.\ \cite[Prop.~3.2]{nondisjoint})} show that
\begin{align}
\label{e:Isom2}
\rho_T =\frac{\rho_{\bm{m}{(\Pi(i))}}}{\rho_{\bm{a}_\nu}}=\frac{\rho_{\bm{m}{(\Pi(j))}}}{\rho_{\bm{b}_\nu}}
\end{align}
and
\begin{align}
\label{}
\label{e:G0ijRep}
G^0_{ij} =
\frac{1}{|\Omega_{\bm{m}{(i)}}|^{1/2}|\Omega_{\bm{m}{(j)}}|^{1/2}}
\begin{cases}
\rho_T^{4}\left(I_{\Omega_{\bm{a}_\nu},\Omega_{\bm{b}_\nu}}[\Phi_0] + 
|\Omega_{\bm{a}_\nu}||\Omega_{\bm{b}_\nu}|
\log(\rho_T)\right), & n=2,\\[4mm]
\rho_T^{5} I_{\Omega_{\bm{a}_\nu},\Omega_{\bm{b}_\nu}}[\Phi_0], & n=3.
\end{cases}
\end{align}

The canonical singular integral $I_{\Omega_{\bm{a}_\nu},\Omega_{\bm{b}_\nu}}[\Phi_0]$ is, by \eqref{E:CanLS}, the {$\nu$th} component of the vector 
\begin{align}
\label{e:sRep}
\bm{s} = \cA^{-1} (\cB\bm{r}+\bm{t}),
\end{align}
and to obtain a numerical approximation of the components of $\bm{s}$ we combine the formula \eqref{e:sRep} with a numerical approximation of the regular integrals in the vector $\bm{r}$. Let 
$\tilde{\bm{r}}\coloneqq \left[Q^{h_s}_{\Omega_{\bm{c}_{1}},\Omega_{\bm{d}_1}}[\Phi_0],\ldots, Q^{h_s}_{\Omega_{\bm{c}_{n_r}},\Omega_{\bm{d}_{n_r}}}[\Phi_0] \right] \in \mathbb{C}^{n_r}$ denote the approximation of $\bm{r}$ obtained by applying the composite barycentre approximation to each element of $\bm{r}$ with a quadrature mesh width $h_s>0$, and set
\begin{align}
\label{e:sQuad}
	\tilde{\bm{s}}\coloneqq \cA^{-1}\left[ \cB
	\tilde{\bm{r}}
	+\bm{t} \right] \in\mathbb{C}^{n_s}.
\end{align}
We then define an approximation $\tilde{G}^0_{ij}$ of $G^0_{ij}$ by replacing $I_{\Omega_{\bm{a}_\nu},\Omega_{\bm{b}_\nu}}[\Phi_0]$ in \eqref{e:G0ijRep} 
by 
$\tilde{\bm{s}}_\nu$, 
the $\nu$th component of 
$\tilde{\bm{s}}$, 
so that
\begin{equation}\label{e:G0ijApp}
	\tilde{G}^0_{ij} \coloneqq 
\frac{1}{|\Omega_{\bm{m}{(i)}}|^{1/2}|\Omega_{\bm{m}{(j)}}|^{1/2}}
\begin{cases}
\rho_T^{4}\left(
\tilde{\bm{s}}_\nu + 
|\Omega_{\bm{a}_\nu}||\Omega_{\bm{b}_\nu}|
\log(\rho_T)\right), & n=2,\\[4mm]
\rho_T^{5} 
\tilde{\bm{s}}_\nu, & n=3.
\end{cases}
\end{equation}
\begin{lemma}\label{l:G0ijErr} 
Suppose that Assumptions \ref{a:Sep} and \ref{a:nondisj} hold. Then there exists a constant $C_0>0$ such that if $h\in(0,\diam(K)]$, $\bm{m}{(i)},\bm{m}{(j)}\in L_{h}(\Omega)$ and $K_{\bm{m}{(i)} }\cap K_{\bm{m}{(j)}} \neq \emptyset$ then 
$G^0_{ij}$ and $\tilde{G}^0_{ij}$ defined by \eqref{e:GijDef} and \eqref{e:G0ijApp} satisfy
\begin{align}
\label{e:G0ijErr}
|G^0_{ij}-\tilde{G}^0_{ij}|\leq C_0 h^2 h_s^2, \qquad \forall h_s>0. 
\end{align}
\end{lemma}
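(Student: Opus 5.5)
Subtracting \eqref{e:G0ijApp} from \eqref{e:G0ijRep}, the $\log(\rho_T)$ terms cancel when $n=2$, and
\[
|G^0_{ij}-\tilde{G}^0_{ij}| = \frac{\rho_T^{n+2}}{|\Omega_{\bm{m}(i)}|^{1/2}|\Omega_{\bm{m}(j)}|^{1/2}}\,|\bm{s}_\nu-\tilde{\bm{s}}_\nu|
\]
(with exponent $4$ for $n=2$ and $5$ for $n=3$). The plan is therefore to bound the prefactor by a multiple of $h^2$ uniformly in $i,j$, and to bound $|\bm{s}_\nu-\tilde{\bm{s}}_\nu|$ by a constant times $h_s^2$. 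The latter involves only the fixed, $h$-independent canonical integrals.

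For the prefactor, I would use \eqref{e:Isom2} to write $\rho_T=\rho_{\bm{m}(\Pi(i))}/\rho_{\bm{a}_\nu}$, and similarly for $j$. By \eqref{e:Kmdiam} and \eqref{e:LhProp1}, $\rho_{\bm{m}(i)}\le h/\diam(\Omega)$, so $\rho_T\le h/(\rho_{\min}^*\diam(\Omega))$, where $\rho_{\min}^*:=\min_\nu\min(\rho_{\bm{a}_\nu},\rho_{\bm{b}_\nu})>0$; this is a fixed constant since $n_s$ is finite. By \eqref{e:tauDef}, $|\Omega_{\bm{m}(i)}|^{1/2}|\Omega_{\bm{m}(j)}|^{1/2}\ge (\rho_{\min}h/\diam(\Omega))^n|\Omega|$. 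Hence the prefactor is at most $Ch^{n+2}/h^n=Ch^2$, with $C$ independent of $h$, $i$, $j$.

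For the canonical error, \eqref{e:sRep} and \eqref{e:sQuad} give $\bm{s}-\tilde{\bm{s}}=\cA^{-1}\cB(\bm{r}-\tilde{\bm{r}})$, so $|\bm{s}_\nu-\tilde{\bm{s}}_\nu|\le \|\cA^{-1}\cB\|_\infty\max_\eta|r_\eta-\tilde r_\eta|$. Each $r_\eta=I_{\Omega_{\bm{c}_\eta},\Omega_{\bm{d}_\eta}}[\Phi_0]$ has $K_{\bm{c}_\eta}\cap K_{\bm{d}_\eta}=\emptyset$. Assumption \ref{a:Sep}(i) then gives disjoint hulls at positive distance $\delta_\eta$, so $\Phi_0$ is smooth on disjoint open neighbourhoods $U\times U'$ of the hulls with second derivatives bounded by a constant times $\delta_\eta^{-n}$. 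Applying \eqref{e:ErrDoub} gives $|r_\eta-\tilde r_\eta|\le C_\eta h_s^2$; the neighbourhoods contain $\Hull(\cdot,h_s)$ for every $h_s$ because these sets lie inside the full hulls. Taking $C_0$ as the product of the prefactor constant, $\|\cA^{-1}\cB\|_\infty$ and $\max_\eta C_\eta$ gives \eqref{e:G0ijErr}.

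The main subtlety is uniformity: the constant must not depend on $h$ or on the pair $(i,j)$. This follows because only finitely many canonical pairs $(\bm{a}_\nu,\bm{b}_\nu)$ and $(\bm{c}_\eta,\bm{d}_\eta)$ occur, and the quadrature is applied only to these fixed pairs, never to the scaled element integrals. One should also check that \eqref{e:G0ijRep} really holds via the change of variables in Assumption \ref{a:nondisj}. Here $\Pi$ is harmless because $\Phi_0$ is symmetric, so swapping the roles of $i$ and $j$ leaves $I[\Phi_0]$ unchanged.
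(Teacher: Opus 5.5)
Your proposal is correct and follows the paper's proof essentially step for step: you reduce to $\rho_T^{n+2}|\bm{s}_\nu-\tilde{\bm{s}}_\nu|/(|\Omega_{\bm{m}(i)}|^{1/2}|\Omega_{\bm{m}(j)}|^{1/2})$, bound this by $\|\cA^{-1}\cB\|_\infty\|\bm{r}-\tilde{\bm{r}}\|_\infty$, and control the finitely many canonical regular integrals via Assumption~\ref{a:Sep}(i) and \eqref{e:ErrDoub}, with the prefactor handled by \eqref{e:Scaling} and \eqref{e:LhProp1}. Your second-derivative bound with $\delta_\eta^{-n}$ is the right one for general $n$; the paper's $C_r$ divides by the squared distance, which is consistent with its derivative bound only when $n=2$.
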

\begin{proof}
By Assumption \ref{a:nondisj} we can represent $G^0_{ij}$ by \eqref{e:G0ijRep} for some $\nu\in\{1,\ldots,n_s\}$. We can then define $\tilde{G}^0_{ij}$ by \eqref{e:G0ijApp}, with the result that
\begin{align}
\label{e:G0ijErr1}
|G^0_{ij}-\tilde{G}^0_{ij}| = 
\frac{\rho_T^{n+2}}{|\Omega_{\bm{m}{(i)}}|^{1/2}|\Omega_{\bm{m}{(j)}}|^{1/2}}
|\bm{s}_\nu - \tilde{\bm{s}}_\nu|.
\end{align}
Taking the maximum over $\nu$, and computing the difference of \eqref{e:sRep} and \eqref{e:sQuad}, gives that 
\begin{align}
\label{e:G0ijErr2}
|\bm{s}_\nu - \tilde{\bm{s}}_\nu|
\leq \|\bm{s} - \tilde{\bm{s}}\|_\infty
 \leq \|\cA^{-1}\cB\|_\infty 
 \|\bm{r} - \tilde{\bm{r}}\|_\infty.
\end{align}
From \cite[Lem.~4.1]{disjoint} we have that 
\begin{align*}
|D^\alpha \Phi_0(x,y)|\leq \frac{n}{|x-y|^n}, \qquad \forall x\neq y\in \R^n, \,\forall \alpha\in \N^{4}, |\alpha|=2,
\end{align*}
and combining this with \eqref{e:ErrDoub} we find that
\begin{align}
\label{e:G0ijErr4}
\|\bm{r} - \tilde{\bm{r}}\|_\infty
\leq 2n^2h_s^2 C_r,
\end{align}
where 
\[ C_r:= \max_{\eta=1,\ldots,n_r}\frac{|\Omega_{\bm{c}_\eta}||\Omega_{\bm{d}_\eta}|}{\dist(\Hull(\Omega_{\bm{c}_\eta}),\Hull(\Omega_{\bm{d}_\eta}))^2},\]
which is {finite} by Assumption \ref{a:Sep}(i). 
Finally, combining \eqref{e:G0ijErr1}, \eqref{e:G0ijErr2} and \eqref{e:G0ijErr4}, 
and applying \eqref{e:Scaling} and \eqref{e:LhProp1}, we obtain \eqref{e:G0ijErr} with 
\[ C_0 = \frac{2n^2C_r\|\cA^{-1}\cB\|_\infty}{(\rho_{\rm min})^n\min_{\nu=1,\ldots,n_s}(|\Omega_{\bm{a}_\nu}|(\diam(\Omega_{\bm{a}_\nu}))^2)} .\]
\end{proof}

Combining the above results, in the singular case where $(i,j)\in \cI_{\rm sing}$ we define our approximation $\tilde{A}_{ij}$ to $A_{ij}$ by
\begin{align}
\label{e:SingQuad}
\tilde{A}_{ij} := \delta_{ij} - 
mk^2(c_n\tilde{G}^0_{ij} + \tilde{G}^*_{ij}),
\end{align}
where $\tilde{G}^0_{ij}$ and $\tilde{G}^*_{ij}$ are defined by \eqref{e:G0ijApp} and \eqref{e:GstijQ}, and we have the following error estimate. 
\begin{lemma}
\label{l:SingComb}
Suppose that Assumptions \ref{a:Sep} and \ref{a:nondisj} hold. If $h\in(0,\diam(K)]$, $\bm{m}{(i)},\bm{m}{(j)}\in L_{h}(\Omega)$ and $K_{\bm{m}{(i)} }\cap K_{\bm{m}{(j)}} \neq \emptyset$ then 
$A_{ij}$ and $\tilde{A}_{ij}$ defined by \eqref{e:LinSys2a} and \eqref{e:SingQuad} satisfy
\begin{align}
\label{e:SingComb}
|A_{ij}-\tilde{A}_{ij}|\leq |m|k^2 (|c_n|C_0 h_s^2 + C^*_{\rm sing} h_*)h^2, \qquad \forall h_s>0, \,\forall h_*>0. 
\end{align}
\end{lemma}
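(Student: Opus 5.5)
The estimate follows by subtracting the definitions \eqref{e:SingDec} and \eqref{e:SingQuad} and then applying the triangle inequality together with the two lemmas already established. The $\delta_{ij}$ terms cancel, leaving
\[
A_{ij}-\tilde{A}_{ij} = -mk^2\bigl(c_n(G^0_{ij}-\tilde{G}^0_{ij}) + (G^*_{ij}-\tilde{G}^*_{ij})\bigr),
\]
so that
\[
|A_{ij}-\tilde{A}_{ij}| \leq |m|k^2\bigl(|c_n|\,|G^0_{ij}-\tilde{G}^0_{ij}| + |G^*_{ij}-\tilde{G}^*_{ij}|\bigr).
\]

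The two differences are then bounded separately. The hypotheses give $h\in(0,\diam(K)]$, $\bm{m}(i),\bm{m}(j)\in L_h(\Omega)$ with $K_{\bm{m}(i)}\cap K_{\bm{m}(j)}\neq\emptyset$, and Assumptions \ref{a:Sep} and \ref{a:nondisj} hold. These are exactly the hypotheses of Lemma \ref{l:G0ijErr}, which yields $|G^0_{ij}-\tilde{G}^0_{ij}|\leq C_0h^2h_s^2$ for every $h_s>0$. Lemma \ref{l:Gst} requires only $h\in(0,\diam(K)]$ and $\bm{m}(i),\bm{m}(j)\in L_h(K)$; it gives $|G^*_{ij}-\tilde{G}^*_{ij}|\leq C^*_{\rm sing}h^2h_*$ for every $h_*>0$. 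Note that $L_h(K)$ and $L_h(\Omega)$ denote the same index set, since $\diam(K)=\diam(\Omega)$. Substituting both bounds and factoring out $h^2$ gives \eqref{e:SingComb}.

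There is no genuine obstacle here. The only points to check are that $\tilde{G}^0_{ij}$ is well defined, which requires the representation \eqref{e:G0ijRep} guaranteed by Assumption \ref{a:nondisj}, and that the two quadrature parameters $h_s$ and $h_*$ enter independently. Both are immediate from the construction.
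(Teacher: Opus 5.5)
Your proposal is correct and follows the paper's own proof: subtract \eqref{e:SingQuad} from \eqref{e:SingDec}, apply the triangle inequality, and invoke Lemmas \ref{l:Gst} and \ref{l:G0ijErr}. Your remark that $L_h(K)$ and $L_h(\Omega)$ coincide is also right, since $K_{\bm{m}}=\overline{\Omega_{\bm{m}}}$ and taking the closure does not change the diameter; this resolves the notational mismatch between those two lemmas.
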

\begin{proof}
The statement follows from \eqref{e:SingDec}, \eqref{e:SingQuad} and Lemmas \ref{l:Gst} and \ref{l:G0ijErr}. 
\end{proof}

\subsubsection{Combined estimates}

Now we can combine the above results to estimate $\|A-\tilde{A}\|_2$.

\begin{lemma}
\label{l:AErr}
Suppose that Assumptions \ref{a:Sep} and \ref{a:nondisj} hold. Then there exist $C_1,C_2,C_3>0$ such that if $h\in(0,\diam(K)]$, $h_r>0$, $h_s>0$ and $h_*>0$, then
\begin{align}
\label{e:AErr}
\|A-\tilde{A}\|_2 \leq 
\frac{1}{h^{n/2}}\left(C_1h_r^2 + C_2 h_s^2 h^2 + C_3 h_*h^2\right).
\end{align}
\end{lemma}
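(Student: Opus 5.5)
We bound $\|A-\tilde A\|_2$ by splitting $A-\tilde A = E_{\rm reg}+E_{\rm sing}$, where $E_{\rm reg}$ keeps the entries with $(i,j)\in\cI_{\rm reg}$ and $E_{\rm sing}$ those with $(i,j)\in\cI_{\rm sing}$. For each piece we use a crude norm bound: the Frobenius bound $\|E\|_2\le\|E\|_F$, or the Schur test $\|E\|_2\le\sqrt{\|E\|_1\|E\|_\infty}$, which for our symmetric-in-size bounds reduces to the maximum row/column sum of absolute values. The factor $h^{-n/2}$ in \eqref{e:AErr} suggests the Frobenius route, since it loses exactly $N^{1/2}\sim h^{-n/2}$ relative to an entrywise bound, by \eqref{e:LhProp2}.

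\emph{Singular part.} By Lemma \ref{l:SingComb}, each nonzero entry of $E_{\rm sing}$ is at most $|m|k^2(|c_n|C_0h_s^2+C^*_{\rm sing}h_*)h^2$. The number of $j$ with $K_{\bm{m}(j)}\cap K_{\bm{m}(i)}\neq\emptyset$ is bounded independently of $h$ and $i$. To see this, note that all such elements lie in a ball of radius $2h$ about a point of $K_{\bm{m}(i)}$. By \eqref{e:tauDef} each has measure at least $c h^n$, and the elements are disjoint, so a volume-counting argument bounds their number by a constant $C_{\rm nb}$ depending only on $\rho_{\rm min}$, $n$ and $|\Omega|/\diam(\Omega)^n$. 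The row-sum (Schur) bound then gives $\|E_{\rm sing}\|_2\le C_{\rm nb}|m|k^2(\cdots)h^2$. This is even better than required, since $h^{-n/2}\ge \diam(K)^{-n/2}$ for $h\le\diam(K)$, so it can be absorbed into $C_2h_s^2h^2h^{-n/2}+C_3h_*h^2h^{-n/2}$ after adjusting constants.

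\emph{Regular part.} By Lemma \ref{l:DisQuad}, $|A_{ij}-\tilde A_{ij}|\le C_{\rm dis}h_r^2h^n d_{ij}^{-n}$, and by Assumption \ref{a:Sep}(ii) we have $d_{ij}\ge C_{\rm sep}h$. This step is the main obstacle: the naive entrywise bound $C_{\rm dis}C_{\rm sep}^{-n}h_r^2$ combined with Frobenius gives only $N h_r^2\sim h^{-n}h_r^2$, which is too weak. We therefore estimate the row sum $\sum_j h^n d_{ij}^{-n}$, or the Frobenius sum $\sum_{i,j}h^{2n}d_{ij}^{-2n}$, via a dyadic shell count. The elements $j$ with $d_{ij}\in[2^\ell C_{\rm sep}h,2^{\ell+1}C_{\rm sep}h)$ lie in a ball of radius $\lesssim 2^{\ell}h$ around $K_{\bm{m}(i)}$, so by the volume argument there are $\lesssim 2^{\ell n}$ of them.

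For the Frobenius sum this gives, for each $i$,
\[\sum_j h^{2n}d_{ij}^{-2n}\lesssim\sum_\ell 2^{\ell n}2^{-2\ell n}\lesssim 1.\]
Summing over the $N\lesssim h^{-n}$ rows yields $\|E_{\rm reg}\|_F^2\lesssim h_r^4h^{-n}$, that is, $\|E_{\rm reg}\|_2\le C_1h_r^2h^{-n/2}$, matching \eqref{e:AErr}. (The row-sum route would instead give a $\log(1/h)$ factor, since $\sum_\ell 2^{\ell n}2^{-\ell n}$ diverges logarithmically; this is why we use Frobenius for the regular part.) Adding the two parts via the triangle inequality completes the proof.
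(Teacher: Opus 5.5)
Your proof is correct. For the regular part it is essentially the paper's argument. The paper also uses the Frobenius norm there: it counts elements in the ball of radius $3h$ and in unit-width annuli $\{qh\le |x-x_{\Omega_{\bm{m}(i)}}|\le (q+1)h\}$, $q\ge 3$, about the barycentre of $\Omega_{\bm{m}(i)}$ (Lemmas \ref{l:Falc} and \ref{l:Falc2}), uses $d_{ij}\ge (q-2)h$, and sums a zeta series. Your dyadic shells in $d_{ij}$ do the same job. Either way each row contributes $O(h_r^4)$, and $N\lesssim h^{-n}$ rows give $C_1h_r^2h^{-n/2}$.

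The genuine difference is the singular part. The paper uses Frobenius there too: it bounds $\#\cI_{\rm sing}\le 2^n\omega_n\tau^{-1}N$ and obtains $(h_s^2+h_*)h^{(4-n)/2}$. Your Schur bound uses only the bounded number of singular entries per row and column, and gives $(h_s^2+h_*)h^2$. This is sharper by a factor $h^{n/2}$. You then discard that gain to match \eqref{e:AErr}, which is legitimate since $h\le\diam(K)$.

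Your parenthetical reason for using Frobenius on the regular part has the comparison backwards. The row-sum route gives a factor $1+\log(\diam(K)/h)$, and this is \emph{smaller} than $h^{-n/2}$ up to a constant, not larger. So applying the Schur test to both parts proves the stronger estimate $\|A-\tilde{A}\|_2\lesssim h_r^2\,(1+\log(\diam(K)/h))+(h_s^2+h_*)h^2$. That estimate implies \eqref{e:AErr}, and it would allow weaker conditions on $h_r$, $h_s$, $h_*$ than \eqref{e:FDQuadAss} in Theorem \ref{t:FD}. Frobenius on the regular part is simply the lighter bookkeeping; nothing forces it.
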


\begin{proof}
We first note that
\begin{align}
\label{e:AQuadEst}
\|A-\tilde{A}\|_2^2\leq \|A-\tilde{A}\|_F^2 &= \sum_{i,j=1}^N |A_{ij}-\tilde{A}_{ij}|^2
= \sum_{(i,j)\in \cI_{\rm sing}} |A_{ij}-\tilde{A}_{ij}|^2
+ \sum_{(i,j)\in \cI_{\rm reg}} |A_{ij}-\tilde{A}_{ij}|^2.
\end{align}
To estimate the sum over $\cI_{\rm sing}$ we combine the result of Lemma \ref{l:SingComb} with an estimate of $\#\cI_{\rm sing}$. 
Define $\tau:=\left(\frac{\rho_{\rm min}}{\diam(\Omega)}\right)^n|\Omega|>0$, and recall that, by \eqref{e:tauDef}, $|\Omega_{\bm{m}}|\geq \tau h^n$ for all $\bm{m}\in L_h(\Omega)$. 
Fix $i\in\{1,\ldots,N\}$ and for $r>0$ let $B_{r,i}$ denote the closed ball of radius $r$ centred at $x^{}_{\Omega_{\bm{m}(i)}}$. 
Since $K_{\bm{m}(i)}\subset B_{h,i}$, 
if $(i,j)\in \cI_{\rm sing}$ then $K_{\bm{m}(j)}\cap B_{h,i}\neq \emptyset$. By Lemma \ref{l:Falc}, 
$\#\{j:K_{\bm{m}(j)}\cap B_{h,i}\neq \emptyset\}\leq 2^n\omega_n\tau^{-1}$, 
where $\omega_n$ denotes the volume of the unit ball in $\R^n$. Then, summing over $i=1,\ldots,N$, we get
\begin{align}
\label{e:ISingEst}
\#\cI_{\rm sing} \leq 2^n\omega_n\tau^{-1}N.
\end{align}
Hence, 
by \eqref{e:SingComb} and \eqref{e:LhProp2}, 
\begin{align}
\label{e:AQuadEst2}
\left(\sum_{(i,j)\in \cI_{\rm sing}} |A_{ij}-\tilde{A}_{ij}|^2\right)^{1/2} 
&\leq
2^{n/2}\omega_n^{1/2}\tau^{-1/2}|m|k^2\left(|c_n|C_0 h_s^2 + C^*_{\rm sing} h_*\right)\left( \frac{\diam(\Omega)}{\rho_{\rm min}}\right)^{n/2}h^{(4-n)/2}.
\end{align}

For the sum over $\cI_{\rm reg}$ we use the result of Lemma \ref{l:DisQuad}. Again let $i\in \{1,\ldots,N\}$ be fixed. Then, for each $j\in\{1,\ldots,N\}$, the set $K_{\bm{m}(j)}$ must intersect at least one of either $B_{3h,i}$ or $B_{(q+1)h,i}\setminus B_{q h,i}$ for some $q \in\{3,\ldots q_*\}$, where 
\[ q_*=\left\lceil \frac{\diam(\Omega)}{h}-1 \right\rceil.\] 
Again by Lemma \ref{l:Falc}, 
$\#\{j:K_{\bm{m}(j)}\cap B_{3h,i}\neq \emptyset\}\leq 4^n\omega_n\tau^{-1}$. And, for such $j$, if $(i,j)\in \cI_{\rm reg}$ then by Assumption \ref{a:Sep}(ii) we have $d_{ij}\geq C_{\rm sep}h$, so that by \eqref{e:DisErr}
\begin{align}
\label{}
|A_{ij}-\tilde{A}_{ij}| \leq 
\frac{C_{\rm dis}h_r^2}
{(C_{\rm sep})^n}. 
\end{align}
By Lemma \ref{l:Falc2}, for $q\geq 3$ we have that $\#\{j:K_{\bm{m}(j)}\cap B_{(q+1)h,i}\setminus B_{q h,i}\neq \emptyset\}\leq 3.2^n q^{n-1}\omega_n\tau^{-1}$. And, for such $j$, $d_{ij}\geq (q-2)h$ since $\Hull(K_{\bm{m}(j)})\cap B_{(q-1)h,i}=\emptyset$ and $\Hull(K_{\bm{m}(i)})\subset B_{h,i}$, so that, by \eqref{e:DisErr},
\begin{align}
\label{}
|A_{ij}-\tilde{A}_{ij}| \leq 
\frac{C_{\rm dis}h_r^2}
{(q-2)^n}. 
\end{align}
Combining these observations and summing over $i=1,\ldots,N$ gives that
\begin{align}
\label{}
\sum_{(i,j)\in \cI_{\rm reg}} |A_{ij}-\tilde{A}_{ij}|^2 \leq N\omega_n\tau^{-1}\left(4^n\left(\frac{C_{\rm dis}h_r^2}
{(C_{\rm sep})^n}\right)^2 + \sum_{q=3}^{q_*} 3.2^n q^{n-1} \left(\frac{C_{\rm dis}h_r^2}
{(q-2)^n}\right)^2 \right),
\end{align}
and, noting that $q\geq 3$ implies that $q\leq 3(q-2)$, so that (with $\zeta$ the Riemann zeta function)
\[ 
\sum_{q=3}^{q_*} \frac{q^{n-1}}{(q-2)^{2n}} 
\leq 3^{n-1}\sum_{q=1}^\infty \frac{1}{q^{n+1}} = 3^{n-1}\zeta(n+1),
\]
we obtain, using \eqref{e:LhProp2}, that
\begin{align}
\label{e:AQuadEst3}
\left(\sum_{(i,j)\in \cI_{\rm reg}} |A_{ij}-\tilde{A}_{ij}|^2\right)^{1/2} \leq 
\left( \frac{\diam(\Omega)}{\rho_{\rm min}}\right)^{n/2}
C_{\rm dis}\omega_n^{1/2}\tau^{-1/2}
\left( \frac{2^n}{(C_{\rm sep})^n} + 6^{n/2}\zeta(n+1)^{1/2}\right)\frac{h_r^2}{h^{n/2}}.
\end{align}
Finally, combining \eqref{e:AQuadEst2} and \eqref{e:AQuadEst3} with \eqref{e:AQuadEst}, and applying \eqref{e:tauDef}, gives \eqref{e:AErr} with 
\[ C_1 = C_{\rm dis}\omega_n^{1/2}|\Omega|^{-1/2}
\left( \frac{2^n}{(C_{\rm sep})^n} + 6^{n/2}\zeta(n+1)^{1/2}\right)\left( \frac{\diam(\Omega)}{\rho_{\rm min}}\right)^{n},\]
\[C_2 = 
2^{n/2}\omega_n^{1/2}|\Omega|^{-1/2}|m|k^2|c_n|C_0\left( \frac{\diam(\Omega)}{\rho_{\rm min}}\right)^{n}
,\]
\[C_3 = 2^{n/2}\omega_n^{1/2}|\Omega|^{-1/2}|m|k^2 C^*_{\rm sing} \left( \frac{\diam(\Omega)}{\rho_{\rm min}}\right)^{n}.\]
\end{proof}

\subsubsection{Right-hand side vector}
\label{s:RHSQuad}

Recalling that $g=u^{\rm i}|_\Omega$ for the scattering problem, given $h_g>0$ we approximate the entries $g_i$ of $\bm{g}$ defined in \eqref{e:LinSys2a} by
\begin{align}
\label{e:tgDef}
\tilde{g}_i:=\frac{Q^{h_g}_{\Omega_{\bm{m}(i)}}[u^{\rm i}]}{|\Omega_{\bm{m}(i)}|^{1/2}}.
\end{align}

\begin{lemma}
\label{l:gQuad}
Suppose that $u^{\rm i}$ is $C^\infty$ on an open set $D^{\rm i}$ and $K\subset D^{\rm i}$. Then there exist $\tilde{h}_g>0$ and $C_g>0$ such that
\begin{align}
\label{e:tgErr2}
\|\bg-\tilde{\bg}\|_2 \leq C_g h_g^2,
\qquad \forall h_g\in(0, \tilde{h}_g].
\end{align}

\end{lemma}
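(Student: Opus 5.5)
The plan is to apply the single-integral barycentre estimate \eqref{e:ErrSing} of Proposition \ref{p:BaryErr} element by element, and then sum over the mesh using the counting bound \eqref{e:LhProp2} together with the volume scaling \eqref{e:Scaling}.

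\emph{Step 1: choice of $\tilde h_g$.} Since $K$ is compact and $D^{\rm i}$ is open with $K\subset D^{\rm i}$, there is $\eps>0$ such that the closed $\eps$-neighbourhood $K_\eps'$ of $K$ lies in $D^{\rm i}$; this set is compact. Set $\tilde{h}_g:=\eps$. For each element $\Omega_{\bm{m}(i)}$, the set $\Hull(\Omega_{\bm{m}(i)},h_g)$ is a union of convex hulls of subsets $\Omega_{(\bm{m}(i),\bm{p})}$. Each of these has diameter at most $h_g$ and meets $K$, and therefore lies in $K_\eps'$ whenever $h_g\le\eps$. (If $h_g\ge\diam(\Omega_{\bm{m}(i)})$, so that the rule reduces to a single point, the hull of the whole element is used; this is still within distance $\diam(\Omega_{\bm{m}(i)})\le h_g$ of $K$.) So we may take $U$ to be a small open neighbourhood of $K_\eps'$ contained in $D^{\rm i}$. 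On this neighbourhood $u^{\rm i}$ is smooth, and
\[
C_{u}:=\sup_{x\in K_\eps'}\max_{|\alpha|=2}|D^\alpha u^{\rm i}(x)|<\infty.
\]

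\emph{Step 2: elementwise estimate.} Applying \eqref{e:ErrSing} on $\Omega_{\bm{m}(i)}$, which is itself an $n$-attractor interior, as explained before the definition of $\cI_{\rm reg}$, gives
\[
|g_i-\tilde g_i| = |\Omega_{\bm{m}(i)}|^{-1/2}\left|I_{\Omega_{\bm{m}(i)}}[u^{\rm i}]-Q^{h_g}_{\Omega_{\bm{m}(i)}}[u^{\rm i}]\right| \le \tfrac{n}{2}C_u h_g^2|\Omega_{\bm{m}(i)}|^{1/2}.
\]

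\emph{Step 3: summation.} Squaring and summing over $i$, and using that the elements are disjoint with $\sum_i|\Omega_{\bm{m}(i)}|=|\Omega|$, we obtain
\[
\|\bg-\tilde\bg\|_2\le \tfrac{n}{2}C_u|\Omega|^{1/2}h_g^2 .
\]
This gives \eqref{e:tgErr2} with $C_g=\tfrac n2 C_u|\Omega|^{1/2}$, a constant independent of $h$, so \eqref{e:LhProp2} is in fact not needed.

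\emph{Main obstacle.} The only delicate point is Step 1. One must check that the hulls of the quadrature subelements stay inside the region where $u^{\rm i}$ is smooth, uniformly in $h$ and in $h_g$. This is exactly why the statement only guarantees the bound for $h_g\le\tilde h_g$.
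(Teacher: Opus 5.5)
Your proof is correct and follows the same route as the paper: you choose $\eps$ so that all quadrature hulls lie in a compact neighbourhood of $K$ inside $D^{\rm i}$ once $h_g$ is small, apply \eqref{e:ErrSing} on each element $\Omega_{\bm{m}(i)}$, and sum over $i$. The differences are minor. You bound the distance from a hull to its element by $\diam\le h_g$ (giving $\tilde h_g=\eps$) rather than by the paper's scaled Hausdorff distance $h_gD/\diam(K)$. You also sum using $\sum_i|\Omega_{\bm{m}(i)}|=|\Omega|$ rather than \eqref{e:Scaling} with the count \eqref{e:LhProp2}, which gives a slightly sharper constant without the paper's factor $\rho_{\rm min}^{-n/2}$.
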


\begin{proof}
By assumption, there exists $\eps>0$ such that $K_{\eps}\subset D^{\rm i}$. 
Furthermore, we claim there exists $\tilde{h}_q>0$ such that 
$\Hull(\Omega,h_q)\subset K_\eps$ for $0<h_q\leq \tilde{h}_q$. 
To see this, define 
$D:=d_{\rm H}(\Hull(K),K)=\sup_{x\in \Hull(K)}\inf_{y\in K} |x-y|$, and note that for $\bm{m}\in L_{h_q}(\Omega)$ it holds that $d_{\rm H}(\Hull(\Omega_{\bm{m}}),\Omega_{\bm{m}})\leq \frac{h_q D}{\diam(K)}$. Hence the claim holds for any $0<\tilde{h}_q< \frac{\eps\diam(K)}{D}$.

Combining \eqref{e:LinSys2a}, \eqref{e:tgDef} and \eqref{e:ErrSing} provides an estimate of $|g_i-\tilde{g}_i|$ for each $i=1,\ldots,N$, which, after applying \eqref{e:Scaling} and \eqref{e:LhProp2}, gives  \eqref{e:tgErr2} with
\begin{align}
 \label{e:CgDef}
C_g = \frac{n|\Omega|^{1/2}}{2(\rho_{\rm min})^{n/2}}\sup_{x\in\Hull(\Omega,\tilde{h}_g)}\max_{\substack{\alpha\in \N_0^2\\|\alpha|=2}}|D^\alpha u^{\rm i}(x)|.
\end{align}
\end{proof}

\subsubsection{Linear functionals}
\label{s:LFQuad}

Similarly, for a linear functional $J$ defined by \eqref{e:JDef}, given $h_J>0$ we approximate the entries $j_i$ of $\bm{j}$ defined in \eqref{e:JSD} by
\begin{align}
\label{e:tjDef}
\tilde{j}_i:=\frac{Q^{h_J}_{\Omega_{\bm{m}(i)}}[j]}{|\Omega_{\bm{m}(i)}|^{1/2}}.
\end{align}

\begin{lemma}
\label{l:jQuad}
Suppose that $j$ is $C^2$ on an open set $D^{J}$ and $K\subset D^{\rm J}$. Then there exist $\tilde{h}_J>0$ and $C_J>0$ such that
\begin{align}
\label{e:tjErr2}
\|\bj-\tilde{\bj}\|_2 \leq C_J h_J^2,
\qquad \forall h_J\in(0, \tilde{h}_J].
\end{align}

\end{lemma}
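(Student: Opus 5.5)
The argument mirrors the proof of Lemma \ref{l:gQuad}, with $u^{\rm i}$ replaced by $\jj$ and $h_g$ by $h_J$. Note that $\jj$ plays the role of $j$ in \eqref{e:tjDef} and that $\psi_i=|\Omega_{\bm{m}(i)}|^{-1/2}\chi_{\Omega_{\bm{m}(i)}}$. Hence
\[ j_i-\tilde j_i = |\Omega_{\bm{m}(i)}|^{-1/2}\left(I_{\Omega_{\bm{m}(i)}}[\jj]-Q^{h_J}_{\Omega_{\bm{m}(i)}}[\jj]\right), \]
and it suffices to bound each local quadrature error using \eqref{e:ErrSing}, applied on the element $\Omega_{\bm{m}(i)}$, which is itself the interior of an $n$-attractor.

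\textbf{Step 1: keep all quadrature points inside $D^J$.} Since $K$ is compact and $D^J$ is open, there is $\eps>0$ with $K_{2\eps}\subset D^J$. Let $D:=d_{\rm H}(\Hull(K),K)$. For $\bm{p}\in L_{h_J}(\Omega)$ we have $d_{\rm H}(\Hull(\Omega_{\bm{p}}),\Omega_{\bm{p}})\le h_J D/\diam(K)$. Choose $\tilde h_J<\eps\diam(K)/D$. Then $\Hull(\Omega,h_J)\subset K_\eps$ for all $h_J\le\tilde h_J$. The same holds for every element, because for each $\bm{m}$ the sub-elements of the quadrature mesh of $\Omega_{\bm{m}}$ are elements $\Omega_{(\bm{m},\bm{p})}$ of diameter at most $h_J$. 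So all hulls involved lie in $K_\eps$, and we may take $U$ to be the open set $K_{2\eps}$ in Proposition \ref{p:BaryErr}.

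\textbf{Step 2: a uniform bound on second derivatives.} On $\overline{K_\eps}$, which is a compact subset of $D^J$, $\jj$ is $C^2$, so
\[ M_J:=\sup_{x\in \overline{K_\eps}}\max_{|\alpha|=2}|D^\alpha \jj(x)|<\infty. \]
If $K_\eps$ is not itself inside a compact subset of $D^J$, shrink $\eps$ so that it is.

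\textbf{Step 3: local error and summation.} By \eqref{e:ErrSing} on $\Omega_{\bm{m}(i)}$,
\[ |j_i-\tilde j_i|\le \tfrac{n}{2}h_J^2 M_J|\Omega_{\bm{m}(i)}|^{1/2}. \]
Squaring and summing over $i$ gives
\[ \|\bj-\tilde\bj\|_2^2\le \left(\tfrac n2 h_J^2M_J\right)^2\sum_i|\Omega_{\bm{m}(i)}|=\left(\tfrac n2 h_J^2M_J\right)^2|\Omega|, \]
since the mesh partitions $\Omega$ up to a null set. This yields $C_J=\tfrac n2|\Omega|^{1/2}M_J$. This avoids even the $\rho_{\rm min}$ factor that appears in \eqref{e:CgDef}; alternatively one can bound each term by \eqref{e:tauDef} and \eqref{e:LhProp2}, as in Lemma \ref{l:gQuad}.

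The only delicate point is Step 1: guaranteeing that the quadrature hulls stay uniformly inside the region where $\jj$ is $C^2$ with bounded second derivatives. This is settled by the Hausdorff-distance scaling argument above.
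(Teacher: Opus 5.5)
Your proof is correct and takes the same approach as the paper, which reuses the argument of Lemma \ref{l:gQuad}: pick $\tilde h_J$ via the Hausdorff-distance scaling so that every quadrature hull lies in a compact neighbourhood of $K$ inside $D^J$, then apply \eqref{e:ErrSing} elementwise and sum. Two details of yours improve slightly on the paper: summing with $\sum_i|\Omega_{\bm{m}(i)}|=|\Omega|$ (instead of \eqref{e:tauDef} and \eqref{e:LhProp2}) removes the $\rho_{\rm min}^{-n/2}$ factor from the constant, and you check explicitly that the element-wise quadrature hulls, not just $\Hull(\Omega,h_J)$, stay inside $K_\eps$, which the paper leaves implicit.
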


\begin{proof}
Follows the proof of Lemma \ref{l:gQuad}, with $\tilde{h}_J$ chosen in a similar way to $\tilde{h}_g$, and with $C_J$ given by \eqref{e:CgDef} but with $u^{\rm i}$ replaced by $j$. 
\end{proof}

\subsection{Fully discrete error analysis}
\label{s:IFSFD}
We can now combine the above results with those of \S\ref{s:AbstFD} to provide a fully discrete error analysis for our piecewise constant $h$-version VIM on an $n$-attractor.

\begin{theorem}
[Fully discrete analysis]
\label{t:FD}
Let $K$ be an $n$-attractor, and let $\Omega:=K^\circ$.
Let $\m=m\chi|_\Omega$ for some constant $m\in \C$ with $\Im[m]\geq 0$. 
Let $u^{\rm i}$ be an incident field that is $C^\infty$ on an open set $D^{\rm i}$ containing $K$. 
Let $u$ denote the unique solution of the scattering problem \eqref{e:IntEqn3}. 

Suppose that Assumptions \ref{a:Sep} and \ref{a:nondisj} hold.
As $h\to 0$ suppose that the quadrature parameters introduced in \S\ref{s:IFSquad} satisfy
\begin{align}
\label{e:FDQuadAss}
h_r =O(h^{\alpha/2+n/4}),
\quad
h_s=O(h^{\alpha/2-1+n/4}),
\quad
h_*=O(h^{\alpha-2+n/2}),
\quad
h_g=O(h^{\alpha/2}),
\end{align}
with $\alpha=1$. 
Then for sufficiently small $h\in (0,\diam(\Omega)]$ the fully discrete solution $\tilde{u}_N$ of the {piecewise-constant} VIM, defined using the quadrature rules described in \S\ref{s:IFSquad}, exists and satisfies 
\[ \|u-\tilde{u}_N\|_{L^2(\Omega)}
=O(h),
\quad h\to 0, \] 
{and the fully discrete system matrix $\tilde{A}$ is well-conditioned, with 
\begin{align}
\label{e:CondIFS}
\kappa_2(\tilde{A}) = \|\tilde{A}\|_2\|\tilde{A}^{-1}\|_2 =O(1), \quad \text{as }\, h\to 0 .
\end{align}
}

Furthermore, if \eqref{e:FDQuadAss} holds with $\alpha=2$, and if 
\begin{align}
 \label{e:FDQuadAss2}
 h_J=O(h),
 \end{align} 
then for the far-field pattern evaluation functional we have $|\tilde{J}^{\rm FFP}_{\hat{x}}(\tilde{u}_N)-J^{\rm FFP}_{\hat{x}}(u)|=O(h^2)$, uniformly for $\hat{x}\in \mathbb{S}^{n-1}$; and, for the scattered field evaluation functional, for every $\eps>0$ we have that $|J^{\rm SF}_{x_0}(u_N)-J^{\rm SF}_{x_0}(u)|=O(h^2)$, uniformly for $x_0\in \R^n\setminus K_\eps$. 
\end{theorem}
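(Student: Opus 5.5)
The plan is to verify the hypotheses of Theorem \ref{t:FDErr} with $p=0$ in the $h$-version, using the quadrature estimates of \S\ref{s:IFSquad}. The first task is to pin down the semi-discrete rates. Take $\m=m\chi_\Omega$, assume without loss of generality $m\neq 0$ (for $m=0$ everything is trivial), and set $s=t=(\tfrac{n-d}{2})^-+2\geq 2>p+1=1$, as in Corollary \ref{c:App3}. Then $\min(p+1,s)=1$ and $\min(p+1,t)=1$. Hence the target rates in \eqref{e:AfOh} and \eqref{e:AfOh2} become $O(h)$ and $O(h^2)$, i.e.\ $O(h^\alpha)$ with $\alpha=1,2$ respectively. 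The $L_h$ meshes are meshes of $\Omega$ by Proposition \ref{p:LhMesh}, with meshwidth $\leq h$. Theorem \ref{t:GalWP} and Corollary \ref{c:Scatt} therefore apply, since $u^{\rm i}$ is smooth near $K=\overline\Omega$.

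The second step is to bound the matrix error. Lemma \ref{l:AErr} gives
\[\|A-\tilde A\|_2\leq h^{-n/2}\left(C_1h_r^2+C_2h_s^2h^2+C_3h_*h^2\right).\]
Substituting \eqref{e:FDQuadAss}, each term is $O(h^{\alpha})$:
\begin{itemize}
\item $h^{-n/2}h_r^2=O(h^{\alpha+n/2-n/2})$;
\item $h^{-n/2}h^2h_s^2=O(h^{2-n/2+\alpha-2+n/2})$;
\item $h^{-n/2}h^2h_*=O(h^{\alpha})$.
\end{itemize}
For the right-hand side, Lemma \ref{l:gQuad} gives $\|\bg-\tilde\bg\|_2\leq C_gh_g^2=O(h^\alpha)$ once $h_g\leq\tilde h_g$, which holds for $h$ small since $h_g=O(h^{\alpha/2})\to0$. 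With $\alpha=1$ this yields \eqref{e:AfOh}. Theorem \ref{t:FDErr}(i),(ii) then gives existence of $\tilde u_N$, the estimate $\|u-\tilde u_N\|=O(h)$, and the conditioning bound \eqref{e:CondIFS}.

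For the functionals, take $\alpha=2$, so the matrix and right-hand-side errors are $O(h^2)$, matching \eqref{e:AfOh2}. For the far-field functional, $\jj(y)=ck^{(1+n)/2}m\,\re^{-\ri k\hat x\cdot y}$ on $\Omega$. This is a smooth entire function, so Lemma \ref{l:jQuad} applies with $h_J=O(h)$ and gives $\|\bj-\tilde\bj\|_2=O(h^2)$. One point to check is that the constant $C_J$ is uniform in $\hat x$. This holds because the second derivatives of the plane wave are bounded by $k^2|c m|k^{(1+n)/2}$ on a fixed neighbourhood of $\Hull(\Omega)$. Theorem \ref{t:FDErr}(iii) together with Theorem \ref{t:SupCon} then gives the $O(h^2)$ bound. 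The superconvergence hypotheses hold because $\jj\in H^t(\Omega)$ for all $t$, and $t=2$ is within the allowed range $t\leq\min(s_\Omega^-+2,s_{\m,\Omega}^-)$. The constants can be taken uniform, as in Corollary \ref{c:Scatt}(ii).

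The scattered field case has the same structure, with $\jj(y)=k^2m\Phi(x_0,y)$. The main obstacle is uniformity in $x_0\in\R^n\setminus K_\eps$. Lemma \ref{l:jQuad} needs $\jj$ to be $C^2$ on a neighbourhood of $\Hull(\Omega,h_J)$. By the argument in the proof of Lemma \ref{l:gQuad}, $\Hull(\Omega,h_J)\subset K_{\eps/2}$ for $h_J$ small. Then $|x_0-y|\geq\eps/2$ there, and \eqref{e:PhiBnd}, applied together with boundedness of $|x_0-y|$ on the relevant region, bounds the second derivatives uniformly in $x_0$. For $x_0$ far away, the bound is $(1+(kr)^{(n+1)/2})/r^n$, which is controlled for $r\geq\eps/2$ since it is decreasing. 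This gives a uniform $C_J$.

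Finally, note that the statement is written with $u_N$ rather than $\tilde u_N$. I would prove the stronger fully discrete estimate. The semi-discrete estimate with $u_N$ follows directly from Corollary \ref{c:Scatt}(iii) anyway.
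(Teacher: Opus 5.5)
Your proposal is correct and follows the same route as the paper, whose proof simply combines Corollary \ref{c:Scatt}, Corollary \ref{c:App3} and Theorem \ref{t:FDErr} (with $p=0$, so the target rates are $O(h^\alpha)$) with the quadrature bounds of Lemmas \ref{l:AErr}, \ref{l:gQuad} and \ref{l:jQuad}; your exponent bookkeeping and your uniformity checks in $\hat{x}$ and $x_0$ spell out what the paper leaves implicit. You are also right that the $J^{\rm SF}_{x_0}(u_N)$ in the statement presumably should read $\tilde{J}^{\rm SF}_{x_0}(\tilde{u}_N)$, and your argument covers both readings.
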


\begin{proof}
Follows by combining Corollary \ref{c:Scatt}, Theorem \ref{t:FDErr} and Corollary \ref{c:App3} with Lemmas \ref{l:AErr}, \ref{l:gQuad} and \ref{l:jQuad}. %
\end{proof}

\begin{remark}
\label{r:Inside}
In Theorem \ref{t:FD} we presented fully discrete superconvergence estimates for the scattered field evaluation functional only in the case where $x_0\in \R^n\setminus K_\eps$ for some $\eps>0$. When $x_0\in K$ at least one of the integrals \eqref{e:JSD} will be singular, so that Lemma \ref{l:jQuad} does not apply, and the approximation \eqref{e:tjDef} may be inaccurate (or even undefined, if $x_0$ coincides with one of the quadrature points in the composite barycentre rule). 
In principle, special quadrature rules could be developed and analysed to deal with this case. 
However, since this is a well-known issue with volume integral equation methods in general, and is unrelated to the fractality of the inhomogeneity, we do not pursue this matter further here. Nonethelesss, we note that by 
Remark \ref{r:SupConF} we expect a degree of superconvergence at a semi-discrete level, even for $x_0\in K$, 
and that, even with naive quadrature 
the approximation \eqref{e:tjDef} often performs well in practice for $x_0\in K$, as the results in Figure \ref{f:ConvScat}(d)-(f) below show.
\end{remark}

\begin{remark}
As an example of what the assumptions \eqref{e:FDQuadAss} and \eqref{e:FDQuadAss2} mean in practical terms, we note that, in the case $n=2$, to achieve \eqref{e:FDQuadAss} with $\alpha=2$ and \eqref{e:FDQuadAss2}, giving optimal $O(h^2)$ superconvergence of the functional evaluations,
we need $h_r/h, h_s=O(h^{1/2})$ %
and $h_*/h,h_g/h,h_J/h = O(1)$, as $h\to 0$. This means that a fixed ($h$-independent) number of quadrature points suffices for the evaluation of each of the integrals involving $\Phi_*$, $g$ and $J$, while the number of quadrature points for the evaluation of each of the regular integrals appearing in $A_{ij}$ when $K_{\bm{m}(i)}\cap K_{\bm{m}(j)}=\emptyset$, and the evaluation of each of the regular canonical integrals in $\bm{r}$, needs to increase modestly (in proportion to $(h^{1/2})^{-2}=h^{-1}$) as $h\to 0$.

\end{remark}

\section{Examples}
\label{s:Examp}
In this section we consider some examples of $n$-attractors $K$ to which the results of \S\ref{s:IFS} apply. 
While our theory applies for both $n=2$ and $n=3$, for ease of visualisation, and to reduce the cost of numerical computations, we restrict our attention henceforth to the case $n=2$.\footnote{Examples of $3$-attractors can be found in \cite{thuswaldner2020self,zaitseva2022self}. We note that the term ``$2$-attractor'' 
has a different meaning in \cite{zaitseva2022self} compared to its meaning in this paper.}
We focus here on the three examples illustrated in Figure \ref{f:Scatt}, namely the Fudgeflake, the Gosper Island, and the Koch Snowflake. 
Each is a $2$-attractor defined by an IFS $\{s_1,\ldots,s_M\}$ in $\R^2$ for some $M\in \N$, with each $s_i$ a contracting similarity satisfying \eqref{e:IFS}, as detailed below.

In order to justify the applicability of the quadrature rules described in \S\ref{s:IFSquad} and the fully discrete error estimates of Theorem \ref{t:FD} 
we need to verify in each case that the conditions of Assumptions \ref{a:Sep} and \ref{a:nondisj} are satisfied. 
Careful inspection of the first few $L_h$ meshes for each example (see Figures \ref{f:FudgeMesh}-\ref{f:KochMesh}) suggests that this should be true. But rigorously proving this %
involves somewhat tedious (albeit elementary) geometrical arguments that we do not reproduce here for reasons of space. 
What we do provide for each example is a collection of basic facts (stated without proof)  
that provide the main building blocks of the arguments. 
In particular, for each example we report the canonical singular integrals required for Assumption \ref{a:nondisj} (determined by the algorithm in \cite{nondisjoint}), we detail the symmetry properties of the attractor, and we state formulas showing that the $L_h$ meshes possess a certain lattice structure.
We also report results (obtained in part using \cite[Theorem 2.1]{vass2013explicit}, which is restated as Lemma \ref{l:BoundBall} below) guaranteeing that the attractors are contained in certain bounding balls. In conjunction with the lattice formulas, this permits rigorous verification of the separation conditions in Assumption \ref{a:Sep}, and guarantees the existence of a suitable similarity $T$ in Assumption \ref{a:nondisj} that allows a given singular interaction 
$G^0_{ij}$ (as defined in \eqref{e:GijDef}) to be expressed in terms of one of the canonical singular integrals (as in \eqref{e:G0ijRep}). 
For the interested reader, full details of the relevant calculations for the Koch Snowflake case can be found in \cite[\S4.2.2]{bannisterthesis}; the analogous calculations for the Fudgeflake and Gosper Island, while not presented in \cite{bannisterthesis}, are simpler because their IFS are homogeneous. 

To define our examples  
we introduce the notation 
\[ R_\theta := 
\left(
\begin{array}{cc}
\cos\theta & -\sin\theta\\
\sin\theta & \cos\theta
\end{array}
\right)
\]
to denote the matrix corresponding to the operation of anticlockwise rotation around the origin in $\R^2$ by an angle $\theta\in[0,2\pi)$.  
We also define the lattice vectors
\begin{align}
\label{e:eps12}
\eps_1 := \left(\begin{array}{c}
1 \\
0
\end{array}\right), \qquad
\eps_2 := \left(\begin{array}{c}
1/2 \\
\sqrt{3}/2
\end{array}\right)=R_{\pi/3}\left(\begin{array}{c}
1 \\
0
\end{array}\right).
\end{align}


\subsection{Fudgeflake}
\label{s:Fudge}

The Fudgeflake 
is defined by a homogeneous IFS, with 
$M=3$ and 
\begin{align}
&\rho_1=\rho_2=\rho_3 = \frac{1}{\sqrt{3}},
\qquad
A_1=A_2=A_3=R_{\pi/6},\notag\\
&
\delta_j = 
\frac{1}{3}R_{(2j-1)\pi/3}
\,\eps_1
, \,\,
j=1,\ldots 3.%
\label{e:FudgeIFS}
\end{align}
An illustration of the Fudgeflake and its first few $L_h$ meshes can be found in Figure~\ref{f:Meshes}(a). %
The barycentre $x_\Omega$ of the Fudgeflake is at the origin, its area $|\Omega|$ equals $\sqrt{3}/2$, and
the dimension of its boundary is 
$d=\dimH(\partial\Omega)= \log 4/\log 3\approx 1.26$ \cite{riddleIFS}.
The Fudgeflake has $C_3$ symmetry, i.e.\ $\Omega$ is invariant under rotation around the origin by multiples of $2\pi/3$. 
An application of Lemma \ref{l:BoundBall} (noting that $\varrho(0)=\rho_*=\mu_*=1/\sqrt{3}$ in this case) proves that the Fudgeflake lies within the ball centred at the origin of radius $(1/2)(1+1/\sqrt{3})\approx 0.789$.

The sequence of $L_h$ mesh widths for which \eqref{e:hkProp} holds is given by $h_l=3^{-l/2}h_0$, $l\in \N_0$, 
{where $h_0:=\diam(\Omega)$, } 
and the corresponding number of mesh elements satisfies {$\#L_{h_l}(\Omega)=3^l$}.
The meshes have the following lattice structure: given $l\in \N$, for $\bm{m}\in L_{h_l}(\Omega)$ there exist $a_1,a_2\in\Z$ such that
\begin{align}
\label{e:FudgeLat}
\Omega_{\bm{m}} = {
3^{-l/2}R_{l\pi/6}\Omega 
+ 3^{-(l+1)/2}R_{(l+1)\pi/6}\eps_1
+ 3^{-l/2}R_{l\pi/6}(a_1\eps_1+a_2\eps_2)}.
\end{align}
Applying the algorithm of \cite{nondisjoint} to the Fudgeflake IFS produces two canonical singular integrals, namely $I_{\Omega,\Omega}[\Phi_0]$, which we refer to as a ``self-interation'', and $I_{\Omega_{1},\Omega_{2}}[\Phi_0]$, which we refer to as an ``edge interaction'', since it involves integration over the pair of neighbouring elements $\Omega_{1}$ and $\Omega_{2}$, whose closures intersect in a fractal curve (see Figure \ref{f:FudgeMesh}). 

By combining the above facts one can show that Assumption \ref{a:Sep} is satisfied with $C_{\rm sep} =(2/\sqrt{3}-1)/(h_0\sqrt{3})$, and that Assumption \ref{a:nondisj} holds, since for any pair of $L_h$ mesh elements $\Omega_{\bm{m}(i)}$ and $\Omega_{\bm{m}(j)}$ whose closures intersect nontrivially either $i=j$, in which case $G^0_{ij}$ can be expressed in terms of the self-interaction $I_{\Omega,\Omega}[\Phi_0]$, or $i\neq j$, in which case the pair $\Omega_{\bm{m}(i)}$ and $\Omega_{\bm{m}(j)}$ must be immediate neighbours on the lattice \eqref{e:FudgeLat}, and then $G^0_{ij}$ can be expressed in terms of the edge interaction $I_{\Omega_{1},\Omega_{2}}[\Phi_0]$.

\subsection{Gosper Island}
\label{s:Gosp}
The Gosper Island IFS is also homogeneous, with 
$M=7$ and 
\begin{align}
&\rho_1=\ldots=\rho_7 = \frac{1}{\sqrt{7}},
\qquad
A_1=\ldots=A_7=R_{\vartheta}, \quad \vartheta = \arcsin\left( \frac{\sqrt{3}}{2\sqrt{7}}\right),
\quad 
R_\vartheta=\left(\begin{array}{cc}
\frac{5}{2 \sqrt{7}} & -\frac{\sqrt{3}}{2\sqrt{7}} \\
\frac{\sqrt{3}}{2\sqrt{7}} & \frac{5}{2 \sqrt{7}}
\end{array}\right). 
\notag\\
&\delta_j = \frac{\sqrt{3}}{\sqrt{7}}R_{(j-1)\pi/3 + \vartheta} \,\eps_1
, \,\,
j=1,\ldots 6, \,\,
\delta_7=\left(\begin{array}{c}
0 \\
0
\end{array}\right). 
\label{e:GospIFS}
\end{align}
An illustration of the Gosper Island and its first few $L_h$ meshes can be found in 
Figure~\ref{f:Meshes}(b). %
Its barycentre $x_\Omega$ is at the origin, its area $|\Omega|$ equals $3\sqrt{3}/2$, 
and the dimension of its boundary is $d=\dimH(\partial\Omega)= \log 3/\log {\sqrt{7}}\approx 1.13$ \cite{riddleIFS}.
The Gosper Island has $C_6$ symmetry, i.e.\ $\Omega$ is invariant under rotation around the origin by multiples of $\pi/3$. 
An application of Lemma \ref{l:BoundBall} (noting that $\varrho(0)=1$, $\rho_*=1/\sqrt{7}$, $\mu_*=\sqrt{3}/\sqrt{7}$ in this case) implies that the Gosper Island lies within the ball centred at the origin of radius $(1+\sqrt{7})/(2\sqrt{3})\approx 1.052$. 

The sequence of $L_h$ mesh widths for which \eqref{e:hkProp} holds is given by $h_l=7^{-l/2}h_0$, $l\in \N_0$, 
{where $h_0:=\diam(\Omega)$, } 
and the corresponding number of mesh elements satisfies {$\#L_{h_l}(\Omega)=7^l$}.
The meshes have the following lattice structure: given $l\in \N$, for $\bm{m}\in L_{h_l}(\Omega)$ there exist $a_1,a_2\in\Z$ such that
\begin{align}
\label{e:GosperLat}
\Omega_{\bm{m}} = 7^{-l/2}R_{l\vartheta}\Omega 
+3^{1/2}7^{-l/2}R_{l\vartheta}(a_1\eps_1+a_2\eps_2).
\end{align}
As for the Fudgeflake, applying the algorithm of \cite{nondisjoint} to the Gosper Island produces two canonical singular integrals, namely the self interaction $I_{\Omega,\Omega}[\Phi_0]$ and the edge interaction $I_{\Omega_{1},\Omega_{2}}[\Phi_0]$ (see Figure \ref{f:GosperMesh}). By combining the above facts one can show that Assumption \ref{a:Sep} is satisfied with $C_{\rm sep} =(3-(1+\sqrt{7})/\sqrt{3})/(h_0\sqrt{7})$ and that Assumption \ref{a:nondisj} holds, since all the singular interactions $G^0_{ij}$ can be expressed either in terms of $I_{\Omega,\Omega}[\Phi_0]$ or $I_{\Omega_{1},\Omega_{2}}[\Phi_0]$.

\subsection{Koch Snowflake}
\label{s:Koch}
The Koch Snowflake IFS is non-homogeneous, with 
$M=7$ and 
\begin{align}
&\rho_1=\ldots=\rho_6 = \frac{1}{3}, \, \rho_7=\frac{1}{\sqrt{3}},
\qquad
A_1=\ldots=A_6 = I, \,\, A_7=R_{\pi/6},\notag\\
&\delta_j = \frac{{h_0}}{3}R_{{j\pi/3 - \pi/6}}
\,\eps_1
, \,\,
j=1,\ldots 6, \,\,
\delta_7=\left(\begin{array}{c}
0 \\
0
\end{array}\right), 
\label{e:KochIFS}
\end{align}
{where $h_0>0$ is the diameter of the Snowflake and $\tilde{h}_0:=\sqrt{3} h_0/2$ is the side length of the equilateral triangle forming the first prefractal approximation to it, as illustrated in Figure \ref{f:KochPreFrac}. 
In our numerical results in \S\ref{s:Numer} below we use $h_0=2/\sqrt{3}$, so that $\tilde{h}_0=1$, but we note that \cite{nondisjoint} uses $h_0=2$, so that $\tilde{h}_0=\sqrt{3}$.} 
An illustration of the Koch Snowflake and its first few $L_h$ meshes can be found in Figure~\ref{f:Meshes}(c). %
Its barycentre $x_\Omega$ is at the origin, its area $|\Omega|$ equals 
{$3h_0^2\sqrt{3}/10$}, 
 and the dimension of its boundary is $d=\dimH(\partial\Omega)= \log 4/\log 3\approx 1.26$ \cite{riddleIFS}. 
The Koch Snowflake has $D_6$ symmetry, i.e.\ $\Omega$ is invariant under rotation around the origin by multiples of $\pi/3$ and under reflection in the lines through the origin with angles $j\pi/6$, $j=1,\ldots,6$. 
An application of Lemma \ref{l:BoundBall} (noting that $\varrho(0)={h_0/2}$, $\rho_*=1/\sqrt{3}$, $\mu_*=2/3$ in this case) implies that the Koch Snowflake lies within the ball centred at the origin of radius ${(1+1/\sqrt{3})h_0/2}\approx 0.789h_0$. However, by different arguments (e.g.\ prefractal approximation, as in Figure \ref{f:KochPreFrac}) one can prove the sharper result that  
{$\diam(\Omega)=h_0$} 
(as stated already above)  
and that $\Hull(\Omega)$ is a hexagon of circum-radius ${h_0/2}$. 

The sequence of $L_h$ mesh widths for which \eqref{e:hkProp} holds is given by $h_l=3^{-l/2}h_0$, $l\in \N_0$, as for the Fudgeflake.  
However, the fact that the IFS is non-homogeneous means that the lattice structure of the $L_h$ meshes is more complicated than for the Fudgeflake or Gosper Island, as we now explain. For the Koch Snowflake each $L_h$ mesh comprises elements of two different sizes. Explicitly,
\begin{align}
\label{}
L_{h_l}(\Omega) = B_l \cup S_l,
\end{align}
where (with ``$B$'' for ``big'' and ``$S$'' for ``small'')
\begin{align}
\label{}
B_l &= \{\bm{m}\in L_{h_l}(\Omega):\diam(\Gamma_{\bm{m}})=h_03^{-l/2}\},\\
S_l &= \{\bm{m}\in L_{h_l}(\Omega):\diam(\Gamma_{\bm{m}})=h_03^{-(l+1)/2}\}.
\end{align}
In obtaining $L_{h_{l+1}}(\Omega)$ from $L_{h_l}(\Omega)$, each element of $B_l$ is subdivided into 7 pieces, one of which becomes an element of $B_{l+1}$, with the remaining six becoming elements of $S_{l+1}$; and each element of $S_l$ becomes an element of $B_{l+1}$. Hence
\begin{align}
\label{}
\#B_{l+1} = \#B_{l} + \#S_{l}, 
\qquad
\#S_{l+1} = 6(\#B_{l}).
\end{align}
Solving these difference equations with $\#B_1=1$ and $\#S_1=6$ gives 
\begin{align}
\label{}
\#B_l = \frac{3^{l+1}-(-2)^{l+1}}{5},
\qquad 
\#S_l = \frac{2.3^{l+1}+3.(-2)^{l+1}}{5},
\qquad l\in\N,
\end{align}
so that the total number of mesh elements satisfies
\begin{align}
\label{e:NlSim}
\#L_{h_l}(\Omega) = \#B_l + \#S_l = \frac{3^{l+2}-(-2)^{l+2}}{5}\sim \frac{3^{l+2}}{5}{=\frac{9}{5}\left(\frac{h_l}{h_0}\right)^{-2}}, \quad l\to\infty.
\end{align}
Regarding the lattice structure, given $l\in \N$, we can further decompose $S_l=S_l^1\cup S_l^2$, with $\#S_l^1=\#S_l^2=(\#S_l)/2$, and %
\begin{align}
B_l&=\{\bm{m}\in L_{h_l}(\Omega): \Omega_{\bm{m}} = 
{\frac{h_0}{2}}3^{-l/2}R_{l\pi/6}\Omega 
+ {h_0}3^{-l/2}R_{(l+1)\pi/6}(a_1\eps_1 + a_2\eps_2), \notag\\
& \qquad \qquad \qquad \qquad \qquad \qquad \qquad \qquad \qquad \qquad \qquad \qquad \qquad
\text{ for some }a_1,a_2\in\Z\}, \notag\\
S_l^j &=
\{ \bm{m}\in L_{h_l}(\Omega): \Omega_{\bm{m}} = {\frac{h_0}{2}}3^{-(l+1)/2}R_{(l+1)\pi/6}\Omega + {h_0}3^{-(l+1)/2}R_{(l+2)\pi/6}\,\eps_j \notag\\
&\qquad\qquad \qquad \qquad \qquad
+ {h_0}3^{-l/2}R_{(l+1)\pi/6}(a_1\eps_1 + a_2\eps_2), 
\quad 
\text{for some }a_1,a_2\in\Z\}, 
\quad j=1,2.
\label{e:KochLat}
\end{align}
The three lattices described by \eqref{e:KochLat} are illustrated in Figure \ref{f:KochLat}. 

\begin{figure}[t!]
\centering
\includegraphics[width=.25\textwidth]{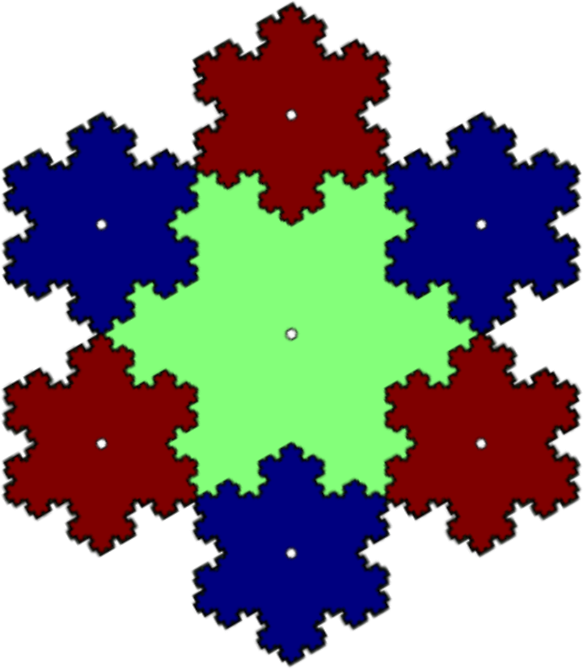}
\hspace{5mm}
\includegraphics[width=.25\textwidth]{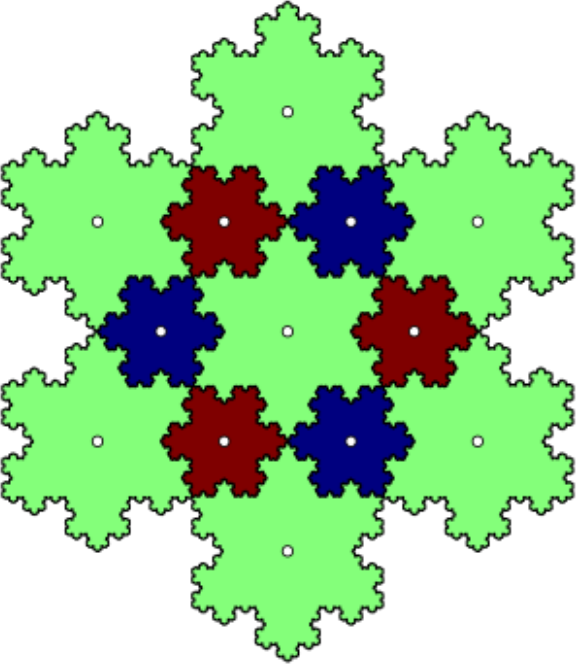}
\hspace{5mm}
\includegraphics[width=.25\textwidth]{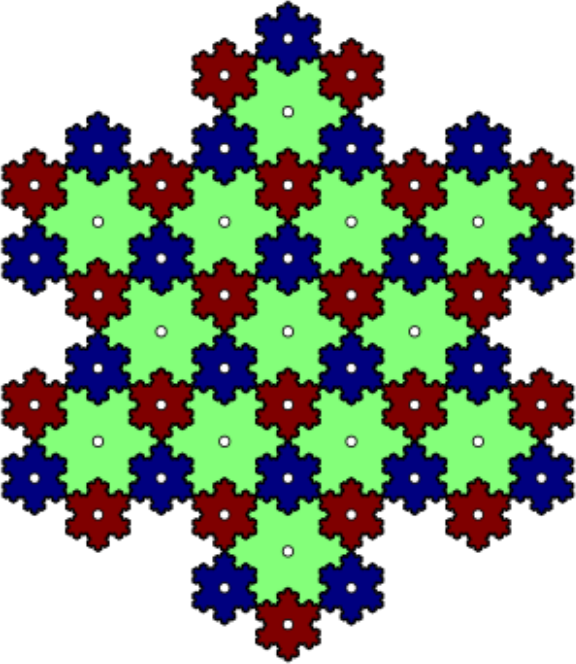}
\caption{Lattice structure of the $L_{h_l}$ meshes for the Koch Snowflake, $l=1,2,3$. The elements corresponding to the index sets $B_l$, $S_l^1$ and $S_l^2$ are coloured green, red and blue respectively. 
}
\label{f:KochLat}
\end{figure}

As was detailed in \cite[\S5.4]{nondisjoint}, for the Koch Snowflake the algorithm of \cite{nondisjoint} produces three canonical singular integrals, namely the self interaction $I_{\Omega,\Omega}[\Phi_0]$, the edge interaction $I_{\Omega_{1},\Omega_{7}}[\Phi_0]$, and the ``point interaction'' $I_{\Omega_{1},\Omega_{2}}[\Phi_0]$, the latter so named because the closures of the elements $\Omega_{1}$ and $\Omega_{2}$ intersect in a single point (see Figure \ref{f:GosperMesh}). 
By combining the above facts one can show that Assumption \ref{a:Sep} is satisfied 
with $C_{\rm sep} =1/(2h_0)$ %
and that Assumption \ref{a:nondisj} holds, since all the singular interactions $G^0_{ij}$ can be expressed either in terms of $I_{\Omega,\Omega}[\Phi_0]$, $I_{\Omega_{1},\Omega_{7}}[\Phi_0]$ or $I_{\Omega_{1},\Omega_{2}}[\Phi_0]$.
Further details of the calculations underpinning these statements can be found in \cite{bannisterthesis}.

\section{Numerical results}
\label{s:Numer}

\begin{figure}[t!]
\begin{subfigure}[t]{0.3\textwidth}
\includegraphics[height=\textwidth]
{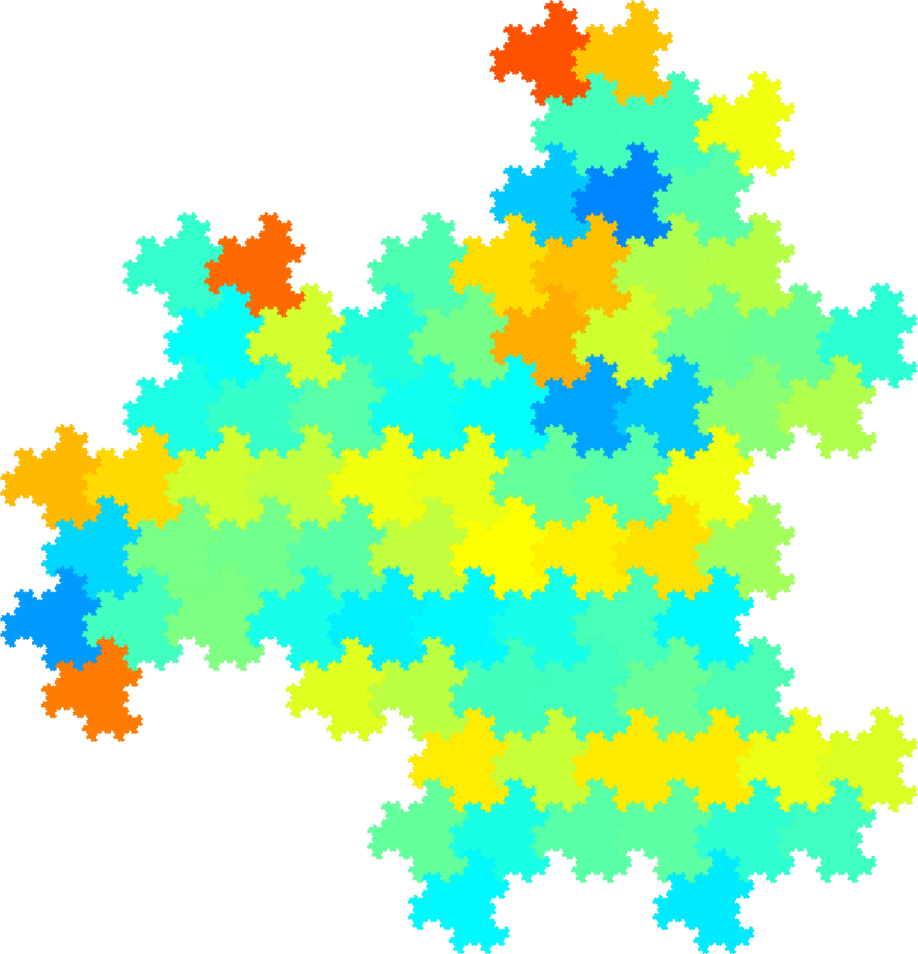}
\caption{$l=4$}
\end{subfigure}
\hspace{1mm}
\begin{subfigure}[t]{0.3\textwidth}
\includegraphics[height=\textwidth]
{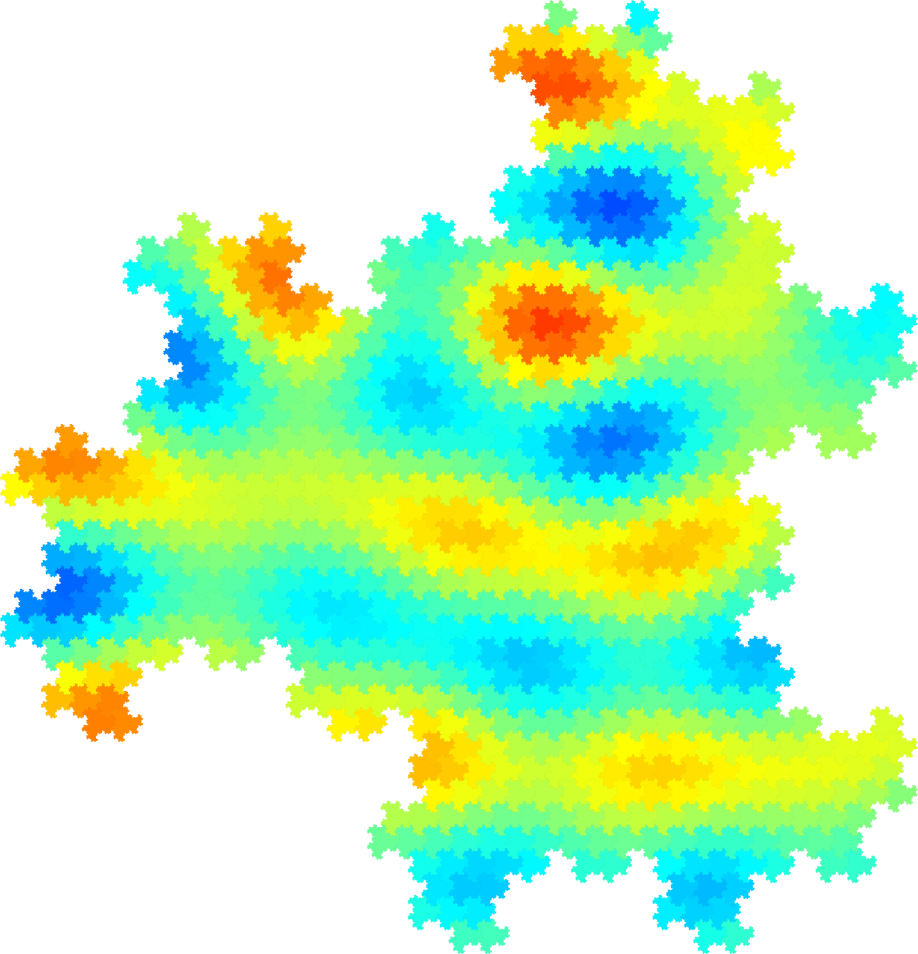}
\caption{$l=6$}
\end{subfigure}
\hspace{1mm}
\begin{subfigure}[t]{0.3\textwidth}
\includegraphics[height=\textwidth]
{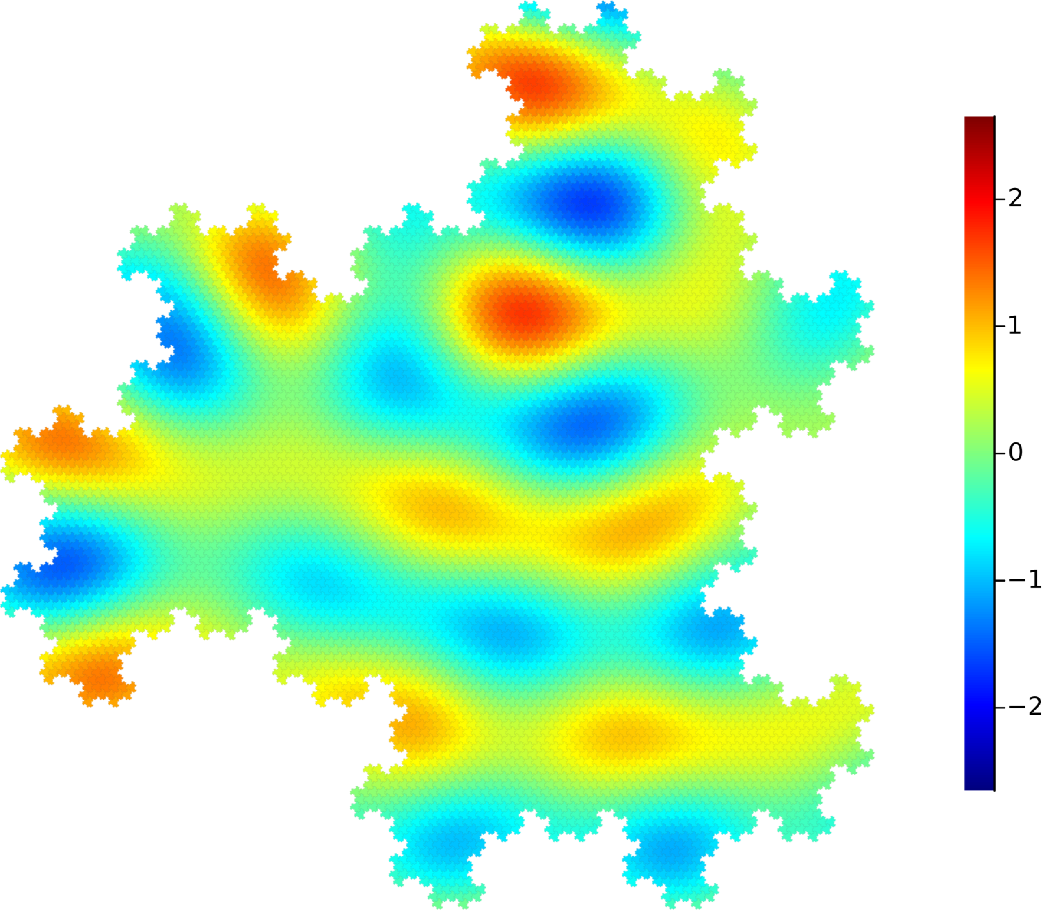}
\caption{$l=8$}
\end{subfigure}

\bigskip

\begin{subfigure}[t]{0.3\textwidth}
\includegraphics[height=\textwidth]
{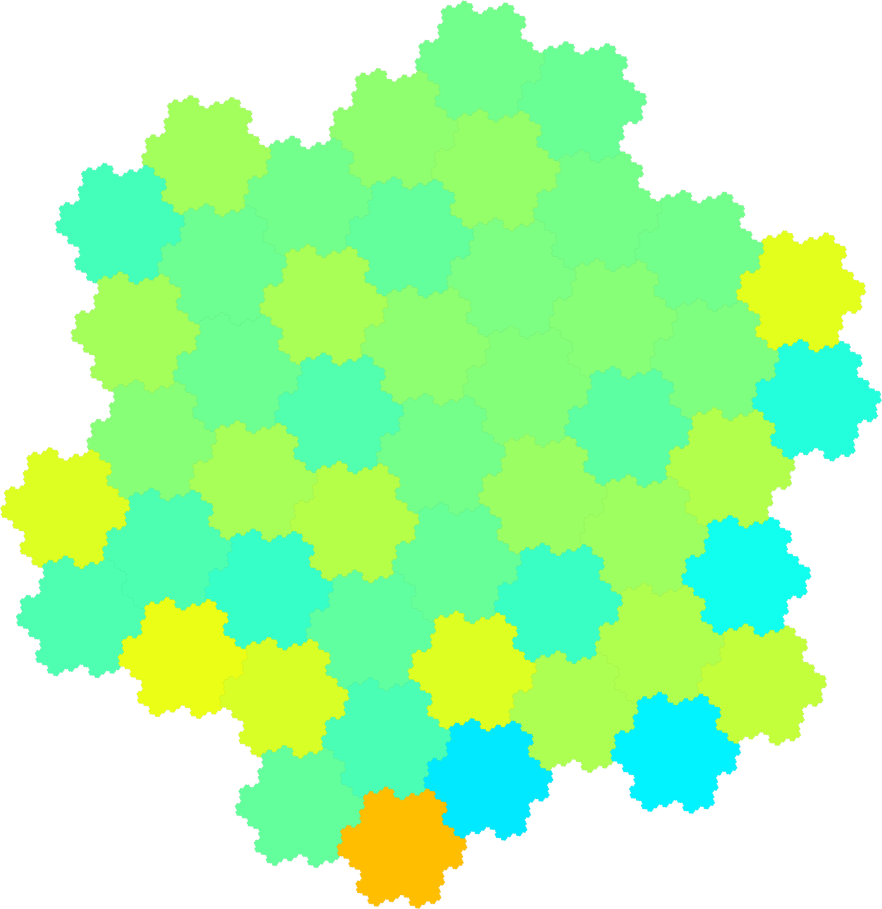}
\caption{$l=2$}
\end{subfigure}
\hspace{1mm}
\begin{subfigure}[t]{0.3\textwidth}
\includegraphics[height=\textwidth]
{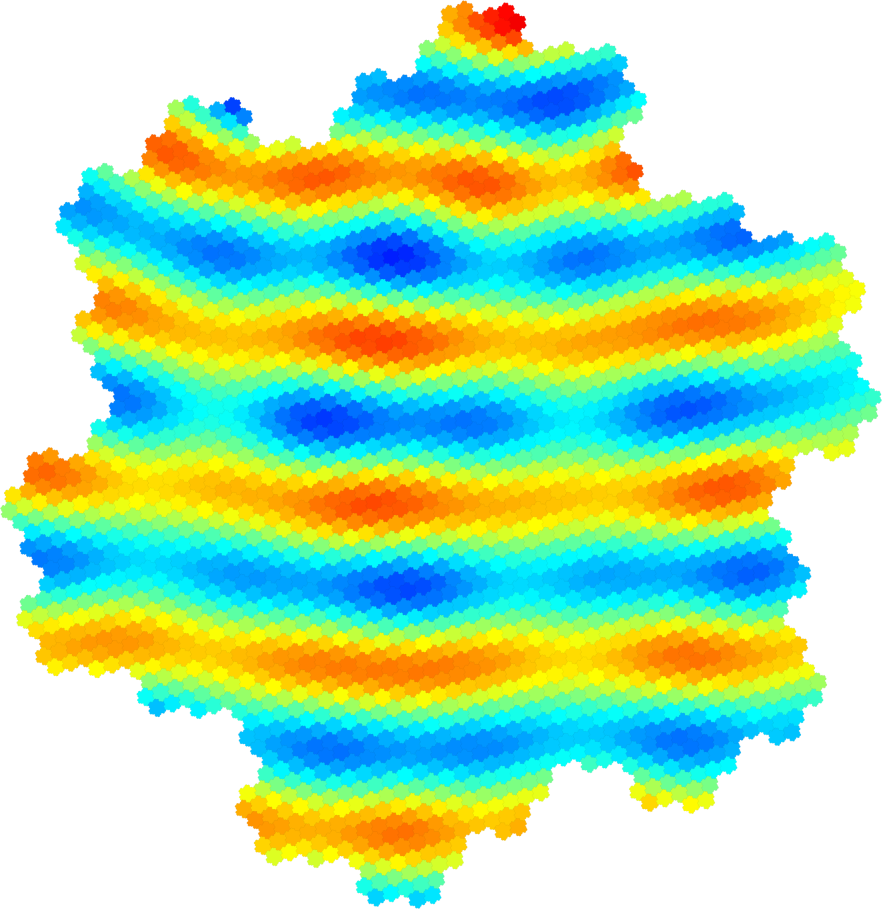}
\caption{$l=4$}
\end{subfigure}
\hspace{1mm}
\begin{subfigure}[t]{0.3\textwidth}
\includegraphics[height=\textwidth]
{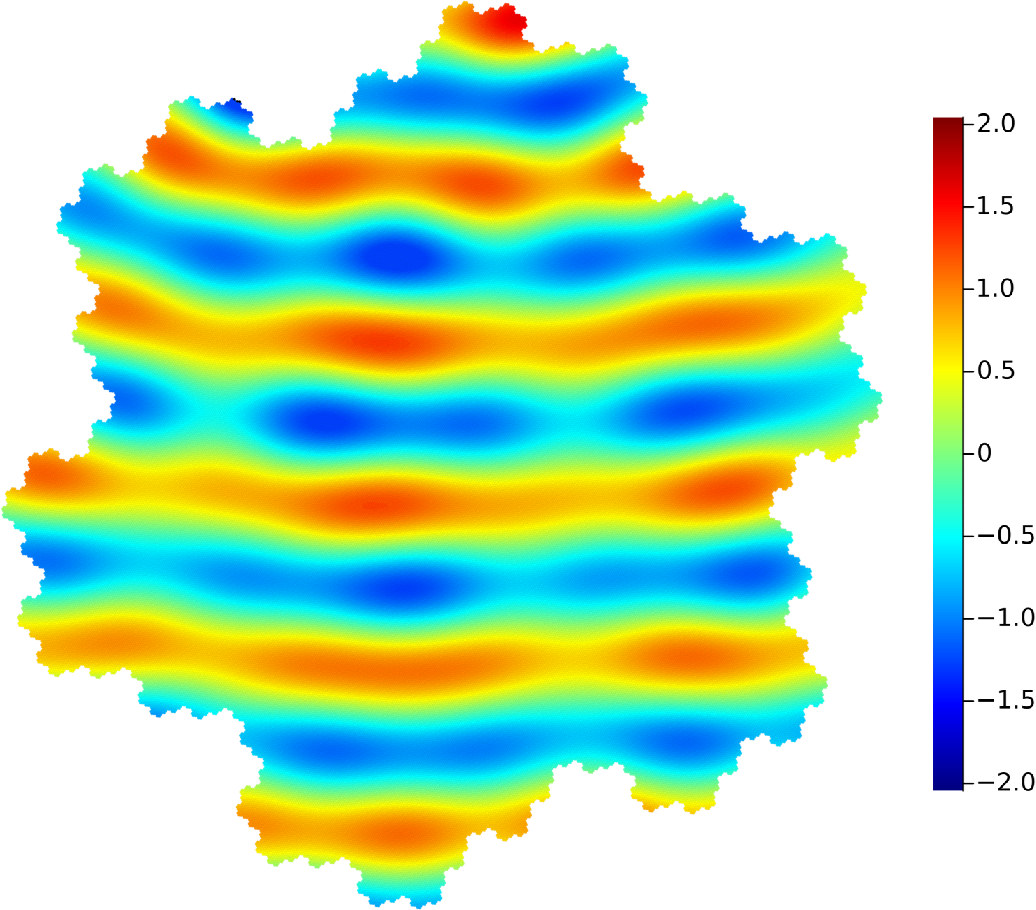}
\caption{$l=6$}
\end{subfigure}

\bigskip

\begin{subfigure}[t]{0.3\textwidth}
\includegraphics[height=\textwidth]
{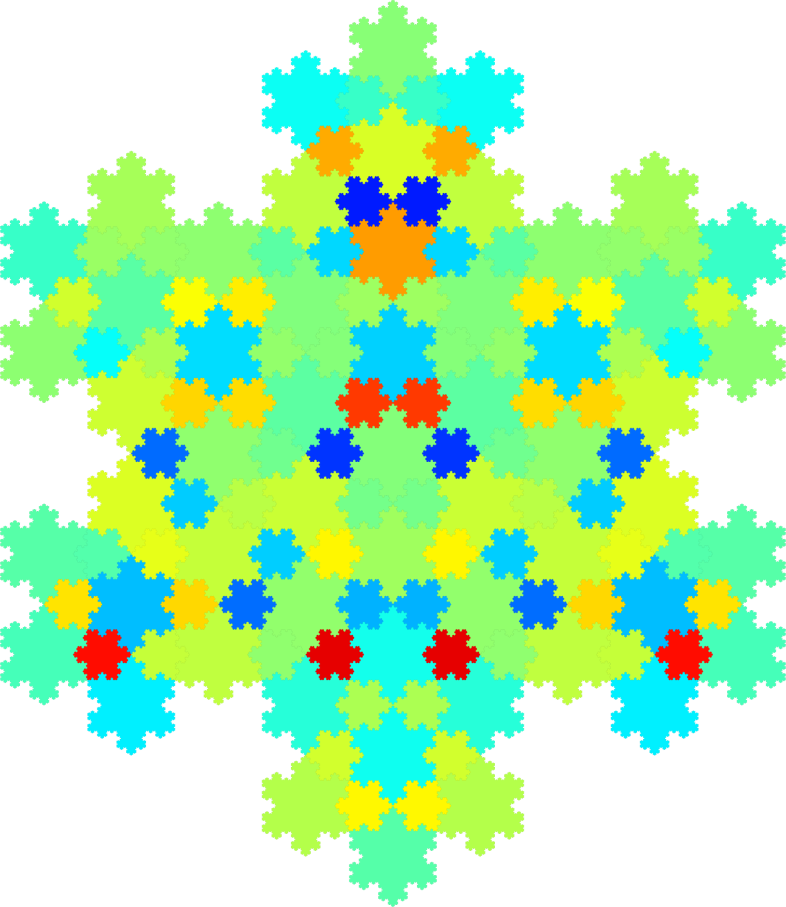}
\caption{$l=4$}
\end{subfigure}
\hspace{1mm}
\begin{subfigure}[t]{0.3\textwidth}
\includegraphics[height=\textwidth]
{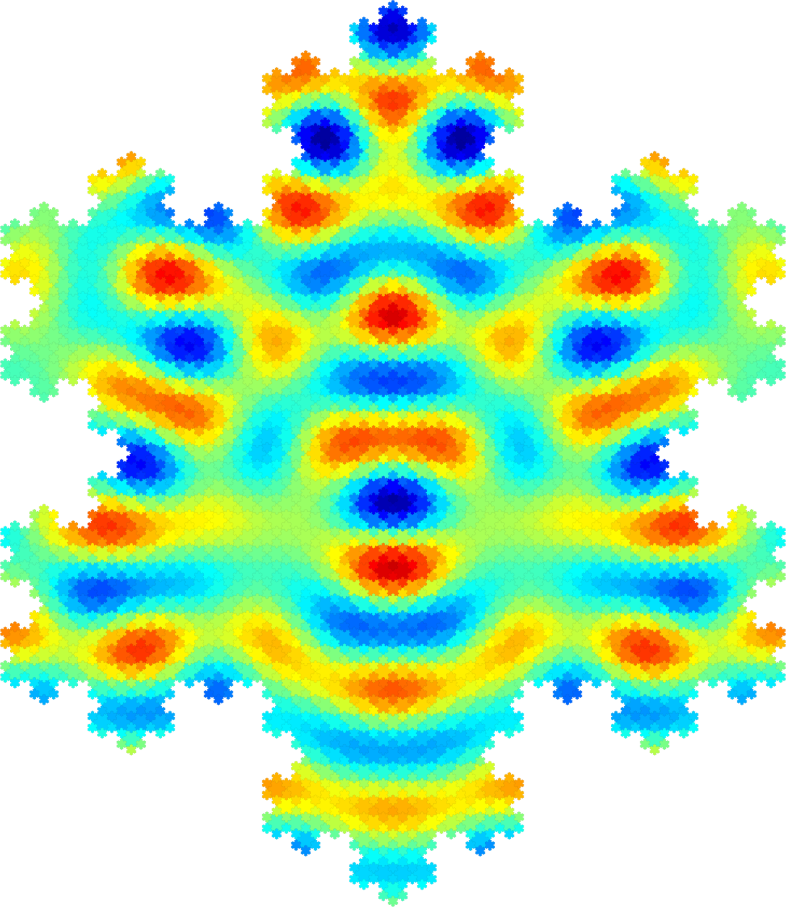}
\caption{$l=7$}
\end{subfigure}
\hspace{1mm}
\begin{subfigure}[t]{0.3\textwidth}
\includegraphics[height=\textwidth]
{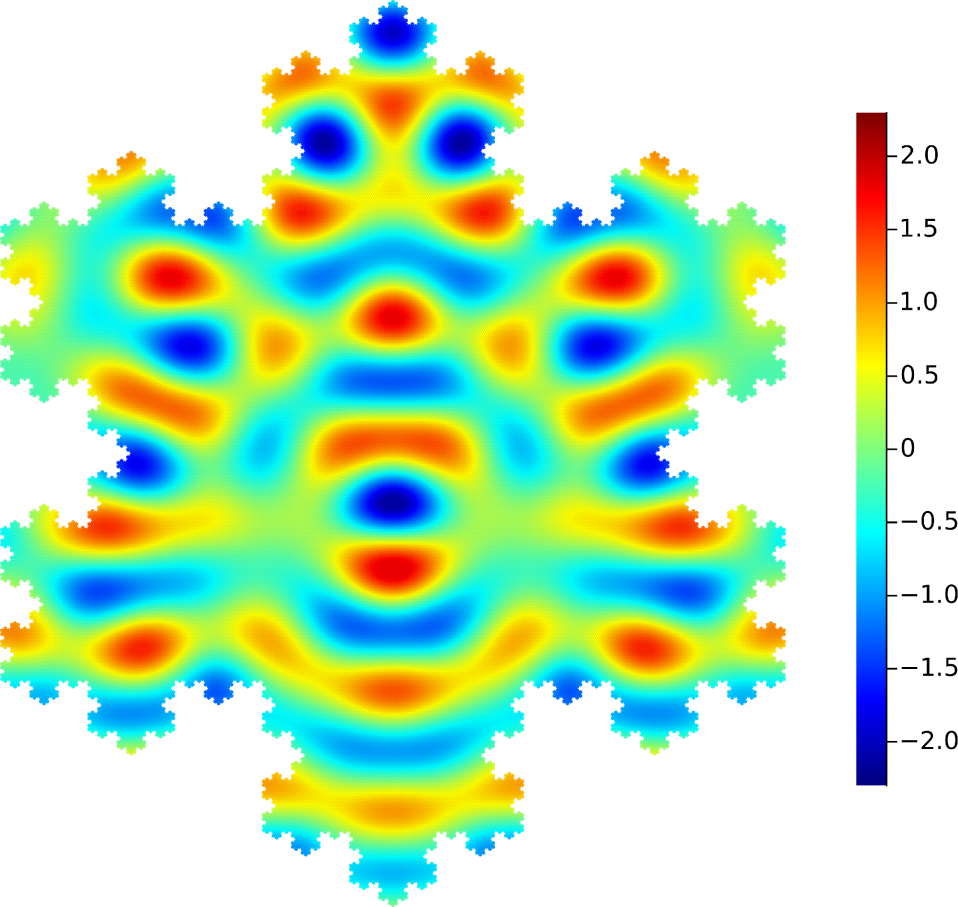}
\caption{$l=10$}
\end{subfigure}
\caption{Piecewise-constant VIM solutions (real parts) on each of the three examples, for coarse (left), medium (middle) and fine meshwidths (right). 
Here $k=30$, $\vartheta=(0,1)$, and in (a)-(c) and (g)-(i) $1+m=2.0$, while in (d)-(f) $1+m=1.311+2.289\times 10^{-9}\ri$. 
}
\label{f:PWC}
\end{figure}

In this section we validate our theoretical analysis via numerical experiments. We present results for scattering by the three IFS attractors discussed in \S\ref{s:Examp}, namely the Fudgeflake, the Gosper Island and the Koch Snowflake. 
We use the quasi-uniform $L_{h}$ meshing strategy described in \S\ref{s:IFSMesh}, for the meshwidth sequences $(h_l)_{l=0}^\infty$ detailed in \S\ref{s:Examp}, and we restrict attention to piecewise constant approximations, using the quadrature rules described in \S\ref{s:IFSquad}. Our first objective is to demonstrate that our geometry-conforming IFS-based method achieves the accuracy predicted by the fully discrete error analysis in Theorem \ref{t:FD} (namely, $O(h)$ convergence for the VIE solution in $L^2(\Omega)$ and $O(h^2)$ superconvergence for the scattered field and far-field pattern evaluations). 
Our second objective is to show that our  method provides significantly more accurate results than a comparable method based on prefractal approximation. 

Our numerical experiments were carried out using the open-source Julia code available at \href{https://github.com/AndrewGibbs/IFSIntegrals}{github.com/AndrewGibbs/IFSIntegrals}, which was used to generate the $L_{h}$ meshes and carry out numerical quadrature. %
For inversion of the linear system \eqref{e:LinSysFD}
we used matrix-free GMRES, exploiting the lattice structure of the $L_{h}$ meshes (as described in \S\ref{s:Examp}) to reduce memory complexity and to accelerate matrix-vector products via FFT-based techniques. Details of this aspect of our implementation can be found in \cite[\S5-\S6]{bannisterthesis} and will be presented in a separate publication. 

We present results for the scattering of an incident plane wave $u^{\rm i}(x) := e^{ik\vartheta\cdot x}$ by the three examples from \S\ref{s:Examp} (with $h_0=2/\sqrt{3}$ in the case of the Koch Snowflake, so that $\tilde{h}_0=1$, see \S\ref{s:Koch}), for different combinations of wavenumbers $k\in\{5,7.5,10, 15, 20, 30\}$ and refractive index perturbation $\m = m\chi_\Omega$, with $1+m\in\{2.0, 1.311+2.289\times 10^{-9}\ri\}$ (the second value is a representative value for light scattering by ice crystals at wavelength $0.55\mu m$, cf.\ \cite{warren2008optical}). 

Plots of the numerically computed total field for the three examples under consideration were given in Figure \ref{f:Scatt}. Here $k=30$ and $\vartheta=(0,1)$ in all three cases, while $1+m=2$ in Figure \ref{f:Scatt}(a)\&(c) and  $1+m=1.311+2.289\times 10^{-9}\ri$ in Figure \ref{f:Scatt}(b). 
These plots were generated by numerical evaluation of the formula \eqref{e:uthDef} via the composite barycentre quadrature rule with a large number of elements, using a  piecewise constant VIM solution $u_N$ with meshwidth $h=h_l$ for $l=9,6,10$ respectively, where $h_l$ is defined for each example in \S\ref{s:Examp}.

In Figure \ref{f:PWC} we show plots of the piecewise constant VIM solution on the three examples, for the same physical parameters as in Figure \ref{f:Scatt}, for three different mesh widths. The purpose of this figure is to emphasize that in our VIM the geometry of $\Omega$ is completely captured by the mesh, so there is no geometry approximation. In the coarsest examples one can clearly see the fractal nature of the mesh elements. 

\begin{figure}[t!]
\begin{subfigure}[t]{0.31\textwidth}
\centering
\includegraphics[width=\textwidth]
{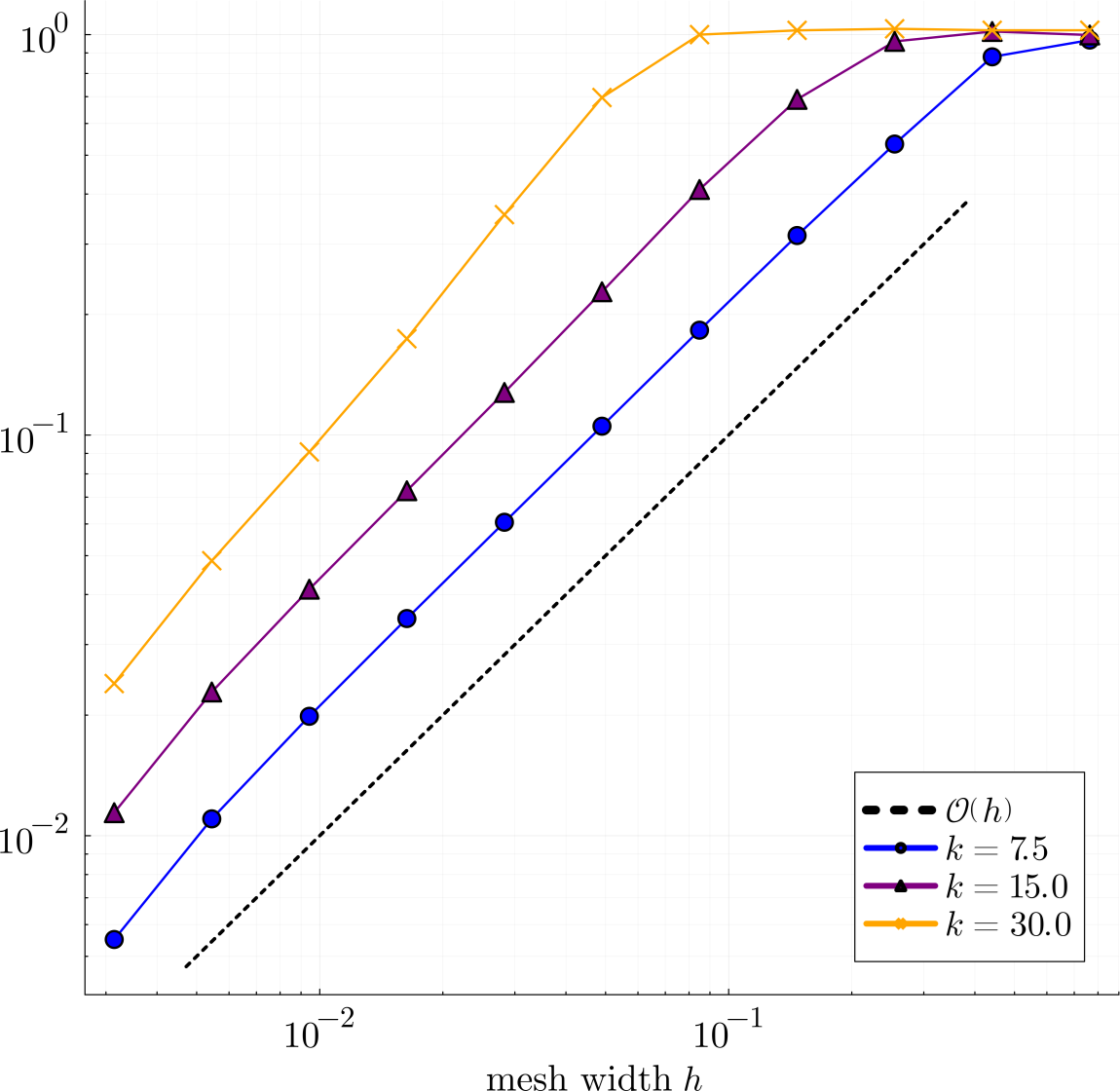}
\caption{Fudgeflake}
\end{subfigure}
\hspace{1mm} 
\begin{subfigure}[t]{0.31\textwidth}
\centering
\includegraphics[width=\textwidth]
{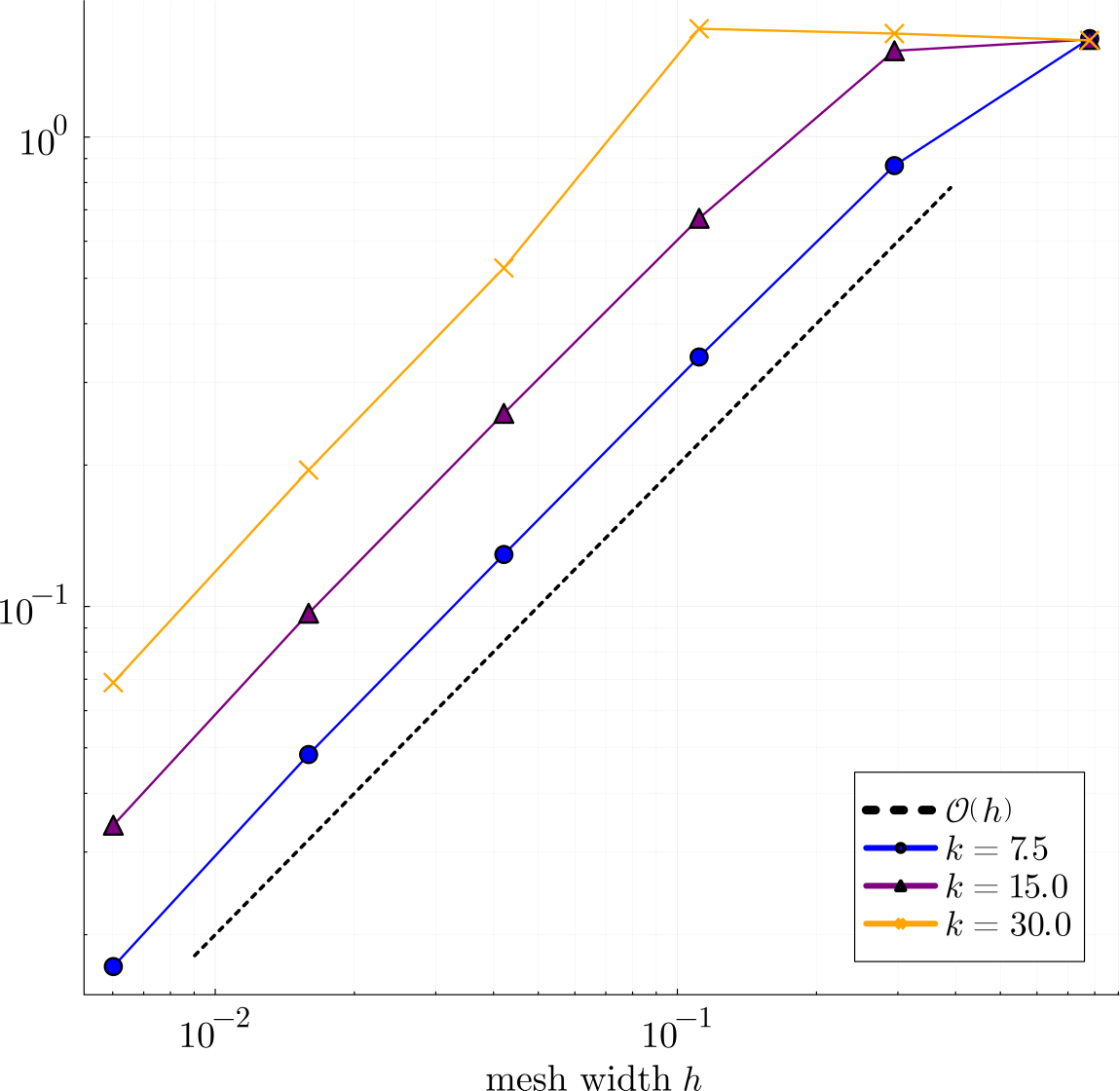}
\caption{Gosper Island}
\end{subfigure}
\hspace{1mm}
\begin{subfigure}[t]{0.31\textwidth}
\centering
\includegraphics[width=\textwidth]
{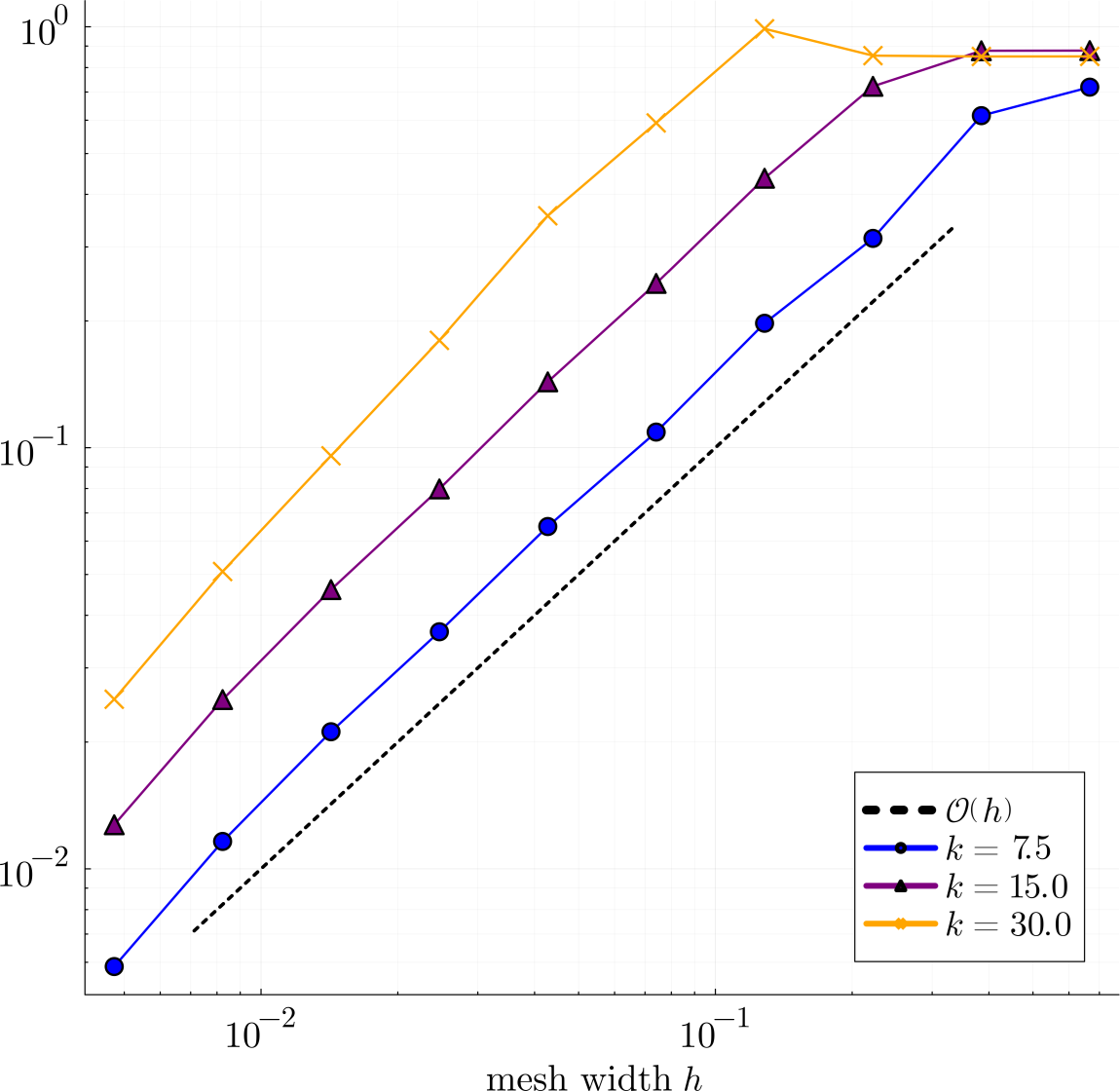}
\caption{Koch Snowflake}
\end{subfigure}
\caption{Convergence plots showing the error in $L^2(\Omega)$ of piecewise-constant solutions as a function of the meshwidth $h$. 
Physical parameters are as in Figure \ref{f:PWC}.
}
\label{f:ConvPWC}
\end{figure}

A corresponding convergence plot showing the error in $L^2(\Omega)$ of the VIM solutions as a function of $h$ is presented in Figure \ref{f:ConvPWC}. Here the reference solution is that obtained by the VIM with $h=h_l$ for and $l=12$, $7$ and $11$ respectively (for the three examples). The quadrature parameters were chosen in line with \eqref{e:FDQuadAss} with $\alpha=1$, specifically with 
\begin{align}
\label{e:QuadPar}
h_r =h, %
\quad
h_s=h_0, %
\quad
h_*=h,    %
\quad
h_g=h. %
\end{align}
This means that $1$-point quadrature is used for all the integrals computed, emphasizing the effectiveness of the singularity subtraction and self-similarity techniques used to handle the singular integrals. 
In all three cases we observe the $O(h)$ convergence rate predicted by Theorem \ref{t:FD}.

\begin{figure}[t!]
\begin{subfigure}[t]{0.3\textwidth}
\centering
\includegraphics[width=\textwidth]{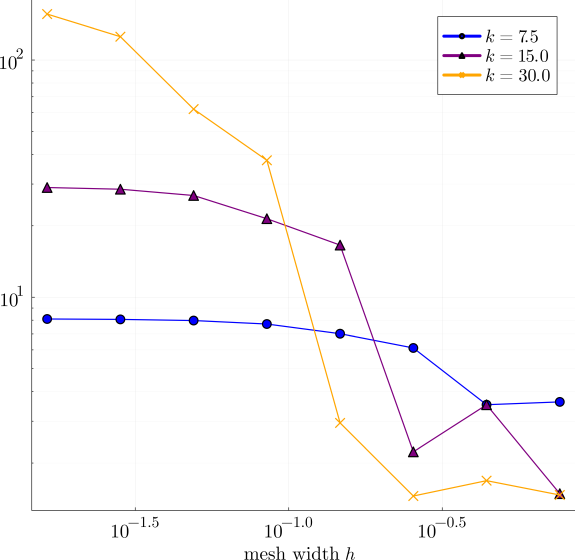}
\caption{Fudgeflake}
\end{subfigure}
\begin{subfigure}[t]{0.3\textwidth}
\centering
\includegraphics[width=\textwidth]{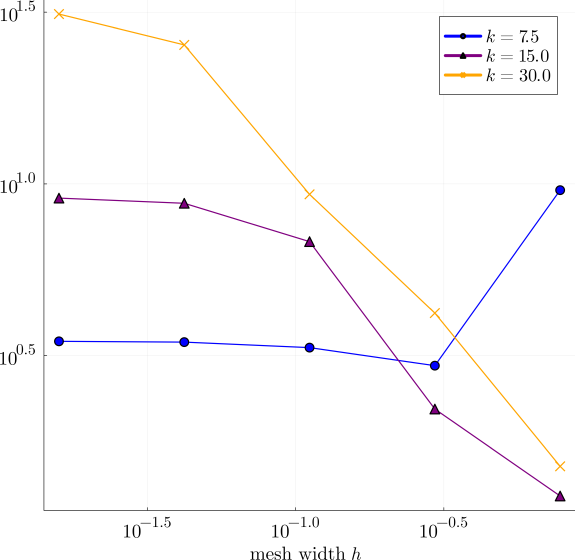}
\caption{Gosper Island}
\end{subfigure}
\begin{subfigure}[t]{0.3\textwidth}
\centering
\includegraphics[width=\textwidth]{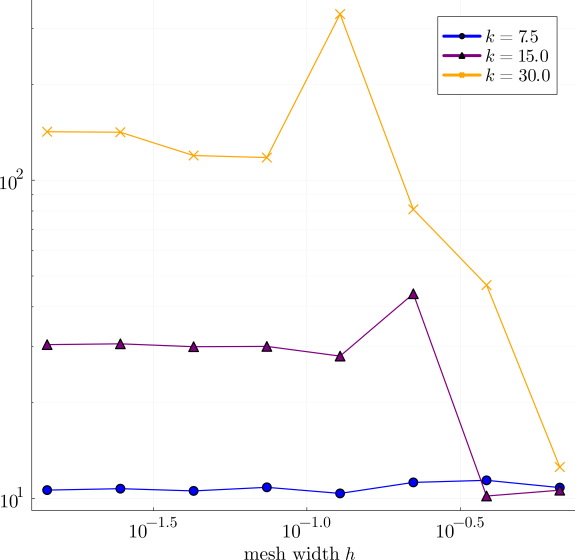}
\caption{Koch Snowflake}
\end{subfigure}
\caption{Condition numbers of the system matrices corresponding to the results in Figure \ref{f:ConvPWC}. 
}
\label{f:CondPWC}
\end{figure}

Plots of the associated condition numbers for the system matrices are presented in Figure \ref{f:CondPWC}.
In each case the discretisations remain well conditioned as $h\to 0$, as predicted by Theorem \ref{t:FD}, and as is expected for a second-kind integral equation formulation.

\begin{figure}[t!]
\centering
\begin{subfigure}[t]{0.3\textwidth}
\includegraphics[width=\textwidth]
{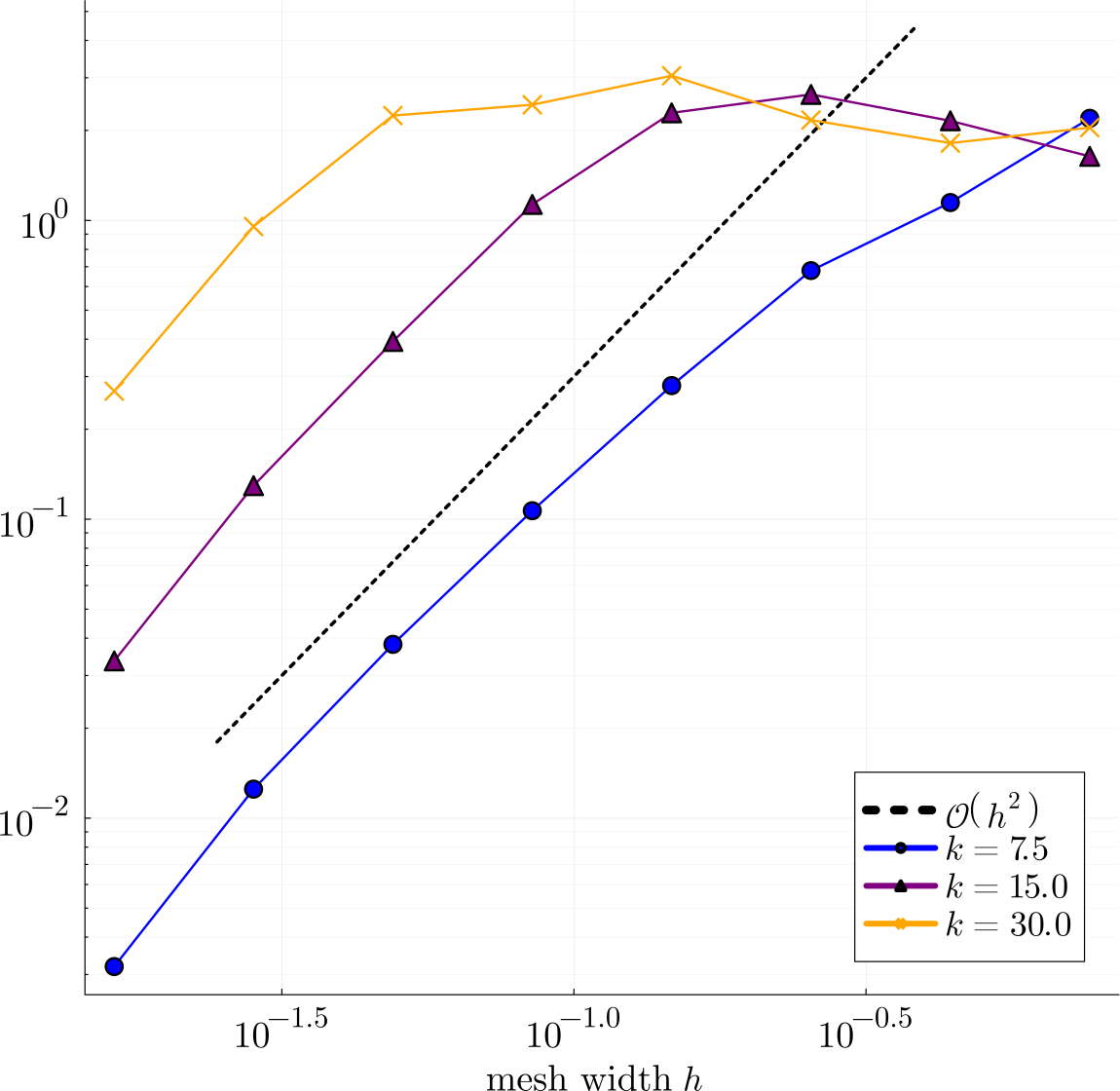}
\caption{Fudgeflake $R={h_0}$}
\end{subfigure}
\hspace{3mm}
\begin{subfigure}[t]{0.3\textwidth}
\includegraphics[width=\textwidth]
{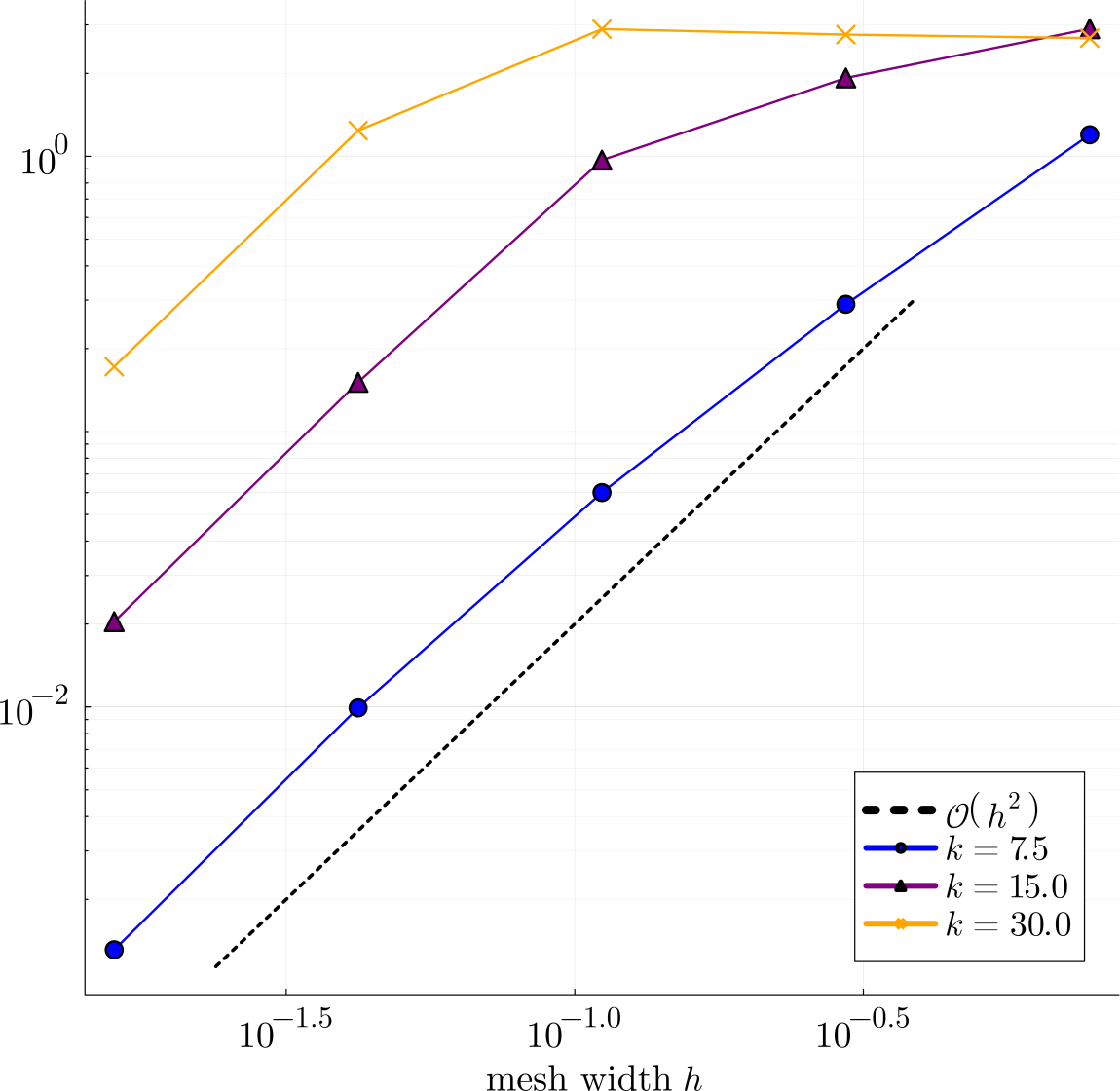}
\caption{Gosper Island $R={h_0}$}
\end{subfigure}
\hspace{3mm}
\begin{subfigure}[t]{0.3\textwidth}
\includegraphics[width=\textwidth]
{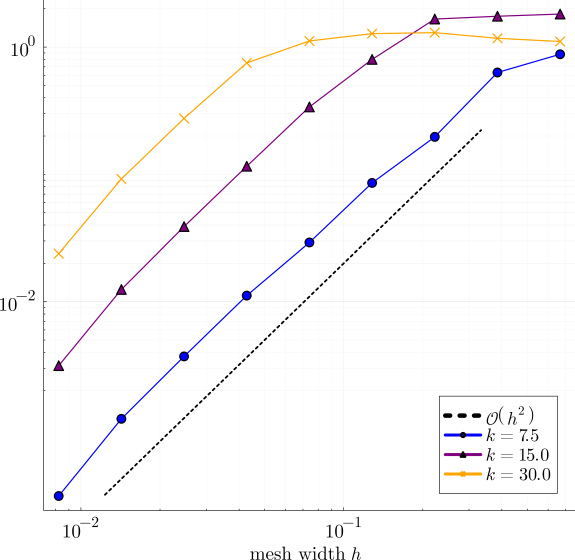}
\caption{Koch Snowflake $R={h_0}$}
\end{subfigure}

\begin{subfigure}[t]{0.3\textwidth}
\includegraphics[width=\textwidth]
{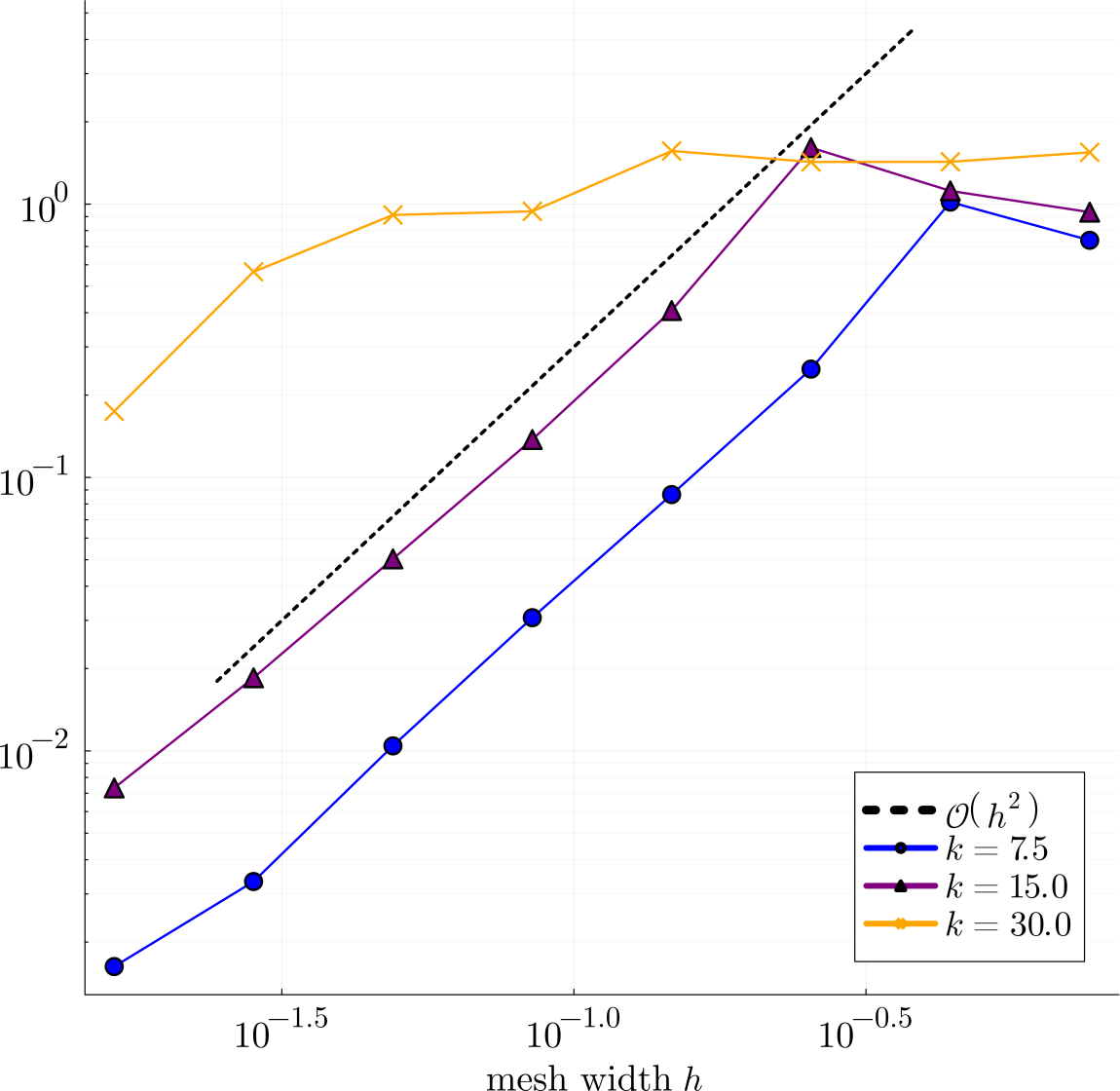}
\caption{Fudgeflake $R={h_0/10}$}
\end{subfigure}
\hspace{3mm}
\begin{subfigure}[t]{0.3\textwidth}
\includegraphics[width=\textwidth]
{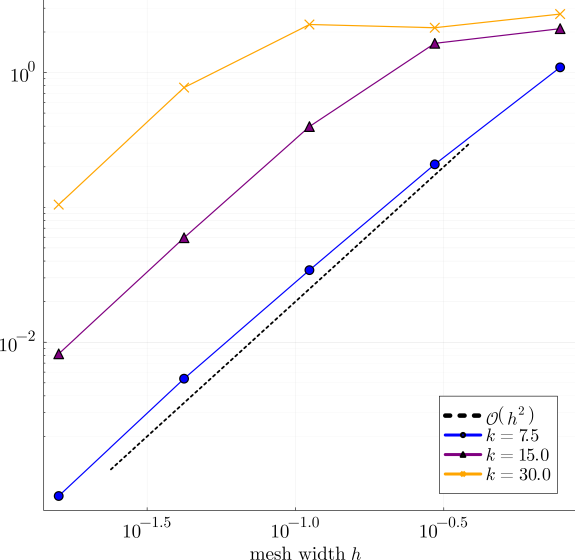}
\caption{Gosper Island $R={h_0/10}$}
\end{subfigure}
\hspace{3mm}
\begin{subfigure}[t]{0.3\textwidth}
\includegraphics[width=\textwidth]
{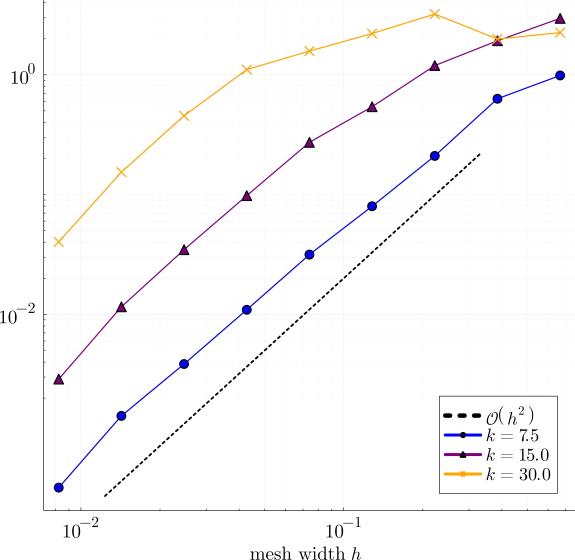}
\caption{Koch Snowflake $R={h_0/10}$}
\end{subfigure}
\caption{Convergence plots 
for the scattered field.
Top row - maximum error on a circle of radius {$R=h_0=\diam(\Omega)$} (outside the inhomogeneity). Bottom row - maximum error on a circle of radius {$R=h_0/10$} (inside the scatterer). The parameters $\vartheta$ and $m$ are as in Figure \ref{f:PWC}.
}
\label{f:ConvScat}
\end{figure}

In Figure \ref{f:ConvScat}(a)-(c) we present results showing the convergence of the scattered field on a circle of radius 
{$h_0=\diam(\Omega)$} 
surrounding the inhomogeneities. Here the quadrature parameters are chosen in accordance with \eqref{e:FDQuadAss} with $\alpha=2$, specifically with 
\begin{align}
\label{e:QuadPar2}
h_r =h^{3/2}, %
\quad
h_s=h^{1/2}, %
\quad
h_*=h,   %
\quad
h_g=h,   %
\quad 
h_J = h. 
\end{align}
Again this means $1$-point quadrature is used for the integrals involving $h_*$, $h_g$ and $h_J$, but not for the integrals involving $h_r$ and $h_s$.  
The plots show the maximum error over 2000 equally-spaced points on the circle, using as reference solution our VIM solution with $l=9$, $6$ and $10$ respectively (for the three examples). 
In all cases we see the $O(h^2)$ convergence rate predicted by Theorem \ref{t:FD}. 
In Figure \ref{f:ConvScat}(d)-(f) we show analogous results for convergence of the scattered field \textit{inside} the inhomogeneity, on a circle of radius 
{$h_0/10$. }
As discussed in Remark \ref{r:Inside}, in \S\ref{s:IFSquad} we have not presented quadrature rules tailored to the evaluation of the scattered field inside the inhomogeneity. However, as discussed in Remark \ref{r:SupConF}, our theoretical analysis proves semi-discrete convergence like $O(h^{2-\epsilon})$ for every $\epsilon>0$. So with sufficiently accurate quadrature for the evaluation of the scattered field we would expect in practice to see convergence approximately like $O(h^2)$. This is indeed what we observe in 
Figure \ref{f:ConvScat}(d)-(f), where for the computation of the scattered field functional we used $h_J=\xi h$, where $\xi = 1/\sqrt{3}$ for the Fudgeflake and the Koch Snowflake, and $\xi=1/\sqrt{7}$ for the Gosper Island (i.e.\ $h_J=h_{l+1}$ for each $l$,  rather than $h_J=h_l$).

\begin{figure}[t!]
\begin{subfigure}[t]{0.3\textwidth}
\centering
\includegraphics[width=\textwidth]
{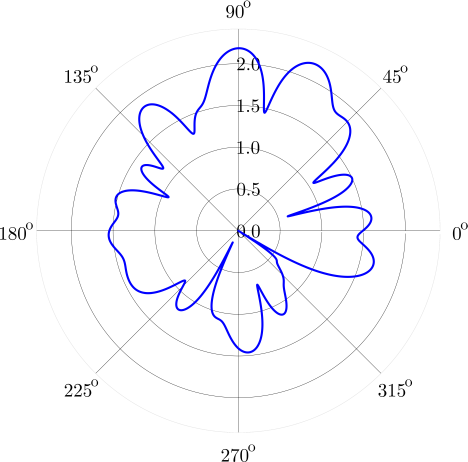}
\caption{Fudgeflake}
\end{subfigure}
\begin{subfigure}[t]{0.3\textwidth}
\centering
\includegraphics[width=\textwidth]
{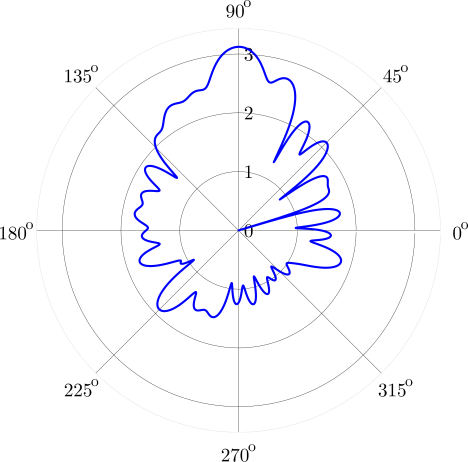}
\caption{Gosper Island}
\end{subfigure}
\begin{subfigure}[t]{0.3\textwidth}
\centering
\includegraphics[width=\textwidth]
{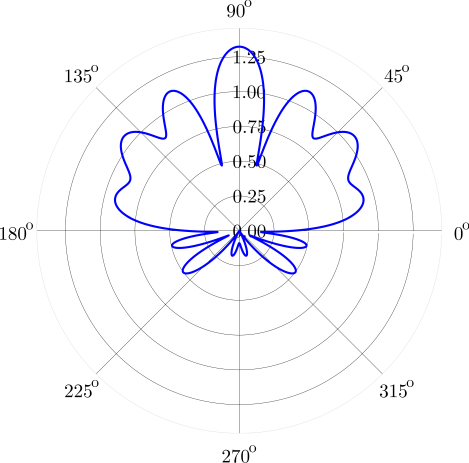}
\caption{Koch Snowflake}
\end{subfigure}
\caption{Polar plots of the far-field patterns for $k=15$ for the three scatterers, with other physical parameters as in Figure \ref{f:PWC}. In each case the radial coordinate shows $\log_{10}(|J^{\rm FFP}_{\theta}|/|J^{\rm FFP}_{\rm min}|)$, where $|J^{\rm FFP}_{\rm min}|$ is the minimum modulus of the far-field pattern computed on the 2000 equally-spaced $\theta$ values considered. 
}
\label{f:Polar}
\end{figure}

\begin{figure}[t!]
\begin{subfigure}[t]{0.3\textwidth}
\centering
\includegraphics[width=\textwidth]
{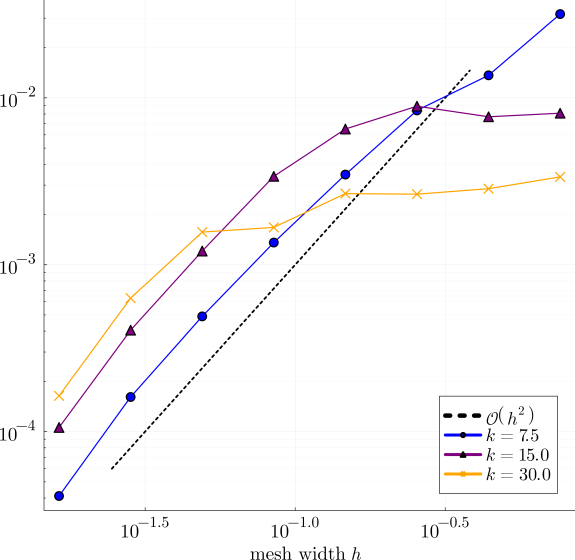}
\caption{Fudgeflake}
\end{subfigure}
\begin{subfigure}[t]{0.3\textwidth}
\centering
\includegraphics[width=\textwidth]
{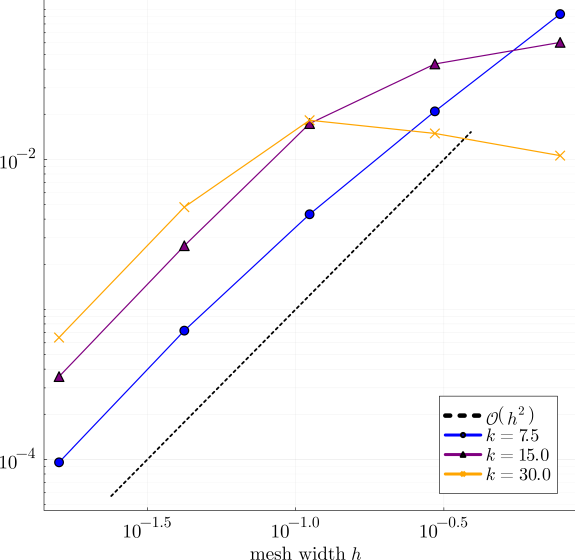}
\caption{Gosper Island}
\end{subfigure}
\begin{subfigure}[t]{0.3\textwidth}
\centering
\includegraphics[width=\textwidth]
{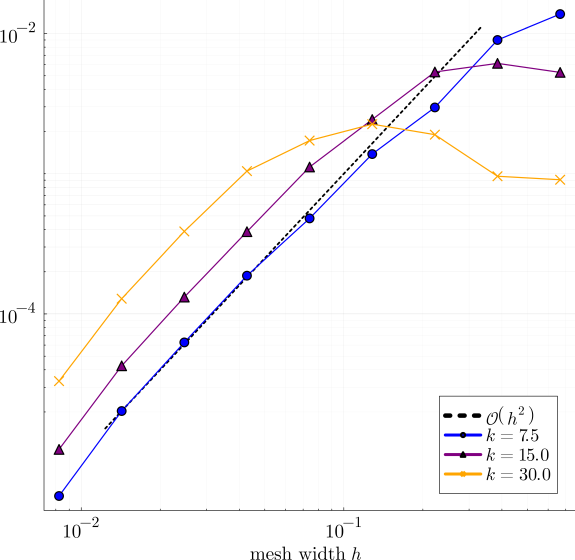}
\caption{Koch Snowflake}
\end{subfigure}
\caption{Convergence of the far-field pattern. Plots show the maximum error over $\theta\in[0,2\pi]$, as a function of the meshwidth $h$. 
Physical parameters $\vartheta$ and $m$ are as in Figure \ref{f:PWC}.
}
\label{f:FFPConv}
\end{figure}

In Figure \ref{f:Polar} we show polar plots of the far-field pattern for the three examples, for $k=15$, computed using the quadrature parameters \eqref{e:QuadPar2}. In Figure \ref{f:FFPConv} we show corresponding convergence plots, in which we again see the $O(h^2)$ convergence rate predicted by Theorem \ref{t:FD}. 
Here we are plotting the maximum error over 2000 equally-spaced directions, using the same reference solutions as for Figure \ref{f:ConvScat}.

\begin{figure}[t!]
\centering
\begin{subfigure}[T]{0.24\textwidth}
\includegraphics[width=.9\textwidth]{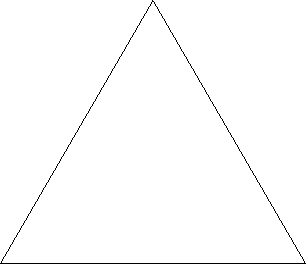}
\vskip .3\textwidth
\caption{$j=0$}
\end{subfigure}
\begin{subfigure}[T]{0.24\textwidth}
\includegraphics[width=.9\textwidth]
{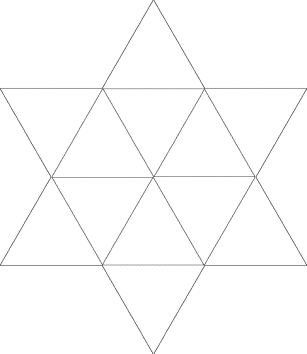}
\caption{$j=1$}
\end{subfigure}
\begin{subfigure}[T]{0.24\textwidth}
\includegraphics[width=.9\textwidth]
{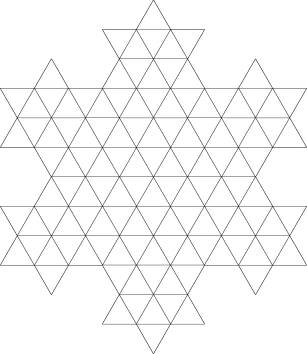}
\caption{$j=2$}
\end{subfigure}
\begin{subfigure}[T]{0.24\textwidth}
\includegraphics[width=.9\textwidth]
{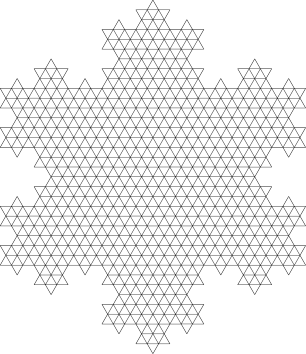}
\caption{$j=3$}
\end{subfigure}
\caption{Uniform triangular meshes for the prefractal-based method. 
}
\label{f:PFMesh}
\end{figure}

\begin{figure}[t!]
\centering
\begin{subfigure}[t]{0.3\textwidth}
\includegraphics[height=\textwidth]
{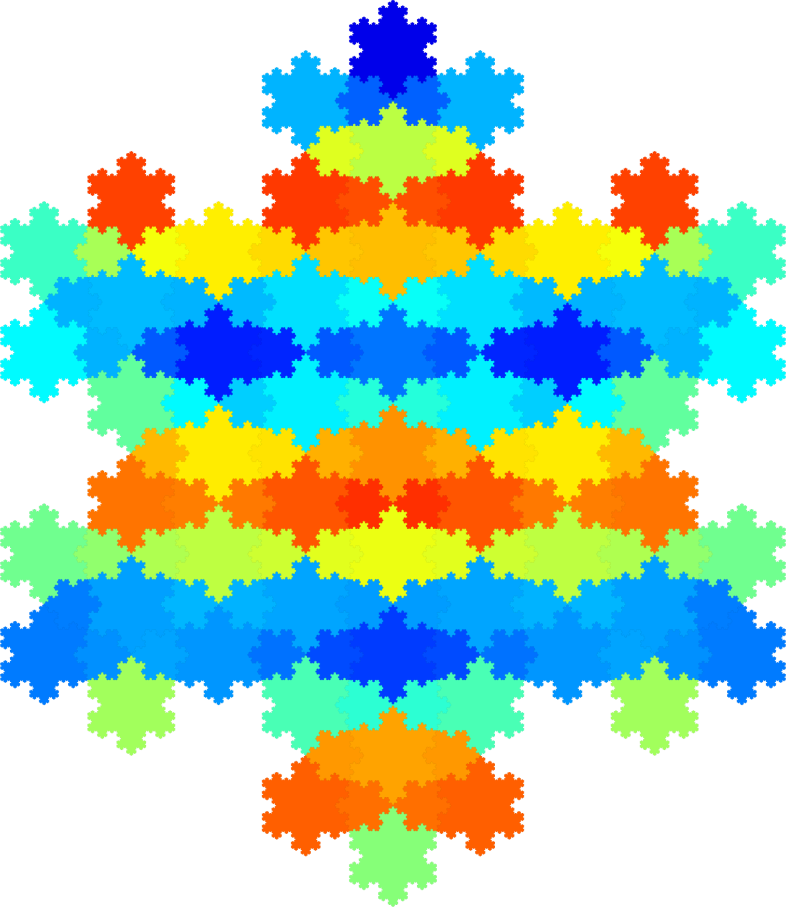}
\caption{$l=4$, $N=133$}
\end{subfigure}
\hspace{2mm}
\begin{subfigure}[t]{0.3\textwidth}
\includegraphics[height=\textwidth]
{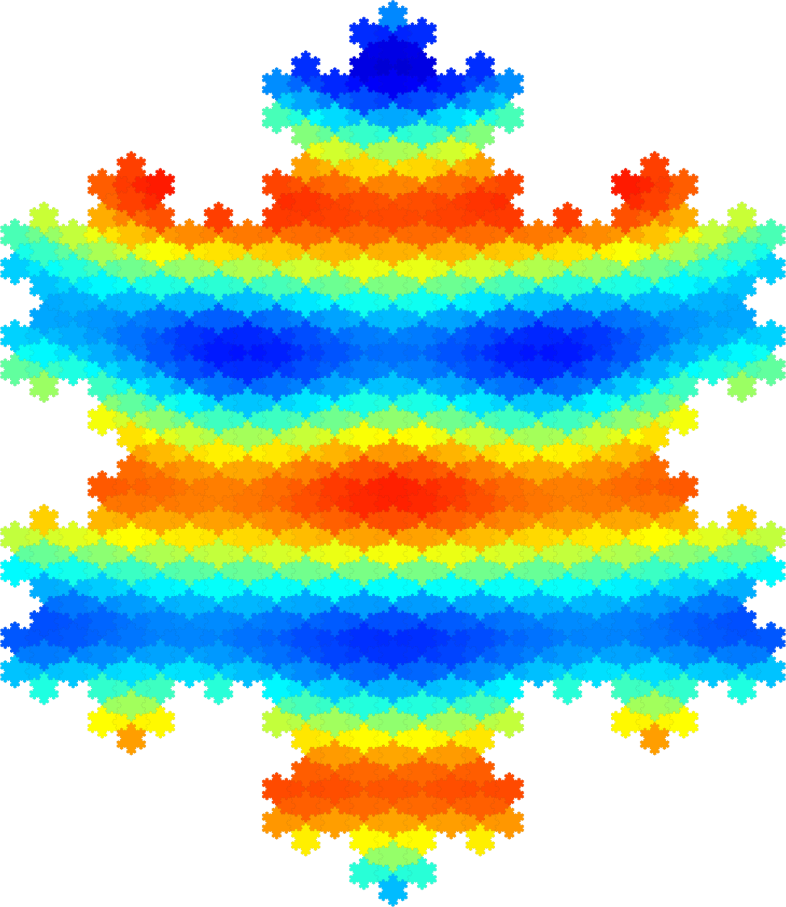}
\caption{$l=6$, $N=1261$}
\end{subfigure}
\hspace{2mm}
\begin{subfigure}[t]{0.3\textwidth}
\includegraphics[height=\textwidth]
{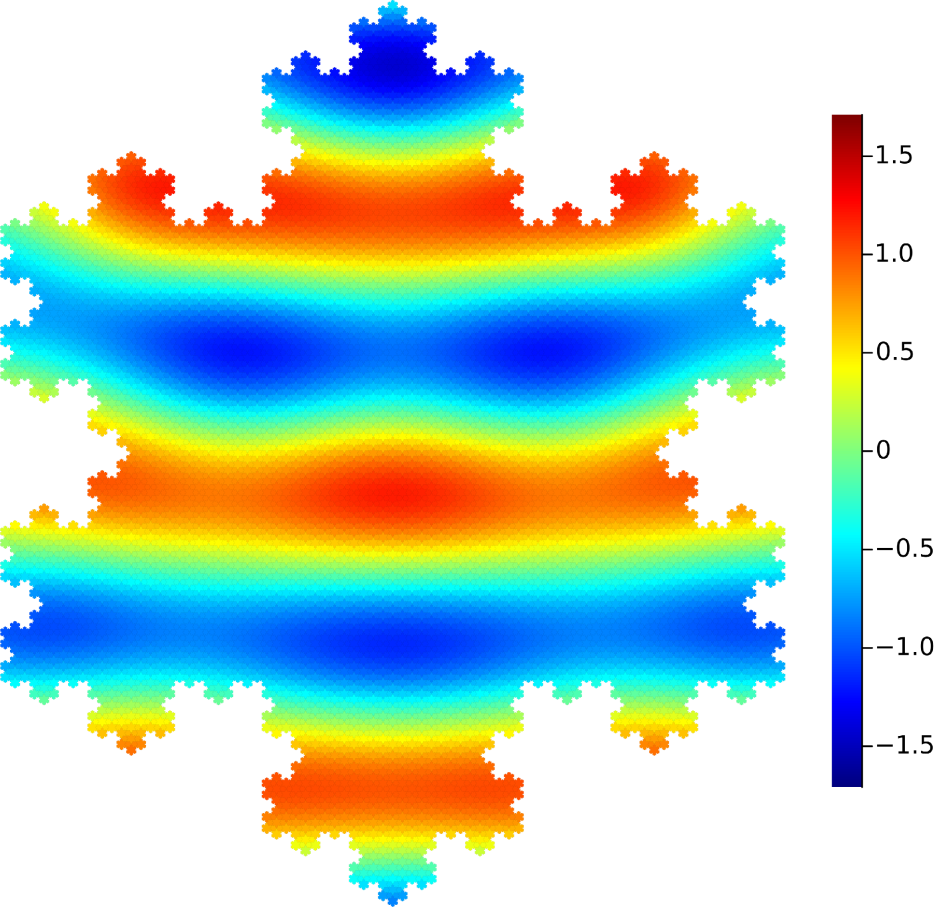}
\caption{$l=8$, $N=11605$}
\end{subfigure}

\begin{subfigure}[t]{0.3\textwidth}
\includegraphics[height=\textwidth]
{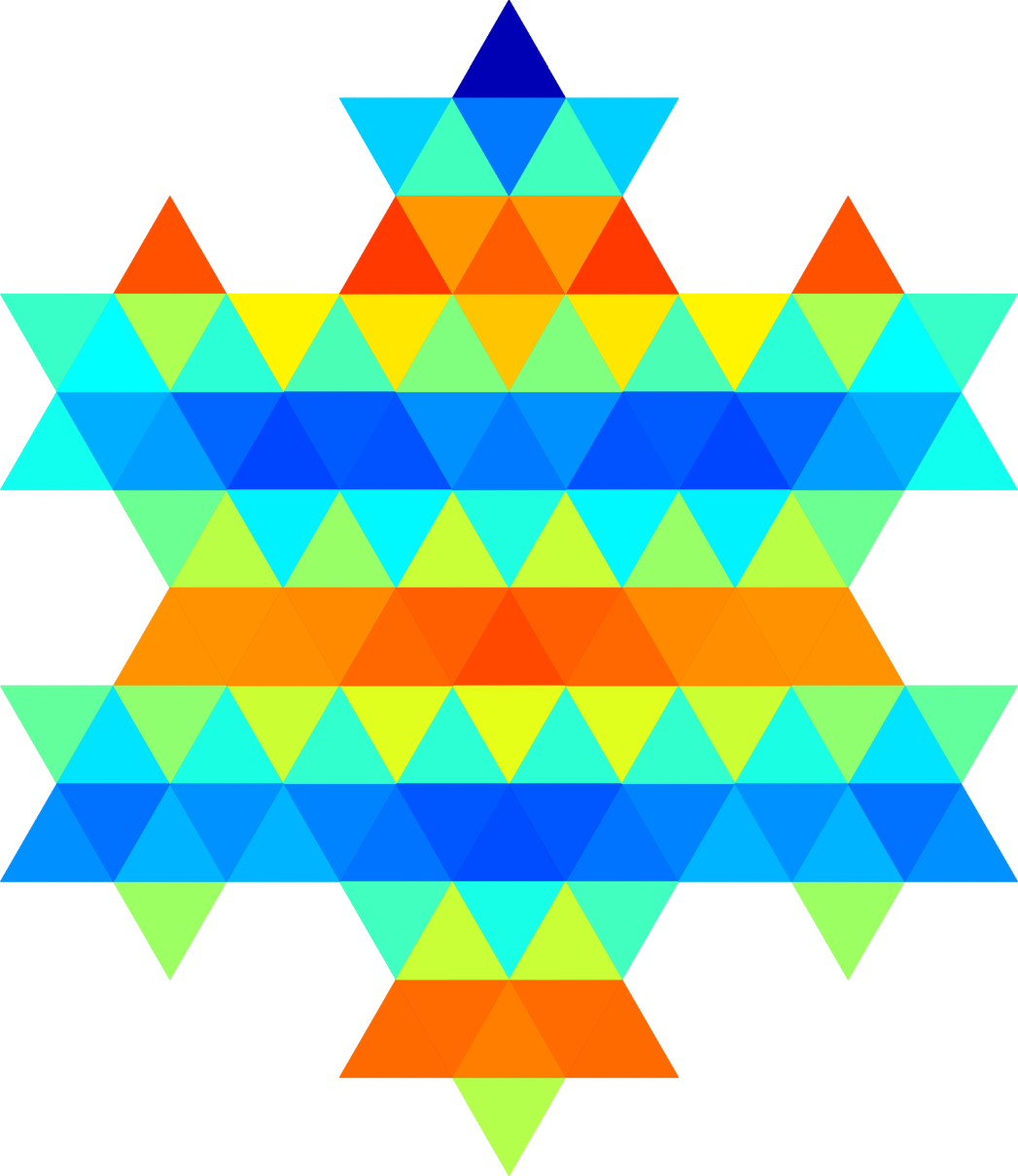}
\caption{$j=2$, $N=120$}
\end{subfigure}
\hspace{2mm}
\begin{subfigure}[t]{0.3\textwidth}
\includegraphics[height=\textwidth]
{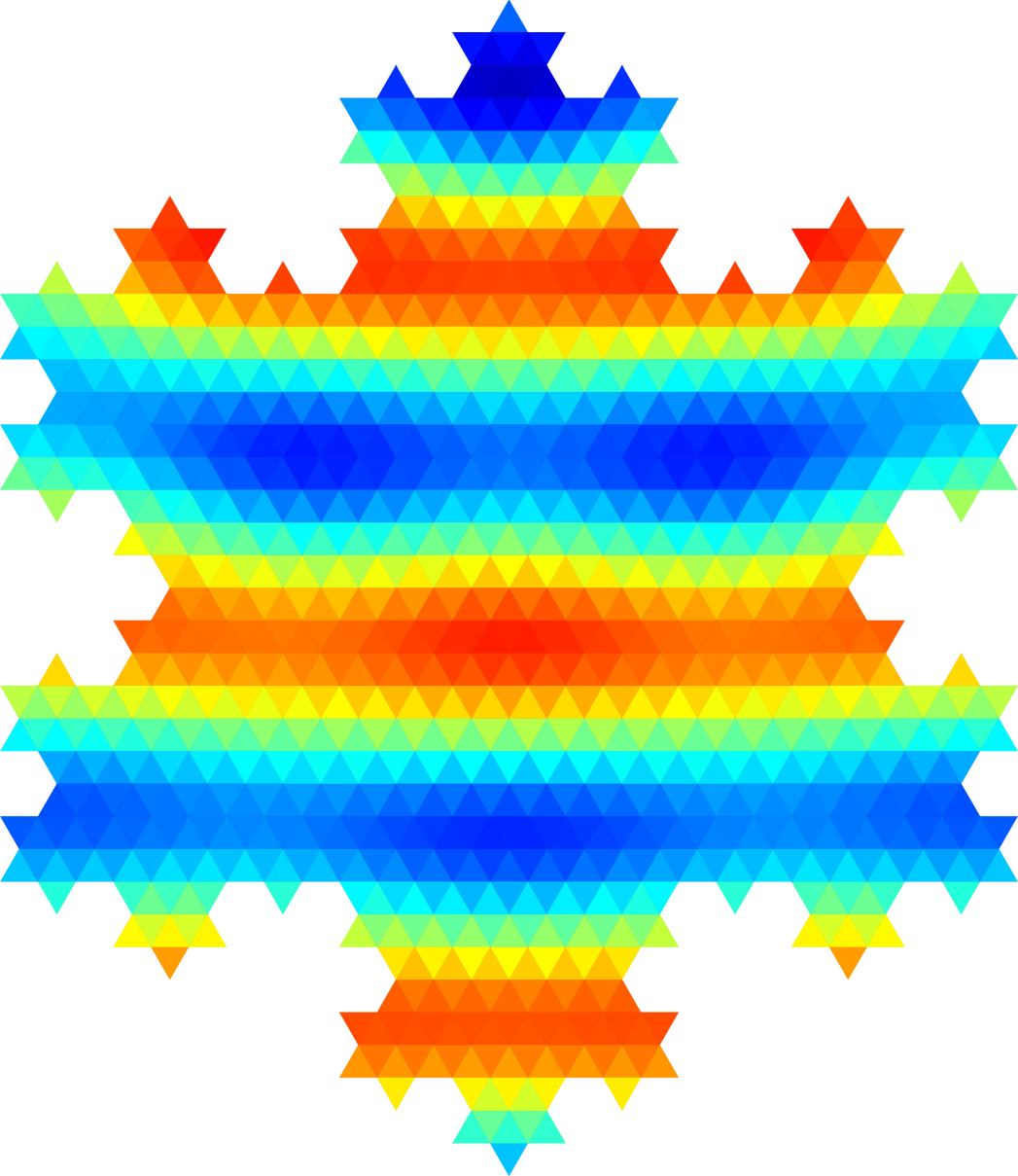}
\caption{$j=3$, $N=1128$}
\end{subfigure}
\hspace{2mm}
\begin{subfigure}[t]{0.3\textwidth}
\includegraphics[height=\textwidth]
{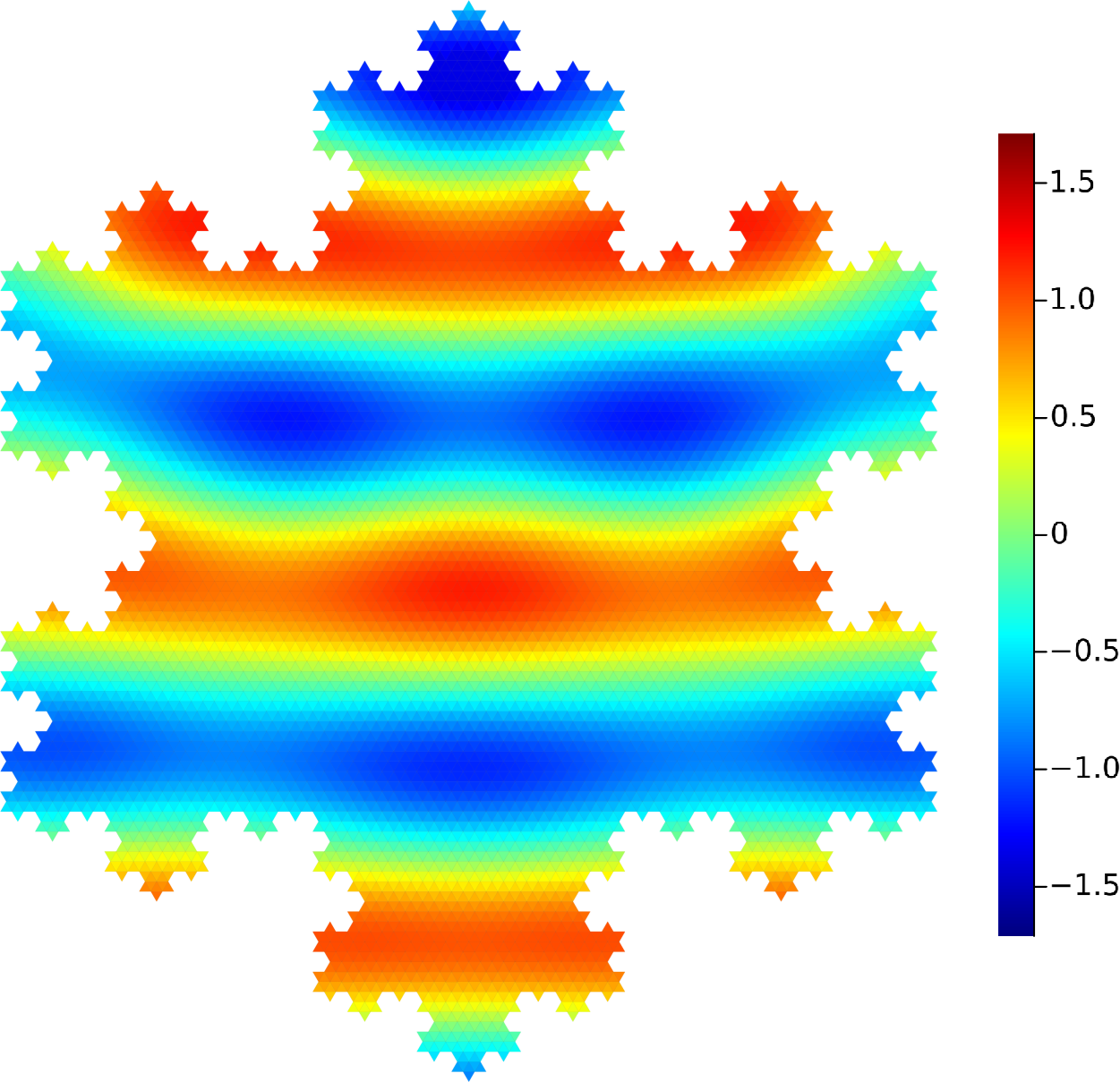}
\caption{$j=4$, $N=10344$}
\end{subfigure}
\caption{Piecewise-constant integral equation solution on the Koch Snowflake (top row) and on prefractal approximations (bottom row) for different values of $h$. 
{The number of mesh elements $N$ is reported in each case.}
}
\label{f:KochPWC}
\end{figure}

Finally, in the case of the Koch Snowflake we compare the performance of our IFS-based method to that of a prefractal-based method. To define the latter, we consider the standard sequence of prefractals for the Koch Snowflake, as illustrated in Figure \ref{f:KochPreFrac}. Each prefractal is meshed by a uniform mesh composed of elements that are equilateral triangles with a side length equal to the length of the line segments making up the boundary of the prefractal. The first 4 such meshes are shown in Figure \ref{f:PFMesh} (which should be compared to Figure \ref{f:Meshes}(c), which shows the first four IFS-based meshes), and general properties of these meshes are discussed in Appendix \ref{a:AppB}. On each such mesh we compute the piecewise constant Galerkin approximation to the solution $u$ of \eqref{e:IntEqn2a}, as the solution of \eqref{e:VIM} with $V_N$ replaced by the space of piecewise constant functions on the prefractal mesh. For fairness of comparison, quadrature is carried out using a similar strategy to that used for our IFS-based method, exploiting the fact that the equilateral triangle is itself a $2$-attractor {(for any $a\in\N$ it can be decomposed as a union of $a^2$ equilateral triangles of side length $1/a$ times the original side length)}. 
In Figure \ref{f:KochPWC} we show example integral equation solutions for the VIM and the prefractal method, for $k=15$, $\vartheta=(0,1)$ and $1+m= 1.311+2.289\times 10^{-9}\ri$. 

\begin{figure}[t!]
\centering
\includegraphics[width=.6\textwidth]%
{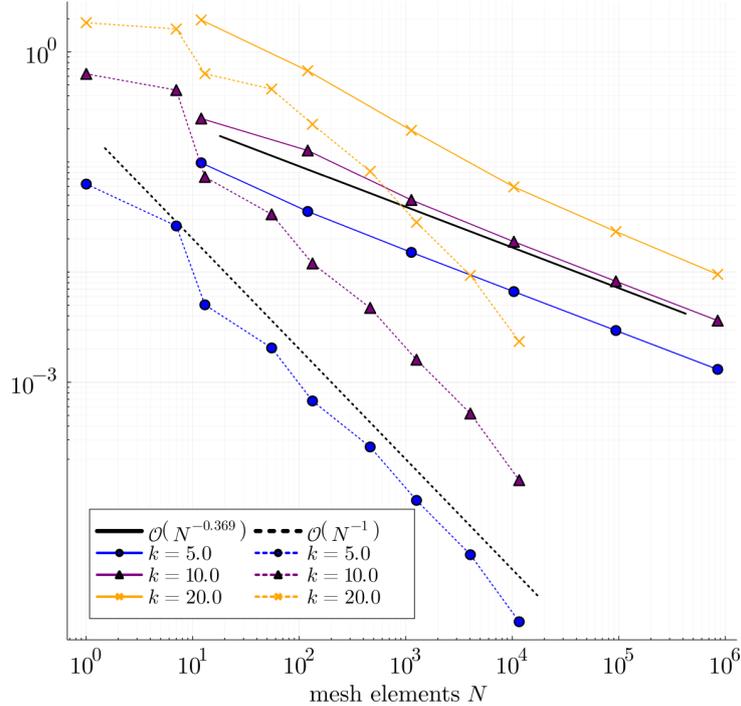}
\caption{
Convergence plot for the scattered field {showing the maximum error on a circle of radius $h_0=\diam(\Omega)$ (outside the inhomogeneity)} for the IFS-based approach (dotted lines) and the prefractal approach (solid lines). 
}
\label{f:KochComp}
\end{figure}

In Figure \ref{f:KochComp} we show corresponding error plots, comparing the accuracy of the IFS-based and prefractal-based methods, for the calculation of the scattered field on a circle of radius $4$ surrounding $\Omega$, as a function of the total number of degrees of freedom (mesh elements) used. As the reference solution we use our IFS-based method with meshwidth $h_{10}=3^{-5}h_0$. For our IFS-based method, by combining Theorem \ref{t:FD} with \eqref{e:NlSim}, we know that the error in the scattered field decays like $O(N_l^{-1})$ as $l\to\infty$, where $N_l=\# L_h(\Omega)$ is the total number of elements in the $L_{h_l}$ mesh of $\Omega$. For the prefractal-based method, a semi-discrete error analysis is provided in Appendix \ref{a:AppB}, and by combining Proposition \ref{p:Prefract} with \eqref{e:NjSim} we expect that the error in the scattered field should decay much more slowly, namely like $O(N_j^{-(1-d/2)})=O(N_j^{-0.369\ldots})$ as $j\to\infty$, where $N_j$ is the total number of elements in the $j$th prefractal mesh. These theoretical predictions are validated by our numerical results in Figure \ref{f:KochComp}, demonstrating that, for this problem, our geometry-conforming IFS-based approach provides significantly more accurate solutions than the prefractal-based approach.

\section*{Acknowledgements}
The authors thank Simon Chandler-Wilde, Ivan Graham, Andrea Moiola, Valery Smyshlyaev, Euan Spence and Timo Betcke for helpful discussions. DH acknowledges support from EPSRC grants EP/S01375X/1 and EP/V053868/1. Part of this work was carried out while JB and DH were in residence at Institut Mittag-Leffler in Djursholm, Sweden in autumn 2025, supported by the Swedish Research Council under grant no.\ 2021-06594.



\appendix

\section{Analysis of the prefractal method}
\label{a:AppB}

Let $\Omega$ denote the interior of the Koch Snowflake, and for $j\in \N_0$ let $\Omega_j$ denote the interior of the $j$th standard interior prefractal for the Koch Snowflake, as defined in e.g.\ \cite[\S5.1]{caetano2021density} (with $\beta=\pi/6$ in the notation of \cite{caetano2021density}) and as illustrated in Figure \ref{f:KochPreFrac}. As in \S\ref{s:Numer} we scale the Snowflake so it has diameter $h_0=2/\sqrt{3}$. The length of the sides of $\Omega_j$ is then $\tilde{h}_j:=3^{-j}$. Let $\tilde{V}_j$ denote the space of piecewise-constant functions on a uniform mesh of $\Omega_j$ composed of equilateral triangular elements of side length $\tilde{h}_j$, as illustrated for the case $j=2$ in Figure \ref{f:Disc}(b). The number $N_j$ of elements in the resulting mesh of $\Omega_j$ satisfies the recurrence $N_{j+1}=9N_j+3.4^j$ with $N_0=1$, so that $N_j=(8.9^j-3.4^j)/5$ for $j\in \N_0$, and hence
\begin{align}
 \label{e:NjSim}
 N_j\sim \frac{8}{5}9^j = \frac{8}{5}\tilde{h}_j^{-2}, \quad j\to\infty.
 \end{align} 
Let $\tilde{u}_j$ denote the semi-discrete prefractal Galerkin approximation to the solution $u$ of \eqref{e:IntEqn2a}, i.e.\ the solution of \eqref{e:VIM} with $V_N$ replaced by $\tilde{V}_j$. We can then prove the following convergence result. 

\begin{proposition}
\label{p:Prefract}
Let $\Omega$ denote the interior of the Koch Snowflake and let $\m=m\chi_\Omega$ for some $m\in \C\setminus\{0\}$ with $\Im{m}\geq 0$.  Suppose that the functions $g$ and $\jj$ in \eqref{e:IntEqn2a} and \eqref{e:JDef} satisfy $g,\jj \in H^s(\Omega)$ for some $s>1$. Then, with $\tilde{V}_j$ as above, for sufficiently large $j$ the {piecewise-constant} prefractal Galerkin approximation $\tilde{u}_j\in \tilde{V}_j$ exists and satisfies
\begin{align}
 \label{e:PFu}
 \|u-\tilde{u}_j\|_{L^2(\Omega)}&=O(h_j^{1-d/2}), \qquad j\to\infty,\\
 |J(u)-J(\tilde{u}_j)|&=O(h_j^{2-d}), \qquad j\to\infty,
 \label{e:PFJ}
 \end{align}  
where $d=\log{4}/\log{3}\approx 1.26$ is the Hausdorff dimension of $\partial\Omega$. 
\begin{proof}
We first show that the spaces $\tilde{V}_j$ form a dense sequence in $L^2(\Omega)$. This holds by a standard triangle inequality argument. 
We begin by noting that,
by applying the standard Poincar\'e inequality on convex domains \cite{payne1960optimal,bebendorf2003note} (of which the equilateral triangle is an example) on each of the elements of the mesh of $\Omega_j$, and summing over all these elements, 
we have for each $j\in \N_0$ that
\begin{align}
\label{e:BestAppP2}
\inf_{v\in \tilde{V}_j}\|\phi - v\|_{L^2(\Omega_j)}
\leq \frac{{\tilde{h}}_j}{\pi} \|\phi\|_{W^1(\Omega_j)} 
\leq \frac{{\tilde{h}}_j}{\pi} \|\phi\|_{W^1(\Omega)},
\qquad \phi\in W^1(\Omega).
\end{align}
Now, given $\psi\in L^2(\Omega)$ and $\eps>0$ we can find $\phi\in C^\infty_{\rm comp}(\Omega)$ such that $\|\psi-\phi\|_{L^2(\Omega)}<\eps/2$. Since $\supp\phi$ is a compact subset of $\Omega$ and the prefractals $\{\Omega_j\}_{j\in\N_0}$ form a nested increasing sequence, 
we have that, for
sufficiently large $j$, 
$\phi\in C^\infty_{\rm comp}(\Omega_j)$ and hence 
$\inf_{v\in \tilde{V}_j}\|\phi - v\|_{L^2(\Omega)} 
= \inf_{v\in \tilde{V}_j}\|\phi - v\|_{L^2(\Omega_j)}$, which by \eqref{e:BestAppP2} 
can be made smaller than $\eps/2$ by taking $j$ sufficiently large. The claimed density result then follows by the triangle inequality. 

Asymptotic well-posedness and quasi-optimality of the prefractal Galerkin approximation then hold by the same standard argument used in the proof of Theorem \ref{t:GalWP}. 
Without loss of generality we can assume that $1<s<2+(2-d)/2=3-d/2$. (If not, just redefine $s$). This constraint, combined with the assumption that $g\in H^s(\Omega)$,  ensures by Corollary \ref{c:App3} 
(specifically, by \eqref{e:MP5b}) 
that the exact solution satisfies $u\in H^s(\Omega)$. Hence to prove \eqref{e:PFu} it suffices to show that 
\begin{align}
 \label{e:PFBA}
 \inf_{v\in \tilde{V}_j}\|\phi-v\|_{L^2(\Omega)}&=O({\tilde{h}}_j^{1-d/2}), \qquad j\to\infty, \,\,\phi\in H^s(\Omega).
 \end{align}
Furthermore, if \eqref{e:PFBA} holds then \eqref{e:PFJ} also follows, by essentially the same superconvergence argument that was used to prove Theorem \ref{t:SupCon}, because if $\jj\in H^s(\Omega)$ then (again assuming without loss of generality that $1<s<3-d/2$) by Corollary \ref{c:App3} 
(specifically, by \eqref{e:MP7b}) the dual solution satisfies $\zeta\in H^s(\Omega)$. To see \eqref{e:PFBA}, first note that 
\begin{align}
 \label{e:PFBA2}
 \inf_{v\in \tilde{V}_j}\|\phi-v\|_{L^2(\Omega)}&\leq\|\phi\|_{L^2(\Omega\setminus \Omega_j)} + \inf_{v\in \tilde{V}_j}\|\phi-v\|_{L^2(\Omega_j)}.
 \end{align}
If $\phi\in H^s(\Omega)$ then since $s>1$ we have $\phi\in W^1(\Omega)$, so that by \eqref{e:BestAppP2} the second term in \eqref{e:PFBA2} is $O(\tilde{h}_j)$ as $j\to\infty$. Furthermore, again since $s>1$, $\phi$ is bounded on $\Omega$ by the Sobolev embedding theorem and hence the first term in \eqref{e:PFBA2} can be bounded by 
\begin{align}
 \label{e:PFBA3}\|\phi\|_{L^2(\Omega\setminus \Omega_j)} \leq \|\phi\|_{L^\infty(\Omega)}|\Omega\setminus \Omega_j|^{1/2}.
  \end{align} 
By direct geometrical calculations (cf.\ \cite[p.80]{bannisterthesis}) one can show that 
\begin{align}
\label{e:GapArea}
|\Omega\setminus \Omega_j| = O((4/9)^j) = O((\tilde{h}_j)^{2-d}),  \quad j\to\infty,
\end{align}
where we used the fact that $h_j=3^{-j}$ and $d=\log{4}/\log{3}$ in the second equality. 
Combined with \eqref{e:PFBA3}, this implies that the first term in \eqref{e:PFBA2} is $O(\tilde{h}_j^{1-d/2}$), from which \eqref{e:PFBA} follows. 
 
\end{proof}

\end{proposition}

\section{Geometrical lemmas}
\label{a:AppA}
The following elementary lemma is a 
sharpening of 
\cite[Lem.\ 9.2]{falconer2014fractal}). 
Specifically, \cite[Lem.\ 9.2]{falconer2014fractal} assumes that the sets $\tilde{E}_i$ in the statement of Lemma \ref{l:Falc} are balls, which constitutes a ``shape-regularity'' assumption on the collection $\{E_i\}$. Our more general result shows this is not necessary.
\begin{lemma}
\label{l:Falc}
Let $r,a,\tau>0$. Let $\{E_i\}$ be a collection of subsets of $\R^n$ such that, for each $i$, $\diam(E_i)\leq ar$ and $E_i$ contains a measurable subset $\tilde{E}_i$ such that $|\tilde{E}_i|\geq \tau  r^n$, with $\{\tilde{E}_i\}$ being pairwise disjoint. Then 
any closed ball of radius $r$ intersects at most $(1+a)^n\omega_n\tau^{-1}$ of the closures $\{\overline{E_i}\}$, where $\omega_n$ denotes the volume of the unit ball in $\R^n$.
\end{lemma}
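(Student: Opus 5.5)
The proof is a volume-counting (packing) argument. Fix a closed ball $B$ of radius $r$ centred at some $x_0$, and let $\mathcal{J}$ denote the set of indices $i$ for which $\overline{E_i}\cap B\neq\emptyset$. For each $i\in\mathcal{J}$ we first show that $\overline{E_i}$, and hence $\tilde{E}_i\subset E_i$, lies in the closed ball $B'$ of radius $(1+a)r$ centred at $x_0$. Indeed, pick $y\in\overline{E_i}\cap B$. Since $\diam(\overline{E_i})=\diam(E_i)\leq ar$, every $z\in\overline{E_i}$ satisfies
\[
|z-x_0|\leq |z-y|+|y-x_0|\leq ar+r .
\]

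Next we sum volumes. The sets $\tilde{E}_i$, $i\in\mathcal{J}$, are pairwise disjoint, measurable and contained in $B'$, so
\[
\#\mathcal{J}\,\tau r^n\leq \sum_{i\in\mathcal{J}}|\tilde{E}_i| = \Bigg|\bigcup_{i\in\mathcal{J}}\tilde{E}_i\Bigg|\leq |B'| = \omega_n (1+a)^n r^n .
\]
Dividing by $\tau r^n>0$ gives $\#\mathcal{J}\leq (1+a)^n\omega_n\tau^{-1}$, which is the claimed bound.

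One technical point needs care: the collection may be infinite, or $\mathcal{J}$ may be infinite a priori. To handle this, apply the inequality above to an arbitrary finite subset of $\mathcal{J}$. Each such subset has cardinality at most $(1+a)^n\omega_n\tau^{-1}$, so $\mathcal{J}$ is finite and obeys the same bound. Finite additivity suffices throughout because the $\tilde{E}_i$ are disjoint and measurable.

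There is no real obstacle here. The only point to emphasise, relative to \cite[Lem.~9.2]{falconer2014fractal}, is that the argument never uses the shape of $\tilde{E}_i$. It needs only its measure lower bound and the diameter bound on $E_i$, so it does not matter that $\tilde{E}_i$ is not a ball.
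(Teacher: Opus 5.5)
Your proof is correct and is essentially the paper's argument: both show that each $\overline{E_i}$ meeting $B$ lies in the concentric closed ball of radius $(1+a)r$, and then compare the total volume of the disjoint $\tilde{E}_i$ with the volume of that ball. Your extra step of restricting to finite subfamilies, so that an infinite index set is handled, is a small refinement that the paper leaves implicit.
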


\begin{proof}
We follow the proof of \cite[Lem 9.2]{falconer2014fractal}. If $\overline{E_i}$ intersects $B$ then, since $\diam(E_i)\leq ar$, $\overline{E_i}$ is contained in the closed ball of radius $(1+a)r$ concentric with $B$. Suppose that $q$ of the sets $\{\overline{E_i}\}$ intersect $B$. Summing the volumes of the corresponding subsets $\tilde{E}_i$, it follows that $q\tau r^n \leq (1+a)^nr^n\omega_n$,
	giving the stated bound for $q$.
\end{proof}

The next lemma is a variant of the above, for intersections with annuli.
\begin{lemma}
\label{l:Falc2}
If $\{E_i\}$ satisfy the hypothesis of Lemma \ref{l:Falc} with $a=1$, then for any $x_0\in\R^n$ and any $j\in\N$, $j\geq 2$, the annulus $A_j:=\{x\in\R^n: jr\leq |x-x_0|\leq (j+1)r\}$ intersects at most $3.2^n j^{n-1}\omega_n\tau^{-1}$ of the closures $\{\overline{E_i}\}$.
\end{lemma}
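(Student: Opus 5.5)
We will argue exactly as in the proof of Lemma \ref{l:Falc}, using a volume comparison, but now the containing region is the thickened annulus rather than a ball. The key observation is that if $\overline{E_i}$ meets $A_j$ then, since $\diam(E_i)\leq r$ (the case $a=1$), every point of $\overline{E_i}$ lies within distance $r$ of $A_j$. Hence $\overline{E_i}$, and therefore $\tilde{E}_i\subset E_i$, is contained in the enlarged annulus
\[ A_j' := \{x\in\R^n : (j-1)r\leq |x-x_0|\leq (j+2)r\}. \]
Since $j\geq 2$ we have $j-1\geq 1>0$, so this really is an annulus rather than a ball.

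Suppose $q$ of the closures $\overline{E_i}$ intersect $A_j$. The corresponding sets $\tilde{E}_i$ are pairwise disjoint and measurable, each has measure at least $\tau r^n$, and all of them lie in $A_j'$. Summing their measures gives
\[ q\tau r^n\leq |A_j'| = \omega_n r^n\big((j+2)^n-(j-1)^n\big). \]

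It remains to bound $(j+2)^n-(j-1)^n$ by $3\cdot 2^n j^{n-1}$. By the mean value theorem,
\[ (j+2)^n-(j-1)^n = 3n\,\xi^{n-1} \]
for some $\xi\in(j-1,j+2)$, so it is at most $3n(j+2)^{n-1}$. For $j\geq 2$ we have $j+2\leq 2j$, hence
\[ (j+2)^n-(j-1)^n \leq 3n\,2^{n-1}j^{n-1}. \]
For $n=2,3$ we have $n2^{n-1}\leq 2^n\cdot n/2$. This gives the required bound $3\cdot 2^n j^{n-1}$ only when $n\leq 2$; for $n=3$ the estimate yields $36j^2$ against the target $24j^2$.

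For $n=3$ I will therefore compute exactly:
\[ (j+2)^3-(j-1)^3 = 9j^2+9j+9. \]
For $j\geq 2$ this is at most $9j^2+4.5j^2+2.25j^2 = 15.75j^2\leq 24j^2$. For general $n$ I would expand binomially, or simply restrict to $n=2,3$, which are the only cases the paper uses.

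Dividing by $\tau r^n$ then gives
\[ q\leq 3\cdot 2^n j^{n-1}\omega_n\tau^{-1}. \]
The only delicate step is this elementary polynomial inequality. The containment and volume-counting steps are routine.
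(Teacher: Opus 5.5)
Your containment and volume-counting steps are exactly the paper's argument. You put every meeting set in the thickened annulus $\{(j-1)r\le |x-x_0|\le (j+2)r\}$ and obtain $q\tau\le \big((j+2)^n-(j-1)^n\big)\omega_n$. For $n=2,3$, which are the paper's standing assumption, your proof is complete and correct.

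The only divergence is the final polynomial inequality, and there your route is weaker than it needs to be. The mean value theorem bounds every term of $(j+2)^n-(j-1)^n$ by the largest one, giving $3n2^{n-1}j^{n-1}$. That exceeds the target for $n\ge 3$. So you need a separate hand computation for $n=3$, and you leave $n\ge 4$ at ``expand binomially''. That is not a proof, even though the lemma's constant $3\cdot 2^n$ is evidently meant for all $n$.

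The paper avoids this with the factorisation
\[
(j+2)^n-(j-1)^n = 3\sum_{k=0}^{n-1}(j+2)^{n-1-k}(j-1)^k .
\]
For $j\ge 2$ it bounds each term by $(2j)^{n-1-k}j^k=2^{n-1-k}j^{n-1}$, so the sum is at most $(2^n-1)j^{n-1}$. This gives $3\cdot 2^n j^{n-1}$ for every $n$ in one line. The geometric decay in $k$, which comes from replacing $j+2$ by the smaller $j-1$, is exactly what the MVT throws away.

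Also, your sentence ``$n2^{n-1}\le 2^n\cdot n/2$'' is an identity and says nothing; drop it.
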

\begin{proof}
We adopt a similar argument to that used to prove 
Lemma \ref{l:Falc}. %
If $\overline{E_i}$ intersects $A_j$ then by the assumption that $\diam(E_j)\leq r$ we have that $\overline{E_i}$ is contained in 
$\{x\in\R^n: (j-1)r\leq |x-x_0|\leq (j+2)r\}$. 
Hence if $q$ of the $\{\overline{E_i}\}$ intersect $A_j$ we have by comparing volumes that $q\tau \leq ((j+2)^n-(j-1)^n)\omega_n$. We then note that 
\[ (j+2)^n-(j-1)^n = 3((j+2)^{n-1} + (j+2)^{n-2}(j-1) + \ldots + (j+2)(j-1)^{n-2} + (j-1)^{n-1}),\]
which for $j\geq 2$ can be bounded above by
\[ 3(2^{n-1} +2^{n-2} + \ldots + 1)j^{n-1} \leq 3.2^nj^{n-1}.\]
\end{proof}

The next lemma, taken from \cite{vass2013explicit}, provides a guaranteed bounding ball for IFS attractors.
\begin{lemma}[{\cite[Theorem 2.1]{vass2013explicit}}]
\label{l:BoundBall}
Let $\{s_1,\ldots,s_M\}$ be an IFS as in Definition \ref{d:IFS} with attractor $K$.  
Given $x\in \R^n$, $K$ is contained in the closed ball centred at $x$ with radius 
\begin{align}
\label{}
r(x) \coloneqq \frac{\mu_{*}\varrho(x)}{1-\rho_{*}},
\end{align}
where 
\begin{align}
\label{}
\rho_{*} &\coloneqq \max_{i=1,\ldots,M} \rho_{i},\\
\mu_{*} &\coloneqq  \max_{i=1,\ldots,M} \|I-\rho_{i}A_{i}\|_{2},\\
\varrho(x) &\coloneqq \max_{i=1,\ldots,M} \| p_i - x \|_{2}, \quad  x\in \R^n, 
\end{align}
and, for $i=1,\ldots,M$, $p_i \in \R^n$ denotes the unique fixed point of the map $s_{i}$. 
\end{lemma}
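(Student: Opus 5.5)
Lemma \ref{l:BoundBall} is a direct consequence of the contraction mapping principle applied to the Hutchinson operator, combined with the self-similarity identity \eqref{e:Attractor}; it needs nothing more.

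Fix $x\in\R^n$ and let $B$ be the closed ball centred at $x$ with radius $r:=r(x)=\mu_*\varrho(x)/(1-\rho_*)$. The plan is to show $s_i(B)\subset B$ for every $i=1,\ldots,M$. Granting this, the Hutchinson operator $\mathcal{S}(E):=\bigcup_i s_i(E)$ maps the family of nonempty compact subsets of $B$ into itself. Since $\mathcal{S}$ is a contraction in the Hausdorff metric, its unique fixed point $K$ (Theorem \ref{t:IFS}) is the Hausdorff limit of $\mathcal{S}^k(B)$. Each $\mathcal{S}^k(B)$ lies in $B$, and $B$ is closed, so $K\subset B$.

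To prove the invariance $s_i(B)\subset B$, take $y\in B$ and expand about the fixed point $p_i$. Since $s_i(p_i)=p_i$, we have $s_i(y)=p_i+\rho_iA_i(y-p_i)$. Therefore
\[
s_i(y)-x=\rho_iA_i(y-x)+(I-\rho_iA_i)(p_i-x).
\]
Because $A_i$ is orthogonal, the triangle inequality gives
\[
|s_i(y)-x|\le \rho_i r+\|I-\rho_iA_i\|_2\,|p_i-x|\le \rho_* r+\mu_*\varrho(x).
\]
The right-hand side equals $r$ exactly when $r=\rho_*r+\mu_*\varrho(x)$, that is, when $r=\mu_*\varrho(x)/(1-\rho_*)$, which is the definition of $r(x)$.

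The only step that needs care is choosing the right decomposition of $s_i(y)-x$: writing it through the fixed point $p_i$ rather than the translation $\delta_i$ is what makes $\|I-\rho_iA_i\|_2$ and $\varrho(x)$ appear. After that the argument is immediate.
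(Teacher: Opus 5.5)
Your proof is correct: the decomposition $s_i(y)-x=\rho_iA_i(y-x)+(I-\rho_iA_i)(p_i-x)$ gives $s_i(B)\subset B$ for exactly the stated radius, and since every $s_i$ maps the closed ball into itself, the attractor must lie in it. The paper gives no proof of its own and simply cites \cite[Theorem 2.1]{vass2013explicit}. Your invariant-ball argument is the standard reasoning behind that result, and it yields a self-contained proof of the stated version in $\R^n$ for arbitrary orthogonal $A_i$.
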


\bibliographystyle{siam}
\bibliography{asbssfi}

\end{document}